\documentclass[11pt]{article}

\usepackage{times} 
\usepackage{microtype}
\usepackage[utf8]{inputenc}
\usepackage{amsmath}
\usepackage{graphicx}
\usepackage[margin = 1 in]{geometry}
\usepackage{amsfonts}
\usepackage{amsthm}
\usepackage{mathrsfs}
\usepackage{bigints}
\usepackage{amssymb}
\usepackage{booktabs}
\usepackage{xspace}
\usepackage{tikz}
\usepackage{tikz-cd}
\usepackage{enumerate}
\usepackage{hyperref}
\usepackage{upgreek}
\usepackage{verbatim}
\usepackage{mathtools}
\mathtoolsset{showonlyrefs}

\newcommand{\R}{\mathbb{R}}

\newcommand{\N}{\mathbb{N}}

\newcommand{\spr}{V}

\newcommand{\KK}{[K]}
\newcommand{\ns}{K}
\newcommand{\s}{k}

\newcommand{\J}{[J]}

\newcommand\numberthis{\addtocounter{equation}{1}\tag{\theequation}}
\newcommand{\wmass}{L}
\newcommand{\awmass}{\mathcal{L}}
\newcommand{\load}{\varrho}

\newcommand{\supp}{\operatorname{supp}}

\newcommand{\ar}{\alpha} 
\newcommand{\ap}{A} 



\newcommand{\servp}{S} 
\newcommand{\p}{p} 

\newcommand{\inta}{u} 
\newcommand{\iinta}{U}
\newcommand{\pat}{\ell} 
\newcommand{\pd}{\vartheta} 
\newcommand{\ser}{v} 
\newcommand{\tm}{Z} 
\newcommand{\ur}{\kappa} 
\newcommand{\dt}{\eta} 
\newcommand{\ssp}{\mathcal{Z}} 
\newcommand{\fl}{\zeta} 
\newcommand{\flm}{z} 


\newcommand{\avg}{H} 
\newcommand{\mart}{Y} 
\newcommand{\othermart}{\mathcal{Y}}


\newcommand{\sr}{\mu} 
\newcommand{\flcdf}{M}
\newcommand{\pdcdf}{N}


\newcommand{\M}{\mathbf{M}}

\theoremstyle{definition}

\theoremstyle{definition}

\theoremstyle{definition}
\newtheorem{defi}{Definition}[section]
\theoremstyle{plain}
\newtheorem{lem}{Lemma}[section]
\theoremstyle{plain}
\newtheorem{prop}{Proposition}[section]
\theoremstyle{plain}
\newtheorem{thm}{Theorem}[section]
\theoremstyle{plain}
\newtheorem{cor}{Corollary}[section]
\theoremstyle{definition}
\newtheorem{rem}{Remark}[section]
\theoremstyle{definition}

\theoremstyle{definition}
\newtheorem{assumption}{Assumption}

\title{Diffusion Limits for Measure-Valued Queueing Models, With Application to General Random Order of Service Queues\footnote{The research reported in this paper was supported in part by NSF RTG grant DMS-2134107.}}
\begin{document}

\author{Eva Loeser\footnote{Department of Statistics and Operations Research, University of North Carolina at Chapel Hill, 204 E Cameron Ave, Chapel Hill, NC 27514. Email: ehloeser@unc.edu.} } 

\maketitle
\begin{abstract}
Currently, there is no general theory for deriving diffusion approximations of queueing systems with high- or infinite-dimensional state descriptors. In this paper, we explore one path for deriving diffusion limit equations of queueing models. The method hinges on a martingale decomposition of dynamics driven by time-changed renewal processes, which are a common feature of many queueing models. 
We then prove a central limit theorem for models decomposed in this way, which gives the form of stochastic differential equations (SDEs) that will be satisfied in the diffusion limit of such a system.
We demonstrate the approach on a multiclass, multi-server random order of service queue with reneging and generally distributed interarrival, service, and patience times. In this setting, the state descriptor is measure-valued and the workload is nonlinear in the fluid limit. We prove tightness and uniqueness of limit points, and derive SDEs satisfied by the diffusion limit. Our results offer a broadly applicable method for diffusion approximations in complex queueing systems without relying on dimension reductions or model-specific structure.

\end{abstract}
\textbf{AMS 2010 Subject Classifications:} Primary: 60K25
Secondary: 60F05,
60B10\\
\textbf{Keywords:} Stochastic Networks, Measure-Valued Processes, Martingale Decompositions, Central Limit Theorem, Diffusion Limit\\

\section{Introduction}

In this paper, we develop a general framework for establishing tightness and deriving stochastic differential equations (SDEs) satisfied by the diffusion limits of queueing systems. Our methods apply broadly across a wide class of models, as they exploit a structural feature common to most queueing systems: state changes either occur at the jump times of time-changed renewal processes (e.g., service completions, arrivals) or are driven by deterministic dynamics.
Both types of dynamics are covered by the Central Limit Theorem (CLT) for Renewal-Driven Systems (Theorem \ref{diffusionclt}) proved in this paper, which derives SDEs that will be satisfied by diffusion limits of such models.

The use of diffusion approximations in analyzing queueing networks dates back several decades~\cite{kingman1961single,halfinwhitt1981many,bramson,williamsssc,miyazawa2014review,ruthsurveypaper}.
However, these approximations can be quite technical, and there is currently no general theory for obtaining diffusion approximations for queueing models.
This gap is especially pronounced in the context of non-Markovian systems with features such as reneging, non-Head-of-the-line (non-HL) service disciplines, and infinite-dimensional state descriptors ~\cite{ruthsurveypaper}.
Martingale methods, such as those used in this paper, for obtaining such approximations have become increasingly common.
Pang, Talreja, and Whitt~\cite{pangetal2007martingale} outline a general framework for using martingale decompositions to derive diffusion approximations for queueing networks.
However, their analysis focuses on Markovian models, primarily addresses finite-dimensional performance processes, and does not account for reneging.

The martingale decomposition used here is closely related to, and can be viewed as a
reformulation of, Lemma 2.1 of Daley and Miyazawa~\cite{martingaledecomppaper}.
Thus, the decomposition itself is not the main novelty of the paper. Rather, the
queueing-specific contribution lies in how this decomposition is applied to the ROS
state descriptor. In particular, it leads to a family of multi-index mass-transport
martingales that capture the fluctuations of the system. Using the tightness criterion
developed in Lemma~\ref{ornsteinuhlenbecktightnesscondition}, we take the limit of this
family and obtain a multiparameter noise process that is \emph{not} a Brownian sheet in
the usual sense, since it has nontrivial correlations between disjoint rectangles; see
Theorem~\ref{Ydefthm} for a full description of the limiting martingale field.
We then use a method-of-characteristics argument, following appropriately chosen paths
through this multiparameter martingale noise, to characterize the limiting behavior of
the system. This provides a systematic route to tightness and subsequential SDE
characterizations for a measure-valued non-HL queue with reneging.

Ward and Glynn~\cite{wardglynn2005balking} obtained the first diffusion approximation for a system with both generally distributed primitives and reneging, establishing an Ornstein--Uhlenbeck representation for the workload and queue-length processes of a GI/GI/1 queue with reneging.
Huang, Zhang, and Zhang~\cite{huangzhangzhang2012unified} present a unified theory of diffusion approximations for queues with general patience-time distributions, but again, their analysis is restricted to finite-dimensional performance processes, rather than the infinite-dimensional state descriptors that are the focus of this paper.

In order to understand the limiting behavior of the infinite-dimensional state descriptors, we formulate the limit as an SPDE on the dual of the Schwartz space.
SPDEs have been used to characterize the limiting behavior of infinite-dimensional state descriptors of complex queueing models \cite{KaspiRamanan2013SPDE,AghajaniRamanan2019Ergodicity,AghajaniRamanan2020StationaryDistributions}. There are strong connections between this work and \cite{KaspiRamanan2013SPDE}, where an SPDE limit on the dual of the Schwartz space is established for a model with generally distributed primitives, and martingale methods are used.
However, the model in \cite{KaspiRamanan2013SPDE} has a large-server limit, whereas the model we study here has a fixed-server limit. Furthermore, the model in this paper has a random order of service discipline with reneging.

A major advantage of our approach is that it accommodates queueing systems with generally distributed primitives and high-dimensional (often infinite-dimensional) state descriptors--systems increasingly prominent in contemporary research \cite{gromollpuhawilliams, gromoll, krukrecent, puhabanerjeebudhiraja}. Furthermore, our framework uses classical FCLT methods to obtain a diffusion approximation of the queueing system. 
The scaling considered here should be distinguished from heavy-traffic regimes in which
state space collapse may reveal strong correlations among components of the state descriptor.
Our focus is instead on fluctuations about a deterministic fluid limit in an overloaded ROS
system. In this regime, centering about the fluid model is necessary to obtain a nontrivial
limit, and one should not generally expect the same collapse phenomena that arise in
critically loaded systems. Thus, our goal is not to replace state-space-collapse methods,
but to develop tools for a different scaling regime that is more relevant to queueing systems with reneging.

To illustrate the power of the method, we apply it to a multiclass, multi-server, random order of service queue with generally distributed interarrival, service, and patience times, and with reneging.
We establish tightness, prove uniqueness of limit points, and identify the diffusion
limit through a system of SDEs (Theorems \ref{tightnessresult}, \ref{lhatconvergencethm}, and \ref{diffusionapproximationresult}).
This model features a measure-valued state descriptor and a workload process that is nonlinear in the fluid limit \cite{loeserwilliams}.

\subsection{Contributions}

Currently, there is no general theory for obtaining diffusion approximations for queueing models with generally distributed primitives and non-head-of-the-line (non-HL) service disciplines \cite{ruthsurveypaper}. Such approximations are technically challenging and often rely heavily on model-specific structures or dimension-reduction techniques.

We demonstrate our framework on a model whose diffusion approximation was previously unknown: the multiclass, multi-server random order of service (ROS) queue with reneging and generally distributed interarrival, service, and patience times (see \S\ref{modelsetupintro}). However, these methods can be used to derive and generalize many of the fluid- and diffusion-limit equations that are currently known. 

As an example, consider the processor-sharing (PS) queue with impatience studied in \cite{gromollkruk}. There, a state space collapse argument is used to lift the workload process into the infinite-dimensional space where the state descriptor lives. To make this argument tractable, the authors assume \emph{soft deadlines}, meaning that jobs do not leave the system when their patience times expire. This assumption allows the diffusion limit of the workload process to coincide with that of a classical PS queue, described by a reflected Brownian motion. The case with \emph{firm deadlines} (i.e., reneging) is left for future work and described as exhibiting ``very different behavior.''

The same formal decomposition can be applied to a PS queue with impatience and reneging, and it suggests candidate fluid and diffusion equations for that model.
As an example, we derive candidate fluid and diffusion equations for the model described in that paper but with the addition of reneging. 
This is done in \S \ref{gromollkrukexamplesect}.
This is only meant as an illustrative example, as proofs of convergence, uniqueness of solutions, and tightness are left for future work.

Furthermore, continuously varying families of martingales arise naturally when martingale decompositions are used with measure-valued processes.
In the proof of tightness \S \ref{proofoftightnesssect}, uniform tightness of one such continuously varying family of martingales is proved.
These methods are novel and interesting in their own right.

\subsection{The Random Order of Service Queue as a Model for Enzymatic Processing}
\label{modelsetupintro}

Our methodology was developed, in part, to analyze a multiclass, multi-server random order of service (ROS) queue with reneging and generally distributed primitives. In this setting, classical state space collapse techniques—such as those developed by Bramson \cite{bramson} and Williams \cite{williamsssc}—do not apply due to the system's non-head-of-line (non-HL) service discipline and non-exponential patience times.

Because patience times are generally distributed, no finite-dimensional Markovian state descriptor exists. We thus model the system using a \emph{measure-valued state descriptor} and prove convergence in the Skorokhod space $D([0,\infty), \mathscr{S}')$, where $\mathscr{S}'$ denotes the space of tempered distributions. Our framework does not require the state process to be real-valued or finite-dimensional, only vector-valued.

While ROS queues have been studied in operations research for decades \cite{flatto, borst, rogiest}, an especially compelling application for this model comes from enzymatic processing \cite{criticalityadaptivity, experimentpaper, factorizedtimedependence}. We briefly describe this application to build intuition for the model setup.

Consider an enzyme operating in an enclosed biological environment, such as a cell nucleus. The cell produces various molecules that need to be processed (i.e., broken down and removed) by the enzyme. In queueing terms, this is not a head-of-the-line (HL) system: the enzyme is equally likely to select any molecule of a given type for processing, regardless of its arrival time.
In the case of one enzyme and a single molecule type, this setup corresponds to a single-server ROS queue. 
Allowing for unprocessed molecules to degrade or dilute out of the system over time, as well as the possibility of multiple enzyme copies and molecule types, this motivates a multiclass, multi-server ROS queue with reneging, where:
\begin{itemize}
    \item Molecules have type-dependent selection weights $p_j$ that reflect their binding affinity with the enzyme;
    \item The selection probability for type-$j$ molecules is proportional to $p_j \flm_j$ when $\flm_j$ molecules of type $j$ are present. To be specific, if $\boldsymbol{\flm} = (\flm_1, \ldots, \flm_J)$ is nonzero, then the probability of an enzyme selecting a type-$j$ molecule is given by
\[
\frac{p_j \flm_j}{\sum_{i=1}^J p_i \flm_i}.
\]
    \item Molecules leave the system upon expiration of a finite patience time.
\end{itemize}
Visualizing each type of molecule in its own virtual queue, the schematic for $J\in \N$ molecule types and $K\in \N$ identical enzymes is as pictured below.
\begin{center}
\includegraphics[width=0.5\textwidth]{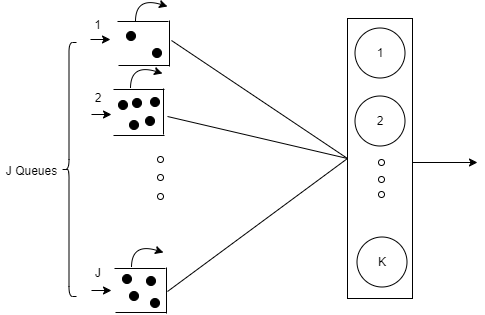}
\end{center}
While this example is couched in biochemical terms, ROS queues are broadly applicable in queueing theory. For instance, servers may represent identical processors in a data center, and molecule types may correspond to different job classes with priority weights. 

\subsection{Notation} We shall use the following notation throughout the paper.
Let $\N$ denote the set of strictly positive integers, $\{1,2,....\},$ and let $\N_0= \N \cup \{0\}$. 
For a positive integer $N,$ let $[N]$ denote the set $\{1,..., N\}.$ 
For $x\in \R$ we denote the positive part of $x$ by $x^+:=x\vee 0.$ 
For a finite set $A \subset\R_+,$ we denote the $i$th smallest element of $A$ by $A_{\{i\}}$. 
Let $\chi(x):=x$ for $x \geq 0.$ 
We denote the zero vector in any vector space by $\mathbf{0}$. 
For a vector $\boldsymbol{x}\in \R^d,$ we write $\boldsymbol{x}>\boldsymbol{0}$ if and only if $x_i>0$ for $i=1,...,d.$  
For $X=\R$ or $X= \R_+$, we denote the set of bounded continuous functions defined on $X$ and taking values in $\R$ by $\mathbf{C}_b(X).$
The set of functions in $\mathbf{C}_b(X)$ that have bounded continuous derivatives up to order $n\geq 1$ is denoted by $\mathbf{C}_b^n(X).$
For $T \geq 0$ and a bounded continuous function $f:\R_+\rightarrow \R,$ we write $||f||_T$ for $\sup_{t\in[0,T]}|f(t)|.$ 
We take $\sup\emptyset$ to be $0$ and $\inf \emptyset$ to be $+\infty.$
Let $\R_+=[0,\infty),$ and consider it with the Borel $\sigma$-algebra $\mathscr{B}(\R_+).$ 
We denote the set of signed, finite measures on $(\R_+, \mathscr{B}(\R_+))$ by $\M.$ 
We endow $\M$ with the topology of weak convergence of measures.
If $\xi \in \M$ and $f$ is a Borel measurable function on $\R_+$ that is integrable with respect to $\xi$, we let $\langle f, \xi\rangle := \int_{\R_+}fd\xi.$ 
If $F$ is a function of bounded variation and $g$ is integrable with respect to $\mu_F,$ the Lebesgue-Stieltjes measure associated to the function $F,$ then we denote $\int_{(s,t]} g d\mu_F$ as $\int_s^t gdF.$
We denote the Schwartz space on $[0,\infty)$ as $\mathscr{S}.$
We denote the space of functions from $[0,\infty)$ to $\R^d$ that are right continuous with finite left limits by $D([0,\infty),\R^d)$. 
We endow $D([0,\infty),\R^d)$ with the Skorokhod-$J_1$ topology, under which it is a Polish space. 
We denote $\delta_x^+:= 1_{\{x>0\}}\delta_x.$
We will commonly denote a vector by using a bold symbol. 
For example, if we have introduced $x_1,...,x_d,$ then $\boldsymbol{x}$ will be $(x_1,...,x_d)^{\bot}.$ Similarly, if we have also introduced $y_1,...,y_d,$ then $\boldsymbol{xy}$ will be $(x_1y_1,...,x_dy_d)^{\bot},$ and so on.
If $\boldsymbol{\nu}\in \M^d$ for some $d \in \N,$ and $\boldsymbol{f}\in \mathscr{B}(\R_+)^d,$ then we denote the vector $(\langle f_1, \nu_1\rangle, ...,\langle f_d, \nu_d\rangle )^{\bot}$ as $\langle \boldsymbol{f}, \boldsymbol{\nu}\rangle.$
Similarly, we denote $(\langle 1, \nu_1\rangle, ...,\langle 1, \nu_d\rangle )^{\bot}$ as $\langle \boldsymbol{1}, \boldsymbol{\nu}\rangle$ and $(\langle \chi, \nu_1\rangle, ...,\langle \chi, \nu_d\rangle )^{\bot}$ as $\langle \boldsymbol{\chi}, \boldsymbol{\nu}\rangle.$

\section{Multiclass Random Order of Service Queue}
\label{modeldescriptionsect}

In this section, we describe the multiclass, multi-server, random order of service (ROS) queue that motivates the development of the methods in this paper.

\subsection{Model Setup}

We now give a more formal description of the model introduced in \S \ref{modelsetupintro}.
There are $J$ job classes, each with a separate queue, and $K$ identical servers. Jobs of class $j \in [J]$ arrive according to a delayed renewal process $\ap_j(\cdot)$, and the vector of arrival processes is denoted by
\[
\boldsymbol{\ap}(\cdot) = (\ap_1(\cdot), \ldots, \ap_J(\cdot)).
\]
Service and patience times are generally distributed. A job can renege if not selected for service within its patience time. Once in service, jobs cannot renege.

\begin{rem}
The arrival and service time distributions are assumed to have no atoms. This precludes the occurrence of simultaneous arrivals, reneging, or service entries almost surely. See \S 2.4 of \cite{loeserwilliams} for details.
\label{simultaneouseventsremark}
\end{rem}

To describe the system’s state, we use a measure-valued process that tracks the remaining patience times of all jobs in each queue. We define key random variables below.

\paragraph{(i) Arrivals:} For each $j \in [J]$, let $\inta_0^j$ be the first arrival time to class-$j$, and $\{\inta_i^j\}_{i=1}^\infty$ be the i.i.d.\ interarrival times. Then $\iinta_i^j := \sum_{l=0}^{i-1} \inta_l^j$ is the arrival time of the $i$th job. The cumulative arrival process is
\[
\ap_j(t) := \sup\left\{ i \in \N : \iinta_i^j \leq t \right\}.
\]
Let $\tau_i^{\ap,j} := \iinta_i^j$ denote the $i$th arrival time to queue $j$.

\paragraph{(ii) Patience Times:} Let $\pat_i^j$ denote the patience time of the $i$th class-$j$ job. We take $\{\pat_i^j\}_{i=1}^{\infty}$ to be i.i.d.. We define the remaining patience at time $t \geq 0$ as
\[
\pat_i^j(t) := \pat_i^j + \iinta_i^j - t.
\]

\paragraph{(iii) Service Times:} Service times are indexed by job class and server. Let $\ser_i^{k,j}$ denote the service time of the $i$th class-$j$ job served at server $k$. 
We take $\{\ser_i^{k,j}\}_{i=1}^{\infty}$ to be i.i.d..
For $t\geq 0,$ let
\begin{equation}
\spr_j^k(t) := \sup \left\{n : \sum_{i=1}^{n} \ser_i^{k,j} \leq t \right\}, \qquad
g_j^k(t) := \int_0^t c_j^k(s)\,ds,
\label{gjkdef}
\end{equation}
where $c_j^k(s) = 1$ if server $k$ is serving a class-$j$ job at time $s$, and 0 otherwise. Then service completions occur at the jump times of $V_j^k(g_j^k(\cdot))$, and we let $\tau_i^{\spr,k,j}$ denote the $i$th such time.
We index our sequences of service times differently than in \cite{loeserwilliams};
however, the resulting system and related processes have the same distribution as those
in that paper.

\paragraph{(iv) Server Assignments:} When a job enters service at server $k$ at time $\tau_i^{\spr,k,j}< \infty$, selection is determined via i.i.d. random variables $\ur_i^{k,j} \sim \mathrm{Uniform}(0,1)$. Define the class-selection interval
\[
I_l(\boldsymbol{\flm}) := \left[ \frac{\sum_{n=1}^{l-1} p_n \flm_n}{\sum_{n=1}^{J} p_n \flm_n}, \frac{\sum_{n=1}^{l} p_n \flm_n}{\sum_{n=1}^{J} p_n \flm_n} \right),
\]
and the within-class subinterval
\[
I_{l,m}(\boldsymbol{\flm}) := \left[ \frac{\sum_{n=1}^{l-1} p_n \flm_n + p_l(m-1)}{\sum_{n=1}^J p_n \flm_n}, \frac{\sum_{n=1}^{l-1} p_n \flm_n + p_l m}{\sum_{n=1}^J p_n \flm_n} \right),
\]
so that the job selected is the one with the $m$th smallest patience time in queue $l$ if $\ur_i^{k,j} \in I_{l,m}(\boldsymbol{\flm})$.

\paragraph{(v) Additional State Variables:} Let $a_j(t)$ be the time until the next arrival to class $j$ at time $t \geq 0$, and $s^k(t)$ be the remaining service time at server $k$ at time $t \geq 0$. If server $k$ is idle, $s^k(t) = 0$. Let $\servp_j^k(t)$ denote the number of class-$j$ jobs served at server $k$ by time $t$, and define
\[
\servp^k(t) := \sum_{j=1}^J \servp_j^k(t), \qquad
\servp(t) := \sum_{k=1}^K \servp^k(t).
\]

\paragraph{(vi) Initial Conditions:} Let $\boldsymbol{\tm}_0 = (\tm_{0,1}, \ldots, \tm_{0,J})$ denote the initial queue lengths. Let $\tilde{\pat}_{-i}^j$ be the remaining patience time of the $i$th initial job in queue $j$, where $\{\tilde{\pat}_{-i}^j\}_{i=1}^{\infty}$ are i.i.d..
Let $s_0^k$ be the remaining service time at server $k$ at $t=0$. If $s_0^k = 0$, then server $k$ is idle. We assume $s_0^k = 0$ only if $\boldsymbol{\tm}_0 = \boldsymbol{0}$ because the system is non-idling, and $s_0^k \neq \inta_0^j$ for any $j,k$ to avoid simultaneous events. 
We define $\tau_0^{\spr,k,1} := s_0^k$ and $\tau_0^{\spr,k,j} := 0$ for $j \neq 1$.

\subsection*{State Descriptor}

The measure-valued process $\ssp_j(t)$ for $j \in [J]$ tracks the remaining patience times of jobs in queue $j$ at time $t$:
\begin{align*}
\ssp_j(t) &:= \sum_{i=1}^{\tm_{0,j}} \delta_{\tilde{\pat}_{-i}^j - t}^+
+ \sum_{i=1}^{\ap_j(t)} 1_{\{ s^k(\iinta_i^j-) \neq 0 \,\, \forall k \in [K] \}} \delta_{\iinta_i^j + \pat_i^j - t}^+ \\
&\quad - \sum_{k \in [K]} \sum_{l \in [J]} \sum_{\tau_i^{\spr,k,l} \in (0,t]} 1_{\{\boldsymbol{\ssp}(\tau_i^{\spr,k,l}-) \neq 0\}} \delta^+_{T_{i,j}^{k,l} - t + \tau_i^{\spr,k,l}},
\numberthis
\label{statespacedescriptorequation}
\end{align*}
where
\[
T_{i,j}^{k,l} := \sum_{n=1}^{\tm_j(\tau_i^{\spr,k,l}-)} 1_{\{ \ur_i^{k,l} \in I_{j,n}(\boldsymbol{\tm}(\tau_i^{\spr,k,l}-)) \}} (\text{supp}(\ssp_j(\tau_i^{\spr,k,l}-)))_{\{n\}},
\]
if a class-$j$ job enters service at $\tau_i^{\spr,k,l}$, and $T_{i,j}^{k,l} = 0$ otherwise. If fewer than $i$ jobs of class $l$ enter service at server $k$, then $\tau_i^{\spr,k,l} = \infty$ and $T_{i,j}^{k,l} := \infty$.

This state descriptor is distributionally equivalent to that in \cite{loeserwilliams}, but expressed in terms of $\ser_i^{k,j}$ and $\tau_i^{\spr,k,j}$, consistent with the indexing used in this paper. The full system state at time $t \geq 0$ is given by
\[
\boldsymbol{X}(t) := (\boldsymbol{\ssp}(t), \boldsymbol{a}(t), \boldsymbol{s}(t)) \in \M^J \times \mathbb{R}^J \times \mathbb{R}^K.
\]

\subsection{Fluid- and Diffusion-Scaled Models}
\label{sequenceofmodelssection}

Because we will be focusing on the overloaded regime, the model will not be balanced. 
Therefore, we will need to center the model before attempting a FCLT-type limit. The model will be centered around the unique fluid model solution, established in \cite{loeserwilliams}, associated to the limiting initial condition. 
The fluid model solution is defined in that paper as follows:
\begin{defi}[Fluid model parameters] A vector $(\boldsymbol{\ar},\boldsymbol{\sr},\boldsymbol{\p},\boldsymbol{\pd})\in \R^J_+\times\R^J_+\times (0,1)^J\times \M^J$ is a set of fluid model parameters if $\boldsymbol{\ar}>\boldsymbol{0},$ $\boldsymbol{\sr}>\boldsymbol{0},$ $\sum_{j=1}^Jp_j=1,$ and $\pd_j$ is a probability measure with $\pd_j(\{0\})=0$ for each $j \in \J.$
\label{parametersdef}
\end{defi}
\begin{defi}[Fluid Model Solution]
\label{notoverdef}
Let $\boldsymbol{\fl}: [0,\infty) \rightarrow \M^J$ be a continuous function. 
Then we say that $\boldsymbol{\fl}$ is a fluid model solution for fluid model parameters $(\boldsymbol{\ar},\boldsymbol{\sr},\boldsymbol{p},\boldsymbol{\pd})$ satisfying Definition \ref{parametersdef} and initial condition $\boldsymbol{\fl}_0 = (\fl_{0,1},...,\fl_{0,J}),$ a vector of continuous measures, if 

\begin{enumerate}[(i)]
\item $\boldsymbol{\fl}(0) = \boldsymbol{\fl}_0,$ \label{startsatICdef}
\item $\langle 1_{\{0\}}, \fl_j(t) \rangle = 0$ for each $t\geq0,j \in \J,$
\label{doesntchargeorigindef}
\item \label{fluidequationdef} for each $f \in  \mathbf{C}^1_b(\R_+) $ such that $ f(0) =0,  j \in \J, t\geq 0,$
\begin{align*}
\langle f, \fl_j(t)\rangle &= \langle f, \fl_j(0) \rangle -\int_0^t \left\langle f', \fl_j(s) \right\rangle ds  \quad - \int_0^t    \ns1_{\{\boldsymbol{\fl}(s) \neq \boldsymbol{0}\}}\frac{p_j\langle f, \fl_j(s)\rangle}{\sum_{i=1}^J\frac{p_i}{\sr_i}\langle 1, \fl_i(s) \rangle } ds \\&\hspace{8mm}+ \ar_j  \langle f,  \pd_j\rangle \int_0^t 1_{\{\boldsymbol{\fl}(s) \neq \boldsymbol{0}\}}ds, \numberthis \label{fluidlimiteqn}
\end{align*}
\item and when $\load >1,$ at each $t >0,$ $\langle 1, \fl_j(t)\rangle >0$ for some $j \in [J].$ 
\label{overloadedconditiondef}
\end{enumerate}
\label{fmsleq}
\end{defi}

The fluid model is obtained as the limit of a sequence of fluid-scaled models, described in \S\ref{modeldescriptionsect}. For a sequence of models indexed by $m \in \N$, the fluid-scaled state descriptor for class $j$ in the $m$th model is defined as follows: for any Borel set $B \subseteq \R_+$,
\begin{equation}
\bar{\ssp}^{m}_j(t)(B) := \frac{1}{m} \ssp^{m}_j(mt)(mB), \qquad t \geq 0,
\label{scalingwithborelsets}
\end{equation}
or, equivalently, for any bounded Borel measurable function $f: \R_+ \to \R$,
\begin{equation}
\langle f, \bar{\ssp}^{m}_j(t) \rangle = \frac{1}{m} \left\langle f\left(\frac{1}{m} \cdot \right), \ssp^{m}_j(mt) \right\rangle.
\label{scalingwithfunctions}
\end{equation}

Throughout the paper, we append the superscript $m$ to denote quantities associated with the $m$th model. For example, the interarrival times for class $j$ are denoted $\{\inta_i^{j,m}\}_{i=1}^{\infty}$. We also use an overbar to denote fluid-scaled processes; for instance, the fluid-scaled arrival process for class $j$ is $\bar{\ap}^m_j(t) := \frac{1}{m} \ap^m_j(mt)$.

Some parameters remain fixed across the sequence of models. Specifically, for $j \in \J,k \in [K],$ the selection probabilities $p_j$, $j \in [J]$, are constant in $m$; the choosing variables $\{\ur_i^{k,j}\}_{i=1}^{\infty}$ are also held fixed; patience times scale with $m$ so that there exists a sequence $\{\pat_i^j\}_{i=1}^{\infty}$ satisfying $\pat_i^{j,m} = m \pat_i^j$ for all $i, m \in \N$; the service rate $\sr_j := 1/\mathbb{E}[\ser_1^{j,m}]$ is independent of $m$.

We now list the assumptions imposed on this sequence of models:

\begin{assumption}
\label{assumptions}
The following conditions are assumed throughout:
\begin{enumerate}[(i)]
\item \label{basicassumptions}
For all $j \in [J]$, $k \in [K]$, and $m \in \N$, the arrival rate $\ar_j^m := 1/\mathbb{E}[\inta_1^{j,m}]$, reneging rate $\gamma_j^m := 1/\mathbb{E}[\pat_1^j]$, and service rate $\sr_j := 1/\mathbb{E}[\ser_1^{k,j,m}]$ are all positive and finite.
The expected initial number of jobs in the queue for class $j$, $\mathbb{E}[\tm_{j,0}^m]$, is finite.
The distributions of $\inta_0^{j,m}$, $\inta_1^{j,m}$, $\ser_1^{k,j,m}$, and $s_0^{k,m}$ have no atoms.
The patience time distribution $\pd_j$ for class $j$ is independent of $m$, and the service rate $\sr_j$ is also independent of $m$. 
Additionally, for each $t \geq 0$, $j \in [J]$, and $k \in [K]$,
\[
\sup_{m \in \N} \mathbb{E}[\bar{\ap}_j^m(t)] < \infty, \qquad
\sup_{m \in \N} \mathbb{E}[\bar{\spr}_j^{k,m}(t)] < \infty.
\]

\item For each $m \in \N, j \in [J],k\in [K]$, the sequences $\{\inta_i^{j,m}\}_{i=1}^{\infty}$, $\{\ser_i^{k,j,m}\}_{i=1}^{\infty}$, $\{\pat_i^j\}_{i=1}^{\infty}$, and $\{\tilde{\pat}_{-i}^j\}_{i=1}^{\infty}$ are mutually independent and independent of the initial condition $(\boldsymbol{\tm}^m(0), \boldsymbol{a}^m(0), \boldsymbol{s}^m(0))$.

\item \label{parametersassumption}
There exists a limiting arrival rate vector $\boldsymbol{\ar} > 0$ such that $\boldsymbol{\ar}^m \to \boldsymbol{\ar}$ as $m \to \infty$. The system is overloaded in the limit, meaning the load parameter
\[
\load := \sum_{j=1}^J \frac{\ar_j}{\ns \sr_j} > 1.
\]

\item \label{fllnassumption1}
For all $j \in [J]$, $k \in [K]$, we assume that
\[
\frac{\mathbb{E}[\inta_0^{j,m}]}{\sqrt{m}} \to 0, \qquad
\frac{\mathbb{E}[s_0^{k,m}]}{\sqrt{m}} \to 0, \qquad \text{as } m \to \infty.
\]

\item \label{fllnassumption2}
For each $j \in [J]$, $k \in [K]$,
\[
\mathbb{E}[\inta_1^{j,m} \cdot 1_{\{\inta_1^{j,m} > m\}}] \to 0, \quad
\mathbb{E}[\ser_1^{k,j,m} \cdot 1_{\{\ser_1^{k,j,m} > m\}}] \to 0, \quad \text{as } m \to \infty,
\]
and the third moments are uniformly bounded:
\[
\sup_{m \in \N} \mathbb{E}[|\ser_1^{k,j,m}|^3] < \infty, \qquad
\sup_{m \in \N} \mathbb{E}[|\inta_1^{j,m}|^3] < \infty.
\]
\item The patience time distribution, $\pd_j$, has a H\"older continuous CDF.
\item \label{initialconditionsassumption}
There exists a deterministic vector of nonzero measures $\bar{\boldsymbol{\ssp}}_0 = (\bar{\ssp}_{0,1}, \dots, \bar{\ssp}_{0,J})$ such that $\langle \chi, \bar{\ssp}_{0,j} \rangle < \infty$ for all $j \in [J]$, and
\[
(\bar{\boldsymbol{\ssp}}^m(0), \langle \boldsymbol{\chi}, \bar{\boldsymbol{\ssp}}^m(0) \rangle)
\Rightarrow
(\bar{\boldsymbol{\ssp}}_0, \langle \boldsymbol{\chi}, \bar{\boldsymbol{\ssp}}_0 \rangle), \qquad \text{as } m \to \infty.
\]
Furthermore, we assume that the generalized cdf of $\bar{\boldsymbol{\ssp}}_0,$ $\bar{F}(x):= \langle 1_{[0,x]},\bar{\boldsymbol{\ssp}}_0\rangle, $ is H\"older continuous.
In addition, there exists a random variable $\hat{\boldsymbol{\ssp}}_0 \in (\mathscr{S}')^J$ such that for any $f_1,\dots,f_J \in \mathscr{S}$,
\[
(\langle f_1, \hat{\ssp}_1^m(0) \rangle, \dots, \langle f_J, \hat{\ssp}_J^m(0) \rangle)
\Rightarrow
(\langle f_1, \hat{\ssp}_{0,1} \rangle, \dots, \langle f_J, \hat{\ssp}_{0,J} \rangle),
\]
and for any $f \in \mathscr{S} \cup \{1_{(0,\infty)}\}$, the functions
\[
F_f^{j,m}(x) := \left\langle f((\cdot - x)^+), \hat{\ssp}_j^m(0) \right\rangle
\]
converge in distribution to a random function $F_f^{j}(x)$ that is continuous almost surely.

\item \label{cltforrenewalassumption}
Let $\sigma_{\ap,j} > 0$ and $\sigma_{\spr,j} > 0$ be the limiting standard deviations of $\inta_1^{j,m}$ and $\ser_1^{1,j,m}$, respectively. Then for each $j \in [J]$ and $k \in [K]$, the diffusion-scaled processes
\[
\hat{\ap}_j^m(\cdot) := \frac{1}{\sqrt{m}} (\ap_j^m(m\cdot) - \ar_j(\cdot)), \qquad
\hat{\spr}_j^{k,m}(\cdot) := \frac{1}{\sqrt{m}} (\spr_j^{k,m}(m\cdot) - \sr_j (\cdot))
\]
are such that the central limit theorem for renewal processes holds, i.e. the diffusion-scaled renewal process converges in distribution to a Brownian motion with quadratic variation $\iota^3 \sigma^2 t$, where $\iota$ is the limiting rate of the renewal process and $\sigma$ is the limiting standard deviation of its interevent times of the renewal process (see \cite{chenandyao}, Theorem 5.11).
\end{enumerate}
\end{assumption}

\medskip

\noindent
We note that even though Assumption~\ref{cltforrenewalassumption} is stated explicitly for the diffusion scaling, Assumptions~\ref{fllnassumption1} and \ref{fllnassumption2} also ensure a functional law of large numbers (FLLN) for the fluid-scaled renewal processes $\{\bar{\ap}^m_j(\cdot)\}_{m=1}^{\infty}$ and $\{\bar{\spr}^{k,m}_j(\cdot)\}_{m=1}^{\infty}$, as shown in Lemma A.2 of \cite{gromollpuhawilliams}. Together, these assumptions ensure that the limiting parameters of the model sequence described in \S\ref{modeldescriptionsect} satisfy the conditions of Definition~\ref{parametersdef}.

This fluid-scaling was analyzed by the authors of \cite{loeserwilliams}, and the results that will be central to this paper can be summarized as follows:
\begin{thm}[Loeser--Williams]
\label{fluidlimittheorem}
    Under the conditions in Assumption \ref{assumptions}, a sequence of fluid-scaled models of a multiclass, multi-server random order of service queue with reneging as described in \S \ref{modeldescriptionsect} and \S \ref{sequenceofmodelssection} is tight, and all subsequential limits are fluid model solutions. If either $\load= \sum_{j=1}^J \frac{\ar_j}{\ns \sr_j}\leq 1$ or $\boldsymbol{\fl}_0 \neq \boldsymbol{0}$, then fluid model solutions are unique, and thus the original sequence converges to a fluid model solution.
    \end{thm}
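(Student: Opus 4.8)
The plan follows the standard three-step program for measure-valued fluid limits: (1) show $\{(\bar{\boldsymbol{\ssp}}^m(\cdot),\bar{\boldsymbol a}^m(\cdot),\bar{\boldsymbol s}^m(\cdot))\}_m$ is tight in $D([0,\infty),\M^J\times\R^J\times\R^J)$ with continuous subsequential limits; (2) show every subsequential limit of $\bar{\boldsymbol{\ssp}}^m$ satisfies the fluid model equations of Definition \ref{notoverdef}; (3) show uniqueness of the fluid model solution when $\load\le 1$ or $\boldsymbol{\fl}_0\ne\boldsymbol 0$, so that tightness plus a unique subsequential limit upgrades to convergence of the whole sequence. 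For tightness I would first pass to scalar processes. Substituting \eqref{statespacedescriptorequation} into \eqref{scalingwithfunctions} with $f\equiv 1$ and $f=\chi$, the scaled mass $\langle 1,\bar{\ssp}^m_j(t)\rangle$ and first moment $\langle\chi,\bar{\ssp}^m_j(t)\rangle$ are each dominated by their values at $t=0$ plus the corresponding scaled arrival quantity, which are tight — indeed uniformly bounded on compact time sets with high probability — by the functional law of large numbers for the renewal processes (valid under \eqref{fllnassumption1}--\eqref{fllnassumption2}, as noted after Assumption \ref{assumptions}) together with \eqref{initialconditionsassumption}. Tightness of $\langle\chi,\bar{\ssp}^m_j(\cdot)\rangle$ gives the compact containment condition, since $\bar{\ssp}^m_j(t)\{(x,\infty)\}\le x^{-1}\langle\chi,\bar{\ssp}^m_j(t)\rangle$ confines the measures to a compact subset of $\M$ uniformly on $[0,T]$; and an oscillation bound for $\langle f,\bar{\ssp}^m_j(\cdot)\rangle$ over a convergence-determining family $f\in\mathbf{C}_b^1(\R_+)$ follows from the prelimit equation, because the aging contribution is Lipschitz in $t$, the arrival contribution has increments bounded by $\|f\|_\infty$ times an increment of $\bar{\ap}^m_j$, and the service contribution is bounded by a constant times the scaled time increment (using $c^{\s,m}_j\le 1$ in \eqref{gjkdef} and the renewal law of large numbers). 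The residuals $\bar{\boldsymbol a}^m,\bar{\boldsymbol s}^m$ vanish in the limit by \eqref{fllnassumption1}, giving joint tightness.

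For the characterization, fix a subsequence along which $\bar{\boldsymbol{\ssp}}^m\Rightarrow\boldsymbol{\fl}$ and, by the Skorokhod representation theorem, assume a.s.\ convergence in $D$; the limit has continuous paths since the scaled jumps are $O(1/m)$. The initial-condition requirement holds by \eqref{initialconditionsassumption}. The requirement that $\fl_j(t)$ not charge $\{0\}$ — in fact not charge any point — follows because, before scaling, $\ssp^m_j$ is a sum of unit atoms at the mutually distinct, atomlessly distributed points $\tilde{\pat}_{-i}^j-t$, $\iinta_i^j+\pat_i^j-t$, and $T_{i,j}^{\s,l}-t+\tau_i^{\spr,\s,l}$ (Remark \ref{simultaneouseventsremark}), combined with the first-moment bound and the fact that the reneging flow near the origin is of bounded rate, so the scaled mass in any short interval is small. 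For the fluid model equation \eqref{fluidlimiteqn} I would pass to the limit term-by-term in the scaled version of \eqref{statespacedescriptorequation} tested against $f\in\mathbf{C}_b^1(\R_+)$ with $f(0)=0$. The shift structure $\delta^+_{x-t}$, via $\tfrac{d}{dt}f((x-t)^+)=-f'(x-t)1_{\{x>t\}}$ and continuity of $s\mapsto\fl_j(s)$, produces the aging term $-\int_0^t\langle f',\fl_j(s)\rangle\,ds$; note reneging is invisible here because it contributes $f(0)=0$, which is exactly why $f(0)=0$ is imposed. The arrival term is obtained by combining the law of large numbers for $\bar{\ap}^m_j$ with the Glivenko--Cantelli theorem for the i.i.d.\ rescaled patience times $\pat_i^{j,m}/m=\pat_i^j\sim\pd_j$, together with the fact that directly-to-service arrivals (those with $s^{\s}(\iinta_i^j-)\ne 0$ for some $\s$) are asymptotically negligible except over scaled-time intervals on which $\boldsymbol{\fl}(s)=\boldsymbol 0$ and no queue is present; this gives $\ar_j\langle f,\pd_j\rangle\int_0^t 1_{\{\boldsymbol{\fl}(s)\ne\boldsymbol 0\}}\,ds$.

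The departure (service) term is the crux of the argument, and I expect it to be the main obstacle. Unscaled it equals $-\sum_{\s\in[\ns]}\sum_{l\in\J}\sum_{\tau_i^{\spr,\s,l}\in(0,mt]}1_{\{\boldsymbol{\ssp}(\tau_i^{\spr,\s,l}-)\ne\boldsymbol 0\}}\,f\big((T_{i,j}^{\s,l}-mt+\tau_i^{\spr,\s,l})^+/m\big)$, divided by $m$, and I would analyze it in four pieces. (a) \emph{Time change / work conservation}: the scaled cumulative effort $\bar{\g}^{\s,m}_j(t):=m^{-1}\g^{\s,m}_j(mt)=m^{-1}\int_0^{mt}c^{\s,m}_j(s)\,ds$ (see \eqref{gjkdef}) is $1$-Lipschitz, hence tight with absolutely continuous limits, and composing the renewal law of large numbers for $\bar{\spr}^{\s,m}_j$ with this time change shows the scaled number of class-$j$ completions at server $\s$ by time $t$ is asymptotically $\sr_j\bar{\g}^{\s,m}_j(t)$. (b) \emph{Non-idling}: whenever $\boldsymbol{\fl}(s)\ne\boldsymbol 0$ all $\ns$ servers are busy in the limit, so $\sum_j\sum_{\s}\dot{\bar{\g}}^{\s,m}_j(s)\to\ns$ there. (c) \emph{Class selection}: by the weighted random order of service (the intervals \eqref{intervaldef}) and a law of large numbers over the i.i.d.\ choosing variables $\ur_i^{\s,l}$, the fraction of service slots routed to class $j$ at scaled time $s$ is $p_j\langle 1,\fl_j(s)\rangle/\sum_i p_i\langle 1,\fl_i(s)\rangle$; together with (a)--(b) this pins down the effort split $\dot{\bar{\g}}^{\s,m}_j(s)\to (p_j/\sr_j)\langle 1,\fl_j(s)\rangle\big/\big(\sum_i(p_i/\sr_i)\langle 1,\fl_i(s)\rangle\big)$ on $\{\boldsymbol{\fl}(s)\ne\boldsymbol 0\}$, so the aggregate class-$j$ completion rate is $\ns\, p_j\langle 1,\fl_j(s)\rangle\big/\big(\sum_i(p_i/\sr_i)\langle 1,\fl_i(s)\rangle\big)$. (d) \emph{Shape of the removed mass}: because within class $j$ the chosen job is uniform among those present (the subintervals \eqref{subintervaldef}), the empirical distribution of the remaining patience times of the class-$j$ jobs departing over a short interval tracks the normalized state $\fl_j(s)/\langle 1,\fl_j(s)\rangle$, supplying the factor $\langle f,\fl_j(s)\rangle/\langle 1,\fl_j(s)\rangle$. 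Combining (c) and (d) yields the term $-\int_0^t\ns\,1_{\{\boldsymbol{\fl}(s)\ne\boldsymbol 0\}}\,p_j\langle f,\fl_j(s)\rangle\big/\big(\sum_i(p_i/\sr_i)\langle 1,\fl_i(s)\rangle\big)\,ds$ of \eqref{fluidlimiteqn}. The hardest sub-step is (d): one must control, uniformly in $s\in[0,T]$, the averaging over which individual patience-time values are removed, which becomes delicate near times when $\langle 1,\fl_j(s)\rangle$ is small relative to $m$; I would handle it with a second-moment/martingale estimate on the choosing variables combined with the oscillation control from the tightness step.

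Finally, uniqueness. Under the standing assumptions $\boldsymbol{\fl}_0=\bar{\boldsymbol{\ssp}}_0\ne\boldsymbol 0$ by \eqref{initialconditionsassumption}, so the relevant case is $\boldsymbol{\fl}_0\ne\boldsymbol 0$. I would first show the total mass cannot vanish: testing \eqref{fluidlimiteqn} against the functions $f_\varepsilon$ equal to $1$ on $[\varepsilon,\infty)$ and linear on $[0,\varepsilon]$ and letting $\varepsilon\downarrow 0$ (valid since $\fl_j(t)$ has no atom at the origin, so $\langle f_\varepsilon,\fl_j(t)\rangle\to\langle 1,\fl_j(t)\rangle$) gives a differential relation for $\sum_j\langle 1,\fl_j(t)\rangle$ whose net drift, when the total mass is small, is dominated by the arrival rate $\sum_j\ar_j$ against the bounded aggregate service rate (at most $\ns\max_j\sr_j$) plus a vanishing reneging contribution; since $\load>1$ this drift is positive near zero, so from $\boldsymbol{\fl}_0\ne\boldsymbol 0$ the total mass stays bounded below by a positive constant for all $t$ — which also establishes the overload requirement $\langle 1,\fl_j(t)\rangle>0$ for some $j$ at each $t>0$. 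Consequently the indicators $1_{\{\boldsymbol{\fl}(s)\ne\boldsymbol 0\}}$ are identically $1$ and the denominator $\sum_i(p_i/\sr_i)\langle 1,\fl_i(s)\rangle$ is bounded away from $0$, so the right-hand side of \eqref{fluidlimiteqn} is a Lipschitz functional of the trajectory $(\boldsymbol{\fl}(s))_{s\le t}$ — say in the metric induced by $\langle f,\cdot\rangle$ over a countable determining family of test functions, or via the Laplace functional — and a Gronwall argument shows two solutions with the same initial condition agree. When $\load\le 1$ one argues analogously, additionally using that once the total mass reaches $\boldsymbol 0$ it stays there. Tightness together with uniqueness of the subsequential limit then gives convergence of the original sequence, completing the proof of Theorem \ref{fluidlimittheorem}.
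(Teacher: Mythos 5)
First, a point of orientation: the paper does not prove Theorem \ref{fluidlimittheorem} at all --- it is quoted from the companion paper \cite{loeserwilliams} ("the results \dots can be summarized as follows"), so there is no in-paper proof to compare against, only the lemmas of \cite{loeserwilliams} that are cited throughout (Lemmas 4.1--4.2, 6.1--6.2, 7.2--7.6, 8.1, 9.1--9.5). Measured against that inferable structure, your three-step program (tightness, characterization of subsequential limits, uniqueness) and your decomposition of the service term into a time change, a non-idling argument, a class-selection average over the choosing variables, and a within-class averaging of the removed patience time is essentially the same route: your "second-moment/martingale estimate on the choosing variables" is exactly the counting-martingale decomposition that the present paper formalizes as Proposition \ref{martingaledecompositionprop} and that \cite{loeserwilliams} uses in its Lemmas 7.3 and 9.1, and your steps (a)--(c) correspond to Lemmas 9.3 and 9.5 there (reproduced here as Lemma \ref{sjkfluidlimit}).

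The one place where your sketch has a genuine soft spot is uniqueness. As written, "the right-hand side of \eqref{fluidlimiteqn} is a Lipschitz functional of the trajectory \dots and a Gronwall argument shows two solutions agree" does not close: the transport term $-\int_0^t\langle f',\fl_j(s)\rangle\,ds$ is not controlled by $\langle f,\fl_j(s)\rangle$ for a fixed test function, so you cannot run Gronwall on $\sup_f|\langle f,\fl_j^1(t)\rangle-\langle f,\fl_j^2(t)\rangle|$ directly; relatedly, the equation for the total masses $\langle 1,\fl_j\rangle$ obtained by letting $f_\varepsilon\uparrow 1_{(0,\infty)}$ is not autonomous, because the surviving $\lim_\varepsilon\int_0^t\langle f_\varepsilon',\fl_j\rangle\,ds$ is the cumulative reneging flux at the origin, which depends on the whole measure and not just on $\langle 1,\fl_j\rangle$. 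The mechanism that actually makes uniqueness work is the mass-transport (renewal-type) reformulation $\flcdf_f^{j,c}(t,x)=\flcdf_f^{j,c}(0,t+x)+\cdots$, in which the derivative term is absorbed into the shifted argument $t+x-s$ and a Gronwall bound on $\sup_x|\flcdf^{1}(t,x)-\flcdf^{2}(t,x)|$ does close; this is precisely equation (24) of \cite{loeserwilliams}, reproduced in this paper as \eqref{masstransportindicator}--\eqref{masstransportequation}. Your alternative suggestion via Laplace functionals ($f=e^{-\beta\cdot}$, for which $\langle f',\fl_j\rangle=-\beta\langle f,\fl_j\rangle$) can be made to work for the same reason, but only after the total-mass/reneging-flux coupling is resolved, which again forces you back to the transport formulation. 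A second, smaller omission: condition \eqref{overloadedconditiondef} of Definition \ref{notoverdef} (positivity of the mass for all $t>0$ when $\load>1$) is part of what must be \emph{verified} for subsequential limits, not merely a consequence of uniqueness; your drift lower bound $\sum_j\ar_j - \ns\max_j\sr_j$ is not guaranteed positive by $\load=\sum_j\ar_j/(\ns\sr_j)>1$, so that step needs the more careful interval-by-interval comparison of arrivals surviving time $h$ against the bounded service outflow.
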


Centering around this fluid model, we define our diffusion scaling
\begin{equation}
\hat{\boldsymbol{\ssp}}^m(\cdot) = \sqrt{m}(\bar{\boldsymbol{\ssp}}^m-\boldsymbol{\fl}(\cdot)), 
\label{diffusiondefinitioneqn}
\end{equation}
where for $\omega \in \Omega,$ $\boldsymbol{\fl}(\omega)$ is the unique fluid model solution with initial condition $\bar{\boldsymbol{\ssp}}_0(\omega).$
Similar to the bar denoting fluid-scaling, we use a hat to denote diffusion-scaling for relevant processes.

\section{Main Results}
\label{diffresultsect}
Before presenting our results, it is important to remark on some key properties of \eqref{diffusiondefinitioneqn} which will inform us on what types of convergence we can and cannot attain.
The key observation is that, for each $t \geq 0, j \in [J]$, $\fl_j(t)$ is a continuous measure by Lemma 4.2 of \cite{loeserwilliams}. Furthermore, $\bar{\ssp}_j^m(t)$ is the sum of weighted delta masses.
Thus, the two measures are singular with respect to each other.
It follows that the total variation of $\hat{\ssp}^m_j(t)$
$$\|\hat{\ssp}^m_j(t)\|_{\mathrm{TV}}
=
\sqrt m\,\|\bar{\ssp}_j^m(t)\|_{\mathrm{TV}}
+
\sqrt m\,\|\fl_j(t)\|_{\mathrm{TV}} \rightarrow^m \infty.$$
Thus, one cannot hope to achieve tightness of measure with respect to the topology of weak convergence of signed measures.
For this reason, we work towards convergence of the sequence $\{\hat{\boldsymbol{\ssp}}^m(\cdot)\}_{m=1}^{\infty}$ in the space $D([0,\infty), \mathscr{S}')^J,$ where $\mathscr{S}$ is the Schwartz space on $[0,\infty)$ and $\mathscr{S}'$ is its dual, as explored in \cite{mitoma}.
Following Theorem 5.3 2) of \cite{mitoma} and the extension of that theorem to the interval $[0,\infty)$ in Remark (R.2.2) of the same work, we see that in order to obtain the limit of $\{\hat{\boldsymbol{\ssp}}^m(\cdot)\}_{m=1}^{\infty},$ we need to
\begin{enumerate}
    \item show that $\{\langle f, \hat{\boldsymbol{\ssp}}^m(\cdot)\rangle \}_{m=1}^{\infty}$ is tight for each $f \in \mathscr{S}$ and
    \label{firstmainpartmitoma}
    \item show that for any finite collection $f_1,...,f_n \in \mathscr{S}$ and $t_1,...,t_n \in [0,\infty),$
    $$ (\langle f_1, \hat{\boldsymbol{\ssp}}^m(t_1)\rangle ,...,\langle f_n, \hat{\boldsymbol{\ssp}}^m(t_n) \rangle )$$ converges in law to some $n$-dimensional probability distribution.
    \label{secondmainpartmitoma}
\end{enumerate}
Together, these two steps imply convergence in distribution of
\(\{\hat{\boldsymbol{\ssp}}^m(\cdot)\}\) in \(D([0,\infty),(\mathscr S')^J)\) to a limit process $\hat{\boldsymbol{\ssp}}(\cdot) \in D([0,\infty),\mathscr{S}')^J$ whose finite dimensional distributions are equal in law to the limits established in part \ref{secondmainpartmitoma} above.
Theorem \ref{tightnessresult} of this paper supplies \ref{firstmainpartmitoma}.
Theorem \ref{lhatconvergencethm} provides uniqueness in distribution of limit points.
Theorem \ref{diffusionapproximationresult} provides SDEs that will be satisfied by subsequential limits of $\langle \boldsymbol{f}, \hat{\boldsymbol{\ssp}}^m(\cdot) \rangle$ for a large class of test functions $\boldsymbol{f}$.
In order to write this theorem, it will be helpful to introduce some notation for certain variables and processes, much of which was also used in \cite{loeserwilliams}. 
We define the class of functions,
\begin{equation}
\mathscr{C}:=\{f \in \mathbf{C}^1_b(\R_+) | f(0) =0\} .
\label{classoffunctions}
\end{equation}
In the following, $(\boldsymbol{\ar}, \boldsymbol{\sr}, \boldsymbol{p}, \boldsymbol{\pd})\in \R_+^J \times \R_+^J\times (0,1)^J\times \M^J$ are parameters satisfying Definition \ref{parametersdef}.
Given $\boldsymbol{\flm} \in \R_+^J$, define a weighted mass
\begin{equation}\wmass(\boldsymbol{\flm}):= \sum_{j=1}^J p_j \flm_j,  \label{wmassequation} \end{equation}  and an adjusted weighted mass \begin{equation}\awmass(\boldsymbol{\flm}):= \sum_{j=1}^J \frac{p_j}{\sr_j} \flm_j.
\label{aloadequation}\end{equation}
We denote the load parameter
\begin{equation}
    \load:= \sum_{j=1}^J \frac{\ar_j}{\sr_j \ns}
    \label{load},
\end{equation}
 and an analogue to a total mass vector for our diffusion-scaled state descriptor as
 \begin{equation}
     \hat{\boldsymbol{\tm}}^m(\cdot):= \langle 1, \hat{\boldsymbol{\ssp}}^m(\cdot)\rangle.
     \label{totalmassnotation}
 \end{equation}
We also take some time to define our driving diffusive noise field.
\begin{thm}
    \label{Ydefthm}
    Let $\boldsymbol{\fl}$ be a fluid model solution with a nonzero initial condition $\boldsymbol{\fl}_0.$
    For each finite collection of functions \(f_1,\ldots,f_n\in\mathscr C\), there exists a unique in distribution, continuous multiparameter $\R^{n(J+J^2K)}$-valued process
    \[
\left\{
\hat{\mart}_{f_a}^{\ap_j,j}(r,t),
\hat{\mart}_{f_a}^{\spr_j^k,i}(r,t):
a\in[n],\ j,i\in[J],\ k\in[K],\ 0\le r\le t\le T
\right\}.
\]
    \normalsize
    with finite dimensional distributions that are equal to those of the mean-zero gaussian process with covariances 
\begin{equation}
    Cov(\hat{\mart}_{f_a}^{\ap_j,j}(r_1,t_1),\hat{\mart}_{f_b}^{\ap_j,j}(r_2,t_2))= \int_0^{r_1\wedge r_2} \ar_j \left(\langle t_{t_1-u}f_at_{t_2-u}f_b,\pd_j\rangle -\langle t_{t_1-u}f_a, \pd_j\rangle \langle t_{t_2-u}f_b, \pd_j \rangle\right) du,
\end{equation}
\begin{align*}
  &Cov( \hat{\mart}_{f_a}^{\spr_j^k,c}(r_1,t_1),\hat{\mart}_{f_b}^{\spr_j^k,d}(r_2,t_2))\\&= \int_0^{r_1 \wedge r_2} \left(1_{\{c=d\}}\frac{p_c \langle t_{t_1-u}f_at_{t_2-u}f_b,\fl_c(u)\rangle}{\wmass(\boldsymbol{\flm}(u))}- \frac{p_cp_d\langle t_{t_1-u}f_a,\fl_c(u)\rangle \langle t_{t_2-u}f_b,\fl_d(u)\rangle  }{\wmass(\boldsymbol{\flm}(u))^2}\right)\frac{p_j\flm_j(u)}{\awmass(\boldsymbol{\flm}(u))}du,
\end{align*}
and all other covariances equal to zero.
Furthermore, if $\pd_j$ and $\fl_j(0)$ have H\"older continuous CDFs for each
$j\in\J$, the same construction applies to finite collections
$f_1,\ldots,f_n\in\mathscr C\cup\{1\}$.
\end{thm}
\begin{defi}
    \label{Udefdef}
    Let $\boldsymbol{\fl}$ be a fluid model solution with a nonzero initial condition $\boldsymbol{\fl}_0.$
    Assume also that $\pd_j,\fl_j(0)$ have H\"{o}lder continuous cdfs for each $j \in \J.$
    Then, take any collection of test functions
\(f_1,\ldots,f_n\in\mathscr C\cup\{1\}\) and a jointly specified realization of the
limiting driving noise processes
\[
\{\hat{\spr}_j^k\}_{j \in \J, k \in \KK},\quad \{\hat{\ap}_j\}_{j \in \J},\quad
\{F_{f_a}^{j}\}_{a\in[n]j\in\J},
\quad
\{\hat{\mart}_{f_a}^{\ap_j,j}\}_{a\in[n],i\in \J,},
\quad
\{\hat{\mart}_{f_a}^{\spr_j^k,i}\}_{a\in[n],i,j\in \J,k\in \KK},
\]
where the martingale field has the joint law specified in
Theorem~\ref{Ydefthm} and the other processes are as defined in Assumption \ref{assumptions} \eqref{initialconditionsassumption}-\eqref{cltforrenewalassumption}.
Furthermore, assume that the five processes above form a mutually independent collection of processes.
   Then we may jointly define the driving noise field for the test functions $f_1,...,f_n$ and $j \in \J$ as follows:
    \begin{align*}
    U_{f_a}^{j}(r,t)&:= F_{f_a}^{j}(t)+ \hat{{\mart}}^{\ap_j,j}_{f_a}(r,t)- \sum_{i=1}^J\sum_{k=1}^K \left(\hat{\mart}^{\spr_i^k,j}_{f_a}(r,t)-\int_0^r \frac{p_j \langle f_a, \fl_j(s)\rangle }{\awmass({\boldsymbol{\flm}}(s))} d\sum_{l=1}^J\frac{1}{\sr_l} \hat{\mart}^{\spr_i^k,l}_1(s,t)\right)\\
    &-\int_0^r \frac{p_j {{\flcdf}}^{j,c}_{f_a}(s,t-s)}{\awmass({\boldsymbol{\flm}}(s))} d\sum_{k=1}^K \sum_{l=1}^J\frac{1}{\sr_l} \hat{\spr}^{k}_l\left(\int_0^s \frac{\frac{p_l}{\sr_l}\flm_l(u)}{\awmass(\boldsymbol{\flm}(u))}du\right)\\
     &+\int_0^r\pdcdf^{j,c}_{f_a}(t-s) d\hat{\ap}_j(s), \hspace{5mm} 0 \leq r \leq t.
     \numberthis
     \label{ujmlimit}
\end{align*}
\end{defi}
\begin{rem}
    We remark that, while it may not seem immediate from the definition, for each $a \in [n], j \in [J],$ the process $U_{f_a}^j(r,t)$ is continuous as a multiparameter process.
    This is an immediate consequence of the fact that it is equal in distribution to the limit of a C-tight sequence of processes, which will be proved in Lemma \ref{Uconvlemma}.
\end{rem}
 \begin{thm}[Tightness of the ROS diffusion-scaled state descriptors]
\label{tightnessresult}
Under Assumption~\ref{assumptions}, suppose that there exist constants
$C<\infty$ and $\alpha>1/2$ such that, for each $j\in[J]$ and $m\in\mathbb N$,
the CDF of $\pd_j$ and the functions
\[
G^{j,m}(x):= E\!\left[\left\langle 1_{[0,x]}, \bar{\ssp}^m_j(0) \right\rangle\right],
\qquad
\tilde{G}^{j,m}(x):=
E\!\left[\left\langle 1_{[0,x]}, \bar{\ssp}^m_j(0) \right\rangle^2\right]
\]
are H\"older continuous with exponent $\alpha$ and H\"older constant at most $C$.
Then the sequence
\[
    \{\hat{\boldsymbol{\mathcal Z}}^m(\cdot)\}_{m=1}^{\infty}
\]
of diffusion-scaled ROS state descriptors is tight in
$D(\mathbb R_+,(\mathscr S')^J)$. Moreover, for every
$\boldsymbol f\in \boldsymbol{C}_1^b(\R_+)^J$, the sequence
\[
\left\{
\left(
    \langle f_1,\hat{{\ssp}}_1^m(\cdot)\rangle,
    \ldots,
    \langle f_J,\hat{{\ssp}}_J^m(\cdot)\rangle,
    \awmass(\hat{\boldsymbol{\tm}}^m(\cdot))
\right)
\right\}_{m=1}^{\infty}
\]
is $C$-tight in $D(\mathbb R_+,\mathbb R^{J+1})$.
\end{thm}
\begin{rem}
This regularity assumption is natural. For example, it is satisfied if the initial data
\(\ssp_j^m\) are constructed by first choosing a random total mass \(\tm_j^m\) with
finite second moment and then independently assigning i.i.d. initial remaining patience
times whose distribution function is H\"older continuous with exponent
\(\alpha>1/2\), provided the resulting H\"older bounds are uniform in \(m\).
\end{rem}
Next, we introduce a system of stochastic differential equations that will be related to the diffusion limit of $\{\hat{\boldsymbol{\mathcal Z}}^m(\cdot)\}_{m=1}^{\infty}.$
\begin{defi}[Diffusion Model Solution for Total Mass]
    Let $\boldsymbol{U}_{1}(r,t)$ be a realization of the multiparameter process defined in Definition \ref{Udefdef}.
    Then, any continuous, $\R^J$-valued, multi-index process $\{\boldsymbol{H}_{1}(r,t): 0 \leq r \leq t\}$ that satisfies
\begin{align*}
    {H}_{1}^{j}(r,t)& = U_{1}^{j}(r,t)- \int_0^r \frac{Kp_jH_{1}^{j}(s,t)}{\awmass(\boldsymbol{\flm}(s))}ds\\
    &+\int_0^r\frac{Kp_j {{\flcdf}}^{j,c}_1(s,t-s)}{\awmass(\boldsymbol{\flm}(s))^2}\sum_{i=1}^J \frac{p_i}{\sr_i}H_{1}^i(s,s)ds, \hspace{5mm} j \in \J,0 \leq r\leq t, 
    \numberthis \label{Llimitdef}
\end{align*}
is a diffusion model solution for total mass driven by the noise $\boldsymbol{U}_1(r,t)$.
\end{defi}
\begin{thm}[Uniqueness of Solutions for Total Mass]
\label{uniquenessoftotalmassthm}
For each realization of $\boldsymbol{U}_1(r,t)$ as defined in Definition \ref{Udefdef}, there is a unique continuous solution to \eqref{Llimitdef}. Furthermore, any subsequential limit of $\{\awmass(\hat{\boldsymbol{\tm}}^m(\cdot))\}_{m=1}^{\infty}$ is equal in distribution to the single-index process
\begin{equation}
    \awmass(\hat{\boldsymbol{\tm}}(t)):=\sum_{j=1}^J \frac{p_j}{\sr_j}H_{1}^j(t,t), \hspace{5mm} t\geq 0,
    \label{Lhatlimit}
\end{equation} where $\boldsymbol H_{1}$ is a
solution for total mass driven by any realization of $\boldsymbol{U}_1(r,t)$, which is unique in distribution.  
\end{thm}
\begin{defi}[Mass Transport Diffusion Model Solution]
    Let $f\in\mathscr{C}$. Let $(\boldsymbol{U}_f(r,t),\boldsymbol{U}_1(r,t))$ be a joint realization of the multiparameter process defined in Definition~\ref{Udefdef}.
    Then, any continuous, $\R^J$-valued, multi-index process $\{\boldsymbol{H}_f(r,t): 0 \leq r \leq t\}$ that satisfies
\begin{align*}
    {H}_f^{j}(r,t)& = U_f^{j}(r,t)- \int_0^r \frac{Kp_jH_f^{j}(s,t)}{\awmass(\boldsymbol{\flm}(s))}ds\\
    &+\int_0^r\frac{Kp_j {{\flcdf}}^{j,c}_f(s,t-s)}{\awmass(\boldsymbol{\flm}(s))^2}\awmass(\hat{\boldsymbol{\tm}}(s))ds, \hspace{5mm} j \in \J,0 \leq r\leq t 
    \numberthis \label{rjlimitdef}
\end{align*}
where $\awmass(\hat{\boldsymbol{\tm}}(s))$ is as defined in \eqref{Lhatlimit}, is a mass transport diffusion model solution for that realization of $(\boldsymbol{U}_f(r,t),\boldsymbol{U}_1(r,t))$.
\end{defi}

\begin{thm}[Uniqueness and characterization of projected ROS diffusion limits]
\label{lhatconvergencethm}
Under the assumptions of Theorem~\ref{tightnessresult}, for each
$f\in\mathscr C$ and realization of
$(\boldsymbol U_f,\boldsymbol U_1)$, equation \eqref{rjlimitdef} has a unique
continuous solution $\boldsymbol H_f$.
Furthermore, for every finite collection
$f_1,\ldots,f_n\in \mathbf{C}_b^1(\mathbb R_+)$, the joint law of any subsequential limit of
\[
\left\{
    \left(\langle f_1,\hat{\boldsymbol{\ssp}}^m(\cdot)\rangle,
    \ldots,
    \langle f_n,\hat{\boldsymbol{\ssp}}^m(\cdot)\rangle
,\awmass(\hat{\boldsymbol{\tm}}^m(\cdot)) 
\right)\right\}_{m=1}^{\infty}
\]
is uniquely determined. More precisely, any such subsequential limit is equal in
distribution to
\[
\left(
     \left(\boldsymbol{H}_{f_1-f_1(0)}(\cdot,\cdot)
     +f_1(0)\boldsymbol{H}_{1}(\cdot,\cdot)\right)\big|_{r=t},
    \ldots,
     \left(\boldsymbol{H}_{f_n-f_n(0)}(\cdot,\cdot)
     +f_n(0)\boldsymbol{H}_{1}(\cdot,\cdot)\right)\big|_{r=t}, \sum_{j=1}^J \frac{p_j}{\sr_j}H_1^j(\cdot,\cdot)\big|_{r=t}
\right),
\]

where the process $\boldsymbol{H}_1$ above is the unique continuous solution to \eqref{Llimitdef}, and the remaining processes are the unique continuous solutions to \eqref{rjlimitdef} with $\awmass(\hat{\boldsymbol{\tm}}(\cdot)):=\sum_{j=1}^J \frac{p_j}{\sr_j}H_1^j(\cdot,\cdot)\big|_{r=t},$ all driven by the jointly
specified noise
\[
\left(
\boldsymbol{U}_{f_1-f_1(0)},\ldots,
\boldsymbol{U}_{f_n-f_n(0)},\boldsymbol{U}_{1}
\right).
\]

\end{thm}
 We now define a diffusion model solution in a more classical sense, as a system of SDEs satisfied by the projections of $\hat{\boldsymbol{\ssp}}(\cdot)$ onto a class of functions that includes the Schwartz functions with the appropriate boundary condition.
 \begin{defi}[Diffusion Model Solution]
     Let $\mathscr{Z}(\cdot)\in D(\mathbb R_+,(\mathscr S')^J).$
     Then $\mathscr{Z}(\cdot)$ will be a diffusion model solution if, for each $\boldsymbol{f} \in \mathscr{S}^J\cap \mathscr{C}^J,$ $(\langle \boldsymbol{f}, \mathscr{Z}(\cdot) \rangle, \langle \boldsymbol{f}', \mathscr{Z}(\cdot) \rangle)$ has a.s. continuous sample paths that satisfy the following system of SDEs for $j \in \J, t\geq 0,$
\begin{align*}
    \langle f_j, \mathscr{Z}_j(t) \rangle &= \langle f_j, \mathscr{Z}_j(0) \rangle- \int_0^t \langle f_j', \mathscr{Z}_j(s) \rangle ds\\
    &+\int_0^t \sqrt{\ar_j(\langle f_j^2, \pd_j\rangle-\langle f_j,\pd_j\rangle^2)}dW_{\pat}^j(s)-\sum_{k=1}^K\sum_{l=1}^J\int_0^t  \bigg[\sqrt{D_{k,l}^{\boldsymbol{f}}(s)}\cdot d\boldsymbol{W}_{\kappa}^{k,l}(s) \bigg]_j\\
    &-\sum_{k=1}^K\sum_{l=1}^J\int_0^t \frac{p_j \langle f_j,\fl_j(s)\rangle }{\awmass(\boldsymbol{\flm}(s))}\sigma_{V,l}\sqrt{\frac{p_l \flm_l(s)}{\awmass(\boldsymbol{\flm}(s))}}dW_{\ser}^{k,l}(s)\\
    &+\int_0^t\ar_j^{3/2}\sigma_{A,j}\langle f_j, \pd_j\rangle dW_{\inta}^j(s)\\
    &-\int_0^t \frac{Kp_j}{\awmass(\boldsymbol{\flm}(s))}\left(\langle f_j, \mathscr{Z}_j(s)\rangle -\frac{\langle f_j, \fl_j(s)\rangle}{\awmass(\boldsymbol{\flm}(s)) }\awmass(\hat{\boldsymbol{\tm}}(s))\right)ds.         \numberthis
        \label{diffusionlimitequation}
\end{align*}
 In the above equation, $W_{\pat}^j$, $\boldsymbol W_{\kappa}^{k,l}$,
$W_{\ser}^{k,l}$, and $W_{\inta}^j$, $j,l\in[J]$, $k\in[K]$, are independent
standard Brownian motions, with $\boldsymbol W_{\kappa}^{k,l}$ being
$J$-dimensional. Furthermore, $\awmass(\hat{\boldsymbol{\tm}}(\cdot))$ is as defined in Theorem \ref{uniquenessoftotalmassthm}. Furthermore, $D_{k,j}^{\boldsymbol{f}}(\cdot),$ $j \in \J,k \in [K]$ is the matrix with values 
\begin{align*}
   &(D^{\boldsymbol{f}}_{k,j})_{a,b}(\cdot)=\\
   &\left(
   1_{\{a=b\}}\frac{p_a\langle f_a^2, {{\fl}}_a(\cdot)\rangle}{\wmass({\boldsymbol{\flm}}(\cdot))}
   -\frac{p_a\langle f_a, \fl_a(\cdot)\rangle p_b\langle f_b, \fl_b(\cdot)\rangle}{\wmass({\boldsymbol{\flm}}(\cdot))^2}
   \right)\frac{p_j\flm_j(\cdot)}{\awmass(\boldsymbol{\flm}(\cdot))}\\
   &-\sum_{n=1}^J
   \frac{p_a \langle f_a, \fl_a(\cdot)\rangle }{\awmass({\boldsymbol{\flm}}(\cdot))}\frac{1}{\sr_n}
   \left(
   1_{\{n=b\}}\frac{p_b\langle f_b, {\boldsymbol{\fl}}_b(\cdot)\rangle}{\wmass({\boldsymbol{\flm}}(\cdot))}
   -\frac{p_n\flm_n(\cdot)p_b\langle f_b, \fl_b(\cdot)\rangle}{\wmass({\boldsymbol{\flm}}(\cdot))^2}
   \right)\frac{p_j\flm_j(\cdot)}{\awmass(\boldsymbol{\flm}(\cdot))}\\
   &-\sum_{n=1}^J
   \frac{p_b \langle f_b, \fl_b(\cdot)\rangle }{\awmass({\boldsymbol{\flm}}(\cdot))}\frac{1}{\sr_n}
   \left(
   1_{\{n=a\}}\frac{p_a\langle f_a, {\boldsymbol{\fl}}_a(\cdot)\rangle}{\wmass({\boldsymbol{\flm}}(\cdot))}
   -\frac{p_n\flm_n(\cdot)p_a\langle f_a, \fl_a(\cdot)\rangle}{\wmass({\boldsymbol{\flm}}(\cdot))^2}
   \right)\frac{p_j\flm_j(\cdot)}{\awmass(\boldsymbol{\flm}(\cdot))}\\
   &+ \sum_{n=1}^J \sum_{x=1}^J
   \frac{p_a \langle f_a, \fl_a(\cdot)\rangle }{\awmass({\boldsymbol{\flm}}(\cdot))}\frac{1}{\sr_n}
   \frac{p_b \langle f_b, \fl_b(\cdot)\rangle }{\awmass({\boldsymbol{\flm}}(\cdot))}\frac{1}{\sr_x}
   \left(
   1_{\{n=x\}}\frac{p_n\flm_n(\cdot)}{\wmass({\boldsymbol{\flm}}(\cdot))}
   -\frac{p_n\flm_n(\cdot)p_x\flm_x(\cdot)}{\wmass({\boldsymbol{\flm}}(\cdot))^2}
   \right)\frac{p_j\flm_j(\cdot)}{\awmass(\boldsymbol{\flm}(\cdot))},
   \numberthis
\end{align*} 
and the matrix square-root above is the unique symmetric square root. 
 \end{defi}
 \begin{rem}
     In the above, instead of simply numbering the independent Brownian motions, we label them using the stochastic primitives that generated them.
In particular, it may be intuitive to the reader to think of the integrator $dW_{\pat}^j$ (resp. $\bigg[\sqrt{D_{k,l}} dW_{\kappa}^{k,l}(s)\bigg]_j,dW_{\ser}^{k,l},dW_{\inta}^j$) as the contribution to the limiting fluctuation of the randomness introduced by the sequence $\{\pat_i^{j}\}_{i=1}^{\infty}$ (resp, $\{\ur_i^{k,l}\}_{i=1}^{\infty},\{\ser_i^{k,l}\}_{i=1}^{\infty},\{\inta_i^{j}\}_{i=1}^{\infty}$).
 \end{rem}
 \begin{thm}     \label{diffusionapproximationresult} Let $\{\hat{\boldsymbol{\ssp}}^m(\cdot)\}_{m=1}^{\infty}$ be a sequence of diffusion-scaled state descriptors as described in \S \ref{modeldescriptionsect} and \S \ref{sequenceofmodelssection} that satisfy Assumption \ref{assumptions}. Then any subsequential limit of $\{\hat{\boldsymbol{\ssp}}^m(\cdot)\}_{m=1}^{\infty},$ $\hat{\boldsymbol{\ssp}}(\cdot),$ is a diffusion model solution.
 \end{thm}

In \S\ref{pathtoadiffusionapproxsection}, we develop the main tools and methodology for obtaining diffusion approximations. In \S\ref{renewaldrivensystemsect}, we show how the sequence of models described in \S\ref{modeldescriptionsect}–\ref{sequenceofmodelssection} can be decomposed into a “good sequence of diffusion-scaled renewal-driven systems,” making them amenable to our central limit theorem for such systems (Theorem \ref{diffusionclt}). In \S\ref{proofoftightnesssect}, we establish tightness (Theorem \ref{tightnessresult}) and prove Theorem \ref{Ydefthm}. Finally, in \S\ref{proofofdiffusiontheoremsection}, we obtain the limit of $\awmass(\hat{\boldsymbol{\tm}}^m(\cdot))$ (Theorems \ref{uniquenessoftotalmassthm} and \ref{lhatconvergencethm}), and we apply Theorem \ref{diffusionclt} to the Random Order of Service model to derive Theorem \ref{diffusionapproximationresult}.

\section{Path to a Diffusion Approximation}
\label{pathtoadiffusionapproxsection}
\subsection{Outline of the Method}
\label{outlineofmethod}

The core idea of our methodology originates from a simple yet powerful observation: in many queueing models, randomness enters the system only through events occurring at the jump times of time-changed renewal processes.
We illustrate this now.
Let $X(\cdot)$ denote the state descriptor of a given queueing model, and let $E_l(g_l(\cdot))$, for $l = 1,2,\dots,N$, denote time-changed renewal processes driving the system (e.g., arrival processes, service processes, etc.). Let $\{\tau_i^l\}_{i\geq 1}$ be the jump times of $E_l(g_l(\cdot))$, and let $\{x_i^l\}$ be an i.i.d.\ sequence of random variables representing information received at each jump time (e.g., $x_i^l$ could be the $i$th service time for some class of jobs).

At time $\tau_i^l$, the system updates via a measurable function $f_l$ of the time, $\tau_i^l,$ the state of the system just before that time, $X(\tau_i^l-)$, and the new information received at that time, $x_i^l$:
\[
f_l(\tau_i^l, x_i^l, X(\tau_i^l-)).
\]
The cumulative change that occurs at jump times of the process $E_l(g_l(\cdot))$ up to time $t$ is:
\begin{equation}
\Delta^l(t) := \sum_{i=1}^{E_l(g_l(t))} f_l(\tau_i^l, x_i^l, X(\tau_i^l-)). \label{deltaintro}
\end{equation}
Define $\phi_l(t, x) := {E}[f_l(t, x_1^l, x)].$
Then $\phi_l(t,x)$ represents the expected change to the system that would occur from a jump in $E_l(g_l(\cdot))$ at time $t$ if the system was in state $x,$ averaged over the noise variable $x_1^l$.
We can decompose \eqref{deltaintro} into a martingale term
\begin{equation}
    \sum_{i=1}^{E_l(g_l(t))} \left( f_l(\tau_i^l, x_i^l, X(\tau_i^l-)) - \phi_l(\tau_i^l, X(\tau_i^l-)) \right), \label{martingalepartintro}
    \end{equation}
and a compensator
\begin{equation}
    \sum_{i=1}^{E_l(g_l(t))} \phi_l(\tau_i^l, X(\tau_i^l-)). \label{compensatorpartintro}
    \end{equation}
Under mild assumptions, Proposition \ref{martingaledecompositionprop} shows that \eqref{martingalepartintro} is a martingale. 
We thus obtain a prelimit equation:
\begin{equation}
X(t) = X(0) + \int_0^t b(s, X(s)) \, ds + \sum_l \Delta^l(t), \qquad t \ge 0, \label{martingaledecompeqnintro}
\end{equation}
where $b(s, X(s))$ accounts for any deterministic dynamics.
(For example, in the random order of service model we'll discuss, $b(s,X(s))$ will represent the change to the system due to all patience times decreasing at rate $1.$)
Under functional law of large numbers (FLLN) scaling, if $\bar{X}(\cdot)$ and $\bar{E}_l(\bar{g}_l(\cdot))$ are subsequential limits of $X(\cdot)$ and $E_l({g}_l(\cdot))$, we expect the compensator (\eqref{compensatorpartintro}, rescaled) to converge to
\[
\int_0^t \phi(s, \bar{X}(s)) \, d\bar{E}_l(\bar{g}_l(s)),  \qquad t \ge 0, 
\]
and the martingale term (\eqref{martingalepartintro}, rescaled), which is centered, to vanish. Therefore, the fluid limit should satisfy:
\begin{equation}
\bar{X}(t) = \bar{X}(0) + \int_0^t b(s, \bar{X}(s)) \, ds + \sum_{l} \int_0^t \phi(s, \bar{X}(s)) \, d\bar{E}_l(\bar{g}_l(s)),  \qquad t \ge 0.
\label{easyfluidintro}
\end{equation}
If this equation has a unique solution, we then center around that limit and diffusion-scale to study fluctuations (the diffusion limit) and derive a system of SDEs using the following steps:

\begin{enumerate}
    \item Decompose each diffusion-scaled renewal-driven term as in \eqref{martingaletermtoyexample}–\eqref{integralagainstrenewaldifftoy}, expressing the diffusion-scaled process as a \emph{renewal-driven system} (Definition \ref{renewaldrivensystemdef}).
    \item Use Lemma~\ref{ornsteinuhlenbecktightnesscondition} to establish tightness of the diffusion-scaled sequence.
    \item Apply our CLT for renewal-driven systems (Theorem \ref{diffusionclt}) to derive the SDEs satisfied by subsequential diffusion limits.
\end{enumerate}

We now describe steps 1-3 above in more detail.
Beginning with the decomposition of \eqref{deltaintro}, \eqref{martingalepartintro}-\eqref{compensatorpartintro},
\begin{align*}
\Delta^l(t)
&= \sum_{i=1}^{E_l(g_l(t))} \left(f_l(\tau_i^l, x_i^l, X(\tau_i^l-)) - \phi_l(\tau_i^l, X(\tau_i^l-))\right)
+ \sum_{i=1}^{E_l(g_l(t))} \phi_l(\tau_i^l, X(\tau_i^l-)) \\
&= \sum_{i=1}^{E_l(g_l(t))} \left(f_l(\tau_i^l, x_i^l, X(\tau_i^l-)) - \phi_l(\tau_i^l, X(\tau_i^l-))\right)
+ \int_0^t \phi_l(s, X(s-)) \, dE_l(g_l(s)). \numberthis \label{toydecomp}
\end{align*}
Recall that $\phi_l(t, x) := {E}[f_l(t, x_1^l, x)]$ is the expected change at time $t$, averaged over the noise variable $x_1^l$. The integral term follows from the definition of the Lebesgue–Stieltjes integral. Under appropriate conditions, the first sum is a martingale (see Proposition~\ref{martingaledecompositionprop}).

This decomposition provides a convenient path to characterize both fluid and diffusion limits. Consider the following rescalings for $t,x,y \geq 0, m \in \N$:
\[
\bar{X}^m(\cdot) := \frac{1}{m} X^m(m\cdot), \quad
f_l^m(t,x,y) := f_l\left(\frac{t}{m}, x, \frac{y}{m}\right), \quad
\bar{g}_l^m := \frac{1}{m} g_l(m\cdot),
\]
\[
\bar{E}_l^m(t) := \frac{1}{m} E_l(m t), \quad
\bar{\Delta}^{l,m}(t) := \frac{1}{m} \Delta^{l,m}(m t).
\]
Suppose $\bar{X}^m(\cdot) \Rightarrow \bar{X}(\cdot)$ and $\bar{g}_l^m(\cdot) \rightarrow \bar{g}_l(\cdot)$. Then the centered (fluid-scaled) martingale part vanishes in the limit, and the fluid limit of $\bar{\Delta}^{l,m}$ is given by:
\[
\bar{\Delta}^l(t) = \int_0^t \phi_l(s, \bar{X}(s-)) \, d\bar{E}_l(\bar{g}_l(s)), \qquad t \geq 0,
\]
where $\bar{E}_l(t) = \mu_l t$ and $\mu_l$ is the mean rate of the original renewal process $E_l(\cdot)$. 
We then center each $\Delta^l(\cdot)$ around this fluid limit and study the diffusion-scaled fluctuations, $\hat{\Delta}^{l,m}(t)$:
\begin{align}
\hat{\Delta}^{l,m}(t) &:= \sqrt{m} \left( \bar{\Delta}^{l,m}(t) - \bar{\Delta}^l(t) \right) \nonumber \\
&= \frac{1}{\sqrt{m}} \sum_{i=1}^{m \bar{E}_l(\bar{g}_l^m(t))} \left(
f_l\left(\frac{\tau_i^l}{m}, x_i^l, \bar{X}^m\left(\frac{\tau_i^l}{m}-\right)\right)
- \phi_l\left(\frac{\tau_i^l}{m}, \bar{X}^m\left(\frac{\tau_i^l}{m}-\right)\right) \right) \label{martingaletermtoyexample} \\
&\quad + \int_0^t \hat{\phi}_l^m(s, \bar{X}^m(s-)) \, d\bar{E}_l^m(\bar{g}_l^m(s)) \label{phihatmtoy} \\
&\quad + \int_0^t \phi_l(s, \bar{X}(s-)) \, d\hat{E}_l^m(\bar{g}_l^m(s)) \label{errortermtoy} \\
&\quad + \int_0^t \phi_l(s, \bar{X}(s-)) \, d(\mu_l \hat{g}_l^m(s)), \label{integralagainstrenewaldifftoy}
\end{align}
where $\hat{\phi}^m_l(\cdot, \bar{X}^m(\cdot-)):=\sqrt{m}(\phi_l(\cdot, \bar{X}^m(\cdot-))-\phi_l(\cdot,\bar{X}(\cdot-))),$ and $\hat{g}^m_l(\cdot) = \sqrt{m}(\bar{g}^m_l(\cdot)-\bar{g}_l(\cdot)).$
\begin{rem}
\label{timechangeremark}
A key subtlety lies in the time change: the presence of the term \eqref{integralagainstrenewaldifftoy} requires handling integrals against $\hat{g}_l^m(\cdot)$, which introduces nontrivial complexity. As we will see in the ROS queue application, any diffusion-scaled time change appearing in the equation must also be decomposed if one wishes to apply the central limit theorem for renewal-driven systems (Theorem~\ref{diffusionclt}) and obtain the correct form of the limiting SDEs.
\end{rem}

Ultimately, this leads to a diffusion-scaled prelimit equation:
\[
\hat{X}^m(t) = \hat{X}^m(0) + \int_0^t b(s, \hat{X}^m(s)) \, ds + \sum_l \hat{\Delta}^{l,m}(t),
\]
where $b(s, \hat{X}^m(s))$ captures deterministic system dynamics, and each $\hat{\Delta}^{l,m}(t)$ is decomposed as above. This form enables the application of Theorem~\ref{diffusionclt} to characterize the limiting process as a solution to a system of SDEs.
\subsection{An Example: The Processor Sharing Queue with Reneging}
\label{gromollkrukexamplesect}
To illustrate the method, we derive the fluid and diffusion equations for the model described in \cite{gromollkruk}, but with reneging.
This is only meant as an illustrative example, as proofs of convergence, uniqueness of solutions, and tightness are left for future work.
As discussed in the introduction, we examine a processor sharing queue with impatience.
Let $\{(v_i, l_i)\}_{i=1}^\infty$, with $(v_i, l_i) \sim \vartheta$, be the i.i.d.\ sequence of initial service and patience times of jobs in the queue, and let $E(t)$ be the exogenous arrival process to the queue.
Define the measure-valued state descriptor $\mathscr{Z}(t)$ on $\mathbb{R}_+^2$ to place a Dirac mass at the point (remaining service time, remaining patience time) for each job in the system.

The contribution to $\mathscr{Z}(t)$ that occurs at jump times of $E(\cdot)$ is given by:
\[
\Delta(t) = \sum_{i=1}^{E(t)} \delta_{(v_i, l_i)}, \qquad t\geq 0.
\]
Between jumps, each point mass evolves deterministically at rate $-\left(1 / \langle 1, \mathscr{Z}(t) \rangle, 1\right)$, reflecting decreases in remaining service and patience times.

Substituting these dynamics into the general form of the evolution equation from \S\ref{outlineofmethod}, \eqref{martingaledecompeqnintro}, we obtain the prelimit equation:
\[
\langle f, \mathscr{Z}(t) \rangle = \langle f, \mathscr{Z}(0) \rangle + \langle f, \Delta(t) \rangle - \int_0^t 1_{\{\langle 1, \mathscr{Z}(s-) \rangle\neq 0\}} \frac{\langle f_x, \mathscr{Z}(s-) \rangle}{\langle 1, \mathscr{Z}(s-) \rangle}ds -\int_0^t \langle f_y, \mathscr{Z}(s-) \rangle  ds,
\]
for each $t\geq 0$ and $f \in \mathbf{C}_b^1(\mathbb{R}_+^2)$ such that $f(0, y) \equiv 0$ and $f(x, 0) \equiv 0$.
The last two terms above reflect the deterministic dynamics, movement of each particle at rate $-\left(1 / \langle 1, \mathscr{Z}(t) \rangle, 1\right)$.
For the rigorous justification of the form of the last terms above, see, e.g., the proof of Lemma 7.2 in \cite{loeserwilliams}.

Passing to the fluid limit $\bar{\mathscr{Z}}(\cdot)$ as in \eqref{easyfluidintro}, we obtain for $0 \leq t \leq t^*,$ where $t^*:= \inf \{{t \geq 0: \bar{\mathscr{Z}}(t) =\boldsymbol{0}}\},$
\[
\langle f, \bar{\mathscr{Z}}(t) \rangle = \langle f, \bar{\mathscr{Z}}(0) \rangle + \ar \langle f, \vartheta \rangle t - \int_0^t  \frac{\langle f_x, \bar{\mathscr{Z}}(s) \rangle}{\langle 1, \bar{\mathscr{Z}}(s) \rangle} ds -\int_0^t \langle f_y, \bar{\mathscr{Z}}(s) \rangle  ds,
\]
where $\alpha$ is the rate of the renewal process $E(\cdot).$ 
We note that, as in \cite{gromollkruk}, we have restricted to the set of times up until the fluid model solution hits zero.
One can check that this fluid equation is consistent with that found in \cite{gromollkruk} (see, e.g., the proof of Lemma 4.3 in \cite{gromollpuhawilliams}), but incorporates reneging.

By applying the decomposition described in \eqref{martingaletermtoyexample}–\eqref{integralagainstrenewaldifftoy}, writing the diffusion-scaled system as a ``good sequence of renewal-driven systems," Theorem~\ref{diffusionclt} yields a stochastic differential equation satisfied by subsequential limits of the diffusion-scaled system under mild conditions: 
\begin{align*}
    \langle f, \hat{\mathscr{Z}}(t)\rangle &= \langle f, \hat{\mathscr{Z}}(0)\rangle + \int_0^t \sqrt{\ar (\langle f, \vartheta \rangle +\langle f^2,\vartheta \rangle - \langle f,\vartheta \rangle^2) }dW(s)\\
    &- \int_0^t \left(\frac{\langle f_x, \hat{\mathscr{Z}}(s) \rangle}{\langle 1, \bar{\mathscr{Z}}(s) \rangle}-\frac{\langle f_x, \bar{\mathscr{Z}}(s)\rangle}{\langle 1, \bar{\mathscr{Z}}(s) \rangle^2  }\langle 1, \hat{\mathscr{Z}}(s)\rangle\right)ds -\int_0^t \langle f_y, \hat{\mathscr{Z}}(s) \rangle  ds, \qquad 0 \leq t \leq t^*.
\end{align*}
In the above, $W(s)$ is a brownian motion, and the form of the quadratic variation displayed in the second term of the right hand side above can be calculated using Corollary \ref{martingaleconvcor} once the martingale decomposition is complete. 
\subsection{Relevant Definitions}
\label{relevantdefinitionssect}

In this section, we introduce definitions and notation related to diffusion-scaled renewal processes and martingales. These tools are foundational for our analysis of renewal-driven queueing systems.

We begin by describing a decomposition of a (delayed) renewal process $E(\cdot)$ into a martingale part $O(\cdot)$ and a remainder term $R(\cdot)$. This decomposition is based on Theorem 2.1 of \cite{martingaledecomppaper} (see equations (2.8) and (1.3)), although we will re-establish the martingale property using Proposition~\ref{martingaledecompositionprop}, which is more aligned with our framework.

Let $\{x_l\}_{l=1}^{\infty}$ be an i.i.d.\ sequence of interarrival times, with an optional initial delay $x_0$ (set to zero in the non-delayed case). Then the renewal process satisfies:
\[
E(t) = O(t) + R(t), \qquad t \geq 0,
\]
where
\begin{equation}
O(t) := \sum_{l=1}^{E(t)}\left(1 - \frac{x_l}{{E}[x_l]}\right), \qquad t \geq 0, \label{martingalepartofE}
\end{equation}
\begin{equation}
R(t) := \frac{1}{{E}[x_l]}\left(r(t) + t - x_0\right), \qquad t \geq 0, \label{remainderpartofE}
\end{equation}
and
\begin{equation}
r(t) := x_0 + \sum_{l=1}^{E(t)} x_l - t, \qquad t \geq 0. \label{rtdef}
\end{equation}

Here, $r(t)$ is the residual time until the next jump after $t$. The process $E(\cdot)$ is said to be delayed if $x_0 > 0$.

\begin{rem}
\label{delayednote}
In \cite{martingaledecomppaper}, any renewal process $E(t)$ with $E(0)=0$ is treated as delayed. Therefore, in our service process martingales, we treat $\ser_1^{k,j,m}$ as the initial delay $x_0$, and we sum from $l = 2$ to $\spr_j^k(t) + 1$ in order to remain consistent with that convention.
\end{rem}

\begin{defi}[Fluid-scaled Renewal Process]
\label{fluidscaledrenewaldef}
A process $\bar{E}^m(\cdot)$ is called a \emph{fluid-scaled (delayed) renewal process} for the parameter $m>0$ if
\[
\bar{E}^m(t) := \frac{1}{m} E(m t), \qquad t \geq 0,
\]
for some (delayed) renewal process $E(\cdot)$.
\end{defi}

\begin{defi}[Diffusion-scaled Renewal Process]
\label{diffscaledrenewaldef}
A process $\hat{E}^m(\cdot)$ is called a \emph{diffusion-scaled (delayed) renewal process} for the parameter $m>0$ if
\[
\hat{E}^m(t) := \frac{1}{\sqrt{m}} \left(E(m t) - \mu t\right), \qquad t \geq 0,
\]
where $E(\cdot)$ is a (delayed) renewal process with rate $\mu = {E}[x_1]^{-1}$.
\end{defi}

\begin{defi}[Fluid-scaled Martingale]
\label{fluidscaledmartdef}
A process $\bar{Y}^m(\cdot)$ is called a \emph{fluid-scaled martingale} for the parameter $m>0$ if
\[
\bar{Y}^m(t) := \frac{1}{m} Y(m t),
\]
for some martingale $Y(\cdot)$.
\end{defi}

\begin{defi}[Diffusion-scaled Martingale]
\label{diffscaledmartdef}
A process $\hat{Y}^m(\cdot)$ is called a \emph{diffusion-scaled martingale} for the parameter $m>0$ if
\[
\hat{Y}^m(t) := \frac{1}{\sqrt{m}} Y(m t),
\]
for some martingale $Y(\cdot)$.
\end{defi}

\begin{defi}[Time Change]
A process $g(\cdot) \in D([0,\infty), \mathbb{R}_+)$ is called a \emph{time change} if $g(0) = 0$ and $g(\cdot)$ is non-decreasing. A process is said to be \emph{time-changed} if its time index has been replaced by a time change.
\end{defi}

We now define the corresponding scaled versions of $O(\cdot)$ and $R(\cdot)$:

\begin{align}
\bar{O}^m(t) &:= \frac{1}{m} \sum_{l=1}^{m \bar{E}^m(t)}\left(1 - \frac{x_l}{{E}[x_1]}\right), \label{martingalepartofEbar} \\
\bar{R}^m(t) &:= \frac{1}{{E}[x_1]} \cdot \frac{1}{m} \left(r(m t) + m t - x_0\right), \label{remainderpartofEbar} \\
\hat{O}^m(t) &:= \frac{1}{\sqrt{m}} \sum_{l=1}^{m \bar{E}^m(t)}\left(1 - \frac{x_l}{{E}[x_1]}\right), \label{martingalepartofEhat} \\
\hat{R}^m(t) &:= \frac{1}{{E}[x_1]} \cdot \frac{1}{\sqrt{m}} \left(r(m t) - x_0\right). \label{remainderpartofEhat}
\end{align}

Later, in Lemma~\ref{martingalesaremartingaleslem}, we will verify—using Proposition~\ref{martingaledecompositionprop}—that even with a time change $\bar{g}^m(\cdot)$, the process $\hat{O}^m(\bar{g}^m(\cdot))$ remains a martingale with respect to an appropriate filtration.

\begin{defi}[Good Sequence of Diffusion-Scaled Renewal Driven Systems]
\label{renewaldrivensystemdef}
Let $(\Omega^m, \mathscr{F}^m, \{\mathscr{F}_t^m\}, {P}^m)$ be a sequence of filtered probability spaces satisfying the usual conditions. A sequence
\begin{align*}
(\hat{\boldsymbol{X}}^m(\cdot),\hat{\boldsymbol{J}}^m(\cdot),\boldsymbol{Y}^m_1(\cdot), \dots, \boldsymbol{Y}^m_A(\cdot), 
\boldsymbol{b}^1(\cdot), \dots, \boldsymbol{b}^A(\cdot), 
\boldsymbol{h}^{1,m}(\cdot), \dots, \boldsymbol{h}^{A,m}(\cdot), \\
E^m_1(g^m_1(\cdot)), \dots, E^m_A(g^m_A(\cdot)), 
\boldsymbol{r}^1, \dots, \boldsymbol{r}^A, c_1^m, \dots, c_A^m)
\end{align*}
for some $A \in \N$ is called a \emph{good sequence of diffusion-scaled renewal-driven systems} if:
\begin{itemize}
    \item The processes are $\mathscr{F}^m_t$-adapted and belong to $D(\mathbb{R}_+, (\mathbb{R}^d)^{2 + 3A}) \times C_b(\mathbb{R},\mathbb{R})^{d A} \times \mathbb{R}^A$;
    \item The system satisfies the stochastic integral equation
    \begin{align}
    \hat{\boldsymbol{X}}^m(t) &= \hat{\boldsymbol{X}}^m(0) 
    + \sum_{i=1}^A \hat{\boldsymbol{Y}}^m_i(t) 
    + \sum_{i=1}^A \int_0^t \boldsymbol{b}^i(s)\, dc_i^m \hat{E}_i^m(\bar{g}_i^m(s)) 
    + \sum_{i=1}^A \int_0^t \boldsymbol{r}^i(\hat{\boldsymbol{X}}^m(s))\, ds \nonumber \\
    &\quad + \sum_{i=1}^A \int_0^t \boldsymbol{h}^{i,m}(s) \hat{\boldsymbol{X}}^m(s-) \, d\bar{E}_i^m(\bar{g}_i^m(s)) 
    + \hat{\boldsymbol{J}}^m(t); \label{renewaldrivenequation}
    \end{align}
    \item Each $E^m_i(\cdot)$ is a (possibly delayed) renewal process with rate $\iota_i^m$, and the $E^m_i(\cdot)$ are mutually independent;
    \item The processes $\bar{E}^m_i(\bar{g}^m_i(\cdot))$ are $\mathscr{F}_t^m$-predictable, and their jump times are distinct across $i$;
    \item At the $n$th jump time $\tau_n^{i,m}$ of $\bar{E}_i^m(\bar{g}_i^m(\cdot))$, the interevent time $x_n^{i,m}$ is independent of $\mathscr{F}_{\tau_n^{i,m}-}$ but measurable with respect to $\mathscr{F}_{\tau_n^{i,m}}$;
    \item The martingales $\hat{\boldsymbol{Y}}^m_i(\cdot)$ and $\hat{O}^m_i(\cdot)$ are pure-jump processes that only jump at the jump times of $\bar{E}_i^m(\bar{g}_i^m(\cdot))$, and the increments of $\hat{\boldsymbol{Y}}^m_i(\cdot)$ are independent of $x_n^{i,m}$.
\end{itemize}
\end{defi}
In order to obtain convergence for a ``Good Sequence of Diffusion-Scaled Renewal Driven Systems," the following natural assumptions prove useful.
\begin{assumption}
    \label{assumptionsgeneral}
    Let   $ (\hat{\boldsymbol{X}}^m(\cdot)$ $,\hat{\boldsymbol{J}}^m(\cdot),$ ${\boldsymbol{Y}}^m_1(\cdot),...,{\boldsymbol{Y}}^m_A(\cdot),$ $\boldsymbol{b}^1(\cdot),...,\boldsymbol{b}^A(\cdot),$ $\boldsymbol{h}^{1,m}(\cdot),...,\boldsymbol{h}^{A,m}(\cdot),$ ${{E}}^m_1({g}^m_1(\cdot)),...,{{E}}^m_A({g}^m_A(\cdot)),$ $ \boldsymbol{r}^{1},...,\boldsymbol{r}^{A} $,
    $c_1^m,...,c^m_A)$ in $D([0,\infty), (\R^d)^{2+3A}\times\R^{A}) )\times\left({C_b(\R,\R)}^{d}\right)^{A}\times \R^A$ for some $A \in \N$ be a ``good sequence of diffusion-scaled renewal-driven systems" whose renewal processes have rates $(\iota_1^m,..,\iota_A^m)$.
    We make the following further assumptions
    \begin{enumerate}
         \item \label{martingalecltsecondassumption}
         For each sequence of vector-valued martingales $\{\hat{\boldsymbol{Y}}_i^m(\cdot)\}_{m=1}^{\infty},$ the associated quadratic covariation matrix $C^m_i(\cdot)$ converges in distribution as $m \rightarrow \infty$ to some continuous deterministic matrix-valued function $C_i(\cdot)$, where each of the components can be written $c_{i}^{j,l}(\cdot) = \int_0^{\cdot} d_{i}^{j,l}(s)d(\gamma_i(s))$ for some deterministic matrix-valued function $D_{i}$ and deterministic, real-valued continuous function $\gamma_i$. Furthermore, for $T>0,$ $\lim_{m \rightarrow \infty}E[\sup_{t \leq T}|\hat{\boldsymbol{Y}}_i^m(t)-\hat{\boldsymbol{Y}}_i^m(t-)|^2]=0$ and $\lim_{m\rightarrow \infty}E[\sup_{t\leq T}|C_{i,j}^m(t)-C_{i,j}^m(t-)|]=0.$
          \item The fluid-scaled time changes $(\bar{g}^m_1(\cdot),...,\bar{g}^m_A(\cdot))= (\frac{1}{m}{g}^m_1(m\cdot),...,\frac{1}{m}{g}^m_A(m\cdot))$ converge in distribution to deterministic, continuously differentiable functions $(\bar{g}_1(\cdot),..,\bar{g}_A(\cdot))$ as $m \rightarrow \infty$ and the constants $(c_1^m,...,c_A^m)$ converge as $m \rightarrow \infty$ to some $(c_1,...,c_A)\in \R^A.$
         \item \label{locallyfinitevarassumption} The functions $(\boldsymbol{b}^1(\cdot),...,\boldsymbol{b}^A(\cdot))$ are deterministic and of locally finite variation.
         \item \label{countingprocessesintegrableassumption} For each $i \in [A]$ and $t \geq 0$,  $\sup_mE\!\left[|\bar{E}^m_i(\bar{g}^m_i(t))|\right]<\infty.$
        \item The processes $(E^m_1(\cdot),...,E^m_A(\cdot))$ are such that the functional law of large numbers for renewal processes holds, i.e., $(\bar{E}^m_1(\cdot),...,\bar{E}^m_A(\cdot))\Rightarrow (\iota_1 (\cdot),...,\iota_A (\cdot))$ (see, e.g. \cite{chenandyao}, Theorem 5.10) where $(\iota_1 ,...,\iota_A )$ are the limiting rates of the renewal processes $(\bar{E}^m_1(\cdot),...,\bar{E}^m_A(\cdot)).$ 
        \item \label{fcltdiffassumption} The function $\boldsymbol{h}^{i,m}(\cdot)$ converges in distribution to a some path $\boldsymbol{h}^i(\cdot)\in D(\R_+, \R^d)$ for each $i \in [A].$
        \item The processes $(\hat{E}^m_1(\cdot),...,\hat{E}^m_A(\cdot))$ are such that the functional central limit theorem for renewal processes holds, i.e., $(\hat{E}^m_1(\cdot),...,\hat{E}^m_A(\cdot))\Rightarrow (\iota_1\sigma_1W_1(\iota_1\cdot),...,\iota_A \sigma_A W_A(\iota_A \cdot))$ for $(W_1(\cdot),...,W_A(\cdot)),$ a vector of independent Brownian Motions (see, e.g. \cite{chenandyao}, Theorem 5.11) where $(\iota_1 ,...,\iota_A) $ are the limiting rates of the renewal processes $({E}^m_1(\cdot),...,{E}^m_A(\cdot))$ and $(\sigma_1 ,...,\sigma_A) $ are the limiting standard deviations of the interevent times of the renewal processes $({E}^m_1(\cdot),...,{E}^m_A(\cdot))$.
        \item Furthermore, if $\{x_n^{i,m}\}_{n=1}^{\infty}$ are the interevent times for the renewal process $E^m_i(\cdot)$, we assume that $\sup_{m \in \N} E[|x_1^{i,m}|^3]<\infty.$ \label{thirdmomentbound}
    \end{enumerate}
\end{assumption}

\subsection{Our Toolbox}
\label{toolssection}
In this section we introduce three main results that will be used to obtain a diffusion approximation for our model. The first formalizes the martingale decompositions alluded to in the toy example.
The second provides a condition under which tightness can be obtained for an equation such as \eqref{renewaldrivenequation}. The last provides a stochastic differential equation that will be satisfied by limits of a system written in the form of \eqref{renewaldrivenequation} under mild assumptions.
These results will be proved in \S \ref{proofsoftoolboxresults}.
\begin{prop}
\label{martingaledecompositionprop}
    Let $(\Omega, \mathscr{F},\mathscr{F}_t,P)$ be a filtered probability space. Let $E(\cdot)$ be counting process with jump times $\tau_1<\tau_2<\tau_3...$ such that each $E(\cdot)$ is adapted to $\mathscr{F}_t$, $E(t)$ is integrable for $t \geq 0,$ and $\tau_k$ is a predictable stopping time for each $k.$ Let $\{a_k\}_{k=1}^{\infty}$ be a sequence of random variables such that $a_k \in \mathscr{F}_{\tau_k}$, but $a_k$ is independent of $\mathscr{F}_{\tau_k-}.$
    Let $X(\cdot)$ be an adapted process that takes values on some Polish space $S$, and $f(t,y,x): \R \times \R \times S\rightarrow \R$ be a $\mathscr{B}(\R)\times\mathscr{B}(\R) \times \mathscr{F}$-measurable function such that $\sup_{t>0}\sup_{x\in S}E[|f(t,a_1,x)|]<\infty$. Then the process defined    $$Y(\cdot):=\sum_{i=1}^{E
(\cdot)}(f(\tau_i,a_i,X(\tau_i-))-\phi(\tau_i,X(\tau_i-))),$$ where the function $\phi:\R\times\R \rightarrow \R$ is defined $\phi(t,x):= E[f(t,a_1,x)],$ is an $\mathscr{F}_t$-martingale.
When the above conditions hold, we call $Y(\cdot)$ a \textbf{counting martingale for the counting process} $\mathbf{E(\cdot)}$ \textbf{and sequence} ${\{a_i\}}_{i=1}^{\infty}.$
\end{prop}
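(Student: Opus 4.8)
The plan is to show that $Y(\cdot)$ is a martingale by verifying the defining properties directly: adaptedness, integrability, and the martingale increment property $E[Y(t) \mid \mathscr{F}_s] = Y(s)$ for $s < t$. Adaptedness is immediate, since $Y(t)$ is built from $E(t)$, the jump times $\tau_i \leq t$, the variables $a_i$ (which lie in $\mathscr{F}_{\tau_i} \subseteq \mathscr{F}_t$), the state process $X(\tau_i-)$, and the measurable function $f$ and its average $\phi$. Integrability follows from the uniform bound $\sup_{t>0}\sup_{x\in S}E[|f(t,a_1,x)|] < \infty$ together with integrability of $E(t)$: conditioning on $\mathscr{F}_{\tau_i-}$ and using independence of $a_i$ from $\mathscr{F}_{\tau_i-}$ gives $E[|f(\tau_i,a_i,X(\tau_i-))|] \leq \sup E[|f(t,a_1,x)|]$, and an analogous bound holds for $\phi$; summing over $i \leq E(t)$ and using Wald-type control via integrability of $E(t)$ yields $E[|Y(t)|] < \infty$. (Here one should be slightly careful and first argue with $E(t) \wedge N$ and let $N \to \infty$ by monotone convergence.)

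For the martingale increment property, the key structural step is to decompose $Y(t) - Y(s) = \sum_{i : s < \tau_i \leq t} \big(f(\tau_i,a_i,X(\tau_i-)) - \phi(\tau_i,X(\tau_i-))\big)$ and show each summand has conditional mean zero given the information just before its jump time. Concretely, I would fix $i$ and consider the term indexed by $i$, restricted to the event $\{\tau_{i-1} \le s < \tau_i \le t\}$ so that the relevant conditioning $\sigma$-algebra is $\mathscr{F}_{\tau_i-}$; on this event $\tau_i$, $X(\tau_i-)$, and the indicator $1_{\{s < \tau_i \le t\}}$ are all $\mathscr{F}_{\tau_i-}$-measurable (using predictability of $\tau_i$ for the first two and that $\{\tau_i \le t\}$ reveals itself at $\tau_i-$ in the predictable sense; more carefully, $1_{\{\tau_i \le t\}}$ is $\mathscr{F}_{\tau_i-}$-measurable because $\tau_i$ is a predictable time). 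Then, since $a_i$ is independent of $\mathscr{F}_{\tau_i-}$, we have
\[
E\big[f(\tau_i, a_i, X(\tau_i-)) \,\big|\, \mathscr{F}_{\tau_i-}\big] = \phi(\tau_i, X(\tau_i-)),
\]
by the very definition $\phi(t,x) = E[f(t,a_1,x)]$ and the fact that $a_i \stackrel{d}{=} a_1$ (this requires that the $a_k$ are identically distributed, which is implicit in the setup via the i.i.d.\ interpretation; if only independence is assumed one reads $\phi$ with the correct index). Hence each centered summand, multiplied by an $\mathscr{F}_{\tau_i-}$-measurable indicator, has conditional expectation zero given $\mathscr{F}_{\tau_i-}$, and therefore given $\mathscr{F}_s$ after a further tower-property step conditioning down from $\mathscr{F}_{\tau_i-}$ to $\mathscr{F}_s$ on the event in question. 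Summing over $i$ and over the (countably many) events $\{\tau_{i-1} \le s < \tau_i\}$ gives $E[Y(t) - Y(s) \mid \mathscr{F}_s] = 0$.

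The main obstacle is the careful handling of the random index of summation and the interchange of conditional expectation with the infinite sum. The clean way around this is the optional stopping / predictable-projection viewpoint: write $Y(\cdot)$ as a stochastic integral $\int_0^{\cdot} \big(f(s, \Delta, X(s-)) - \phi(s, X(s-))\big)\, dE(s)$ against the counting process and use that $E(\cdot)$ has predictable jump times with the jump ``marks'' $a_i$ conditionally centered; the predictable compensator of the integrand vanishes by construction of $\phi$. Equivalently, one localizes by the stopping times $\tau_N$, proves the bounded-horizon statement $E[Y(t \wedge \tau_N) \mid \mathscr{F}_s] = Y(s \wedge \tau_N)$ by the finite-sum argument above, and passes to the limit $N \to \infty$ using the $L^1$-domination established in the integrability step (so that $Y(t \wedge \tau_N) \to Y(t)$ in $L^1$, noting $\tau_N \to \infty$ since $E(t)$ is a.s.\ finite for each $t$). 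This localization step is where the integrability hypothesis $\sup_{t,x} E[|f(t,a_1,x)|] < \infty$ does its real work.
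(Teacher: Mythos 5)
Your proposal is correct and follows essentially the same route as the paper's proof: adaptedness by construction, integrability via conditioning on $\mathscr{F}_{\tau_i-}$ and the uniform bound together with $E[E(t)]<\infty$, and the martingale property by splitting the sum over $\{s<\tau_k\le t\}$ versus $\{\tau_k\le s\}$ and using that $E[f(\tau_k,a_k,X(\tau_k-))\mid\mathscr{F}_{\tau_k-}]=\phi(\tau_k,X(\tau_k-))$ by independence of $a_k$ from $\mathscr{F}_{\tau_k-}$ and $\mathscr{F}_{\tau_k-}$-measurability of $\tau_k$ and $X(\tau_k-)$. Your added remarks (truncation to justify the interchange, and the implicit assumption that the $a_k$ are identically distributed so that $\phi$ defined via $a_1$ is the correct conditional mean) are valid refinements of the same argument rather than a different approach.
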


Using Proposition \ref{martingaledecompositionprop}, we apply the Martingale Central Limit Theorem to terms of the form \eqref{martingaletermtoyexample}. This gives us the following important corollary.
\begin{rem}
    As discussed in the beginning of \S \ref{relevantdefinitionssect}, the proposition \ref{martingaledecompositionprop} is not a new result, but rather a reframing of the result in \cite{martingaledecomppaper}.
    In \S \ref{proofoftightnesssect}, we will use this decomposition to construct a family of multi-index mass-transport martingales that characterize the fluctuations of our system.
    We will then take the limit of this family using Lemma \ref{ornsteinuhlenbecktightnesscondition} and the corollary below in order to obtain a multiparameter noise process that is \emph{not} a Brownian sheet in the usual sense.
\end{rem}
\begin{cor}
\label{martingaleconvcor}
  Assume one has a sequence of filtered probability spaces $(\Omega^m,\mathscr{F}^m, \mathscr{F}_t^m,P^m)$ on which there is a sequence of counting processes $E^m(\cdot)$ with jump times $\tau_1^m < \tau_2^m <...,$ a sequence of adapted processes $X^m(\cdot),$ and an array of random variables $\{a_n^m\}_{n,m=1}^{\infty},$ such that there are counting martingales $Y_1^m(\cdot),...,Y_d^m(\cdot)$ for each $m \in \N$, as defined in Proposition \ref{martingaledecompositionprop}:
  $${Y}_{i}^m(\cdot):=\sum_{n=1}^{{E}^m(\cdot)}\left(f_i(\tau_n^m,a_n^m,X(\tau_n^m-))- \phi_{f_i}(\tau_n^m,X(\tau_n^m-))\right).$$
Then, define
$$\hat{Y}_{i}^m(\cdot):=\frac{1}{\sqrt{m}}\sum_{n=1}^{m\bar{E}^{m}(\cdot)}\left(f_i(\tau_n^m/m,a_n^m,X(\tau_n^m-)/m)- \phi_{f_i}(\tau_n^m/m,X(\tau_n^m-)/m)\right),$$
  where $\phi_{f_i}(t,x) = E[f_i(t,a_1,x)].$
  Assume also that $\bar{E}^m(\cdot):=\frac{1}{m}E^m(m\cdot)$ converges in distribution to some continuously differentiable process $\bar{E}(\cdot),$ $f_i$ is bounded for $i \in [d],$ and that for $\bar{X}^m(\cdot):=\frac{1}{m}X^m(m\cdot),$ $\phi_{f_i}(\cdot,\bar{X}^m(\cdot)),\phi_{f_if_l}(\cdot,\bar{X}^m(\cdot))\Rightarrow \phi_{f_i}^X(\cdot),\phi_{f_if_l}^X(\cdot)$ in $D([0,\infty), \R)$ for some processes $\phi_{f_i}^X(\cdot),\phi_{f_if_l}^X(\cdot)$ for each $i,l \in [N]$.
  Then for $i,l \in [n],$ the predictable quadratic variation
  \begin{equation}
\langle\hat{Y}_i^m,\hat{Y}_l^m\rangle_{\cdot} \Rightarrow \int_0^{\cdot}(\phi^X_{f_if_l}(s-)-\phi^X_{f_i}(s-)\phi^X_{f_l}(s-))E'(s)ds,
      \label{limitingcov}
  \end{equation}
  and it will follow from the Martingale Central Limit Theorem that
$$(\hat{Y}_1^m(\cdot),...,\hat{Y}^m_d(\cdot)) \Rightarrow \int_0^{\cdot} \sqrt{B(s)}d\boldsymbol{W}(s)$$
where the $d\times d$ matrix $B$ 
 has $B_{il}(\cdot) = (\phi_{f_if_l}^X(\cdot-)-\phi_{f_i}^X\phi_{f_l}^{X}(\cdot-))E'(\cdot)$ for $i,l \in [N],$ the square-root is the unique positive semi-definite matrix square root, and $\boldsymbol{W}$ is a standard $d$-dimensional Brownian motion.
\end{cor}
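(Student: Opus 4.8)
The plan is to compute the predictable quadratic covariations $\langle\hat Y_i^m,\hat Y_l^m\rangle$ explicitly, identify their weak limit, and then feed this into a functional martingale central limit theorem whose only remaining hypothesis --- an asymptotic negligibility condition on the jumps --- is immediate from boundedness of the $f_i$. For the first part, note that each $\hat Y_i^m$ is a pure-jump martingale (a time-rescaled counting martingale in the sense of Proposition~\ref{martingaledecompositionprop}) whose jumps occur precisely at the jump times $\tau_n^m/m$ of $\bar E^m(\cdot)$, the $n$th jump being $\tfrac{1}{\sqrt m}(f_i(\tau_n^m/m,a_n^m,\bar X^m(\tau_n^m/m-))-\phi_{f_i}(\tau_n^m/m,\bar X^m(\tau_n^m/m-)))$, where $\bar X^m:=\tfrac1m X^m(m\cdot)$, so that $X^m(\tau_n^m-)/m=\bar X^m(\tau_n^m/m-)$. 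Since the $\tau_n^m$ are predictable, $a_n^m$ is independent of $\mathscr F_{\tau_n^m-}$, and $\tau_n^m/m$ and $\bar X^m(\tau_n^m/m-)$ are $\mathscr F_{\tau_n^m-}$-measurable, I would expand $\Delta(\hat Y_i^m\hat Y_l^m)$ into $\hat Y_i^m(\cdot-)\Delta\hat Y_l^m+\hat Y_l^m(\cdot-)\Delta\hat Y_i^m+\Delta\hat Y_i^m\Delta\hat Y_l^m$ and take $\mathscr F_{\tau_n^m-}$-conditional expectations: the first two terms have zero conditional expectation since each jump increment of a counting martingale is conditionally centered by construction, and the third collapses --- via $\phi_{f_if_l}(t,x):=E[f_i(t,a_1,x)f_l(t,a_1,x)]$ --- to $\tfrac1m(\phi_{f_if_l}-\phi_{f_i}\phi_{f_l})(\tau_n^m/m,\bar X^m(\tau_n^m/m-))$. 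Summing over the jumps in $[0,t]$ identifies the compensator of $[\hat Y_i^m,\hat Y_l^m]$, i.e.\ the predictable quadratic covariation, as the Lebesgue--Stieltjes integral $\langle\hat Y_i^m,\hat Y_l^m\rangle_t=\int_0^t(\phi_{f_if_l}-\phi_{f_i}\phi_{f_l})(s,\bar X^m(s-))\,d\bar E^m(s)$ against the nondecreasing, jumps-of-size-$1/m$ process $\bar E^m$.

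Next I would pass to the limit in that integral. By hypothesis $\bar E^m\Rightarrow\bar E$ with $\bar E$ continuously differentiable (and, in the renewal setting of interest, deterministic, which furnishes the joint convergence used below), while $\phi_{f_i}(\cdot,\bar X^m(\cdot))$ and $\phi_{f_if_l}(\cdot,\bar X^m(\cdot))$ converge in distribution to $\phi^X_{f_i}$ and $\phi^X_{f_if_l}$; boundedness of the $f_i$, hence of all these $\phi$'s, lets the continuous mapping theorem produce joint convergence of the integrand $(\phi_{f_if_l}-\phi_{f_i}\phi_{f_l})(\cdot,\bar X^m(\cdot-))$ and the integrator $\bar E^m$. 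Because the limiting integrator $\bar E$ is continuous and nondecreasing and the $\bar E^m$ are nondecreasing with uniformly bounded variation on compacts, the map $(g,h)\mapsto\int_0^\cdot g(s-)\,dh(s)$ is continuous at this limit point; concretely, after a Skorokhod-representation reduction to almost sure convergence one writes $d\bar E^m=d\bar E+d(\bar E^m-\bar E)$, treats $\int(\cdot)\,d\bar E$ by bounded convergence (using continuity of $\bar E$ to replace $\bar X^m(s-)$ by its limit off a Lebesgue-null set), and controls $\int(\cdot)\,d(\bar E^m-\bar E)$ by integration by parts from $\lVert\bar E^m-\bar E\rVert_T\to 0$ and the uniform variation bound. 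This yields \eqref{limitingcov}, and running the argument jointly over all pairs $(i,l)$ gives convergence of the full predictable covariation matrix to $\int_0^\cdot B(s)\,ds$ with $B$ the matrix in the statement.

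Finally I would apply a functional martingale central limit theorem to $(\hat Y_1^m(\cdot),\dots,\hat Y_d^m(\cdot))$. Its hypotheses are convergence of the predictable quadratic covariation matrix to $\int_0^\cdot B(s)\,ds$, just established, and an asymptotic negligibility condition on the jumps; the latter is trivial here because boundedness of the $f_i$ and the $\phi_{f_i}$ forces $\sup_{s\le T}\lvert\Delta\hat Y_i^m(s)\rvert\le C/\sqrt m\to 0$ for every $T$. Since the candidate limit $\int_0^\cdot\sqrt{B(s)}\,d\boldsymbol W(s)$ is a continuous process whose predictable quadratic covariation is exactly $\int_0^\cdot B(s)\,ds$ --- the symmetric square root being the canonical choice and $B(s)$ being a limit of covariance matrices, hence nonnegative definite --- the martingale CLT delivers $(\hat Y_1^m(\cdot),\dots,\hat Y_d^m(\cdot))\Rightarrow\int_0^\cdot\sqrt{B(s)}\,d\boldsymbol W(s)$ for a standard $d$-dimensional Brownian motion $\boldsymbol W$, which is the assertion.

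I expect the second step to be the main obstacle: the integrand genuinely lies in $D([0,\infty),\R)$ and has jumps, so continuity of Stieltjes integration in the Skorokhod topology is not available off the shelf; the saving feature is that the limiting integrator $\bar E$ is continuous, so the time-deformations implicit in the Skorokhod metric do not interfere, but this has to be argued via the decomposition and integration-by-parts estimate sketched above. A secondary technical point is matching the precise hypotheses of whichever version of the martingale CLT one invokes when the limiting covariance $B$ is not deterministic, which requires the quadratic-variation convergence to hold jointly with that of the martingales --- automatic after the Skorokhod-representation reduction.
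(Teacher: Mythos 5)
Your proposal follows the same architecture as the paper's proof: compute $\langle\hat Y_i^m,\hat Y_l^m\rangle$ by conditioning the product of jump increments on $\mathscr F_{\tau_n^m-}$ (the cross terms vanish by conditional centering, the square term collapses to $\tfrac1m(\phi_{f_if_l}-\phi_{f_i}\phi_{f_l})$), rewrite the sum as $\int_0^{\cdot}(\phi_{f_if_l}-\phi_{f_i}\phi_{f_l})(s,\bar X^m(s-))\,d\bar E^m(s)$, pass to the limit, and invoke the Ethier--Kurtz martingale CLT with the jump condition killed by the $1/\sqrt m$ scaling and boundedness of the $f_i$. The only genuine divergence is the limit passage for the stochastic integral, which you correctly identify as the delicate step. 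The paper handles it by citing Kurtz--Protter (Theorem 7.10 of \cite{protterkurtz}) and verifying the UT condition for the integrator, which is immediate because $\bar E^m$ is nondecreasing and C-tight, so integrals of simple predictable $|H|\le 1$ are bounded by $\bar E^m(t)$. Your hand-rolled substitute does not close as written: in the decomposition $d\bar E^m=d\bar E+d(\bar E^m-\bar E)$, controlling $\int g^m(s-)\,d(\bar E^m-\bar E)(s)$ by integration by parts transfers the variation onto the integrand $g^m=(\phi_{f_if_l}-\phi_{f_i}\phi_{f_l})(\cdot,\bar X^m(\cdot-))$, which is merely c\`adl\`ag and bounded --- no finite-variation hypothesis is available for it --- while the total variation of $\bar E^m-\bar E$ itself stays bounded away from zero. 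So either supply a separate argument for that term (e.g.\ a partition/Riemann-sum argument exploiting continuity of $\bar E$ and tightness of the integrand in $D$) or, as the paper does, appeal to the UT framework, which packages exactly this difficulty. Your caveat about the MCLT requiring a deterministic limiting covariation is fair and applies equally to the paper's statement.
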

\begin{proof}
    We would like to apply the Martingale Central Limit Theorem (see e.g., \cite{ethierandkurtz}, Chapter 7, Theorem 1.4 (b)) to obtain the limit of these terms.
    We first calculate the compensator matrix.
    Following the Remark 1.5 in the same reference \cite{ethierandkurtz}, in the expository commentary just after the cited Martingale Central Limit theorem (Theorem 1.4 of Chapter 7), we observe that for $i,l \in [d], t\geq 0$
    \begin{align*}
\langle \hat{Y}_i^m,\hat{Y}_l^m\rangle_t &= \frac{1}{m}\sum_{n=1}^{m \bar{E}^m(t)}E[\xi_n^i\xi_n^l|\mathscr{F}_{\tau_n-}^m]
\end{align*}
   where
   $$\xi_n^i=f_i(\tau_n^m/m,a_n^m,X(\tau_n^m-)/m)- \phi_{f_i}(\tau_n^m/m,X(\tau_n^m-)/m)).$$
Expanding out term-by-term and using both the assumed independence of $a_n^m$ from $\mathscr{F}^m_{\tau_n-}$ and the fact that the stopping times are predictable, we see that
\begin{align*}
    E[f_i(\tau_n^m/m,a_n^m,X(\tau_n^m-)/m)f_l(\tau_n^m/m,a_n^m,X(\tau_n^m-)/m)|\mathscr{F}^m_{\tau_n-}] = \phi_{f_if_l}(\tau_n^m/m,X(\tau_n^m-)/m),
\end{align*}
\begin{align*}
    &E[f_i(\tau_n^m/m,a_n^m,X(\tau_n^m-)/m)\phi_{f_l}(\tau_n^m/m,X(\tau_n^m-)/m)|\mathscr{F}^m_{\tau_n^m-}]\\& = \phi_{f_i}(\tau_n^m/m,X(\tau_n^m-)/m)\phi_{f_l}(\tau_n^m/m,X(\tau_n^m-)/m),
\end{align*}
\begin{align*}
    &E[\phi_{f_i}(\tau_n^m/m,X(\tau_n^m-)/m)\phi_{f_l}(\tau_n^m/m,X(\tau_n^m-)/m)|\mathscr{F}^m_{\tau_n^m-}] \\&= \phi_{f_i}(\tau_n^m/m,X(\tau_n^m-)/m)\phi_{f_l}(\tau_n^m/m,X(\tau_n^m-)/m).
    \numberthis
    \label{quadraticvarcalc}
\end{align*}
Ultimately, we obtain
\begin{align*}
\langle \hat{Y}_i^m,\hat{Y}_l^m\rangle_{\cdot} &= \frac{1}{m}\sum_{n=1}^{m \bar{E}^m(\cdot)}\phi_{f_if_l}(\tau_n^m/m,X(\tau_n^m-)/m)-\phi_{f_i}(\tau_n^m/m,X(\tau_n^m-)/m)\phi_{f_l}(\tau_n^m/m,X(\tau_n^m-)/m)\\
& = \int_0^{\cdot} (\phi_{f_if_l}(s,\bar{X}^m(s-))-\phi_{f_i}(s,\bar{X}^m(s-))\phi_{f_l}(s,\bar{X}^m(s-)))d\bar{E}^m(s). 
\numberthis
\label{martingalepredquadvar}
\end{align*}
We will be applying the theory in \cite{protterkurtz} to obtain convergence of the stochastic integral above. 
In particular, applying Theorem 7.10 of that paper, we see that if $\bar{{E}}^m(\cdot) \Rightarrow \bar{{E}}(\cdot),$ and $\bar{E}^m(\cdot)$ satisfies their UT condition, then the stochastic integral above will converge in distribution to \eqref{limitingcov}.
The UT condition in that paper, which is given in Definition 7.4, is as follows:
\begin{defi}[Definition 7.4 from \cite{protterkurtz}]
    A sequence of semimartingales $(U^m)_{m\geq 1}$, with $U^m$ defined on a filtered probability space $(\Omega^m,\mathscr{F}^m,\mathscr{F}^m_t,P^m)$ that satisfies the usual hypothesis for each $m \geq 1,$ is said to be uniformly tight, denoted UT, if for each $t>0,$ the set
    $$ \left\{ \int_0^t H_{s-}^mdU_{s}^m, H^m \text{ is simple and predictable }, |H^m|\leq 1, m \geq 1\right\}$$ is stochastically bounded (uniformly in $m$).
\end{defi}
It is straightforward to check this definition for $\bar{{E}}^m(\cdot).$ 
We see that for and $m \in \N$ and such an $H^m,$ 
$$\left|\int_0^t H_{s-}^md\bar{E}_{s}^m\right| = \left|\frac{1}{m}\sum_{l=1}^{m\bar{{E}}^m(\cdot)}H_{\tau_l/m-}^m\right|
      \leq \frac{1}{m}\sum_{i=1}^{m\bar{E}^m(\cdot)}1= \bar{E}^m(\cdot)$$
(recalling that $\tau_l$ is the $l$th jump time of the counting process $E(\cdot)$). Because $\{\bar{E}^m(\cdot)\}_{m=1}^{\infty}$ is C-tight, the result holds.
Lastly, note that the bounded jumps conditions, (1.16) and (1.17) of (b) in the cited martingale central limit theorem will be satisfied because $\sup_{0\leq t< \infty}|\hat{\mart}_i^m(t)-\hat{\mart}_i^m(t-)|^2 \leq \frac{1}{m}4
    ||f||^2$ and for $i,l \in [d],$ $\sup_{0\leq t< \infty}|\langle \hat{\mart}_i^m,\hat{\mart}_l^m\rangle_t-\langle \hat{\mart}_i^m,\hat{\mart}_l^m\rangle_{t-}|^2 \leq \frac{1}{\sqrt{m}}
    (||f_if_l||_{\infty}+||f_i||_{\infty}||f_l||_{\infty }).$
\end{proof}
\begin{lem}
\label{ornsteinuhlenbecktightnesscondition}
    Let $X^m(\cdot)$ be a sequence of stochastic processes in $D([0,\infty),\R)$. If for each $t \geq 0,$
    \begin{equation}
        X^m(t) \leq \int_0^{t}  f^m(s,t)X^m(s)dR^m(s) + U^m(t) \label{tightnesslemmaineq}
    \end{equation}
    for some sequence of processes $\{U^m(\cdot)\}_{m=1}^{\infty},$ sequence of random functions $\{f^m(\cdot,\cdot)\}_{m=1}^{\infty}$, and a sequence of increasing processes $\{R^m(\cdot)\}_{m=1}^{\infty}$ that are compactly contained, where a process $H^m(\cdot)$ is said to be compactly contained if the following condition holds,
    \begin{enumerate}[i]
    \item (Compact Containment)\label{jacodboundedness} For each $M\in\N,$ $\epsilon>0,$ there exists $m_0\in \N$ and $K_{\epsilon} \in \R_+$ such that $$m\geq m_0\implies P^m(\sup_{t\leq M}|H^m(t)|\geq K_{\epsilon})\leq \epsilon.$$
\end{enumerate}
(For $f,$ we replace $\sup_{t\leq M}$ above with $\sup_{t\leq M} \sup_{s\leq t}.$)
    Then $X^m(\cdot)$ is compactly contained.
    If equality holds, as in, for $t\geq 0,$
    \begin{equation}
        X^m(t) = \int_0^{t}  f^m(s,t)X^m(s)dR^m(s) + U^m(t), \label{tightnesslemmaeq}
    \end{equation}
    and we assume that $U^m(\cdot)$ and $R^m(\cdot)$ are C-tight, then we have that $X^m(\cdot)$ is
    also $C$-tight.
\end{lem}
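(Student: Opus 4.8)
The plan is to run a Gronwall-type comparison on a high-probability event to get compact containment, and then bootstrap to $C$-tightness through an oscillation estimate. \textbf{Compact containment.} Fix $M\in\N$ and $\epsilon>0$. By the compact containment of $\{f^m\}$, $\{R^m\}$ and $\{U^m\}$ (splitting the budget $\epsilon/3$ among them), choose $K_1,K_2,K_3\in\R_+$ and $m_0\in\N$ (the maximum of the three thresholds) so that for $m\ge m_0$ the event
\[
\Omega^m_{\mathrm{good}}:=\{\sup\nolimits_{t\le M}\sup\nolimits_{s\le t}|f^m(s,t)|\le K_1,\ R^m(M)\le K_2,\ \sup\nolimits_{t\le M}|U^m(t)|\le K_3\}
\]
satisfies $P^m(\Omega^m_{\mathrm{good}})\ge 1-\epsilon$ (using that $R^m$ is increasing, so $\sup_{t\le M}R^m(t)=R^m(M)$). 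On $\Omega^m_{\mathrm{good}}$ put $Z^m(t):=\sup_{s\le t}|X^m(s)|$, which is finite, nondecreasing and right-continuous since $X^m\in D([0,\infty),\R)$. Bounding the integrand in \eqref{tightnesslemmaineq} by its modulus and taking the supremum over $[0,t]$ gives, on $\Omega^m_{\mathrm{good}}$,
\[
Z^m(t)\le K_1\int_0^t Z^m(s)\,dR^m(s)+K_3,\qquad 0\le t\le M
\]
(if only \eqref{tightnesslemmaineq} is available then $X^m$ is non-negative in the relevant applications, so $Z^m(t)=\sup_{s\le t}X^m(s)$; under the equality \eqref{tightnesslemmaeq} the bound for $|X^m|$ is immediate). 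Gronwall's inequality in Lebesgue--Stieltjes form then yields the deterministic bound $Z^m(M)\le K_3 e^{K_1K_2}$, whence $P^m(\sup_{t\le M}|X^m(t)|\ge K_3 e^{K_1K_2}+1)\le P^m((\Omega^m_{\mathrm{good}})^c)\le\epsilon$ for all $m\ge m_0$. This is exactly the statement that $\{X^m(\cdot)\}$ is compactly contained.

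\textbf{$C$-tightness.} Suppose now that \eqref{tightnesslemmaeq} holds and $\{U^m\}$, $\{R^m\}$ are $C$-tight. By the first step $\{X^m\}$ is compactly contained, so by the standard characterization of $C$-tightness (compact containment together with an asymptotically negligible modulus of continuity $w(x,\delta,M):=\sup\{|x(t)-x(t')|:t,t'\in[0,M],\,|t-t'|\le\delta\}$) it remains to check that for every $\eta>0$,
\[
\lim_{\delta\downarrow 0}\limsup_{m\to\infty}P^m\big(w(X^m,\delta,M)>\eta\big)=0.
\]
Set $I^m(t):=\int_0^t f^m(s,t)X^m(s)\,dR^m(s)$, so $X^m=U^m+I^m$ and $w(X^m,\delta,M)\le w(U^m,\delta,M)+w(I^m,\delta,M)$; the first term is controlled by $C$-tightness of $\{U^m\}$. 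For $t'\le t\le M$ with $t-t'\le\delta$, on the good event from the first step, and with $K$ the compact-containment bound for $X^m$,
\[
|I^m(t)-I^m(t')|\le K_1 K\,(R^m(t)-R^m(t'))+K\int_0^{t'}|f^m(s,t)-f^m(s,t')|\,dR^m(s),
\]
and the first summand is at most $K_1 K\,w(R^m,\delta,M)$, controlled by $C$-tightness of $\{R^m\}$.

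\textbf{Main obstacle.} Everything reduces to the second, ``reweighting'', summand $\int_0^{t'}|f^m(s,t)-f^m(s,t')|\,dR^m(s)$: bounding it uniformly in $m$ is the delicate point, since the compact containment of $\{f^m\}$ supplies no regularity of $f^m$ in its second argument. In the uses of this lemma the term is absent because the kernel does not depend on $t$ — the integrals to which the lemma is applied have the form $\int_0^t g^m(s)\,ds$ with $R^m(s)=s$ and $f^m(s,t)=g^m(s)$ — so the estimate closes at once; in greater generality one would supplement the hypotheses with a uniform-in-$m$ modulus of continuity for $t\mapsto f^m(s,t)$, or re-run the comparison on $X^m(t)-X^m(t')$ directly through Gronwall to express the oscillation of $X^m$ in terms of those of $U^m$, $R^m$ and $f^m$. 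Combining the pieces and letting $\delta\downarrow 0$ and then $m\to\infty$ gives the modulus condition, hence the $C$-tightness of $\{X^m(\cdot)\}$.
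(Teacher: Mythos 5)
Your argument is essentially the paper's: compact containment via a Lebesgue--Stieltjes Gr\"onwall inequality run on a high-probability event where $f^m$, $R^m$, $U^m$ are bounded, followed by an oscillation estimate for the $C$-tightness half. The cosmetic differences --- you fix explicit constants $K_1,K_2,K_3$ on a good event while the paper carries a random, compactly contained constant $C^m_M$, and you phrase the modulus condition via $w(x,\delta,M)$ while the paper uses the equivalent controlled-oscillations criterion --- do not change the substance.

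The ``main obstacle'' you isolate is genuine, and it is worth noting that the paper's own proof does not resolve it either: expanding $X^m(t+\delta)-X^m(t)$ from \eqref{tightnesslemmaeq} produces, besides $\int_t^{t+\delta} f^m(s,t+\delta)X^m(s)\,dR^m(s)$ and the increment of $U^m$, the reweighting term $\int_0^{t}\bigl(f^m(s,t+\delta)-f^m(s,t)\bigr)X^m(s)\,dR^m(s)$, and the paper's displayed oscillation bound silently omits it. So your diagnosis --- that the lemma as stated needs either a kernel independent of its second argument or a uniform-in-$m$ modulus of continuity for $t\mapsto f^m(s,t)$ --- is correct, and your observation that the term is harmless where the $C$-tightness half is actually invoked (there the kernel is $p_j/\wmass(\bar{\boldsymbol{\tm}}^m(s-))$, with no dependence on $t$, though the integrator is a fluid-scaled service process rather than $ds$) matches how the lemma is used. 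The same goes for your parenthetical about nonnegativity in the inequality-only case: Gr\"onwall needs $0\le u$, the hypothesis \eqref{tightnesslemmaineq} is one-sided, and both your proof and the paper's implicitly rely on $X^m\ge 0$, which holds in the applications since $X^m$ is there an absolute value.
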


\begin{thm}{CLT for Renewal Driven Systems}\\
\label{diffusionclt}
Let   $ (\hat{\boldsymbol{X}}^m(\cdot)$ $,\hat{\boldsymbol{J}}^m(\cdot),$ ${\boldsymbol{Y}}^m_1(\cdot),...,{\boldsymbol{Y}}^m_A(\cdot),$ $\boldsymbol{b}^1(\cdot),...,\boldsymbol{b}^A(\cdot),$ $\boldsymbol{h}^{1,m}(\cdot),...,\boldsymbol{h}^{A,m}(\cdot),$ ${{E}}^m_1({g}^m_1(\cdot)),...,{{E}}^m_A({g}^m_A(\cdot)),$ $ \boldsymbol{r}^{1},...,\boldsymbol{r}^{A} $,
    $c_1^m,...,c^m_A)$ in $D([0,\infty), (\R^d)^{2+3A}\times\R^{A}) )\times\left({C_b(\R,\R)}^{d}\right)^{A}\times \R^A$ for some $A \in \N$ be a ``good sequence of diffusion-scaled renewal-driven systems" whose renewal processes have rates $(\iota_1^m,..,\iota_A^m)$.
    Assume that Assumption \ref{assumptionsgeneral} holds.
    Then if $(\hat{\boldsymbol{X}}(0),\hat{\boldsymbol{X}}(\cdot), \hat{\boldsymbol{J}}(\cdot))$ is a subsequential limit in distribution of $\{(\hat{\boldsymbol{X}}^m(0),\hat{\boldsymbol{X}}^m(\cdot), \hat{\boldsymbol{J}}^m(\cdot))\}_{m=1}^{\infty}$ that has continuous sample paths, it will satisfy the equation
     \begin{align*}
    \hat{\boldsymbol{X}}(\cdot) &= \hat{\boldsymbol{X}}(0) +\sum_{i=1}^A\int_0^{\cdot}\sqrt{D_i(s)} d\boldsymbol{W}_i(\gamma(s))+\sum_{i=1}^A \int_0^{\cdot}\boldsymbol{b}(s)c_i\iota_i^{3/2}\sigma_i\sqrt{g_i'(s)} d\tilde{W}_i(s) +  \sum_{i=1}^A\int_0^{\cdot} \boldsymbol{r}_{i}(\hat{\boldsymbol{X}}(s))ds \\&
    + \sum_{i=1}^{A}\int_0^{\cdot}\boldsymbol{h}^i(s)\hat{\boldsymbol{X}}(s-)\iota_i \bar{g}_i'(s)ds+\hat{\boldsymbol{J}}(\cdot) 
    \numberthis\label{renewaldrivenequationlimit}
   \end{align*} 
   where $\boldsymbol{W}_i(\cdot)\in D([0,\infty), \R^d),\tilde{W}_i\in D([0,\infty),\R)$ for $i \in [A],$ are independent, standard Brownian motions, and all matrix square roots are taken to be the unique symmetric square roots.

\end{thm}
\begin{rem}
Although the list of conditions in Definition~\ref{renewaldrivensystemdef} may appear extensive, they are, in fact, quite natural for systems of this type. 
For instance, the condition on martingale jumps is typically satisfied in practice, as the jump sizes are scaled by $1/\sqrt{m}$. When the system is decomposed following the outline in \S\ref{outlineofmethod}, the time changes $g_i^m(\cdot)$ generally take the form of integrals of system-dependent rate processes.
The functions $\boldsymbol{h}^{i,m}$ are often fluid-scaled and, as such, one expects them to converge to deterministic vector-valued functions.
In particular, $g_i^m(\cdot)$ typically converges to a differentiable limit.
Likewise, the functions $\boldsymbol{b}_i(\cdot)$ usually arise from the fluid model of the system and are expected to have locally finite variation.
Finally, the remaining assumptions—tightness and convergence in FCLTs and FSLLNs for the stochastic primitives—are standard requirements in the study of scaling limits for stochastic processing networks.
\end{rem}

   \subsection{Proofs of Main Toolbox Results}
   \label{proofsoftoolboxresults}
   \subsubsection{Proof of Proposition \ref{martingaledecompositionprop}}
   
  \begin{proof}
By construction, we see that $Y(t)$ is adapted to the filtration $\mathscr{F}_t.$ 
We observe that because $a_k$ is independent of $\mathscr{F}_{\tau_{k}-}$ and $X(\tau_k-),\tau_k$ are $\mathscr{F}_{\tau_{k}-}$-measurable, $\phi(\tau_k,X(\tau_k-))= E[f(\tau_k,a_k,X(\tau_k-))|\mathscr{F}_{\tau_k-}]$ (for proof of this elementary but sometimes forgotten property of conditional expectation, see e.g., example 4.1.7 of \cite{durrett}).
To prove that $E[|Y(t)|]< \infty$ for each $t \geq 0,$ note that, defining $|\phi|(t,x):= E[|f(t,a_1,x)|],$
\begin{align*}
    E[|Y(t)|]&\leq \sum_{i=1}^{\infty} E[1_{\tau_i \leq t}|(f(\tau_i,a_i,X(\tau_i-)|]+E[1_{\tau_i \leq t}|(|\phi|(\tau_i,X(\tau_i-)]\\
    & \leq \sum_{i=1}^{\infty} E[1_{\tau_i \leq t}E[|(f(\tau_i,a_i,X(\tau_i-)||\mathscr{F}_{\tau_i-}]]+E[1_{\tau_i \leq t}(|\phi|(\tau_i,X(\tau_i-)]\\
    & = \sum_{i=1}^{\infty} 2E[1_{\tau_i \leq t}|(|\phi|(\tau_i,X(\tau_i-)|]\\
    &\leq \sum_{i=1}^{E(t)}2\sup_{t\geq0}\sup_{x \in S}E[|f(t,a_1,X)|] \leq 2 E[E(t)] \sup_{t\geq0}\sup_{x \in S}E[|f(t,a_1,X)|] <\infty.
\end{align*}
Thus, we continue to the martingale property. 
By the tower property for stopping times, we see that for $0 \leq s \leq t,$
\begin{align*}
    E[\mart(t)|\mathscr{F}_s] &= E\bigg[\sum_{k=1}^{\infty} 1_{\{\tau_k \leq t\}}(f(\tau_k,a_k,X(\tau_k-))-\phi(\tau_k,X(\tau_k-)))\bigg|\mathscr{F}_s\bigg]\\&=E\bigg[\sum_{k=1}^{\infty} 1_{\{s<\tau_k \leq t\}}(f(\tau_k,a_k,X(\tau_k-))-\phi(\tau_k,X(\tau_k-)))\bigg|\mathscr{F}_s\bigg]  
    \numberthis \label{vanishingpart}\\
    &+E\bigg[\sum_{k=1}^{\infty} 1_{\{\tau_k \leq s\}}(f(\tau_k,a_k,X(\tau_k-))-\phi(\tau_k,X(\tau_k-)))\bigg|\mathscr{F}_s\bigg]
    \numberthis \label{yspart}
    \end{align*}
    The proof is concluded when one notes that \eqref{yspart} is simply $\mart(s)$ and \eqref{vanishingpart} is zero:
    \begin{align*} &E\bigg[\sum_{k=1}^{\infty} 1_{\{s<\tau_k \leq t\}}(f(\tau_k,a_k,X(\tau_k-))-\phi(\tau_k,X(\tau_k-)))\bigg|\mathscr{F}_s\bigg]\\&=E\bigg[E\bigg[\sum_{k=1}^{\infty} 1_{\{s<\tau_k \leq t\}}(f(\tau_k,a_k,X(\tau_k-))-\phi(\tau_k,X(\tau_k-)))\bigg|\mathscr{F}_{\tau_k-}\bigg]\bigg|\mathscr{F}_s\bigg]\\&=E\bigg[\sum_{k=1}^{\infty} 1_{\{s<\tau_k \leq t\}}E\bigg[(f(\tau_k,a_k,X(\tau_k-))-\phi(\tau_k,X(\tau_k-)))\bigg|\mathscr{F}_{\tau_k-}\bigg]\bigg|\mathscr{F}_s\bigg]=0.
\end{align*}
\end{proof}
   \subsubsection{Proof of Lemma \ref{ornsteinuhlenbecktightnesscondition}}
   \begin{proof}
    Applying the C-tightness criterion (see, e.g., \cite{Skorokhodtopologychapter}, Proposition 3.26), condition \ref{jacodboundedness} from the statement of the Lemma along with the following condition imply C-tightness in our case:
\begin{enumerate}[i]
\setcounter{enumi}{1}
    \item \label{jacodcontinuity} (Controlled Oscillations) For each $M\in \N$, $\epsilon>0,$ $\eta>0,$ there exists some $m_0 \in \N$ and $\theta>0$ such that $$m\geq m_0\implies P^m(\sup_{t\in [0,M-\theta]}\sup_{\delta \in [0,\theta)}|X^m(t+\delta)-X^m(t)|>\eta)\leq \epsilon.$$
\end{enumerate}
We will first prove \ref{jacodboundedness} when \eqref{tightnesslemmaineq} holds, and then prove \ref{jacodcontinuity} when \eqref{tightnesslemmaeq} also holds.
We will use the integral form of the Gr{\"o}nwall Inequality (see, e.g. \cite{corlay}, Lemma 3.1) for locally finite measures to prove \ref{jacodboundedness}.
 Let $m,M\in \N.$ Define 
 \begin{equation}
C^m_M:=e^{R^m(M)\sup_{t\leq M}\sup_{x\leq t}|f^m(x,t)|}\vee \sup_{0 \leq t \leq M}|U^m(t)| \vee \sup_{t\leq M}\sup_{s\leq t}|f^m(s,t)|\vee R^m(M). \label{CMdef}
\end{equation}
It follows from continuity of the exponential function and compact containment of $f^m(\cdot,\cdot), R^m(\cdot),U^m(\cdot)$ that $C^m_M$ will also be compactly contained. 
 The Gr{\"o}nwall Inequality for locally finite measures says that if the integral $\int_{[a,t)} |u(s)|d \mu(s)$ is well-defined on $[0,T]$ and 
 $$0 \leq  u(t) \leq x(t)+\int_{[a,t)} u(s) \mu(ds),$$
on $[0,T]$, and the function $x(\cdot)$ is nonnegative, then $u(\cdot)$ satisfies
$$ u(t) \leq x(t) + \int_{[a,t)} x(s)e^{\mu(s,t)}\mu(ds).$$
Substituting $|X(\cdot)|$ for $u(\cdot),$ $U^m(\cdot)$ for $x(\cdot),$ and the Lebesgue-Stieltjes measure induced by the function $R^m(s)\sup_{t\leq M}\sup_{x\leq t}|f^m(x,t)|$ for $\mu$, we may conclude that, in our setting,
\begin{align*}
    &\sup_{t\leq M} |X^m(t)| \\&\leq \sup_{t\leq M}U^m(t)+\sup_{t\leq M}\left(\int_0^t U^m(s) e^{\int_s^t \sup_{t\leq M}\sup_{x\leq t}|f^m(x,t)|dR^m(r)}d\sup_{t\leq M}\sup_{x\leq t}|f^m(x,t)|dR^m(s)\right) \\&\leq C_M^m + (C_M^m)^4
\numberthis \label{firstboundtightnesslemmaproof}
\end{align*}
Compact containment of $X^m(\cdot)$ follows.

We continue to the continuity condition, \ref{jacodcontinuity}. Let $M\in \N,$ $\epsilon>0,$ $\eta>0,$ $\theta>0,$  and $m\in \N.$ Then we see that, in the case of equality, using \eqref{CMdef} and applying the same Gr\"{o}nwall argument that was used to obtain \eqref{firstboundtightnesslemmaproof} to the process $X^m(t+ \cdot)-X^m(t),$
 \begin{align*}
     &\sup_{t\in [0,M-\theta]}\sup_{\delta \in [0,\theta)}|X^m(t+\delta)-X^m(t)| \\&\leq \sup_{t\in [0,M-\theta]}\sup_{\delta \in [0,\theta)}\int_t^{t+\delta} \sup_{t \leq M}\sup_{x\leq t}|f^m(x,t)||X^m(w)|dR^m(w) +\sup_{t\in [0,M-\theta]}\sup_{\delta \in [0,\theta)}|U^m(t+\delta)-U^m(t)|\\
     & \leq \sup_{t\in [0,M-\theta]}\sup_{\delta \in [0,\theta)}|R^m(t+\delta)-R^m(t)| ((C_M^m)^2+(C_M^m)^5)+  \sup_{t\in [0,M-\theta]}\sup_{\delta \in [0,\theta)}|U^m(t+\delta)-U^m(t)|.
 \end{align*}
The continuity condition \ref{jacodcontinuity} then follows from the continuity condition \ref{jacodcontinuity} holding for $U^m(\cdot)$ and $R^m(\cdot).$
\end{proof}
\subsubsection{Proof of CLT for Renewal Driven Systems}
We now prove Theorem \ref{diffusionclt}.
We will do so by proving a series of Lemmas that give convergence of each type of term in \eqref{renewaldrivenequation}.
For the remainder of this section, we will assume the assumptions of Theorem \ref{diffusionclt}.
In particular, let   $ (\hat{\boldsymbol{X}}^m(\cdot)$ $,\hat{\boldsymbol{J}}^m(\cdot),$ ${\boldsymbol{Y}}^m_1(\cdot),...,{\boldsymbol{Y}}^m_A(\cdot),$ $\boldsymbol{b}^1(\cdot),...,\boldsymbol{b}^A(\cdot),$ $\boldsymbol{h}^{1,m}(\cdot),...,\boldsymbol{h}^{A,m}(\cdot),$ ${{E}}^m_1({g}^m_1(\cdot)),...,{{E}}^m_A({g}^m_A(\cdot)),$ $ \boldsymbol{r}^{1},...,\boldsymbol{r}^{A} $,
    $c_1^m,...,c^m_A)$ in $D(\R_{+}, (\R^d)^{2+3A}\times\R^{A}) )\times\left({C_b(\R,\R)}^{d}\right)^{2A}\times \R^A$ be such a system. We first decompose the diffusion-scaled renewal processes into martingale and bounded variation parts, as described in \eqref{martingalepartofEhat} and \eqref{remainderpartofEhat}.
    This allows us to write equation \eqref{renewaldrivenequation} as
     \begin{align*}
    \hat{\boldsymbol{X}}^m(\cdot) &= \hat{\boldsymbol{X}}^m(0) +\sum_{i=1}^A \hat{\boldsymbol{Y}}^m_i(\cdot) +\sum_{i=1}^A \int_0^{\cdot}\boldsymbol{b}^i(s) dc_i^m\hat{{O}}^m_i(\bar{g}^m_i(s)) + \sum_{i=1}^A \int_0^{\cdot}\boldsymbol{b}^i(s) dc_i^m\hat{{R}}^m_i(\bar{g}^m_i(s))\\& + \sum_{i=1}^A\int_0^t \boldsymbol{r}^{i}(\hat{\boldsymbol{X}}^m(s))ds + \sum_{i=1}^{A}\int_0^{\cdot}\boldsymbol{h}^{i,m}(s)\hat{\boldsymbol{X}}^m(s-)d\bar{{E}}^m_i(\bar{g}^m_i(s))+\hat{\boldsymbol{J}}^m(\cdot)
    \numberthis\label{firsteqncltproof}
   \end{align*} 
For our situation, observe that $\{\hat{O}^m_i(\bar{g}^m_i(t)),\mathscr{F}^m_t: t \geq 0 \}$ satisfies the conditions Proposition \ref{martingaledecompositionprop} with $f(t,y,x)=\frac{1}{\sqrt{m}}\frac{1}{E[x_1^{i,m}]}y$, and is thus a martingale.
\begin{lem}
\label{dremainderconv}
    Terms of the form
  $c_j^m\int_0^{\cdot}b_{i}^j(s)d\hat{R}^m_{j}(\bar{g}^m_j(s)), $ $j \in [J],$ converge to zero in probability, uniformly on compact sets. 
\end{lem}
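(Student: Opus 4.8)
The plan is to unpack the definition of $\hat{R}^m_j$ from \eqref{remainderpartofEhat} and show that the integrand $b_i^j(\cdot)$, being deterministic and of locally finite variation, cannot spoil the convergence, so that everything reduces to the fact that $\hat{R}^m_j(\bar g^m_j(\cdot)) \Rightarrow 0$ uniformly on compacts. First I would recall that
\begin{equation*}
\hat{R}^m_j(t) = \frac{1}{E[x_1^{j,m}]}\frac{1}{\sqrt{m}}\bigl(r_j(mt) - x_0^{j,m}\bigr),
\end{equation*}
where $r_j(\cdot)$ is the forward-recurrence-time (residual life) process of the renewal process $E_j^m(\cdot)$, so that $0 \le r_j(mt) \le x_{N+1}^{j,m}$ where $N = E_j^m(mt)$ is the number of renewals by time $mt$. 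Time-changing by $\bar g^m_j(\cdot)$ only replaces $t$ by $\bar g^m_j(t) \le \bar g^m_j(T)$ on $[0,T]$, and by Assumption 2 of Theorem \ref{diffusionclt} the $\bar g^m_j$ are C-tight (they converge in distribution to continuously differentiable limits), so they are compactly contained; hence it suffices to bound $\sup_{t \le T'} |\hat R^m_j(t)|$ for a fixed deterministic $T'$.

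Next I would establish $\sup_{t\le T'}|\hat R^m_j(t)| \to 0$ in probability. Since $E[x_1^{j,m}]\to \iota_j^{-1} \in (0,\infty)$, the prefactor is bounded, so it is enough to show $\frac{1}{\sqrt m}\sup_{t\le T'} r_j(mt) \to 0$ in probability (the term $x_0^{j,m}/\sqrt m \to 0$ by Assumption \ref{assumptions}(iv), or more simply because a single interevent time divided by $\sqrt m$ vanishes under the third-moment bound). A crisp way to control $\sup_{t\le T'} r_j(mt)$ is to observe it is bounded above by $\max_{1\le n \le E_j^m(mT')+1} x_n^{j,m}$, i.e.\ the largest interevent time among the (order $mT'$) renewals occurring by time $mT'$. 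Using the uniform third-moment bound $\sup_m E[|x_1^{j,m}|^3] < \infty$ from Assumption \ref{thirdmomentbound}, together with the FLLN $\bar E^m_j(T') \Rightarrow \iota_j T'$ which makes $E_j^m(mT')+1 \le C m$ with high probability, a union bound gives
\begin{equation*}
P\Bigl(\max_{1\le n\le Cm} x_n^{j,m} > \epsilon\sqrt m\Bigr) \le Cm \cdot \frac{\sup_m E[|x_1^{j,m}|^3]}{\epsilon^3 m^{3/2}} = O(m^{-1/2}) \to 0,
\end{equation*}
so $\frac{1}{\sqrt m}\sup_{t\le T'} r_j(mt) \to 0$ in probability, hence $\sup_{t\le T'}|\hat R^m_j(t)| \Rightarrow 0$.

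Finally I would transfer this to the stochastic integral. Since $b_i^j(\cdot)$ is deterministic of locally finite variation, integration by parts (Lebesgue--Stieltjes) gives
\begin{equation*}
c_j^m\int_0^t b_i^j(s)\,d\hat R^m_j(\bar g^m_j(s)) = c_j^m\Bigl( b_i^j(t)\hat R^m_j(\bar g^m_j(t)) - \int_0^t \hat R^m_j(\bar g^m_j(s))\,db_i^j(s)\Bigr),
\end{equation*}
and both terms are bounded in absolute value on $[0,T]$ by $|c_j^m|\bigl(\|b_i^j\|_T + \mathrm{Var}_{[0,T]}(b_i^j)\bigr)\sup_{s\le T}|\hat R^m_j(\bar g^m_j(s))|$. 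Because $c_j^m \to c_j$ is bounded and the bracketed variation factor is a fixed finite constant, this upper bound tends to $0$ in probability by the previous step, uniformly in $t\le T$; as $T$ is arbitrary, the convergence is uniform on compact sets. The main obstacle is the estimate on $\sup_{t\le T'} r_j(mt)/\sqrt m$: one must be careful that the supremum of the residual-life process over a growing time horizon does not blow up, which is exactly where the uniform third-moment bound (Assumption \ref{thirdmomentbound}) and the FLLN control on the number of renewals are needed, rather than a pointwise-in-$t$ argument.
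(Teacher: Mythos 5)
Your proof is correct, but it takes a genuinely different route from the paper's. The paper attacks the integral directly: it applies the substitution formula for Lebesgue--Stieltjes integrals to undo the time change, writes $\int b\,d\hat{R}^m_j$ as a telescoping sum over interevent intervals (pairing each upward jump of size $x_l^{j,m}/\sqrt{m}$ with the downward drift it cancels, via the mean value theorem for integrals), and bounds the result by $\frac{1}{E[x_1^{j,m}]}\frac{\max_l x_l^{j,m}}{\sqrt{m}}\bigl(TV(b_i^j)_{[0,T]}+C\bigr)$. You instead prove the stronger statement that the integrator itself vanishes, $\sup_{t\le T}|\hat{R}^m_j(\bar{g}^m_j(t))|\to 0$ in probability, and then transfer this to the integral by integration by parts, paying $\|b_i^j\|_T + \mathrm{Var}_{[0,T]}(b_i^j)$. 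Both arguments bottom out in the identical probabilistic estimate — the maximum of $O(m)$ i.i.d. interevent times is $o(\sqrt{m})$ under the uniform third-moment bound, with the number of renewals controlled by the FLLN — so neither buys extra generality; yours is arguably cleaner and avoids the generalized-inverse bookkeeping, while the paper's avoids invoking integration by parts and the boundary term $b(t)\hat{R}^m_j(\bar{g}^m_j(t))$. Two small points to tighten: the boundary term at $0$ vanishes because $\hat{R}^m_j(\bar{g}^m_j(0))=\hat{R}^m_j(0)=0$, which is worth saying since you keep only the $t$-endpoint; and the initial delay $x_0^{j,m}$ must be controlled via Assumption \ref{assumptions}(\ref{fllnassumption1}) (your parenthetical ``more simply because a single interevent time divided by $\sqrt{m}$ vanishes under the third-moment bound'' does not cover $x_0^{j,m}$, since Assumption \ref{thirdmomentbound} of Theorem \ref{diffusionclt} bounds only the moments of $x_1^{i,m}$, not of the delay — the paper's own proof has the same implicit reliance, hidden in its $o\bigl((x_1+x_{\bar{E}^m_j(\bar{g}^m_j(t))})/\sqrt{m}\bigr)$ error term).
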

\begin{proof}

  Fix a $T >0.$
  Let $\bar{f}^{m}_j$ be the generalized inverse of $\bar{g}^m_j$ on $[0,T]$ as in \cite{lssubrule},
  $$\bar{f}^{m}_j(s):=\inf \{x \in [0,T]: s \leq \bar{g}^m_j(x)\}.$$
  Then, using the substitution formula for Lebesgue-Stieltjes integrals (see, e.g., \cite{lssubrule}, Proposition 1), we see that for $i,j \in [A], t\in [0,T],$ 
  \begin{align*}
      &\int_0^t b_{i}^j (s)d\hat{R}^m_j(\bar{g}_j^m(s)) = \int_0^{\bar{g}^m_j(t)} b_{i}^j (\bar{f}_j^{m}(s))d\hat{R}^m_j(s)
      \end{align*}
  Then, applying \eqref{remainderpartofEhat} and \eqref{rtdef}, the above expression equals
  \begin{align*}
\int_0^{\bar{g}^m_j(t)} b_{i}^j (\bar{f}_j^{m}(s))d\frac{1}{E[x_1^{j,m}]}\sqrt{m}\left(\sum_{l=1}^{E^m_j(ms)}\frac{x_l^{j,m}}{m}-s
      \right) .
      \end{align*}
  Using the fact that $\bar{g}^m_j(\tau_{l}^{j,m}/m), l \in \N,$ are the jump times of the process $\bar{E}_j^m(s),$ we expand to
  \begin{align*}
      &\frac{1}{E[x_1^{j,m}]}\sqrt{m}\sum_{{\tau}_l^{j,m}/m\in (0,t]}\left( b_{i}^j(\bar{f}_j^{m}(\bar{g}_j^m({\tau}_{l}^{j,m}/m)))\frac{x_l^{j,m}}{m} - \int_{\bar{g}^m_j({\tau}_{l}^{j,m}/m)}^{\bar{g}^m_j({\tau}_{l}^{j,m}/m)+x_l^{j,m}/m}b_{i}^j(\bar{f}_j^{m}(s))ds\right)\\&+o\left(\frac{x_1+x_{\bar{E}_j^m(\bar{g}^m_j(t))}}{\sqrt{m}}\right)\\
      \end{align*}
  where the error term $o\left(\frac{x_1+x_{\bar{E}_j^m(\bar{g}^m_j(t))}}{\sqrt{m}}\right)$ arises from the integral (ds) up to the first arrival and the integral (ds) after the last arrival but before time $t$ that are under- and over- covered, respectively, by the second term in the sum in the previous display.    Finally, applying  the mean value theorem for integrals for some   ${{\sigma}_l^{j,m} \in [\bar{g}^m_j({\tau}_{l}^{j,m}/m),\bar{g}^m_j({\tau}_{l}^{j,m}/m)+x_l^{j,m}/m],} $ we obtain the following bound
      \begin{align*}
       &\int_0^t b_{i}^j (s)d\hat{R}^m_j(\bar{g}_j^m(s)) \\
      & = \frac{1}{E[x_1^{j,m}]}\sum_{{\tau}_l^{j,m}/m\in (0,t]}\frac{x_l^{j,m}}{\sqrt{m}}\left( b_{i}^j(\bar{f}_j^{m}(\bar{g}_j^m({\tau}_{l}^{j,m}/m))) -  b_{i}^j(\bar{f}_j^{m}({\sigma}_{l}^{j,m}))\right)+o\left(\frac{x_1+x_{\bar{E}_j^m(\bar{g}^m_j(t))}}{\sqrt{m}}\right)\\
      &\leq \frac{1}{E[x_1^{j,m}]}\frac{\max\{{x_l^{j,m}}:l \leq E_j(mT)\}}{\sqrt{m}}(TV(b_{i}^j)_{[0,T]}+C)
      \numberthis \label{tvcalc}
  \end{align*}
  where the fourth line in follows from the TV stands for total variation.
It follows from known bounds on the maximum of sequences of i.i.d random variables, (see, e.g. \cite{downey}) and tightness of $\bar{E}_j^m(\cdot)$ that $\frac{\max\{{x_l^{j,m}}:l \leq E_j(mT)\}}{\sqrt{m}}$ will go to zero in probability if $x_l^{j,m}$ have a uniform bound on the first, second, and third moment, as is assumed in the assumptions of Theorem \ref{diffusionclt}.
In particular, applying Theorem 3 of that work \cite{downey} with $p=3$ and Markov's Inequality, we see that for $\epsilon,N >0,$
\begin{align*}
    P\left(\frac{1}{\sqrt{m}}\max_{1 \leq l \leq mN} x_{l}^{j,m}>\epsilon\right) &\leq \frac{1}{\epsilon} E\left[\frac{1}{\sqrt{m}}\max_{1 \leq l \leq mN} x_{l}^{j,m}\right]\\ &\leq \frac{1}{\epsilon}\frac{\sqrt[3]{m}}{\sqrt{m}}\left( \sup_m E[ x_{1}^{j,m}]+\sup_mE[|x_1^{j,m}-E[x_1^{j,m}]|^3]\right)\sqrt[3]{N}\rightarrow^m 0. \numberthis \label{thirdmomentultimatePbound}
\end{align*}
Fixing $\epsilon, \eta,$ it follows from compact containment of $\bar{E}^m(T)$ that there exists some $N_{\epsilon,\eta}$ such that \\$P(\sup_{m}|\bar{E}^m(T)|<N_{\epsilon,\eta}) \geq 1 - \eta/2.$
Choosing $m$ large enough that \eqref{thirdmomentultimatePbound} is less than $\eta/2$ for this choice of $\epsilon, N_{\epsilon,\eta},$ the claim is proven. 
Thus, the result follows from \eqref{tvcalc} and the assumption of locally finite variation of $(\boldsymbol{b}^1(\cdot),...,\boldsymbol{b}^A(\cdot))$ (\ref{locallyfinitevarassumption} of the assumptions of this theorem).
\end{proof}  
  We now examine the martingale terms.
  
  \begin{lem}

  \label{martingaleconv}
  Terms of the form
  $$\sum_{i=1}^A \hat{\boldsymbol{Y}}^m_i(\cdot) +\sum_{i=1}^A \int_0^{\cdot}\boldsymbol{b}^i(s) dc_i^m\hat{{O}}^m_i(\bar{g}^m_i(s)) $$ converge to
  \begin{equation}
      \sum_{i=1}^A\int_0^{\cdot}\sqrt{D_i(s)} d\boldsymbol{W}_i(\gamma(s))+\sum_{i=1}^A \int_0^{\cdot}\boldsymbol{b}(s)c_i\iota_i^{3/2}\sigma_i\sqrt{g_i'(s)} d\tilde{W}_i(s),
      \label{lemm43eqn}
  \end{equation}
  as defined in Theorem \ref{diffusionclt}.
\end{lem}
\begin{proof}
We would like to apply the Martingale Central Limit Theorem (see e.g., \cite{ethierandkurtz}, Chapter 7, Theorem 1.4 part b) to obtain the limit of these terms.
		We will view the martingale term as a vector-valued martingale:
		$(\int_0^{\cdot}b_{1}^1(s)dc_1^m\hat{O}^m_{1}(\bar{g}^m_1(s)),$ $...,$ $\int_0^{\cdot}b_{d}^1(s)dc_1^m\hat{O}^m_{1}(\bar{g}^m_1(s)), $ $...$ $\int_0^{\cdot}b_{1}^A(s)dc_A^m\hat{O}^m_{A}(\bar{g}^m_A(s))$ $...$ $\int_0^{\cdot}b_{d}^A(s)dc_A^m\hat{O}^m_{A}(\bar{g}^m_A(s)),$ $ \hat{Y}^m_{1,1},$ $...,$ $\hat{Y}^m_{1,d},$  $...,$ $\hat{Y}^m_{A,1},...,\hat{Y}^m_{A,d}).$

		In order to apply the theorem, we need to calculate the predictable quadratic covariation matrix of this vector of martingales, which we will denote $M^m(\cdot).$
		Because pairs of martingales of the form $(\hat{\boldsymbol{\mart}}^m_{i},\hat{\boldsymbol{\mart}}^m_{j}),$ $(\hat{{O}}_i,\hat{{O}}_j),$ or $(\hat{{\mart}}^m_{i,l},\hat{{O}}^m_{j}),$ for $i \neq j \in [A],l\in [d]$ are pure jump processes with no shared jumps, it follows that the predictable quadratic covariation of any such pair is zero.
		Therefore, the bottom right $dA\times dA$ portion of $M^m(\cdot)$ is simply the block diagonal of $C_1^m(\cdot),...,C_A^m(\cdot).$ 
Along similar lines, for terms of the form $\int_0^{\cdot}b_{1}^1(s)dc_1^m\hat{O}^m_{1}(\bar{g}^m_1(s)),$ $...,$ $\int_0^{\cdot}b_{d}^1(s)dc_1^m\hat{O}^m_{1}(\bar{g}^m_1(s)),$ $...,$ $\int_0^{\cdot}b_{1}^A(s)dc_A^m\hat{O}^m_{A}(\bar{g}^m_A(s)), $ $...,$ \\$\int_0^{\cdot}b_{d}^A(s)dc_A^m\hat{O}^m_{A}(\bar{g}^m_A(s))),$
we find that 
\begin{align*}
    &\left\langle\int_0^{\cdot}b_{j}^i(s)dc_i^m\hat{O}^m_{i}(\bar{g}^m_i(s)), \int_0^{\cdot}b_{k}^l(s)dc_l^m\hat{O}^m_{l}(\bar{g}^m_l(s))\right\rangle_t \\&= 1_{\{i=l\}}\int_0^t b_{j}^i(s)b_{k}^i(s)(c_i^m)^2d\langle\hat{O}_i^m(\bar{g}^m_i(\cdot))\rangle_s \hspace{8mm} t \geq 0,
\end{align*}
(see, e.g. \cite{protter}, Chapter 6, particularly Theorem 29, for background on the identities used to calculate these predictable quadratic covariations). 
The only covariations that we have not yet calculated are of the form
$$\left\langle \int_0^{\cdot}b_{j}^i(s)dc_i^m\hat{O}^m_{i}(\bar{g}^m_i(s)), \hat{Y}_{i,k}^m(\cdot)\right\rangle.$$
 Following the Remark 1.5 in the same reference \cite{ethierandkurtz}, in the expository commentary just after the cited Martingale Central Limit theorem (Theorem 1.4 of Chapter 7), we observe that
 \begin{align*}
     &\left\langle \int_0^{\cdot}b_{j}^i(s)dc_i^m\hat{O}^m_{i}(\bar{g}_i^m(\cdot)),\hat{Y}_{i,k}^m(\cdot)\right\rangle_s \\&= \sum_{n=1}^{m\bar{E}^m_i(\bar{g}^m_i(s))}E[b_{j}^i(\tau^i_n/m)c_i^m(\hat{O}_i^m(\tau^i_n/m)-\hat{O}_i^m(\tau^i_{n-1}/m))(\hat{\mart}_{i,k}^m(\tau^i_{n}/m)-\hat{\mart}_{i,k}^m(\tau^i_{n-1}/m))| \mathscr{F}_{\tau^{i}_{n}-}]\\
     & = \frac{1}{\sqrt{m}}\sum_{n=1}^{m\bar{E}^m_i(\bar{g}^m_i(s))}E\left[b_{j}^i(\tau^i_n/m)c_i^m\left(1 - \frac{x_n^{i,m}}{E[x_n^{i,m}]}\right)(\hat{\mart}_{i,k}^m(\tau^i_{n}/m)-\hat{\mart}_{i,k}^m(\tau^i_{n-1}/m))\bigg| \mathscr{F}_{\tau^{i}_{n}-}\right]\\
     & = \frac{1}{\sqrt{m}}\sum_{n=1}^{m\bar{E}^m_i(\bar{g}_i^m(s))}E\left[\left(1 - \frac{x_n^{i,m}}{E[x_n^{i,m}]}\right)\right]E\left[b_{j}^i(\tau^i_n/m)c_i^m(\hat{Y}_{i,k}^m(\tau_n^i/m)-\hat{Y}_{i,k}^m(\tau_{n-1}^i/m))\bigg|\mathscr{F}_{\tau_{n}^i-}\right]=0
 \end{align*}
where the last line follows from the assumption of independence of $x_n^{i,m}$ from $\mathscr{F}_{\tau_{n}^i-}$ and $Y_{i,k}(\tau_n^i)-Y_{i,k}(\tau_{n-1}^i)$ (see Definition \ref{renewaldrivensystemdef} for a full list of assumptions) as well as the fact that $\tau_n^i$ is $\mathscr{F}_{\tau_{n}^i-}$-measurable and $b_{j}^i(\cdot)$ is continuous. 
We conclude that $M^m(\cdot)$ is a block diagonal matrix where the first $A$ blocks are of the form
$$(U_i^{m})_{n,l}(\cdot)=\int_0^{\cdot} b_{n}^i(s)b_{l}^i(s)(c_i^m)^2d\langle \hat{O}_i^m(\bar{g}^m_i(\cdot))\rangle_s$$ for $i \in [A],$ $n,l \in [d]$ and the last $A$ blocks are of the form $C_1^m(\cdot),...,C_A^m(\cdot).$
Applying a the random time change theorem, we see that
\begin{align*}
    \langle \hat{O}_i^m(\bar{g}^m_i(\cdot))\rangle_{\cdot}
    &= \frac{1}{m}\sum_{l=1}^{m \bar{E}_i^m(\bar{g}^m_i(\cdot))}E\left[\left(1 - \frac{x_l^{i,m}}{E[x_l^{i,m}]}\right)^2\bigg| \mathscr{F}_{\tau_l^i-}\right]\\
    &=\bar{E}_i^m(\bar{g}_i^m(\cdot))Var\left(1 - \frac{x_1^{i,m}}{E[x_1^{i,m}]}\right) \Rightarrow \iota_i^3 \sigma_i^2\bar{g}_i(\cdot)
    \numberthis \label{quadraticvariationcalc}
\end{align*}
It then follows from a standard real analysis argument (one may take a Skorokhod representation to work with the pathwise limits) that $M^m(\cdot)$ converges in distribution to the block diagonal matrix where the last $A$ blocks are $C_1(\cdot),...,C_A(\cdot)$ and the first $A$ blocks are of the form
$$(U_i)_{n,l}(\cdot)=\int_0^\cdot b_{n}^i(s)b_{l}^i(s)c_i^2\iota_i^3\sigma_i^2\bar{g}'_i(s)ds$$ for $i \in [A],$ $n,l \in [d], t\geq 0.$ 
We also note that it follows from \eqref{quadraticvariationcalc} that the jumps of these entries of $M^m(\cdot)$ are all of size $\frac{1}{m}Var\left(1 - \frac{x_1^{i,m}}{E[x_1^{i,m}]}\right),$ and thus the bounded jumps condition (1.16) of the cited martingale central limit theorem is satisfied for this portion of the matrix as well.
Now that we have found the limiting behavior of the predictable quadratic covariation, following the cited theorem, the last step is to check that
$\lim_{m\rightarrow \infty} E\left[\sup_{t\leq T}|\boldsymbol{R}^m(t)-\boldsymbol{R}^m(t-)|^2\right]\rightarrow 0$ where $\boldsymbol{R}^m(\cdot)$ is the given vector of martingales.
Recall that we have assumed that, for $T>0,$ $\lim_{m \rightarrow \infty}E[\sup_{t \in [0,T]}|\hat{\boldsymbol{Y}}_i^m(t)-\hat{\boldsymbol{Y}}_i^m(t-)|^2]=0$ in bullet \ref{martingalecltsecondassumption} of the assumptions for this theorem. 
Next, we note that
\begin{align*}
   & \sup_{t\leq T}\bigg|\int_0^{t}{b}_{j}^i(s) dc_i^m\hat{{O}}^m_i(\bar{g}^m_i(s))- \int_0^{t-}{b}_{j}^i(s) dc_i^m\hat{{O}}^m_i(\bar{g}^m_i(s))\bigg|^2 \\&\leq ||b_{j}^i||_T^2 (c_i^m)^2\sup \left\{\frac{1}{m}\left(1 - \frac{x_l^{i,m}}{E[x_l^{i,m}]}\right)^2 :l \leq m \bar{E}^m_i(\bar{g}^m_i(T)) \right\}.
   \numberthis \label{analogousquant}
\end{align*}
which goes to zero in expectation by established bounds maximum of a sequence of i.i.d. random variables (see, e.g., \cite{downey}, Theorem 3, with $p=3$).
A similar argument is included in detail in the proof of Lemma \ref{dremainderconv} for the convergence of \eqref{tvcalc}.
Therefore, this martingale satisfies condition (b) of the Martingale Central Limit Theorem given in \cite{ethierandkurtz}, Theorem 1.4 of Chapter 7.
Because $M(\cdot)$ is continuous in each coordinate and, as the limit of symmetric, positive-valued, positive semidefinite matrices, it is positive semidefinite as well, the theorem applies.
Thus, we find that the martingale vector $(\int_0^{\cdot}b_{1}^1(s)dc_1^m\hat{O}^m_{1}(\bar{g}^m_1(s)),$ $...,$ $\int_0^{\cdot}b_{d}^1(s)dc_1^m\hat{O}^m_{1}(\bar{g}^m_1(s)), $ $...$ $\int_0^{\cdot}b_{1}^A(s)dc_A^m\hat{O}^m_{A}(\bar{g}^m_A(s))$ $...$ $\int_0^{\cdot}b_{d}^A(s)dc_A^m\hat{O}^m_{A}(\bar{g}^m_A(s)),$ $ \hat{Y}^m_{1,1},$ $...,$ $\hat{Y}^m_{1,d},$  $...,$ $\hat{Y}^m_{A,1},...,\hat{Y}^m_{A,d}).$ converges in distribution to a process of the form $\int_0^{\cdot} \sqrt{N(s)}d \boldsymbol{W}(\psi(s)),$ where the matrix square-root is the unique symmetric square-root and $N(\cdot)$ and $\psi(\cdot)$ are such that $\int_0^{\cdot} N_{l,j}(s)d\psi(s) = M_{l,j}(\cdot).$
Exploiting the block diagonal form of $M(\cdot),$ we obtain
$$\sum_{i=1}^A \hat{\boldsymbol{Y}}^m_i(\cdot) +\sum_{i=1}^A \int_0^{\cdot}\boldsymbol{b}^i(s) dc_i^m\hat{{O}}^m_i(\bar{g}^m_i(\cdot)) \Rightarrow \sum_{i=1}^A\int_0^{\cdot}\sqrt{D_i(s)} d\boldsymbol{W}_i(\gamma(s))+\sum_{i=1}^A \int_0^{\cdot}\sqrt{U_i(s)} d\tilde{\boldsymbol{W}}_i(s).$$
Finally, we note that we may take any matrix square-root of the matrix $U_i$ above and get a process that is the same, in distribution, as the limit process.
Thus, to obtain the final form of \eqref{lemm43eqn}, we take the most natural matrix-square root $$(\sqrt{U_i})_{n,l}:=1_{\{n=1\}}b_l(s)c_i\iota_i^{3/2}\sigma_i\sqrt{g_i'(s)}.$$
\end{proof}
\begin{lem}
\label{xhatinintconv}
For each $i \in [A],$ $$ \int_0^{\cdot}\boldsymbol{h}^{i,m}(s)\hat{\boldsymbol{X}}^m(s-) d\bar{{E}}^m_i(\bar{g}^m_i(s))\Rightarrow \int_0^{\cdot}\boldsymbol{h}^i(s)\hat{\boldsymbol{X}}(s-) d\bar{{E}}_i(\bar{g}_i(s)).$$
\end{lem}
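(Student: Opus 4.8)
The plan is to read this convergence off as a convergence-of-stochastic-integrals statement in which the integrator $\bar{E}^m_i(\bar{g}^m_i(\cdot))$ is a nondecreasing, pure-jump process whose limit is the absolutely continuous deterministic function $s\mapsto\iota_i\bar{g}_i(s)$. Throughout I would work along the subsequence fixed in the statement of Theorem \ref{diffusionclt}, on which $\hat{\boldsymbol{X}}^m\Rightarrow\hat{\boldsymbol{X}}$ with $\hat{\boldsymbol{X}}$ continuous, passing to a further subsequence if necessary so that all relevant processes converge jointly; this is possible since each is tight ($\boldsymbol{h}^{i,m}\Rightarrow\boldsymbol{h}^i$ by assumption \ref{fcltdiffassumption}, and $\bar{g}^m_i\Rightarrow\bar{g}_i$, $\bar{E}^m_i\Rightarrow\iota_i(\cdot)$ by the time-change and functional-law-of-large-numbers hypotheses of the theorem). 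Because $\bar{g}_i$ and $s\mapsto\iota_i s$ are continuous and deterministic, composition is continuous at this limit, so $\bar{E}^m_i(\bar{g}^m_i(\cdot))\Rightarrow\iota_i\bar{g}_i(\cdot)$; as that limit is deterministic, marginal convergence upgrades to joint convergence and I obtain
$$\bigl(\boldsymbol{h}^{i,m}(\cdot),\ \hat{\boldsymbol{X}}^m(\cdot),\ \bar{E}^m_i(\bar{g}^m_i(\cdot))\bigr)\ \Rightarrow\ \bigl(\boldsymbol{h}^i(\cdot),\ \hat{\boldsymbol{X}}(\cdot),\ \iota_i\bar{g}_i(\cdot)\bigr)$$
in $D(\R_+,\R^d\times\R^d\times\R)$. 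Since $\hat{\boldsymbol{X}}$ has continuous paths, multiplication is continuous at $(\boldsymbol{h}^i,\hat{\boldsymbol{X}})$, so also $\boldsymbol{h}^{i,m}\hat{\boldsymbol{X}}^m\Rightarrow\boldsymbol{h}^i\hat{\boldsymbol{X}}$.

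Second, I would verify the uniform tightness (UT) condition of \cite{protterkurtz}, Definition 7.4, for the integrator $\bar{E}^m_i(\bar{g}^m_i(\cdot))$: it is nondecreasing, so its total variation on $[0,t]$ is $\bar{E}^m_i(\bar{g}^m_i(t))$, whence $|\int_0^t H^m_{s-}\,d\bar{E}^m_i(\bar{g}^m_i(s))|\leq\bar{E}^m_i(\bar{g}^m_i(t))$ for every simple predictable $H^m$ with $|H^m|\leq 1$, and this bound is stochastically bounded uniformly in $m$ by the $C$-tightness from the first step. Theorem 7.10 of \cite{protterkurtz} then applies with integrand $(\boldsymbol{h}^{i,m}\hat{\boldsymbol{X}}^m)(\cdot)$ and integrator $\bar{E}^m_i(\bar{g}^m_i(\cdot))$ and yields
$$\int_0^{\cdot}(\boldsymbol{h}^{i,m}\hat{\boldsymbol{X}}^m)(s-)\,d\bar{E}^m_i(\bar{g}^m_i(s))\ \Rightarrow\ \int_0^{\cdot}(\boldsymbol{h}^i\hat{\boldsymbol{X}})(s)\,d\bigl(\iota_i\bar{g}_i(s)\bigr)$$
jointly with the inputs; since $\hat{\boldsymbol{X}}$ is continuous the right side is $\int_0^{\cdot}\boldsymbol{h}^i(s)\hat{\boldsymbol{X}}(s-)\,d\bar{E}_i(\bar{g}_i(s))$, the asserted limit.

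Third, I would replace the left-continuous integrand used by Kurtz--Protter with the one appearing in \eqref{renewaldrivenequation}, namely $\boldsymbol{h}^{i,m}(s)\hat{\boldsymbol{X}}^m(s-)$. Writing the integral against the pure-jump integrator as a sum over the jump times $\tau^{i,m}_n$ of $\bar{E}^m_i(\bar{g}^m_i(\cdot))$, the difference of the two integrals is $\frac{1}{m}\sum_{n:\,\tau^{i,m}_n\leq\cdot}\bigl(\boldsymbol{h}^{i,m}(\tau^{i,m}_n)-\boldsymbol{h}^{i,m}(\tau^{i,m}_n-)\bigr)\hat{\boldsymbol{X}}^m(\tau^{i,m}_n-)$, which on $[0,T]$ is at most $\sup_{s\leq T}|\boldsymbol{h}^{i,m}(s)-\boldsymbol{h}^{i,m}(s-)|\cdot\bar{E}^m_i(\bar{g}^m_i(T))\cdot\sup_{s\leq T}|\hat{\boldsymbol{X}}^m(s)|$ in supremum norm; the last two factors are stochastically bounded and the first tends to $0$ in probability whenever the limit $\boldsymbol{h}^i$ is continuous (the case in our model, see the remark following Theorem \ref{diffusionclt}), so this discrepancy vanishes uniformly on compact time sets and the lemma follows.

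The hardest part will be the bookkeeping in the first step --- securing joint convergence of $\hat{\boldsymbol{X}}^m$, $\boldsymbol{h}^{i,m}$ and the time-changed renewal process along a single subsequence rather than merely marginal convergence, which rests on tightness of each component together with the integrator's limit being deterministic --- and, closely related, confirming the UT hypothesis; once these are arranged, the Kurtz--Protter continuity theorem does the remaining work.
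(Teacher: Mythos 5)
Your proposal is correct and follows essentially the same route as the paper, which likewise verifies the UT condition of Kurtz--Protter for the nondecreasing integrator $\bar{E}^m_i(\bar{g}^m_i(\cdot))$ (exactly as in the end of the proof of Corollary \ref{martingaleconvcor}) and then invokes their convergence theorem for stochastic integrals. Your additional care about joint convergence of the inputs and about reconciling the integrand $\boldsymbol{h}^{i,m}(s)\hat{\boldsymbol{X}}^m(s-)$ with the left-limit integrand required by that theorem only makes explicit details the paper leaves implicit.
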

\begin{proof}
This result follows from the theory presented in \cite{protterkurtz}, in particular, the fact that the sequence $\{\bar{{E}}^m_i(\bar{g}^m_i(\cdot))\}_{m=1}^{\infty}$ satisfies the UT condition in that paper. 
To see details, see the end of the proof of Corollary \ref{martingaleconvcor}, where the same argument is used.
\end{proof}
Finally, we prove Theorem \ref{diffusionclt}
\begin{proof}
We begin by noting that joint convergence in distribution of each term implies convergence of $\hat{\boldsymbol{X}}(\cdot)$ to a solution to the limiting SDEs (see, e.g., \cite{protterkurtz} Theorem 8.1).
Applying Lemmas \ref{xhatinintconv}, \ref{dremainderconv}, and \ref{martingaleconv} and examining \eqref{firsteqncltproof}, we see that all that is left to check is convergence of $$\int_0^{\cdot}\boldsymbol{r}^{i}(\hat{\boldsymbol{X}}^m(s))ds\rightarrow  \int_0^\cdot \boldsymbol{r}^{i}(\hat{\boldsymbol{X}}(s)) ds.$$
For this, we take a Skorokhod representation that includes all of the processes whose convergence we have established in this proof so we may work with almost sure convergence.
    This may need to take place on a different probability space, but since we are only interested in the limit in distribution, that suffices.
    We will continue to denote the Skorokhod representation using the same variables.
    Fix a realization on the almost sure set on which this convergence occurs.
    Then it follows from the continuous mapping theorem that $\boldsymbol{r}^{i}(\hat{\boldsymbol{X}}^m(\cdot))\rightarrow  \boldsymbol{r}^{i}(\hat{\boldsymbol{X}}(\cdot))$ in $D(\R_+, \R^d).$ Since the limit is continuous, this implies uniform convergence on compact sets.
    The limit of the integral term $\int_0^{\cdot}\boldsymbol{r}^{i}(\hat{\boldsymbol{X}}^m(s))ds\rightarrow  \int_0^t\boldsymbol{r}^{i}(\hat{\boldsymbol{X}}(s)) ds$ follows from uniform convergence of the integrands and a standard real analysis argument.
\end{proof}

\section{Representing the of Sequence Diffusion-Scaled Models as a Renewal-Driven System}
\label{renewaldrivensystemsect}
\subsection{Additional Fluid Model Results}
In this paper, we study each server individually, while in \cite{loeserwilliams}, the servers are studied in aggregate.
For this reason, we take a moment now to prove some results about the fluid limit of the service processes $\bar{\servp}^k(\cdot), k \in [K]$ that are analogous to the result proved for $\bar{\servp}(\cdot)$ in \cite{loeserwilliams}.
Because the servers are identical (and thus identical in distribution in the fluid limit), and they converge to deterministic functions, we will find that $\bar{\servp}^k(\cdot) = \frac{1}{K}\bar{\servp}(\cdot)$, which is the limit in the $K=1$ case of \cite{loeserwilliams}.
We also prove a useful lemma about the fluid model being bounded away from zero in the prelimit with high probability as $m\rightarrow \infty$.
\begin{lem}
    For each $T>0$,
    $$\liminf_{m \rightarrow \infty}P \{\awmass(\bar{\boldsymbol{\tm}}^m(t)) \in \R^+\setminus\{0\} \hspace{5mm} \forall t \in [0,T]\} = 1.$$
    \label{nonzerolemma}
\end{lem}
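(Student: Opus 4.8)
\emph{Proof sketch.} The plan is to compare $\awmass(\bar{\boldsymbol{\tm}}^m(\cdot))=\sum_{j=1}^{J}\tfrac{p_j}{\sr_j}\langle 1,\bar{\ssp}^m_j(\cdot)\rangle$ with the deterministic fluid path and to use that the limiting fluid model solution is overloaded, hence has total mass bounded away from $0$ on every compact interval. Let $\boldsymbol{\fl}(\cdot)$ denote the fluid model solution of Definition~\ref{notoverdef} with the (deterministic) initial condition $\bar{\boldsymbol{\ssp}}_0$ of Assumption~\ref{initialconditionsassumption}; since $\load>1$ and $\boldsymbol{\fl}_0=\bar{\boldsymbol{\ssp}}_0\neq\boldsymbol{0}$, Theorem~\ref{fluidlimittheorem} tells us that $\boldsymbol{\fl}$ is the unique such solution and that $\bar{\boldsymbol{\ssp}}^m(\cdot)\Rightarrow\boldsymbol{\fl}(\cdot)$. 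Set $g(t):=\awmass(\langle\boldsymbol{1},\boldsymbol{\fl}(t)\rangle)=\sum_{j=1}^{J}\tfrac{p_j}{\sr_j}\langle 1,\fl_j(t)\rangle$, which is continuous in $t$ because $t\mapsto\boldsymbol{\fl}(t)$ is continuous into $\M^J$. Note that $\awmass(\bar{\boldsymbol{\tm}}^m(t))\in\R^+\setminus\{0\}$ precisely when $\langle 1,\bar{\ssp}^m_j(t)\rangle>0$ for some $j$, since $p_j,\sr_j>0$.

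First I would transfer the fluid convergence to the scalar level. The total-mass functional $\xi\mapsto\langle 1,\xi\rangle$ is continuous on $\M$ equipped with the weak topology (as $1\in\mathbf{C}_b(\R_+)$) and $\awmass$ is linear, so applying the induced map at the level of $D([0,\infty),\cdot)$ yields $\awmass(\bar{\boldsymbol{\tm}}^m(\cdot))\Rightarrow g(\cdot)$ in $D([0,\infty),\R)$; equivalently, this convergence is already contained in the fluid limit established in \cite{loeserwilliams}. Since $g$ is continuous and deterministic, $J_1$-convergence upgrades to uniform convergence on compacts in probability: for each $T>0$, $\sup_{t\leq T}|\awmass(\bar{\boldsymbol{\tm}}^m(t))-g(t)|\to 0$ in probability as $m\to\infty$.

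Next I would verify that $g>0$ on $[0,T]$. At $t=0$, $g(0)=\awmass(\langle\boldsymbol{1},\bar{\boldsymbol{\ssp}}_0\rangle)>0$ because $\bar{\boldsymbol{\ssp}}_0$ is a nonzero vector of measures; for $t\in(0,T]$, part~(iv) of Definition~\ref{notoverdef} (which applies since $\load>1$) gives $\langle 1,\fl_j(t)\rangle>0$ for some $j\in[J]$, so $g(t)>0$. As $g$ is continuous on the compact interval $[0,T]$, $\delta:=\min_{t\in[0,T]}g(t)>0$. On the event $\{\sup_{t\leq T}|\awmass(\bar{\boldsymbol{\tm}}^m(t))-g(t)|<\delta/2\}$ one has $\awmass(\bar{\boldsymbol{\tm}}^m(t))>g(t)-\delta/2\geq\delta/2>0$ for every $t\in[0,T]$, and hence
\[
P\bigl(\awmass(\bar{\boldsymbol{\tm}}^m(t))\in\R^+\setminus\{0\}\ \ \forall\,t\in[0,T]\bigr)\ \geq\ P\Bigl(\sup_{t\leq T}|\awmass(\bar{\boldsymbol{\tm}}^m(t))-g(t)|<\delta/2\Bigr)\ \longrightarrow\ 1 .
\]
Taking $\liminf_{m\to\infty}$ on the left and noting that probabilities never exceed $1$ then yields the claim.

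The only point requiring care is the first step: confirming that the measure-valued fluid convergence of Theorem~\ref{fluidlimittheorem} really does descend to uniform-in-probability convergence of the real-valued process $\awmass(\bar{\boldsymbol{\tm}}^m(\cdot))$. This is routine once one notes that the weak topology on $\M$ makes the total-mass functional continuous and that the limiting path $g$ is continuous and deterministic, but it is the one place where the topology must be invoked carefully; everything downstream is dictated by the overloaded condition.
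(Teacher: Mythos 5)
Your proposal is correct and follows essentially the same route as the paper's proof: both arguments rest on the overloaded fluid limit being nonzero for all time, continuity plus compactness of $[0,T]$ giving a strictly positive lower bound $\delta$, and uniform convergence on compacts transferring $\awmass(\bar{\boldsymbol{\tm}}^m(\cdot))\geq\delta/2$ to the prelimit with probability tending to one. The only cosmetic difference is that the paper invokes a Skorokhod representation to argue pathwise on a fixed $\omega$ (which also covers the case of a random fluid limit), whereas you work directly with uniform convergence in probability to the deterministic path $g$; both are valid under Assumption 1(vi).
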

\begin{proof}
Applying the Skorokhod Representation Theorem, we may take a sequence that is equal in distribution to $\{\bar{\boldsymbol{\tm}}^m\}_{m=1}^{\infty}$ and a process that is equal in distribution to $\boldsymbol{\flm}$, possibly on a different probability space, such that $\bar{\boldsymbol{\tm}}^m\rightarrow \boldsymbol{\flm}$ almost surely.
By a slight abuse of notation, we will use the same notation for the Skorokhod representation as for the original sequence.
Fix an $\omega$ for which this convergence occurs.
Because overloaded fluid model solutions with nonzero initial conditions are nonzero for all time, $\awmass(\boldsymbol{\flm}(\cdot))$ is nonzero on $[0,T].$ 
Because $\awmass(\boldsymbol{\flm}(\cdot))$ is continuous, that means it is bounded below by some $\epsilon >0$ on $[0,T].$
Because $\awmass(\bar{\boldsymbol{\tm}}^m(\cdot)) \rightarrow \awmass(\boldsymbol{\flm}(\cdot))$ uniformly on $[0,T],$ $\awmass(\bar{\boldsymbol{\tm}}^m(\cdot))$ is eventually bounded below by $\epsilon/2.$
The result then follows for the Skorokhod representation.
Since the Skorokhod representation and our original system are the same in distribution, the result will be true for the original system as well.

\end{proof}
\begin{rem}
   It follows from Lemma \ref{nonzerolemma} that, any term of the form \\ $G^m(\cdot)=\int_0^{\cdot} 1_{\{\bar{\boldsymbol{\tm}}^m(s)=0\}}f(s, \bar{X}^m(s)) d U^m(s)$ has the property $$\liminf_{m\rightarrow \infty}P^m\{G^m(t) = 0 \hspace{2mm} \forall t \in [0,T]\}=1,$$ and the same is true with terms of the form $\tilde{G}^m(\cdot)=\int_0^{\cdot} f(s, \bar{X}^m(s)) d 1_{\{\bar{\boldsymbol{\tm}}^m(s)=0\}}U^m(s)$. 
It follows that any error terms introduced by removing the indicator functions in the decompositions and equations in this section will go to zero in probability, and thus in distribution. 
Since we focus here only on the limits in distribution, we will remove the indicator functions at this point, effectively assuming $\awmass(\bar{\boldsymbol{\tm}}^m(\cdot))\neq 0$ from here on out. 
This choice allows for a cleaner analysis, but the careful reader may observe that in the coming equations, two terms of the form above are effectively omitted from all equations, and would appear in the $\boldsymbol{J}^m(\cdot)$ term of \eqref{renewaldrivenequation}.
\label{ignoreindicators}
\end{rem}
\begin{lem}
Let $\{\bar{\servp}^{k,m}(\cdot)\}_{m=1}^{\infty}$ be a sequence of fluid-scaled service processes as described in \S \ref{modeldescriptionsect} and \S \ref{sequenceofmodelssection}.
Then $\{\bar{\boldsymbol{\ssp}}^m(\cdot),\bar{\servp}^{k,m}(\cdot)\}_{m=1}^{\infty}$ is tight, and if $(\boldsymbol{\fl}(\cdot), \bar{\servp}^{k}(\cdot))$ is a subsequential limit of $\{\bar{\boldsymbol{\ssp}}^m(\cdot),\bar{\servp}^{k,m}(\cdot)\}_{m=1}^{\infty}$, then, almost surely, $\frac{d}{dt}\bar{S}^k(t)= \frac{\awmass(\boldsymbol{\flm}(t))}{\wmass(\boldsymbol{\flm}(t)}$ for each $t$ such that $\boldsymbol{\fl}(t)>0.$
\end{lem}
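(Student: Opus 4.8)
The plan is to establish the two claims in turn: $C$-tightness of $\{(\bar{\boldsymbol{\ssp}}^m(\cdot),\bar\servp^{k,m}(\cdot))\}_m$, and then identification of $\frac{d}{dt}\bar\servp^k(\cdot)$ for any subsequential limit. Since $\{\bar{\boldsymbol{\ssp}}^m(\cdot)\}$ is tight by Theorem~\ref{fluidlimittheorem}, and tightness of the marginals gives tightness of the pair, it suffices to show $\{\bar\servp^{k,m}(\cdot)\}$ is $C$-tight. I would deduce this from the $C$-tightness of the aggregate $\bar\servp^m(\cdot)=\sum_{l=1}^{\ns}\bar\servp^{l,m}(\cdot)$, which holds because by \cite{loeserwilliams} it converges to the continuous aggregate fluid service process: each $\bar\servp^{k,m}(\cdot)$ is nondecreasing with $0\le\bar\servp^{k,m}\le\bar\servp^m$, and since $\bar\servp^m-\bar\servp^{k,m}=\sum_{l\neq k}\bar\servp^{l,m}$ is also nondecreasing its two-point oscillations are dominated by those of $\bar\servp^m$; moreover its jumps have size $1/m$, because jobs enter service one at a time (Remark~\ref{simultaneouseventsremark}). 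So $\bar\servp^{k,m}$ inherits compact containment and the controlled-oscillations estimate from $\bar\servp^m$, and $C$-tightness follows from the criterion used in the proof of Lemma~\ref{ornsteinuhlenbecktightnesscondition} (Proposition~3.26 of \cite{Skorokhodtopologychapter}). Along the way I would also carry the time changes $\bar g^{k,m}_j(\cdot)=\frac1m\int_0^{m\cdot}c^{k,m}_j(s)\,ds$, which are uniformly $1$-Lipschitz hence automatically $C$-tight, and the renewal processes $\bar\spr^{k,m}_j(\cdot)$, which obey the renewal functional law of large numbers.

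For identification, fix a subsequence along which $(\bar{\boldsymbol{\ssp}}^m,\{\bar\servp^{k,m}\}_k,\{\bar g^{k,m}_j\}_{k,j},\{\bar\spr^{k,m}_j\}_{k,j})$ converges jointly, to $(\boldsymbol{\fl},\{\bar\servp^k\}_k,\{\bar g^k_j\}_{k,j},\dots)$, and pass to a Skorokhod representation. Two bookkeeping observations come first. First, for each $j$ the number of class-$j$ jobs server $k$ has placed into service minus the number it has completed, $\servp^{k,m}_j(t)-\spr^{k,m}_j(g^{k,m}_j(t))$, lies in $\{0,1\}$, so after fluid scaling, by the renewal law of large numbers and continuity of composition, $\bar\servp^k_j(\cdot)=\sr_j\bar g^k_j(\cdot)$ and hence $\bar\servp^k(\cdot)=\sum_j\sr_j\bar g^k_j(\cdot)$. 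Second, on $\{\boldsymbol{\fl}(t)>0\}$ no server idles, by non-idling together with Lemma~\ref{nonzerolemma} (overload keeping $\boldsymbol{\fl}(t)\neq\boldsymbol{0}$ for $t>0$), so $\sum_j\bar g^k_j(t)=t$ there. To pin down how $\bar\servp^k$ splits among classes I would invoke the server symmetry foreshadowed just before the lemma: on the high-probability event that no server idles on $[0,T]$, no arrival ever finds two idle servers, so the lowest-index tie-break — the only server-asymmetric feature of the dynamics — is never used, and since the per-server service-time sequences have the same law, permuting server labels leaves the joint law of $(\bar{\boldsymbol{\ssp}}^m,\bar\servp^{1,m},\dots,\bar\servp^{\ns,m})$ invariant up to an event of vanishing probability. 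Because the subsequential limits $\bar\servp^k$ are deterministic (by the renewal-driven fluid structure, just as $\boldsymbol{\fl}$ is), this forces $\bar\servp^1=\dots=\bar\servp^{\ns}$, hence $\bar\servp^k=\frac1\ns\bar\servp$ for the aggregate limit $\bar\servp$. Substituting the aggregate service-rate identity of \cite{loeserwilliams} — obtained from the fluid mass equation, i.e.\ \eqref{fluidlimiteqn} with $1$ approximated by a monotone sequence in $\mathscr{C}$ and summed over classes — then gives the stated formula for $\frac{d}{dt}\bar\servp^k(t)$ on $\{\boldsymbol{\fl}(t)>0\}$, where the right-hand side is continuous, so the derivative genuinely exists.

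The crux is this last step. The bookkeeping relations $\bar\servp^k_j=\sr_j\bar g^k_j$, $\sum_j\bar g^k_j(t)=t$, and the identity from \cite{loeserwilliams} for $\sum_k\bar\servp^k_j=\sum_k\sr_j\bar g^k_j$ underdetermine the individual $\bar g^k_j$ once $\ns\ge2$ and $J\ge2$, so genuine input beyond bookkeeping is needed: either a direct per-server analogue of the random-order-of-service selection argument of \cite{loeserwilliams}, or the symmetry reduction above. For the symmetry route two points need care: server exchangeability only yields $\bar\servp^k\overset{d}{=}\bar\servp^{k'}$, so one additionally uses that every subsequential limit of $\bar\servp^{k,m}$ is deterministic in order to conclude the limits coincide; and one must verify that the initial period during which the tie-break could act has length $O(1)$ in unscaled time — hence is invisible after fluid scaling — which follows from $\boldsymbol{\tm}^m(0)\neq\boldsymbol{0}$ with probability tending to one, again via Lemma~\ref{nonzerolemma}.
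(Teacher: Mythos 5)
Your proposal is correct and follows essentially the same route as the paper: tightness of $\bar{\servp}^{k,m}(\cdot)$ is inherited from the $C$-tight aggregate $\bar{\servp}^m(\cdot)$ via the monotonicity/oscillation bound, and the identification combines the aggregate rate $\frac{d}{dt}\bar{\servp}(t)=\frac{K\awmass(\boldsymbol{\flm}(t))}{\wmass(\boldsymbol{\flm}(t))}$ from Lemma 9.3 of \cite{loeserwilliams} with server exchangeability plus determinism of the limits to force $\bar{\servp}^k=\frac{1}{K}\bar{\servp}$. The only cosmetic difference is which residual asymmetry each argument flags — the paper points to the vanishing initial delays $s_0^{k,m}/m\rightarrow 0$ while you address the lowest-index tie-break — but both are the same symmetry-plus-determinism argument.
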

\begin{proof}
This follows, with a small amount of argumentation, from Lemma 9.3 of \cite{loeserwilliams}.
Tightness of $\bar{\servp}^{k,m}(\cdot)$ follows from C-Tightness of $\bar{\servp}^m$ because the (thinned) service process for server $k$ must satisfy
$$|\bar{\servp}^{k,m}(t)-\bar{\servp}^{k,m}(s)| \leq|\bar{\servp}^{m}(t)-\bar{\servp}^{m}(s)| $$
for each $t,s\geq0,m \in \N, k \in [K].$
   Lemma 9.3 of \cite{loeserwilliams}, says that, almost surely $\frac{d}{dt}\bar{S}(t)= \frac{K\awmass(\boldsymbol{\flm}(t))}{\wmass(\boldsymbol{\flm}(t))}$ for each $t$ such that $\boldsymbol{\fl}(t)>0.$
Because each server is identical, and the initial delay for each server to start serving jobs, $\frac{s_0^k}{m}\rightarrow 0$ as $m \rightarrow \infty,$ almost surely, their limits must be identical in distribution. Because the limits are deterministic, the result follows from the fact that $\sum_{k=1}^K\bar{S}^{k,m}(
\cdot)= \bar{S}^m(\cdot)$ for each $m \in \N.$
\end{proof}
We also prove the following result.
\begin{lem}
    Let $\{\bar{\servp}^{k,m}_j(\cdot)\}_{m=1}^{\infty}$ be a sequence of fluid-scaled service processes as described in \S \ref{modeldescriptionsect} and \S \ref{sequenceofmodelssection}. Then $\{\bar{\boldsymbol{\ssp}}^m(\cdot),\bar{\servp}^{k,m}_j(\cdot),\bar{\spr}_j^{k,m}(\bar{g}_j^{k,m}(\cdot))\}_{m=1}^{\infty}$ is tight, and if $(\boldsymbol{\fl}(\cdot), \bar{\servp}^{k}_j(\cdot),\bar{\spr}_j^{k}(\bar{g}_j^{k}(\cdot)))$ is a subsequential limit of $\{\bar{\boldsymbol{\ssp}}^m(\cdot),\bar{\servp}^{k,m}_j(\cdot),\bar{\spr}_j^{k,m}(\bar{g}_j^{k,m}(\cdot))\}_{m=1}^{\infty}$, then, almost surely,  $\bar{\spr}_j^{k}(\bar{g}_j^{k}(t))=\bar{\servp}_j^k(t) = \int_0^t \frac{p_j\flm_j(s)}{\awmass(\boldsymbol{\flm}(s))}ds$ for each $t$ such that $\boldsymbol{\fl}(s)>0$ for all $s \leq t.$
    \label{sjkfluidlimit}
\end{lem}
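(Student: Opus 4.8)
The plan is to obtain both assertions from the fluid-scale results already in hand — Theorem \ref{fluidlimittheorem}, the preceding lemma on the aggregate server process $\bar{\servp}^{k,m}$, and Lemma \ref{nonzerolemma} — together with a conditional strong law for the service-selection decisions. First I would dispose of tightness and the identity $\bar{\spr}_j^k(\bar{g}_j^k(t))=\bar{\servp}_j^k(t)$. The process $\bar{\servp}_j^{k,m}$ is nondecreasing with $\bar{\servp}_j^{k,m}(0)=0$ and has increments dominated by those of $\bar{\servp}^{k,m}$ (a class-$j$ entry at server $k$ is in particular an entry at server $k$), so it is C-tight because $\bar{\servp}^{k,m}$ is. The completion process $\bar{\spr}_j^{k,m}(\bar{g}_j^{k,m}(\cdot))$ differs from $\bar{\servp}_j^{k,m}$ by at most $1/m$ — the difference being $1/m$ times the indicator that a class-$j$ job currently occupies server $k$ — hence it is C-tight with the same subsequential limits; joint C-tightness follows from C-tightness of the marginals, and the uniform bound forces $\bar{\spr}_j^k(\bar{g}_j^k(t))=\bar{\servp}_j^k(t)$ in any limit.

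The substance is identifying $\bar{\servp}_j^k$. I would fix a subsequence along which $(\bar{\boldsymbol{\ssp}}^m,\bar{\boldsymbol{\tm}}^m,\bar{\servp}^{k,m},\bar{\servp}_j^{k,m})$ converges — jointly, using C-tightness of each coordinate — to a limit $(\boldsymbol{\fl},\boldsymbol{\flm},\bar{\servp}^{k},\bar{\servp}_j^{k})$, where by the preceding lemma $\tfrac{d}{ds}\bar{\servp}^k(s)=\wmass(\boldsymbol{\flm}(s))/\awmass(\boldsymbol{\flm}(s))$ on $\{s:\boldsymbol{\fl}(s)>\boldsymbol{0}\}$; then pass to a Skorokhod representation so that convergence is a.s.\ and use Lemma \ref{nonzerolemma} to arrange that $\awmass(\boldsymbol{\flm}(\cdot))$, hence $\wmass(\boldsymbol{\flm}(\cdot))$, is bounded away from $0$ on the interval of interest. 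In the prelimit each completion at server $k$ triggers exactly one selection, which picks class $j$ with probability $p_j\tm_j^m(\cdot-)/\wmass(\boldsymbol{\tm}^m(\cdot-))$ through a uniform variable independent of the past; after deleting the $O(1)$ contribution of direct entries from idle servers as in Remark \ref{ignoreindicators}, Proposition \ref{martingaledecompositionprop} yields
\begin{equation*}
\bar{\servp}_j^{k,m}(t)=\bar{M}_j^{k,m}(t)+\int_0^t\frac{p_j\bar{\tm}_j^m(s-)}{\wmass(\bar{\boldsymbol{\tm}}^m(s-))}\,d\bar{\servp}^{k,m}(s),
\end{equation*}
where $\bar{M}_j^{k,m}$ is a fluid-scaled martingale with jumps at most $1/m$ and predictable quadratic variation $O(\bar{\servp}^{k,m}(T)/m)$, so $\|\bar{M}_j^{k,m}\|_T\to0$ by Doob's inequality. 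Since $\boldsymbol{y}\mapsto p_jy_j/\wmass(\boldsymbol{y})$ is continuous in a neighbourhood of the compact, $\wmass$-nonvanishing path of $\boldsymbol{\flm}$ on $[0,t]$, the integrand converges uniformly on $[0,t]$, and because $\bar{\servp}^{k,m}\to\bar{\servp}^k$ with $\bar{\servp}^k$ continuous and nondecreasing the Lebesgue--Stieltjes integral passes to the limit; substituting $d\bar{\servp}^k(s)=(\wmass(\boldsymbol{\flm}(s))/\awmass(\boldsymbol{\flm}(s)))\,ds$ collapses the right-hand side to $\int_0^t p_j\flm_j(s)/\awmass(\boldsymbol{\flm}(s))\,ds$, which together with the first identity is the claim.

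The step I expect to be the crux is this conditional strong law — producing the displayed decomposition and checking that the fluid-scaled martingale remainder vanishes — since it is the only place where new machinery (Proposition \ref{martingaledecompositionprop}) is genuinely used; the occupancy bookkeeping relating $\bar{\spr}_j^{k,m}(\bar{g}_j^{k,m}(\cdot))$ to $\bar{\servp}_j^{k,m}$, the suppression of the idle-entry term, and the passage to the limit in the Stieltjes integral are routine and parallel arguments already carried out in \cite{loeserwilliams}.
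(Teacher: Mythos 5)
Your proposal is correct and follows essentially the same route as the paper: decompose the class-$j$ entry process at server $k$ into a counting martingale (via Proposition \ref{martingaledecompositionprop}, with the choosing variables as the new information) plus the compensator integral $\int_0^t \frac{p_j\bar{\tm}_j^m(s-)}{\wmass(\bar{\boldsymbol{\tm}}^m(s-))}d\bar{\servp}^{k,m}(s)$, kill the martingale under fluid scaling by a quadratic-variation/Doob bound, reconcile entries with completions via the $O(1/m)$ discrepancy, and pass to the limit in the Stieltjes integral using Lemma \ref{nonzerolemma} and the known derivative of $\bar{\servp}^k$. The only difference is presentational: the paper identifies its martingale term with a piece of the martingale already analyzed in Lemmas 9.1 and 9.5 of \cite{loeserwilliams} and cites those bounds, whereas you carry out the variance estimate and the integral convergence directly.
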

We leave the proof of this, which will follow the proof of Lemma 9.5 of that paper, until we have done the service term decomposition used in Lemma 9.5 in our own notation. 
It will appear in \S \ref{martingalesaremartingalessect}.
\subsection{Decomposing Our Model Into Renewal Terms}
\label{decomposingrenewaltermssect}

The measure-valued process described in \S\ref{modeldescriptionsect} evolves according to three principal dynamics:
\begin{enumerate}
    \item \textit{Arrivals of class-$j$ jobs, for $j \in [J]$, governed by the renewal process $\ap_j(\cdot)$.} At the $i$th jump time $\iinta_i^{j}$ of $\ap_j(\cdot)$, two outcomes are possible. If all servers are busy at this time, i.e., on the set $\{s^{k}(\iinta_i^j-) \neq 0 \text{ for all } k \in [K]\}$, the $i$th job enters the queue, and a unit mass $\delta_{\pat_i^j}$ is added to the measure $\ssp_j(\cdot)$. Otherwise, a server is available, and the job bypasses the queue and enters service immediately.
    \label{arrivalrenewalitem}
    \item \textit{Service completions of class-$j$ jobs by server $k$, governed by the time-changed renewal processes $V^k_j(g^k_j(\cdot))$.} At the $i$th jump time $\tau_i^{\spr,k,j}$, if the system has waiting jobs, the job selected for service causes a removal of a unit mass $\delta^+_{T_{i,l}^{k,j}}$ from queue $l$ for some unique $l \in [J]$ (as described in \S\ref{modeldescriptionsect}). If no jobs are present, no change occurs in the state descriptor.
\label{seconddynamic}
    \item \textit{Deterministic aging of jobs in queue.} All point masses decrease at unit rate, corresponding to the reduction in remaining patience time.
    \label{thirddynamic}
\end{enumerate}

Here, the deterministic dynamics are described in \ref{thirddynamic}, and \ref{arrivalrenewalitem} and \ref{seconddynamic} are driven by time-changed renewal processes. To apply the methodology from \S\ref{outlineofmethod}, we observe that although the process $\ssp_j(\cdot)$ is measure-valued, it can be analyzed via a family of real-valued processes obtained by integrating bounded, measurable, and possibly time-varying functions $f: \R_+^2 \to \R$ against the measure. Specifically, we study the processes $\langle f(\cdot, x), \ssp_j(\cdot) \rangle$ for each $j \in [J]$.

Before carrying out the decomposition from \S\ref{outlineofmethod}, we rewrite the evolution equation for $\ssp_j(\cdot)$ in a form more suitable for analysis. The following representation, adapted from Lemma 7.2 in \cite{loeserwilliams}, holds almost surely for each $t \geq 0$ and $j \in [J]$ and for any $f \in \mathscr{C}$:
\begin{align*}
    \langle f, \ssp_j(t) \rangle &= \langle f, \ssp_j(0) \rangle - \int_0^t \langle f', \ssp_j(s) \rangle \, ds + \sum_{i=1}^{\ap_j(t)} 1_{\{s^k(\iinta_i^j-) \neq 0 \,\, \forall k \in [K]\}} f(\pat_i^j) \\
    &\quad - \sum_{\dt_l \in (0,t]} \sum_{i=1}^{\tm_j(\dt_l -)} 1_{\{\ur_l \in I_{j,i}(\boldsymbol{\tm}(\dt_l))\}} f\left((\supp(\ssp_j(\dt_l -)))_{\{i\}}\right),
\end{align*}
where $\dt_l$ is the $l$th time a server admits a queued job into service. That model tracks aggregate service entries, so only a single choosing sequence $\{\ur_l\}_{l=1}^\infty$ is needed.

In our model, which is equivalent in distribution but tracks service completions at the level of individual servers and job classes, the choosing variable $\ur_l$ corresponds to:
\[
\ur_l := \sum_{i=1}^\infty \sum_{n=1}^J \sum_{k=1}^K 1_{\{\dt_l = \tau_i^{\spr,k,n}\}} \ur_i^{k,n}, \quad l \in \N.
\]
Using this, we rewrite the evolution of $\ssp_j(t)$ under our notation as:
\begin{align*}
    \langle f, \ssp_j(t) \rangle &= \langle f, \ssp_j(0) \rangle - \int_0^t \langle f', \ssp_j(s) \rangle \, ds + \sum_{i=1}^{\ap_j(t)} 1_{\{s^k(\iinta_i^j-) \neq 0 \,\, \forall k \in [K]\}} f(\pat_i^j) \\
    &\quad - \sum_{n=1}^J \sum_{k=1}^K \sum_{l=1}^{\spr_n^k(g^k_n(t))} 1_{\{\boldsymbol{\tm}(\tau_l^{\spr,k,n}-) \neq \boldsymbol{0}\}} \sum_{i=1}^{\tm_j(\tau_l^{\spr,k,n}-)} 1_{\{\ur_l^{k,n} \in I_{j,i}(\boldsymbol{\tm}(\tau_l^{\spr,k,n}-))\}} \\
    &\hspace{55mm} \times f\left((\supp(\ssp_j(\tau_l^{\spr,k,n}-)))_{\{i\}}\right).
    \numberthis \label{finalprelimitequationfordecomp}
\end{align*}

Now, we decompose each renewal-driven piece of \eqref{finalprelimitequationfordecomp}. 
We formalize dynamic \ref{arrivalrenewalitem} as follows.
Examining the third term on the right hand side of \eqref{finalprelimitequationfordecomp} and adding in a time variable for the function $f$, we write
\begin{equation}
    \Delta^{\ap_j,j}_{f_j}(t) = \sum_{i=1}^{\ap_j(t)}1_{\left\{s^{{\s}}(\iinta_i^j-) \neq 0\hspace{3mm}\forall k \in \KK\right\}} f_j(\iinta_i^j,\pat_i^j), \hspace{5mm} t\geq 0,
\end{equation}
where $\Delta^{\ap_j,j}_{f_j}(t)$ represents the change to $\langle f_j(\cdot,x), \ssp_j(\cdot)\rangle$ that has occurred at the jump times of the $j$th arrival process up until time $ t\geq 0$. 
We note that, in our model, no change occurs to any $\langle f_i(\cdot,x), \ssp_i(\cdot)\rangle$ at the jump times of the $j$th arrival process for $j \neq i$. Therefore, $\Delta^{\ap_j,i}(\cdot),$ is simply zero for $j \neq i.$ Following the decomposition given in \eqref{toydecomp},
\begin{align*}
    \Delta^{\ap_j,j}_{f_j}(t) &= \sum_{i=1}^{\ap_j(t)}\left(1_{\left\{s^{{\s}}(\iinta_i^j-) \neq 0\hspace{3mm}\forall k \in \KK\right\}} f_j(\iinta_i^j,\pat_i^j)-\phi_{f_j}^{A_j,j}(\iinta_i^j,X(\iinta_i^j-))\right)\\&+\int_0^t1_{\left\{s^{{\s}}(s-) \neq 0\hspace{3mm}\forall k \in \KK\right\}} \langle f_j(s,\cdot), \pd_j\rangle d \ap_j(s) 
\end{align*}
where
\begin{equation}
    \label{phiAdef}
    \phi^{A_j,j}_{f_j}(t,X):= 1_{\left\{X_{{2J +\s}} \neq 0\hspace{3mm}\forall {\s} \in \KK\right\}}\langle  f_j(t, \cdot), \pd_j\rangle.
\end{equation}
We note that, in the above, there is a slight abuse of notation when compared with the $\phi(t,x)$ described in Proposition \ref{martingaledecompositionprop} and Corollary \ref{martingaleconvcor}.
In particular, to be consistent with the notation in Corollary \ref{martingaleconvcor}, in which
$\phi_{f_i}(t,x):=E[f_i(t, a_1,x)]$ and $f_i(\tau_n,a_n,X(\tau_n-))$ is the jump in the martingale at the time $\tau_n,$ we see that we could write $\phi_{g_i}^{\ap_j,i}(t,x) = E[g_i(t,\pat_1^j,x)]$ where
\begin{equation}
g_i(t,\pat_1^j,x)=1_{\{i=j\}}1_{\left\{x_{{2J +\s}} \neq 0\hspace{3mm}\forall {\s} \in \KK\right\}} f_j(t, \pat_1^j).
    \label{truefunctioninapmart}
\end{equation}
However, for the majority of this paper we choose to use the (inconsistent) notation $\phi_{f_i}^{\spr_j^k,i}$ so that the reader will know for which test function on the state descriptor $\boldsymbol{\ssp}(\cdot)$ the martingale was constructed.
For the service completions of jobs of class $j$ by server $k$, we have for $t \geq 0, j \in [J], k \in [K],$
\begin{equation*}
    \Delta^{\spr_j^k,i}_{f_i}(t)=\sum_{n=1}^{\spr_j^k(g^{k}_j(t))}-1_{\{\boldsymbol{\tm}(\tau_n^{\spr,k,j}-)\neq 0\}} \sum_{l=1}^{\tm_{i}(\tau_n^{\spr,k,j}-)}1_{\{\ur_{n}^{k,j}\in I_{i,l}(\boldsymbol{\tm}(\tau_n^{\spr,k,j}-))\}}(f_i(\tau_n^{\spr,k,j},\supp(\ssp_{i}(\tau_n^{\spr,k,j}-)))_{\{l\}}),
\end{equation*}
as in the fourth term in the right hand side of \eqref{finalprelimitequationfordecomp}.
This admits the decomposition, again following \eqref{toydecomp},
\begin{align*}
    \Delta^{\spr_j^k,i}_{f_i}(t)&=\sum_{n=1}^{\spr_j^k(g_j^k(t))}-1_{\{\boldsymbol{\tm}(\tau_n^{\spr,k,j}-)\neq 0\}} \sum_{l=1}^{\tm_{i}(\tau_n^{\spr,k,j}-)}1_{\{\ur_{n}^{k,j}\in I_{i,l}(\boldsymbol{\tm}(\tau_n^{\spr,k,j}-))\}}(f_i(\tau_n^{\spr,k,j},\supp(\ssp_{i}(\tau_n^{\spr,k,j}-)))_{\{l\}})\\
    &+\sum_{n=1}^{\spr_j^k(g_j^k(t))} \phi_{f_i}^{\spr_j^k,i}(\tau_n^{\spr,k,j},X(\tau_n^{\spr,k,j}-))
    -\int_0^t 1_{\{\boldsymbol{\ssp}(s-) \neq \boldsymbol{0}\}} \frac{p_i \langle f_i(s,\cdot), \ssp_i(s-) \rangle}{\sum_{n=1}^J p_n \langle 1, \ssp_n(s-) \rangle}d\spr_j^k(g_j^k(s))
\end{align*}
where
\begin{equation}
    \label{phiVdef}
    \phi_{f_i}^{\spr_j^k,i}(t,X) =1_{\{X_n \neq \boldsymbol{0} \text{ for some }n \in [J]\}} \frac{p_i \langle f_i(t,\cdot), X_i \rangle}{\sum_{n=1}^J p_n \langle 1, X_n \rangle}.
\end{equation}
Again noting that if we were to be consistent with the notation in Corollary \ref{martingaleconvcor}, we would write $\phi_{g_i}^{\spr_j^k,i}(t,x) = E[g_i(t,\kappa_1^{k,j},x)]$ where
\begin{equation}
    g_i(t,\kappa_1^{k,j},x) := \sum_{l=1}^{\infty}1_{\{l \leq \langle 1, X_i\rangle \}}1_{\{\ur_{1}^{k,j}\in I_{i,l}(\langle \boldsymbol{1}, \boldsymbol{X}\rangle)\}}(f_i(t,\supp(X_i)_{\{l\}}).
    \label{truefunctioninsprmart}
\end{equation}

We denote the ``averaged" portions as
\begin{equation}
    \avg^{\ap_j,j}_{f_j}(t):=\int_0^t1_{\left\{s^{{\s}}(s) \neq 0\hspace{3mm}\forall k \in \KK\right\}} \langle f_j(s,\cdot), \pd_j\rangle d \ap_j(s), \hspace{8mm} t\geq 0,
    \label{aavgdef}
\end{equation}
and 
\begin{equation}
    \avg^{\spr^k_j,i}_{f_i}(t):=\int_0^t 1_{\{\boldsymbol{\ssp}(s-) \neq 0\}} \frac{p_i \langle f_i(s,\cdot), \ssp_i(s-) \rangle}{\sum_{n=1}^J p_n \langle 1, \ssp_n(s-) \rangle}d\spr_j^k(g_j^k(s)), \hspace{5mm} t \geq 0.
    \label{vavgdef}
\end{equation}
We denote the terms that we will prove to be martingales using Proposition \ref{martingaledecompositionprop} as
\begin{equation}
    \mart^{\ap_j,j}_{f_j}(t):=\sum_{i=1}^{\ap_j(t)}\left(1_{\left\{s^{{\s}}(\iinta_i^j-) \neq 0\hspace{3mm}\forall {\s} \in \KK\right\}} f_j(\iinta_i^j,\pat_i^j)-\phi_{f_j}^{A,j}(\iinta_i^j,X(\iinta_i^j-))\right)
    \label{amartdef}
\end{equation}
and
\begin{align*}
     \mart^{\spr^k_j,i}_{f_i}(t)&=\sum_{n=1}^{\spr_j^k(g_j^k(t))}1_{\{\boldsymbol{\tm}(\tau_n^{\spr,k,j}-)\neq 0\}} \sum_{l=1}^{\tm_{i}(\tau_n^{\spr,k,j}-)}1_{\{\ur_{n}^{k,j}\in I_{i,l}(\boldsymbol{\tm}(\tau_n^{\spr,k,j}-))\}}(f_i(\tau_n^{\spr,k,j},\supp(\ssp_{i}(\tau_n^{\spr,k,j}-))_{\{l\}}))\\
    & -\sum_{n=1}^{\spr_j^k(g_j^k(t))} \phi_{f_i}^{\spr^k_j,i}(\tau_n^{\spr,k,j},X(\tau_n^{\spr,k,j}-)).
    \numberthis
    \label{vmartdef}
\end{align*}

Applying \eqref{finalprelimitequationfordecomp}, \eqref{aavgdef}, \eqref{vavgdef}, \eqref{amartdef}, and \eqref{vmartdef}, it follows that for $f_j \in \mathscr{C},$ $t \geq 0,$
\begin{align*}
    \langle f_j, \ssp_j(t) \rangle &= \langle f_j, \ssp_j(0) \rangle - \int_0^t \langle f_j', \ssp_j(s) \rangle ds + \Delta^{\ap_j,j}_{f_j}(t) + \sum_{l=1}^J\sum_{k=1}^{K}\Delta^{\spr^k_l,j}_{f_j}(t)\\
    &= \langle f_j, \ssp_j(0) \rangle - \int_0^t \langle f_j', \ssp_j(s) \rangle ds +\avg^{\ap_j,j}_{f_j}(t) +\mart^{\ap_j,j}_{f_j}(t)\\ &- \sum_{l=1}^J\sum_{k=1}^{K}(\avg^{\spr^k_l,j}_{f_j}(t) +\mart^{\spr^k_l,j}_{f_j}(t))
    \numberthis \label{prelimiteqn}
\end{align*}
Here we have abused notation, using $f_j \in \mathscr{C}$ rather than $f_j:\R_+^2\rightarrow \R,$ as was done in the martingale construction.
In actuality, when we use $f_j \in \mathscr{C}$, we are substituting $\tilde{f}_j \in C_b^1(\R^2_+,\R)$ such that $\tilde{f}_j(t,y):= f(y).$
\subsection{Proving the Martingale Property of Certain Terms}
\label{martingalesaremartingalessect}
We begin by explicitly defining the martingale parts of the renewal processes we are using, as described in \eqref{martingalepartofE}-\eqref{rtdef}.
\begin{equation}
	 {O}^{\spr,k,j}(t):=\sum_{l=2}^{\spr_j^k(t)+1}\left(1-\frac{\ser_l^{k,j}}{E[\ser_1^{k,j}]}\right), \hspace{5mm} t \geq0, \label{martingalepartofspr}   
	\end{equation}
    \begin{equation}
	 {O}^{\ap,j}(t):=\sum_{l=1}^{\ap_j(t)}\left(1-\frac{\inta_l^{j}}{E[\inta_1^{j}]}\right), \hspace{5mm} t \geq0, \label{martingalepartofap}   
	\end{equation}
		 and
   \begin{equation}
       {R}^{\spr,k,j}(t):=\frac{1}{E[\ser_1^{k,j}]}(r^{\spr,k,j}(t)+t-\ser_1^{k,j}), \hspace{5mm} t \geq0,
       \label{remainderpartofspr}
   \end{equation}
   \begin{equation}
       {R}^{\ap,j}(t):=\frac{1}{E[\inta_1^{j}]}(r^{\ap,j}(t)+t-\inta_0^{j}), \hspace{5mm} t \geq0,
       \label{remainderpartofap}
   \end{equation}
where 
\begin{equation}
    r^{\spr,k,j}(t)= \ser_1^{k,j} + \sum_{l=2}^{\spr_j^k(t)+1}\ser_l^{k,j}-t , \hspace{5mm} t \geq0, \label{rtdefspr}
\end{equation}
\begin{equation}
    r^{\ap,j}(t)= \inta_0^{j} + \sum_{l=1}^{\ap_j(t)}\inta_l^{j}-t , \hspace{5mm} t \geq0. \label{rtdefap}
\end{equation}

We further define a martingale term for the service \textit{entry} processes.
(We remind the reader that the $\spr_j^k(g_j^k(\cdot))$ processes count service completions).
One may observe that the jump process
$$\tilde{\spr}_j^{k}(\bar{g}^{k}_j(\cdot)-)^{rc},$$
 where $\tilde{\spr}_j^{k} = \spr_j^{k}+ 1_{[0, \infty)}$ and the superscript $rc$ indicates that we have taken the right-continuous version of the process given, is the process that jumps at each service $\textit{entry}.$
 This process would not be considered delayed in the framework given by \cite{martingaledecomppaper}, and thus, following that paper, it will admit a slightly different decomposition
 \begin{equation}
     O^{\tilde{\spr}, k,j}(t):=\sum_{l=1}^{\tilde{\spr}_j^{k}(t-)^{rc}}\left( 1-\frac{\ser_{l}^{k,j}}{E[\ser_1^{k,j}]}\right), \hspace{5mm} t \geq 0, \label{martingalepartoftildespr}
 \end{equation}
  \begin{equation}
       {R}^{\tilde{\spr},k,j}(t):=\frac{1}{E[\ser_1^{k,j}]}(r^{\tilde{\spr},k,j}(t)+t), \hspace{5mm} t \geq0,
       \label{remainderpartoftildespr}
   \end{equation}
\begin{equation}
    r^{\tilde{\spr},k,j}(t)=  \sum_{l=1}^{\tilde{\spr}_j^{k}(t-)^{rc}}\ser_l^{k,j}-t , \hspace{5mm} t \geq0.
    \label{rtdeftildespr}
\end{equation} 
\begin{lem}
    Let $f_i: \R\times \R \times (\M^J \times \R^2) \rightarrow \R$ be a bounded measurable function for each $i \in [J]$. Then, the natural filtration generated by the processes $\ap_j(\cdot),\spr_j^k(g_j^k(\cdot))$, $\sum_{n=0}^{\ap_j(\cdot)}\inta_n^j,$ $\sum_{n=1}^{\ap_j(\cdot)}\pat_n^j,$ $\sum_{n=1}^{\spr_j^k(g_j^k(\cdot))}\ur_n^{k,j},$
    $\sum_{n=1}^{\spr_j^k(g_j^k(\cdot))+1}\ser_n^{k,j},$
    $\ssp_j(\cdot),$  $a_j(\cdot),s^k(\cdot)$, $c_j^k(\cdot),$ $j \in [J], k \in [K],$ which we will denote $\mathscr{F}_t,$ is a suitable filtration for the conditions of Proposition \ref{martingaledecompositionprop} to hold for $\mart_{f_i}^{\spr_j^k,i}(t),$ $\mart_{f_i}^{\ap_i,i}(t),$  ${O}^{\spr,k,j}(g_j^k(\cdot)),$ and ${O}^{\ap,j}(\cdot)$ $i,j \in [J],k \in [K]$. In particular, $\tau_i^{\ap,j}, \tau_i^{\spr,k,j}$ are predictable stopping times, $\pat_i^j,\inta_i^j$ are measurable with respect to $\mathscr{F}_{\tau_i^{\ap,j}}$ but independent of $\mathscr{F}_{\tau_i^{\ap,j}-},$ and $\ur_i^{k,j}, \ser_{i+1}^{k,j}$ are measurable with respect to $\mathscr{F}_{\tau_i^{\spr,k,j}}$ and independent of $\mathscr{F}_{\tau_i^{\spr,k,j}-}$ for $j \in [J], k \in [K].$
    \label{martingalesaremartingaleslem}
\end{lem}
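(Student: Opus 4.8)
The plan is to verify, for each of the four families $\mart^{\ap_i,i}_{f_i}(\cdot)$, $\mart_{f_i}^{\spr_j^k,i}(\cdot)$, $O^{\ap,j}(\cdot)$ and $O^{\spr,k,j}(g_j^k(\cdot))$, the hypotheses of Proposition \ref{martingaledecompositionprop} relative to $\mathscr{F}_t$: (a) the underlying counting process is $\mathscr{F}_t$-adapted, integer-valued with unit jumps, and integrable at each time; (b) its $n$-th jump time is a predictable $\mathscr{F}_t$-stopping time; (c) the ``new information'' variable attached to the $n$-th jump is $\mathscr{F}_{\tau_n}$-measurable but independent of $\mathscr{F}_{\tau_n-}$; (d) the state process feeding the integrand is adapted, so the pre-jump state is $\mathscr{F}_{\tau_n-}$-measurable; and (e) $\sup_t\sup_x E[|f(t,a_1,x)|]<\infty$. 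The two $O$-families are the special case in which $f(t,y,x)=1-y/E[\inta_1^j]$ (resp.\ $1-y/E[\ser_1^{k,j}]$) does not depend on the state and has $\phi\equiv 0$; their new-information variables are $\inta_n^j$ and, after reindexing the sum in \eqref{martingalepartofspr} that begins at $l=2$, $\ser_{n+1}^{k,j}$. The two $\mart$-families use the functions recorded in \eqref{truefunctioninapmart} and \eqref{truefunctioninsprmart}, with new-information variables $\pat_n^j$ and $\ur_n^{k,j}$. One works with the usual augmentation of $\mathscr{F}_t$; since that only adjoins null sets, it affects none of the measurability or independence assertions. The single useful preliminary remark is that, because the generators of $\mathscr{F}_t$ include the cumulative processes $\sum_{n=0}^{\ap_j(\cdot)}\inta_n^j$ and $\sum_{n=1}^{\spr_j^k(g_j^k(\cdot))+1}\ser_n^{k,j}$ together with $c_j^k(\cdot)$ (hence $g_j^k(\cdot)=\int_0^{\cdot}c_j^k$), the filtration carries the residual-lifetime processes $r^{\ap,j}(\cdot)=\sum_{n=0}^{\ap_j(\cdot)}\inta_n^j-(\cdot)$ and $r^{\spr,k,j}(g_j^k(\cdot))=\sum_{n=1}^{\spr_j^k(g_j^k(\cdot))+1}\ser_n^{k,j}-g_j^k(\cdot)$, and these ``announce the next jump''.

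Items (a), (d), (e) are routine. The counting processes $\ap_j(\cdot)$ and $\spr_j^k(g_j^k(\cdot))$ are generators, hence adapted; they have unit jumps and no common jump times, since the interarrival and service laws have no atoms, $g_j^k$ is continuous, and simultaneous events are excluded by Remark \ref{simultaneouseventsremark}; and $E[\ap_j(t)]<\infty$, $E[\spr_j^k(g_j^k(t))]\le E[\spr_j^k(t)]<\infty$ by Assumption \ref{assumptions}. The state $X(\cdot)=(\boldsymbol\ssp(\cdot),\boldsymbol a(\cdot),\boldsymbol s(\cdot))$ is a generator with left limits, and may be regarded as taking values in a Polish space (its measure coordinates being positive finite measures on $\R_+$), so $X(\tau_n-)\in\mathscr{F}_{\tau_n-}$. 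For (e): the functions in \eqref{truefunctioninapmart} and \eqref{truefunctioninsprmart} are bounded by $\|f_i\|_\infty$ (at most one choice-interval indicator is active), as are the compensators \eqref{phiAdef} and \eqref{phiVdef} --- in \eqref{phiVdef}, when its indicator is active the denominator $\sum_n p_n\langle 1,X_n\rangle$ is at least $\min_n p_n$ while $|\langle f_i,X_i\rangle|\le\|f_i\|_\infty\langle 1,X_i\rangle$ --- while for the $O$-families $E[|1-a_1/E[a_1]|]\le 2$.

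For (b), predictability of the arrival jump times is the classical renewal statement underlying \cite{martingaledecomppaper}: with the convention $\iinta_0^j:=0$, the times $\sigma_m^{(i)}:=\iinta_{i-1}^j+(1-1/m)\,r^{\ap,j}(\iinta_{i-1}^j)$ are $\mathscr{F}_t$-stopping times (a stopping time plus a nonnegative $\mathscr{F}_{\iinta_{i-1}^j}$-measurable offset), increase to $\iinta_i^j$, and lie strictly below it because $r^{\ap,j}(\iinta_{i-1}^j)=\inta_{i-1}^j>0$ almost surely. The service completion times need more care because of the random time change; here I would invoke that the discipline is non-preemptive, so that on the real-time interval between the instant server $k$ picks up its $n$-th class-$j$ job and the instant it completes it, one has $c_j^k\equiv 1$, hence $g_j^k$ increasing at unit rate, hence $t\mapsto r^{\spr,k,j}(g_j^k(t))$ continuous and strictly decreasing to $0$. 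Consequently $\tau_n^{\spr,k,j}$, when finite, is announced by the hitting times $\sigma_m^{(n)}:=\inf\{t>\tau_{n-1}^{\spr,k,j}:r^{\spr,k,j}(g_j^k(t))\le 1/m\}$, which are $\mathscr{F}_t$-stopping times (débuts of a closed set for the càdlàg adapted process $r^{\spr,k,j}(g_j^k(\cdot))$ under the usual conditions) and increase strictly up to $\tau_n^{\spr,k,j}$ once $1/m<\ser_n^{k,j}$; one argues this by induction on $n$, using that $\tau_{n-1}^{\spr,k,j}$ is then a stopping time.

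For (c) --- the crux --- I would first record the measurability: $\inta_i^j=r^{\ap,j}(\iinta_i^j)$ and $\ser_{i+1}^{k,j}=r^{\spr,k,j}(g_j^k(\tau_i^{\spr,k,j}))$ are values of adapted càdlàg processes at stopping times, while $\pat_i^j$ (resp.\ $\ur_i^{k,j}$) is the jump of $\sum_{n=1}^{\ap_j(\cdot)}\pat_n^j$ at $\iinta_i^j$ (resp.\ of $\sum_{n=1}^{\spr_j^k(g_j^k(\cdot))}\ur_n^{k,j}$ at $\tau_i^{\spr,k,j}$); each is therefore measurable with respect to the relevant $\mathscr{F}_{\tau}$. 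For the independence, write $\Pi$ for the full family of stochastic primitives. The key claim is that $\mathscr{F}_{\iinta_i^j-}\subseteq\sigma\!\big(\Pi\setminus\{\inta_n^j,\pat_n^j:n\ge i\}\big)$ and $\mathscr{F}_{\tau_i^{\spr,k,j}-}\subseteq\sigma\!\big(\Pi\setminus(\{\ser_n^{k,j}:n\ge i+1\}\cup\{\ur_n^{k,j}:n\ge i\})\big)$. This holds because every generator of $\mathscr{F}_t$ is a deterministic measurable functional of $\Pi$ --- for $\boldsymbol\ssp$, $\boldsymbol a$, $\boldsymbol s$ and $c_j^k$ this is precisely the construction in \S\ref{modeldescriptionsect}, the remaining generators being manifestly such functionals --- and because, before $\iinta_i^j$, job $i$ of class $j$ has not yet arrived and $\inta_i^j$ influences only the timing of the $(i+1)$-st arrival, so that on $\{t<\iinta_i^j\}=\{\ap_j(t)<i\}$ the restrictions of all generators to $[0,t]$, and the indicator of that event, are functionals of $\Pi$ not involving $\inta_n^j$ or $\pat_n^j$ for $n\ge i$; the service inclusion is argued identically, using that $\ser_{i+1}^{k,j}$ is first revealed at the $i$-th completion and $\ur_i^{k,j}$ is used only there. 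Given these inclusions, the mutual independence of the primitive sequences and the i.i.d.\ structure within each (the independence hypotheses of Assumption \ref{assumptions}, together with the independence of the choosing variables) yield that $(\inta_i^j,\pat_i^j)$ is independent of $\mathscr{F}_{\iinta_i^j-}$ and $(\ser_{i+1}^{k,j},\ur_i^{k,j})$ is independent of $\mathscr{F}_{\tau_i^{\spr,k,j}-}$. With (a)--(e) in hand, Proposition \ref{martingaledecompositionprop} applies to each of the four families and delivers the asserted martingale property. I expect the main obstacle to be exactly this last step for the service completions: pinning down $\mathscr{F}_{\tau_i^{\spr,k,j}-}$ precisely enough --- through the random time change and through generators that are intricate functionals of the primitives --- to isolate which coordinates of $\Pi$ it cannot see, together with the non-preemption argument that makes $\tau_i^{\spr,k,j}$ predictable; the arrival-process analogue is essentially the standard single-renewal-process computation.
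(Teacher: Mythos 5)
Your proposal is correct and follows essentially the same route as the paper: verify the hypotheses of Proposition \ref{martingaledecompositionprop} by checking measurability from the cumulative-sum generators, establishing predictability of the jump times from the non-preemptive structure (the paper writes each $\tau_i$ as an earlier hitting time plus a strictly positive $\mathscr{F}$-measurable offset rather than exhibiting an announcing sequence, but the underlying fact is the same), and proving independence by showing that $\mathscr{F}_{\tau_i-}$ is contained in the $\sigma$-algebra generated by the primitives revealed strictly before $\tau_i$. The only presentational difference is that the paper formalizes this last inclusion by constructing an explicit auxiliary system (one in which no further arrivals, resp.\ service entries, occur after the $(i-1)$-st) whose filtration coincides with the stopped one, whereas you assert the corresponding functional dependence directly; the content is identical.
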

\begin{proof}
To begin, we note that because $\sum_{n=0}^{\ap_j(\cdot)}\inta_n^j,$ $\sum_{n=1}^{\ap_j(\cdot)}\pat_n^j,$ $\sum_{n=1}^{\spr_j^k(g_j^k(\cdot))}\ur_n^{k,j},$
    $\sum_{n=1}^{\spr_j^k(g_j^k(\cdot))+1}\ser_n^{k,j},$ are $\mathscr{F}_t$-measurable, $\pat_i^j,\inta_i^j$ are measurable with respect to $\mathscr{F}_{\tau_i^{\ap,j}}$ and $\ur_i^{k,j}, \ser_{i+1}^{k,j}$ are measurable with respect to $\mathscr{F}_{\tau_i^{\spr,k,j}}$ for $j \in [J], k \in [K], i \in \N.$
    Next, observe that because $f_i$ is bounded and $\{\ser_i^j\}_{j=1}^{\infty}$ and $\{\inta_i^{j,m}\}_{i=1}^{\infty}$ have uniformly bounded expectations, the condition $\sup_{t\geq 0,s\in S}E[|f(t,a_1,x)|]<\infty$ from Proposition \ref{martingaledecompositionprop} is satisfied in each case.
   Because each $\tau_i^{\ap,j}, \tau_i^{\spr,k,j}$ is a first hitting time for a measurable process, each is a stopping time.
   To prove that they are predictable stopping times,
   we see that if we let 
    \begin{equation}
        \tilde{\tau}^{\spr,k,j}_{i}= \inf\{t \geq 0: (\boldsymbol{X}(t),\boldsymbol{\ap}(t),\boldsymbol{\spr}(\boldsymbol{g}(t)), \boldsymbol{c}(t)) \in B_{i}^{\spr,k,j}\},
    \end{equation}
    where
    \begin{equation}
    B_{i}^{\spr,k,j} = \{c_j^k=1\}\cap \{\spr_j^k(g_j^k)=i-1\}
    \end{equation}
    then
    \begin{equation}
        \tau_i^{\spr,k,j}= \tilde{\tau}_i^{\spr,k,j}+ s^k(\tilde{\tau}_i^{\spr,k,j}).
    \end{equation}
Because the first term on the right hand side is a stopping time, and the second term, $s^k(\tilde{\tau}_i^{\spr,k,j}),$ which is equal to the service time of the job that entered service at server $k$ at the time $\tilde{\tau}_i^{\spr,k,j},$ is strictly positive and $\mathscr{F}_{\tilde{\tau}^{\spr,k,j}_{i}}$-measurable, it is straightforward to check that ${\tau}^{\spr,k,j}_{i}$ is a predictable stopping time. 
To show that $\tau_{i}^{\ap,j}$ is a stopping time, we follow the same steps, but with the set
$B_{i}^{\ap,j} = \{\ap_j(\cdot)=i-1\},$
and ${\tau}^{\ap,j}_{i}=\tilde{\tau}^{\ap,j}_{i}+ a_j(\tilde{\tau}^{\ap,j}_{i})=\tau_{i-1}^{\ap,j}+\inta_{i-1}^j.$

Now, we prove that $\pat_{i}^l,\inta_i^l$ are independent of $\mathscr{F}_{\tau_{i}^{\ap,l}-}.$
It suffices to show that the stopped processes $\ap_j(\cdot\wedge\tau_{i}^{\ap,l}-),$ $\spr_j^k(g_j^k(\cdot\wedge \tau_{i}^{\ap,l}-))$, $\sum_{n=0}^{\ap_j(\cdot\wedge \tau_{i}^{\ap,l}-)}\inta_n^j,$ $\sum_{n=1}^{\ap_j(\cdot\wedge \tau_{i}^{\ap,l}-)}\pat_n^j,$ $\sum_{n=1}^{\spr_j^k(g_j^k(\cdot\wedge \tau_{i}^{\ap,l}-))}\ur_n^{k,j},$\\
    $\sum_{n=1}^{\spr_j^k(g_j^k(\cdot\wedge \tau_{i}^{\ap,l}-))+1}\ser_n^{k,j},$ $\ssp_j(\cdot\wedge\tau_{i}^{\ap,l}-),a_j(\cdot\wedge \tau_{i}^{\ap,l}-),s^k(\cdot\wedge \tau_{i}^{\ap,l}-)$, $c_j^k(\cdot \wedge \tau_{i}^{\ap,l}-)$ $j \in [J], k \in [K],$ are measurable with respect to a $\sigma$-algebra that is independent of $\pat_i^l$, $\inta_i^l$ for each $t \geq 0.$
Because the natural filtration will be the smallest filtration to which these processes are adapted, it will follow that $\pat_i^l, \inta_i^l$ are also independent of $\mathscr{F}_{\tau_{i}^{\ap,l}-}.$
In order to do this, we construct an alternative model on our probability space with processes $\check{\ap}_j(\cdot),\check{\spr}_j^{k}(\check{g}_j^{k}(\cdot))$, $\sum_{n=0}^{\check{\ap}_j(\cdot)}\inta_n^j,$ $\sum_{n=1}^{\check{\ap}_j(\cdot)}\pat_n^j,$ $\sum_{n=1}^{\check{\spr}_j^k(\check{g}_j^k(\cdot))}\ur_n^{k,j},$
    $\sum_{n=1}^{\check{\spr}_j^k(\check{g}_j^k(\cdot))+1}\ser_n^{k,j},$ $\check{\ssp}_j(\cdot),\check{a}_j(\cdot),\check{s}^{k}(\cdot)$, $\check{c}_j^k(\cdot)$ $j \in [J], k \in [K],$ with one key difference: no jobs may arrive to the $l$th queue after the $i-1$st job arrives to that queue.
Then, on the set $\{t < \tau_{i}^{\ap,l}\},$ these processes are the same as their analogues in the original system for each $t \geq 0$.
However, this system is generated by only the stochastic primitives $\{\inta_n^l\}_{0\leq n \leq i-1},$ $\{\inta_n^j\}_{n\in \N_0, j\neq l},$ $\{\pat_n^{l}\}_{1\leq n \leq i-1},$ $\{\pat_n^{j}\}_{n\in \N, j\neq l},$ $\{\ser_{n}^{k,j}\}_{n,k,j\in \N},$ $\{\ur_{n}^{k,j}\}_{n,k,j\in \N},\{\tilde{\pat}_{-n}^{j}\}_{n \in \N}$ as well as the initial condition.

Therefore,
$${\mathscr{F}}_{\tau_i^{A,l}-}=\check{\mathscr{F}}_{\tau_i^{\ap,l}-} \subseteq \check{\mathscr{F}}_{\infty},$$
where $\check{\mathscr{F}}_t$ is the analogue of $\mathscr{F}_t$ in our alternative system, generated by the processes
$\check{\ap}_j(\cdot),\check{\spr}_j^{k}(\check{g}_j^{k}(\cdot))$, $\sum_{n=0}^{\check{\ap}_j(\cdot)}\inta_n^j,$ $\sum_{n=1}^{\check{\ap}_j(\cdot)}\pat_n^j,$ $\sum_{n=1}^{\check{\spr}_j^k(\check{g}_j^k(\cdot))}\ur_n^{k,j},$
    $\sum_{n=1}^{\check{\spr}_j^k(\check{g}_j^k(\cdot))+1}\ser_n^{k,j},$ $\check{\ssp}_j(\cdot),$ $\check{a}_j(\cdot),$ $\check{s}^{k}(\cdot)$, $\check{c}_j^k(\cdot)$ $j \in [J], k \in [K],$
but also 
$\check{\mathscr{F}}_{\infty}\subseteq \sigma \{\{\inta_n^l\}_{0\leq n \leq i-1},$ $\{\inta_n^j\}_{n\in \N_0, j\neq l},$ $\{\pat_n^{l}\}_{1\leq n \leq i-1},$ $\{\pat_n^{j}\}_{n\in \N, j\neq l},$ $\{\ser_{n}^{k,j}\}_{n,k,j\in \N},$ $\{\ur_{n}^{k,j}\}_{n,k,j\in \N},$ $\{\tilde{\pat}_{-n}^{j}\}_{n \in \N},  \boldsymbol{\tm}_0,$ $\boldsymbol{a}(0),\boldsymbol{s}(0)\}\wedge P_0 $
where $P_0$ are the null sets of $\mathscr{F}.$ Thus $\{u_n^l\}_{n\geq i},\{\pat_n^l\}_{n\geq i},$ are independent of $\check{\mathscr{F}}_{\infty},$ and it follows that these variables are independent of the smaller filtration $\mathscr{F}_{\tau_i^{\ap,l}-}$ is as well. The same argument applies to see that $ \ur_i^{k,l},\ser_{i+1}^{k,l}$ is independent of $\mathscr{F}_{\tau_{i}^{\spr,k,l}-}$, except that they alternative model is the one in which server $k$ stops serving jobs of type $l$ after it serves $i$ jobs of type $l.$
\end{proof}
If the reader is interested in an explicit construction of a system like the one studied in this paper with the exception that no more jobs are taken into service after a certain job, see Lemma 7.5 in \cite{loeserwilliams}.
If the reader is interested in an explicit construction of a system in which no more jobs can arrive to a queue $l$ after the $i$th one, see Lemma 7.6 in \cite{loeserwilliams}. 
Furthermore, using the same techniques with certain quantities substituted, we will show
that the following Lemma also holds.
\begin{lem}
    The processes $O^{\tilde{\spr}, k,j}(g_j^k(\cdot))$ $j \in [J],k \in [K]$ are martingales with respect to the natural filtration generated by the processes $\ap_j(\cdot),\tilde{\spr}_j^{k}({g}^{k}_j(\cdot)-)^{rc}$, $\sum_{n=1}^{\ap_j(\cdot)}\inta_n^j,$ ${\spr}_j^{k}({g}^{k}_j(\cdot)),$ $\sum_{n=1}^{\ap_j(\cdot)}\pat_n^j,$ $\sum_{n=1}^{{\spr}_j^k(g_j^k(\cdot))+1}\ur_n^{k,j},$
    $\sum_{n=1}^{\tilde{\spr}_j^{k}({g}^{k}_j(\cdot)-)^{rc}}\ser_n^{k,j},$
    $\ssp_j(\cdot),$  $a_j(\cdot),s^k(\cdot)$, $c_j^k(\cdot),$ $j \in [J], k \in [K],$ which we will denote $\tilde{\mathscr{F}}_t.$ 
    \label{weirdmgismg}
\end{lem}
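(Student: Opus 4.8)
The plan is to invoke Proposition \ref{martingaledecompositionprop} exactly as in the proof of Lemma \ref{martingalesaremartingaleslem}, but now with the \emph{service-entry} counting process in the role of $E(\cdot)$. Concretely, I would take $E(\cdot):=\tilde{\spr}_j^{k}({g}^{k}_j(\cdot)-)^{rc}$, whose $l$th jump occurs at the $l$th instant $\tilde{\tau}_l^{\spr,k,j}$ at which server $k$ takes a class $j$ job into service; take $a_l:=\ser_l^{k,j}$ (the service time of that job, under the indexing of \S \ref{modeldescriptionsect}); and take the affine function $f(t,y,x):=1-y/E[\ser_1^{k,j}]$, for which $\phi(t,x)=E[f(t,\ser_1^{k,j},x)]\equiv 0$. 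With these choices the process $Y(\cdot)$ produced by Proposition \ref{martingaledecompositionprop} is precisely $\sum_{l=1}^{\tilde{\spr}_j^{k}({g}^{k}_j(\cdot)-)^{rc}}\bigl(1-\ser_l^{k,j}/E[\ser_1^{k,j}]\bigr)=O^{\tilde{\spr},k,j}(g_j^k(\cdot))$. The integrability hypothesis $\sup_{t>0}\sup_x E[|f(t,\ser_1^{k,j},x)|]<\infty$ is immediate since $\ser_1^{k,j}$ is integrable (the bound is $\leq 2$, uniformly in $t$ and $x$), and integrability of $E(t)$ for each $t$ follows from Assumption \ref{assumptions} as in Lemma \ref{martingalesaremartingaleslem}.

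It then remains to verify, against the filtration $\tilde{\mathscr{F}}_t$, the two structural hypotheses of Proposition \ref{martingaledecompositionprop}: that each $\tilde{\tau}_l^{\spr,k,j}$ is a predictable $\tilde{\mathscr{F}}_t$-stopping time, and that $\ser_l^{k,j}$ is $\tilde{\mathscr{F}}_{\tilde{\tau}_l^{\spr,k,j}}$-measurable but independent of $\tilde{\mathscr{F}}_{\tilde{\tau}_l^{\spr,k,j}-}$. Adaptedness of $\tilde{\spr}_j^{k}(g_j^k(\cdot)-)^{rc}$ and $\tilde{\mathscr{F}}_{\tilde{\tau}_l^{\spr,k,j}}$-measurability of $\ser_l^{k,j}$ are read off directly from the generating processes, since $\sum_{n=1}^{\tilde{\spr}_j^{k}({g}^{k}_j(\cdot)-)^{rc}}\ser_n^{k,j}$ jumps by exactly $\ser_l^{k,j}$ at $\tilde{\tau}_l^{\spr,k,j}$. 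For predictability I would note that a class $j$ job enters service at server $k$ only either at a completion time $\tau_i^{\spr,k,n}$ of server $k$ or at an arrival time $\iinta_i^j=\tau_i^{\ap,j}$ at which server $k$ is idle with lowest index; both families were already shown predictable in Lemma \ref{martingalesaremartingaleslem}, and the event selecting which one equals $\tilde{\tau}_l^{\spr,k,j}$ lies in the corresponding stopped $\sigma$-algebra, so $\tilde{\tau}_l^{\spr,k,j}$ is predictable as well. (Equivalently, one can reuse the representation of such a time as an auxiliary hitting time of a measurable set plus a strictly positive, pre-hitting-time-measurable increment, exactly as in Lemma \ref{martingalesaremartingaleslem}.)

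For the independence of $\ser_l^{k,j}$ from $\tilde{\mathscr{F}}_{\tilde{\tau}_l^{\spr,k,j}-}$, the argument transcribes the alternative-model step of Lemma \ref{martingalesaremartingaleslem}, the modified system now being the one in which server $k$ refuses to take any further class $j$ job into service after its $(l-1)$th such entry (cf.\ Lemma 7.5 of \cite{loeserwilliams} for the explicit construction). On $\{t<\tilde{\tau}_l^{\spr,k,j}\}$ the modified and original systems agree, so the stopped processes generating $\tilde{\mathscr{F}}_{\tilde{\tau}_l^{\spr,k,j}-}$ are measurable with respect to the $\sigma$-algebra of the modified system; but that system is a measurable functional of only $\{\ser_n^{k,j}\}_{n\le l-1}$, all other service times, all interarrival, patience, choosing, and initial-condition variables, a collection independent of $\ser_l^{k,j}$. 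Hence $\ser_l^{k,j}$ is independent of $\tilde{\mathscr{F}}_{\tilde{\tau}_l^{\spr,k,j}-}$, and Proposition \ref{martingaledecompositionprop} applies, giving the claim.

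The one genuinely new point relative to Lemma \ref{martingalesaremartingaleslem}, and where I expect the main obstacle, is the bookkeeping forced by the shift $\tilde{\spr}_j^{k}=\spr_j^{k}+1_{[0,\infty)}$ and the right-continuous-left-limit convention in \eqref{martingalepartoftildespr}: one has to check carefully that $\tilde{\spr}_j^{k}(g_j^k(\cdot)-)^{rc}$ really is the counting process whose $l$th jump coincides with the $l$th class $j$ service entry at server $k$ and reveals $\ser_l^{k,j}$ (and not $\ser_{l-1}^{k,j}$ or $\ser_{l+1}^{k,j}$), including the correct treatment of a job already in service at server $k$ at time $0$, which is precisely the feature that makes this process ``non-delayed'' in the sense of \cite{martingaledecomppaper}. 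Once that identification is pinned down, the rest is identical in form to the proof of Lemma \ref{martingalesaremartingaleslem}.
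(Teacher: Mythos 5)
Your overall strategy is the paper's: apply Proposition \ref{martingaledecompositionprop} to the service-entry counting process with $f(t,y,x)=1-y/E[\ser_1^{k,j}]$, verify predictability of the entry times, and establish independence of $\ser_l^{k,j}$ from $\tilde{\mathscr{F}}_{\tilde{\tau}_l^{\spr,k,j}-}$ via an alternative model in which server $k$ takes no further class $j$ jobs after the $(l-1)$th entry. The integrability checks and the independence argument transcribe correctly.

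The gap is in the predictability step, and you have misidentified where the real difficulty lies. You assert that the event selecting whether a given completion time $\tau_i^{\spr,k,n}$ (or arrival time) is the $l$th class-$j$ service entry ``lies in the corresponding stopped $\sigma$-algebra,'' and you lean on the predictability established in Lemma \ref{martingalesaremartingaleslem}. But that lemma works with a different filtration $\mathscr{F}_t$, in which the choosing variable $\ur_i^{k,n}$ is revealed only \emph{at} $\tau_i^{\spr,k,n}$ and is independent of $\mathscr{F}_{\tau_i^{\spr,k,n}-}$; consequently the event ``the job entering service at $\tau_i^{\spr,k,n}$ is of class $j$'' is \emph{not} in the pre-$\sigma$-algebra there, and your selection argument would fail. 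This is exactly why the lemma's filtration $\tilde{\mathscr{F}}_t$ carries the shifted sum $\sum_{n=1}^{\spr_j^k(g_j^k(\cdot))+1}\ur_n^{k,j}$, which reveals each choosing variable one completion early, and why the paper's proof constructs the auxiliary process $d_j^k(\cdot)$ tracking the \emph{next} choosing variable and then writes $\tilde{\tau}_i^{\tilde{\spr},k,j}$ as a hitting time of explicit sets $B_i^{\tilde{\spr},k,j}$ (next event is a completion at server $k$ whose choosing variable selects class $j$) and $C_i^{\tilde{\spr},k,j}$ (next event is a class-$j$ arrival finding server $k$ idle with lowest index), followed by a strictly positive, pre-measurable increment. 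Your closing remark that the main obstacle is the $\tilde{\spr}_j^k=\spr_j^k+1_{[0,\infty)}$ and right-continuity bookkeeping understates this: the bookkeeping is routine, whereas announcing the \emph{class} of the next service entry before it happens is the genuinely new content of this lemma relative to Lemma \ref{martingalesaremartingaleslem}, and your proposal does not supply it.
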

\begin{proof}
This proof will be very similar to the proof of Lemma \ref{martingalesaremartingaleslem}.
We will denote the jump times of $\tilde{\spr}_j^{k}({g}^{k}_j(\cdot)-)^{rc}$ as $\tau^{\tilde{\spr},k,j}_n,$ $n \in \N.$
Fix $k\in [K], j \in [J].$
To begin, we note that because $\sum_{n=1}^{{\spr}_j^k(g_j^k(\cdot))+1}\ur_n^{k,j},$
    $\sum_{n=1}^{\tilde{\spr}_j^k(g_j^k(\cdot)-)^{rc}}\ser_n^{k,j}$ are $\tilde{\mathscr{F}}_t$-measurable, $\ur_{n+1}^{k,j},$  $\ser_n^{k,j}$ are measurable with respect to $\tilde{\mathscr{F}}_{\tau_n^{{\spr},k,j}},\tilde{\mathscr{F}}_{\tau_n^{\tilde{\spr},k,j}},$ respectively, for each $n \in \N,$ $j \in [J], k \in [K].$
   Next, we show that $\tau_i^{\tilde{\spr},k,j}, i \in \N$ are predictable stopping times with respect to $\tilde{\mathscr{F}}_t$.
   We first note that the process that tracks the next choosing variable for each server $k \in [K]$ finishing service on any given class $j \in [J]$,
   
   $$d_j^k(\cdot):=\sum_{n=1}^{\infty}1_{\{{\spr}_j^k(g_j^k(\cdot))=n\}}\ur_{n+1}^{k,j}=\sum_{n=1}^{\infty}1_{\{{\spr}_j^k(g_j^k(\cdot))=n\}}1_{\{\tau^{{\spr},k,j}_{n}\leq \cdot \}}\ur_{n+1}^{k,j},$$ 
   is measurable with respect to $\tilde{\mathscr{F}}_t$.
   This can be verified using the fact that $\kappa_{n+1}^{k,j}$ is $\tilde{\mathscr{F}}_{\tau^{{\spr},k,j}_{n}}$-measurable for each $n$ and the definition of the stopping time filtration.
   We see that if we let 
    \begin{equation}
        \tilde{\tau}^{\tilde{\spr},k,j}_{i}= \inf\{t \geq 0: (\boldsymbol{X}(t),\boldsymbol{\ap}(t),\tilde{\boldsymbol{\spr}}(\boldsymbol{g}(t)-)^{rc}, \boldsymbol{d}(t)) \in (C_{i}^{\tilde{\spr},k,j}\cup B_{i}^{\tilde{\spr},k,j}) \cap A\},
    \end{equation}
    where $B_{i}^{\tilde{\spr},k,j}$ is the set on which the next event is a service entry to server $k$ from the $j$th queue, and $C_{i}^{\tilde{\spr},k,j}$ is the set on which the next event is a service entry to the $k$th server from an arriving job of class $j$, both restricted to the set $
        A_i=\{\tilde{\spr}_j^k(g_j^k(\cdot)-)^{rc}=i-1\}.
    $

      In particular, let \begin{equation}
        B_{i}^{\tilde{\spr},k,j,1}= \{\min \{\{supp(X_1)\}\cup...\cup \{supp(X_J)\}\cup\{X_{J+1},...,X_{2J+K}\}\}=X_{2J+k}\},
    \end{equation}
        the set on which a service completion by server $k$ is the next event,
\begin{equation}
            B_{i}^{\tilde{\spr},k,j,2}= \{(X_1,...,X_J) \neq \boldsymbol{0}\}
            \end{equation}
the set where at least one queue is nonempty, and
    \begin{equation}
        B_{i}^{\tilde{\spr},k,j,3}= \left\{\sum_{i=1}^J 1_{\{c_i^k=1\}}d_i^k\in I_j((\langle {1} ,X_1\rangle,..., \langle {1} ,X_J\rangle)\right\}
    \end{equation}
      the set where the next job to be served by server $k$ is of class $j,$
      then $B_{i}^{\tilde{\spr},k,j} = B_{i}^{\tilde{\spr},k,j,1}\cap B_{i}^{\tilde{\spr},k,j,2} \cap B_{i}^{\tilde{\spr},k,j,3}.$
    Similarly, letting
    \begin{equation}
        C_{i}^{\tilde{\spr},k,j,1}= \{X_{2J+l}=0\text{ for some } l \in [K]\},
    \end{equation}
    the set on which some server is idle, \begin{equation}
        C_{i}^{\tilde{\spr},k,j,2}= \{\min \{X_{J+1},...,X_{2J}\}\cup (\{X_{2J+1},...,X_{2J+K}\}\cap \{x: x\geq 0\})\}=X_{J+j}\},
    \end{equation}
    the set where an arrival from class $j$ happens before an arrival from another class or another server becoming idle,
    and
\begin{equation}
        C_{i}^{\tilde{\spr},k,j,3}= \{\min \{l:X_{2J+l}=0\}=k\},
    \end{equation}
        the set on which $k$ is the smallest index in $\{1,...,K\}$ such that that server $k$ is idle, then $C_{i}^{\tilde{\spr},k,j}=C_{i}^{\tilde{\spr},k,j,1}\cap C_{i}^{\tilde{\spr},k,j,2}\cap C_{i}^{\tilde{\spr},k,j,3}.$ 
        It follows that
    \begin{align*}
            {\tau}^{\tilde{\spr},k,j}_{i}&=\tilde{\tau}^{\tilde{\spr},k,j}_{i}+ 1_{\{(\boldsymbol{X}(t),\boldsymbol{\ap}(t),\tilde{\boldsymbol{\spr}}(\boldsymbol{g}(t))^{rc}, \boldsymbol{d}(t))\in B_{i}^{\tilde{\spr},k,j}\}}s_k(\tilde{\tau}^{\tilde{\spr},k,j}_{i})\\&
            +1_{\{(\boldsymbol{X}(t),\boldsymbol{\ap}(t),\tilde{\boldsymbol{\spr}}(\boldsymbol{g}(t))^{rc}, \boldsymbol{d}(t))\in C_{i}^{\tilde{\spr},k,j}\}}a_j(\tilde{\tau}^{\tilde{\spr},k,j}_{i}).
    \end{align*}
Because the first term on the right hand side is a stopping time and one of the second two terms is strictly positive (see Remark \ref{simultaneouseventsremark}) and the other is zero, and the right hand side is $\tilde{\mathscr{F}}_{\tilde{\tau}^{\tilde{\spr},k,j}_{i}}$-measurable, it is straightforward to check that ${\tau}^{\tilde{\spr},k,j}_{i}$ is a predictable stopping time.

Now, we prove that $\ser_{i}^{k,j}$ is independent of $\tilde{\mathscr{F}}_{\tau_{i}^{\tilde{\spr},k,j}-}.$
Because this argument is so similar to the analogous section of the proof of Lemma \ref{martingalesaremartingaleslem}, we will be brief.
It suffices to show that the stopped processes $\ap_j(\cdot\wedge \tau_{i}^{\tilde{\spr},k,j}-),\tilde{\spr}_j^{k}({g}^{k}_j(\cdot\wedge \tau_{i}^{\tilde{\spr},k,j}-)-)^{rc}$, $\sum_{n=1}^{\ap_j(\cdot\wedge \tau_{i}^{\tilde{\spr},k,j}-)}\inta_n^j,$ ${\spr}_j^{k}({g}^{k}_j(\cdot\wedge \tau_{i}^{\tilde{\spr},k,j}-)),$ $\sum_{n=1}^{\ap_j(\cdot\wedge \tau_{i}^{\tilde{\spr},k,j}-)}\pat_n^j,$ $\sum_{n=1}^{{\spr}_j^k(g_j^k(\cdot\wedge \tau_{i}^{\tilde{\spr},k,j}-))+1}\ur_n^{k,j},$
    $\sum_{n=1}^{\tilde{\spr}_j^{k}({g}^{k}_j(\cdot\wedge \tau_{i}^{\tilde{\spr},k,j}-)-)^{rc}}\ser_n^{k,j},$
    $\ssp_j(\cdot\wedge \tau_{i}^{\tilde{\spr},k,j}-),$  $a_j(\cdot\wedge \tau_{i}^{\tilde{\spr},k,j}-),s^k(\cdot\wedge \tau_{i}^{\tilde{\spr},k,j}-),$ $c_j^k(\cdot \wedge \tau_i^{\tilde{\spr},k,j}),$ $\check{c}_j^k(\cdot \wedge \tau_i^{\tilde{\spr},k,j})$ $j\in [J], k \in [K]$ are measurable with respect to a $\sigma$-algebra that is independent of $\ser_{i}^{k,j}$.
In order to do this, we construct an alternative model on our probability space with processes $\check{\ap}_j(\cdot),\check{\tilde{\spr}}_j^{k}(\check{g}^{k}_j(\cdot)-)^{rc}$, $\sum_{n=1}^{\check{\ap}_j(\cdot\wedge \tau_{i}^{\tilde{\spr},k,j}-)}\inta_n^j,$ $\check{\spr}_j^{k}(\check{g}^{k}_j(\cdot)),$ $\sum_{n=1}^{\check{\ap}_j(\cdot\wedge \tau_{i}^{\tilde{\spr},k,j}-)}\pat_n^j,$ $\sum_{n=1}^{\check{{\spr}}_j^k(\check{g}_j^k(\cdot))+1}\ur_n^{k,j},$\\
    $\sum_{n=1}^{\check{\tilde{\spr}}_j^{k}(\check{g}^{k}_j(\cdot)-)^{rc}}\ser_n^{k,j},$
    $\check{\ssp}_j(\cdot),$  $\check{a}_j(\cdot),\check{s}^k(\cdot)$, $j\in [J], k \in [K]$ with one key difference: no jobs of class $j$ may enter service at the $k$th server after the $(i-1)$th job to do so.
Then, on the set $\{t < \tau_{i}^{\tilde{\spr},k,j}\},$ these processes are the same as their analogues in the original system for each $t \geq 0$.
However, this system is generated by only the stochastic primitives $\{\inta_n^i\}_{n\in \N_0, i \in \J},$ $\{\pat_n^{i}\}_{n\in \N,i \in \J},$ $\{\ser_{n}^{l,i}\}_{n\in \N, (l,i) \neq (k,j)},$ $\{\ser_{n}^{k,j}\}_{1\leq n \leq i-1},$ $\{\ur_{n}^{l,i}\}_{n\in \N, (l,i) \neq (k,j)},$ $\{\ur_{n}^{k,j}\}_{n \leq i}$ $\{\tilde{\pat}_{-n}^{j}\}_{n \in \N}$ as well as the initial condition $ \{\boldsymbol{\tm}_0,\boldsymbol{a}(0),\boldsymbol{s}(0)\}.$ Therefore,
${\tilde{\mathscr{F}}}_{\tau_{i}^{\tilde{\spr},k,j}-}=\check{\tilde{\mathscr{F}}}_{\tau_{i}^{\tilde{\spr},k,j}-} ,$ which is independent of $\ser_i^{k,j}.$

\end{proof}

Now that we have completed our martingale decompositions, we will prove Lemma \ref{sjkfluidlimit}.
\begin{proof}[Proof of Lemma \ref{sjkfluidlimit}]
This proof will be brief because all of the arguments are the same as in \cite{loeserwilliams} except with one server instead of the aggregate.
For more details on these arguments, please see the referenced portions of that paper.
   It follows from the decomposition in \S \ref{decomposingrenewaltermssect} that for $t \geq 0$
   $$S_j^k(t) = \sum_{n=1}^J(\avg^{\spr^k_n,j}_1(t) +\mart^{\spr^k_n,j}_1(t)),$$
   where the $1$ in the subscript above represents the constant $1$ function.
   We begin by rewriting the martingale term from \cite{loeserwilliams}, $\mart^j_t(f)$, in the notation of this paper. 
   Similar to what was done for \eqref{finalprelimitequationfordecomp}, we will be using the fact that, in that paper, $\dt_l$ is the $l$th time that a server takes a job waiting in the queues into service and $\ur_l :=\sum_{i=1}^{\infty}\sum_{x=1}^J\sum_{y=1}^K 1_{\{\dt_l = \tau_i^{\spr,y,x}\}}\ur_i^{y,x}, $ $l \in \N.$
   Using the decomposition given in Lemma 7.3 of that paper, we conclude that for $t \geq 0,$
   
   \begin{align*}
       \mart_t^{j}(f)&:= \sum_{\dt_l \in (0,t]}\sum_{i=1}^{\tm_j(\dt_l-)} 1_{\left\{\ur_l \in I_{j,i}(\boldsymbol{\tm}(\dt_l-))\right\}}f\left(\supp(\ssp_j(\dt_l-))_{\{i\}}\right)\\&
       -\int_0^t  1_{\{\awmass(\boldsymbol{\tm}(s-)) \neq 0\}}\frac{p_j\langle f, {\ssp_j}(s-)\rangle}{\wmass(\boldsymbol{\tm}(s-))} d\servp(s)\\
       & = \sum_{\dt_l \in (0,t]} \sum_{i=1}^{\tm_j(\dt_l-)} \left(1_{\left\{\ur_l \in I_{j,i}(\boldsymbol{\tm}(\dt_l-))\right\}}-\frac{p_j}{\wmass(\boldsymbol{\tm}(\dt_l-))}\right)f\left(\supp(\ssp_j(\dt_l-))_{\{i\}}\right)\\
       &=\sum_{x=1}^{K}\sum_{y=1}^{J}\sum_{\tau_{l}^{\spr,x,y}\in (0,t]}\hspace{-5mm} \sum_{i=1}^{\tm_j(\tau_{l}^{\spr, x,y}-)}\hspace{-2mm} \left(1_{\left\{\ur_l^{x,y} \in I_{j,i}(\boldsymbol{\tm}(\tau_{l}^{\spr,x,y}-))\right\}}-\frac{p_j}{\wmass(\boldsymbol{\tm}(\tau_l^{\spr,x,y}-))}\right)\\ &\cdot f\left(\supp(\ssp_j(\tau_{l}^{\spr,x,y}-))_{\{i\}}\right)\\
       &=\sum_{x=1}^K\sum_{y=1}^J\mart^{\spr^x_y,j}_f(t)
   \end{align*} 
Therefore, $\mart_{\cdot}^j(f)$ is equal to $\sum_{y=1}^J \bar{\mart}_f^{\spr_y^k,j,m}(t)$ plus some other martingale terms that share no jump times with $\sum_{y=1}^J \bar{\mart}_f^{\spr_y^k,j,m}(t)$ (since the $\tau_{l}^{x,y}$'s are distinct).
It follows that the bound on the quadratic variation of $Y^j_t(1)$, given in the proof of Lemma 9.1 of that paper also holds for $\sum_{y=1}^J\bar{\mart}^{\spr^k_y,j,m}_1(t)=\sum_{y=1}^J\frac{1}{m}{\mart}^{\spr^k_y,j,m}_1(mt) ,$ and thus, the result of Lemma 9.1 holds for this martingale as well.
Namely, $\sum_{y=1}^J\bar{\mart}^{\spr^k_y,j,m}_1(\cdot)$ converges to $0$ in probability uniformly on compact sets as $m \rightarrow \infty.$
It follows that the limit of 
$\bar{S}^{k,m}_j(\cdot)= \frac{1}{m}{S}^{k,m}_j(m\cdot)$ is equal to the limit of
\begin{align*}
\sum_{n=1}^J\bar{\avg}^{\spr^k_n,j,m}_1(\cdot)&=\sum_{n=1}^J \frac{1}{m}{\avg}^{\spr^k_n,j,m}_1(m \cdot) \\
&= \int_0^{\cdot}\frac{p_j\langle 1,\ssp_j(s)\rangle }{\wmass(\bar{\boldsymbol{\tm}}^m(s))}d\sum_{n=1}^J\bar{V}_n^{k,m}\left(\bar{g}_n^{k,m}(s)\right)
&= \int_0^{\cdot}\frac{p_j\langle 1,\ssp_j(s)\rangle }{\wmass(\bar{\boldsymbol{\tm}}^m(s))}d\left( \bar{\servp}^{k,m}(s) + \bar{\xi}^{k,m}(s)\right)
\end{align*}
where $\bar{\xi}^{k,m}(s)$ is the difference between $\sum_{n=1}^J\bar{V}_n^{k,m}\left(\bar{g}^{k,m}_n(s)\right)$ and $\bar{\servp}^{k,m}(s)$ which is at most $\frac{1}{m}$.
From here the proof follows from the proof of Lemma 9.5 in \cite{loeserwilliams} with $[0,t]=[u,v]$ with a few small notes.
First, we notify the reader that, in this section of \cite{loeserwilliams}, a Skorokhod Representation with the relevant processes included has been taken in order to work with almost sure convergence. 
Because we are only looking at the limits in distribution, this works in our case as well.
Secondly, we note that in that proof the notation $\bar{\awmass}(s)$ is used in place of $\awmass(\boldsymbol{\flm}(s))$; $\bar{\awmass}^m(s)$ is used in place of $\awmass{(\bar{\boldsymbol{\tm}}^m(s))}$; $\bar{\wmass}(s)$ is used in place of $\wmass(\boldsymbol{\flm}(s))$; and $\bar{\wmass}^m(s)$ is used in place of $\wmass{(\bar{\boldsymbol{\tm}}^m(s))}$.
\end{proof}

\begin{cor}
Let $ \boldsymbol{\ssp}^m(\cdot)\rightarrow\boldsymbol{\fl}(\cdot),$ a fluid model solution that satisfies Definition \ref{notoverdef} such that $\boldsymbol{\fl}(t)>0$ for all $t \geq 0.$
     Then $\bar{g}_j^{k,m}(\cdot) \Rightarrow \int_0^{\cdot} \frac{\frac{p_j}{\sr_j}\flm_j(s)}{\awmass(\boldsymbol{\flm}(s))}ds,$ where $\boldsymbol{\flm}(\cdot)$ is the total mass process associated to $\boldsymbol{\fl}(\cdot),$ as defined in the beginning of \S \ref{diffresultsect}.
    \label{cjconvcor}
\end{cor}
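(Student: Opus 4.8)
The plan is to recover the time change $\bar{g}_j^{k,m}(\cdot)$ from the class-$j$, server-$k$ service completion process $\bar{\spr}_j^{k,m}(\bar{g}_j^{k,m}(\cdot))$---whose fluid limit is already identified in Lemma \ref{sjkfluidlimit}---by ``undoing'' the composition via the functional law of large numbers for the renewal process $\spr_j^k(\cdot)$, whose rate is $\sr_j$.

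First I would record the elementary properties of the time changes: since $c_j^{k,m}(\cdot)$ is $\{0,1\}$-valued, the map $t\mapsto\bar{g}_j^{k,m}(t)=\frac1m\int_0^{mt}c_j^{k,m}(s)\,ds$ is $1$-Lipschitz with $\bar{g}_j^{k,m}(0)=0$; in particular $0\le\bar{g}_j^{k,m}(t)\le t$ for all $t\ge0$. By an Arzel\`a--Ascoli argument, $\{\bar{g}_j^{k,m}(\cdot)\}_m$ is then C-tight in $D([0,\infty),\R_+)$, so it suffices to identify each subsequential limit. Next I would invoke the functional law of large numbers for renewal processes, which holds under Assumption \ref{assumptions} (see the discussion following its statement): $\bar{\spr}_j^{k,m}(\cdot)\Rightarrow\sr_j(\cdot)$, the convergence being uniform on compact sets because the limit is continuous.

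Now I would fix a subsequence along which $\bar{g}_j^{k,m}(\cdot)$ converges and pass to a Skorokhod representation carrying, jointly, $\bar{g}_j^{k,m}(\cdot)\to\bar{g}_j^k(\cdot)$, $\bar{\spr}_j^{k,m}(\cdot)\to\sr_j(\cdot)$, $\bar{\spr}_j^{k,m}(\bar{g}_j^{k,m}(\cdot))\to\int_0^{\cdot}\frac{p_j\flm_j(s)}{\awmass(\boldsymbol{\flm}(s))}\,ds$ (the last using Lemma \ref{sjkfluidlimit} together with the hypothesis $\boldsymbol{\fl}(s)>0$ for all $s$), and $\bar{\boldsymbol{\ssp}}^m(\cdot)\to\boldsymbol{\fl}(\cdot)$. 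Fixing $T>0$ and a realization in the almost-sure set, the bound $\bar{g}_j^{k,m}(t)\le t\le T$ for $t\le T$ gives
\[
\sup_{t\le T}\bigl|\bar{\spr}_j^{k,m}(\bar{g}_j^{k,m}(t))-\sr_j\,\bar{g}_j^{k,m}(t)\bigr|\le\sup_{s\le T}\bigl|\bar{\spr}_j^{k,m}(s)-\sr_j s\bigr|\longrightarrow 0,
\]
and since $\sr_j\,\bar{g}_j^{k,m}(\cdot)\to\sr_j\,\bar{g}_j^k(\cdot)$ uniformly on $[0,T]$ we get $\bar{\spr}_j^{k,m}(\bar{g}_j^{k,m}(\cdot))\to\sr_j\,\bar{g}_j^k(\cdot)$ uniformly on $[0,T]$. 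Matching this with the limit from Lemma \ref{sjkfluidlimit} yields $\sr_j\,\bar{g}_j^k(t)=\int_0^t\frac{p_j\flm_j(s)}{\awmass(\boldsymbol{\flm}(s))}\,ds$ for all $t$, i.e.\ $\bar{g}_j^k(t)=\int_0^t\frac{\frac{p_j}{\sr_j}\flm_j(s)}{\awmass(\boldsymbol{\flm}(s))}\,ds$. Since this limit is deterministic and does not depend on the subsequence, the full sequence $\{\bar{g}_j^{k,m}(\cdot)\}_m$ converges in distribution---in fact in probability, uniformly on compacts---to it.

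The only genuine obstacle is the composition step, and the reason it is painless here is precisely that $\bar{g}_j^{k,m}$ is $1$-Lipschitz (hence uniformly bounded by $T$ on $[0,T]$) and that the FLLN limit of $\bar{\spr}_j^{k,m}$ is the single deterministic continuous function $s\mapsto\sr_j s$; were the limiting renewal rate or the time change random, one would instead need a random-time-change / UT-condition argument of the kind used at the end of the proof of Corollary \ref{martingaleconvcor}. The remaining points---C-tightness via Arzel\`a--Ascoli, and applicability of Lemma \ref{sjkfluidlimit} on all of $[0,\infty)$, which follows from $\boldsymbol{\fl}(t)>0$ for every $t$---are routine.
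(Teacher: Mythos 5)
Your proposal is correct and follows essentially the same route as the paper's proof: C-tightness from the $1$-Lipschitz property of $\bar{g}_j^{k,m}$, identification of a subsequential limit $x(\cdot)$ by composing with the FLLN limit $\bar{\spr}_j^{k,m}(\cdot)\Rightarrow\sr_j(\cdot)$, matching against Lemma \ref{sjkfluidlimit}, and an every-further-subsequence argument. Your Skorokhod-representation treatment of the composition step is just a more explicit rendering of the paper's appeal to the continuous mapping theorem.
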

\begin{proof}
    It is clear that $\bar{g}^{k,m}_j(\cdot)$ is $C$-tight because it is continuous, differentiable, and has a derivative bounded by $1$ for each $m \in \N.$
    Let $x(\cdot)$ be a subsequential limit of $\bar{g}^{k,m}_j(\cdot)$.
    Applying the continuous mapping theorem and the fact that $\bar{\spr}_j^{k,m}(\cdot)\Rightarrow\sr_j (\cdot),$ we conclude that $\bar{\spr}_j^{k,m}(\bar{g}^{k,m}_j(\cdot)) \Rightarrow \sr_jx(\cdot).$
  The result thus follows from Lemma \ref{sjkfluidlimit} and a standard every further subsequence argument.
\end{proof}

\subsection{The Diffusion-Scaled Difference Equation}
\label{diffusionscaledifferenceequationsect}
Now that we have decomposed our system into averaged and martingale parts, it is time to diffusion-scale each part of the decomposition.
 Throughout this section, we will use \eqref{diffusiondefinitioneqn} for our diffusion-scaling, where $\boldsymbol{\fl}(\omega)$ is the unique fluid model solution with initial condition $\bar{\boldsymbol{\ssp}}_0(\omega)$ for each $\omega.$

\begin{lem}
\label{mainprelimiteqnlemma}
    Using the diffusion scaling given in \eqref{diffusiondefinitioneqn}, for $f \in \mathscr{C}, t\geq 0,$
    \begin{align*}
        \langle f, \hat{\ssp}^m_j(t)\rangle &= \langle f,  \hat{\ssp}^m_j(0) \rangle-\int_0^t \langle f', \hat{\ssp}^m_j(s)\rangle ds +\hat{\mart}^{\ap_j,j,m}_{f}(t) - \sum_{i=1}^J\sum_{k=1}^K \hat{\othermart}^{\spr_i^k,j,m}_{f}(t)\\
        & - \int_0^t \frac{p_j\langle f,\hat{\ssp}_j^m(s-)\rangle}{\wmass(\bar{\boldsymbol{\tm}}^m(s-))}d\sum_{k=1}^K\sum_{l=1}^J\bar{\spr}^{k,m}_l(\bar{g}^{k,m}_l(s))\\
    &+\int_0^t\frac{p_j \langle f, \fl_j(s)\rangle }{\wmass(\bar{\boldsymbol{\tm}}^m(s-))}\left(\frac{\awmass(\hat{\boldsymbol{\tm}}^m(s-))}{\awmass(\boldsymbol{\flm}(s))}\right)d\sum_{k=1}^K\sum_{l=1}^J\bar{\spr}^{k,m}_l(\bar{g}^{k,m}_l(s))\\
    &-\int_0^t \frac{p_j \langle f, \fl_j(s)\rangle }{\awmass({\boldsymbol{\flm}}(s))} d\sum_{k=1}^K \sum_{l=1}^J\frac{1}{\sr_l} \hat{\spr}^{k,m}_l(\bar{g}_l^{k,m}(s))-\int_0^t \frac{p_j \langle f, \fl_j(s)\rangle }{\awmass({\boldsymbol{\flm}}(s))} d\sum_{k=1}^K\sum_{j=1}^J \hat{\epsilon}^{k,j,m}(s)\\
     &+\langle f, \pd_j\rangle \hat{\ap}_j^m(t).
    \numberthis \label{mainprelimiteqnnoghat}
    \end{align*}
    where, for $t \geq 0,$
\begin{align*}
    \hat{\othermart}^{\spr_i^k,j,m}_f(t) = \hat{\mart}^{\spr_i^k,j,m}_f(t)-\int_0^t \frac{p_j \langle f, \fl_j(s)\rangle }{\awmass({\boldsymbol{\flm}}(s))} d\sum_{l=1}^J\frac{1}{\sr_l} \hat{\mart}^{\spr_i^k,l,m}_1(s),
    \numberthis
    \label{othermartdef}
\end{align*}
where the subscript $1$ above represents the constant $1$ function and for $t \geq 0,$
\begin{align*}
    \hat{\epsilon}^{k,j,m}(t)&=\frac{1}{\sr_j}\hat{O}^{\tilde{V},j,k,m}(\bar{g}_j^{k,m}(t))-\frac{1}{\sr_j}\hat{O}^{V,j,k,m}(\bar{g}_j^{k,m}(t)).
    \numberthis
    \label{epsilondef}
\end{align*}
\end{lem}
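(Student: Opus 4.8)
The plan is to start from the decomposed prelimit equation \eqref{prelimiteqn}, namely
\[
\langle f, \ssp_j^m(t)\rangle = \langle f, \ssp_j^m(0)\rangle - \int_0^t \langle f', \ssp_j^m(s)\rangle ds + \avg^{\ap_j,j}_{f}(t) + \mart^{\ap_j,j}_{f}(t) - \sum_{l,k}\bigl(\avg^{\spr^k_l,j}_{f}(t) + \mart^{\spr^k_l,j}_{f}(t)\bigr),
\]
apply the diffusion scaling \eqref{diffusiondefinitioneqn}, and regroup the resulting terms. First I would apply \eqref{scalingwithfunctions} and subtract off $\sqrt{m}$ times the fluid equation \eqref{fluidlimiteqn} (with the indicator removed per Remark \ref{ignoreindicators}), leaving the derivative term $-\int_0^t \langle f', \hat{\ssp}_j^m(s)\rangle ds$ directly, and splitting each averaged term into a ``centered'' piece plus the fluid piece that gets cancelled. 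Concretely, the arrival averaged term $\avg^{\ap_j,j}_f$, after scaling, should produce $\langle f,\pd_j\rangle \hat{\ap}_j^m(t)$ (from centering the renewal process $\ap_j$ around $\ar_j$) plus a term $\ar_j\langle f,\pd_j\rangle \int_0^t \cdots ds$ that is matched against the corresponding term in \eqref{fluidlimiteqn}; the martingale term $\mart^{\ap_j,j}_f$ scales directly to $\hat{\mart}^{\ap_j,j,m}_f$.

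The heart of the bookkeeping is the service side. Each scaled $\avg^{\spr^k_l,j}_f$ is an integral of $\frac{p_j\langle f,\ssp_j(s-)\rangle}{\wmass(\tm(s-))}$ against $\spr^k_l(g^k_l(\cdot))$; writing $\langle f, \ssp_j^m\rangle = \langle f, \fl_j\rangle + \frac{1}{\sqrt m}\langle f, \hat{\ssp}_j^m\rangle$ and $\wmass(\bar\tm^m) $ vs $\wmass(\flm)$, one obtains, after multiplying by $\sqrt m$ and subtracting the fluid term, (i) the term $-\int_0^t \frac{p_j\langle f,\hat{\ssp}_j^m(s-)\rangle}{\wmass(\bar\tm^m(s-))}d\sum_{k,l}\bar\spr^{k,m}_l(\bar g^{k,m}_l(s))$, and (ii) the terms coming from the discrepancy between $\sum_{k,l}\bar\spr^{k,m}_l(\bar g^{k,m}_l(\cdot))$ and the fluid-scaled service-completion rate. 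For (ii) one decomposes $\sum_{k,l}\bar\spr^{k,m}_l(\bar g^{k,m}_l)$ using the renewal CLT: $\bar\spr^{k,m}_l(\bar g^{k,m}_l(s))$ expands into a fluid part plus $\frac{1}{\sqrt m}\hat\spr^{k,m}_l(\bar g^{k,m}_l(s))$ times $\frac{1}{\sr_l}$-type factors, which gives the $\hat\spr^{k,m}_l$ integral term, and the $\hat\epsilon^{k,j,m}$ error term arises precisely from the mismatch between the service-completion martingale decomposition \eqref{martingalepartofspr} and the service-\emph{entry} decomposition \eqref{martingalepartoftildespr} (the delayed vs.\ non-delayed indexing difference), so $\hat\epsilon^{k,j,m}$ is defined as in \eqref{epsilondef}. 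Finally, I would absorb the cross term — the product of the $\langle f,\fl_j\rangle$ numerator with the fluctuation of $\wmass(\bar\tm^m)$ around $\wmass(\flm)$, which is of order $\frac{\awmass(\hat\tm^m)}{\awmass(\flm)}$ after normalizing — into the term on the third line of \eqref{mainprelimiteqnnoghat}, and collect the service-completion martingales $\mart^{\spr^k_l,j}_f$ together with the piece of the above coming from $\hat\mart^{\spr^k_i,l,m}_1$ into the single object $\hat\othermart^{\spr^k_i,j,m}_f$ defined in \eqref{othermartdef}.

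The main obstacle I expect is the algebraic regrouping in the service term: one must be careful that the fluid equation \eqref{fluidlimiteqn} has $\sum_i \frac{p_i}{\sr_i}\langle 1,\fl_i\rangle = \awmass(\flm)$ in the denominator whereas the prelimit averaged terms \eqref{vavgdef} have $\sum_n p_n\langle 1,\ssp_n\rangle = \wmass(\bar\tm^m)$ in the denominator and are driven by the service-\emph{completion} processes rather than by a clean rate, so reconciling these requires simultaneously (a) Taylor-expanding the ratio $\frac{1}{\wmass(\bar\tm^m)}$ around $\frac{1}{\wmass(\flm)}$, which is licit since $\awmass(\bar\tm^m)$ is bounded away from $0$ with high probability by Lemma \ref{nonzerolemma}, (b) using Corollary \ref{cjconvcor} / Lemma \ref{sjkfluidlimit} to identify the fluid limit of $\bar g^{k,m}_l$ and hence of $\bar\spr^{k,m}_l(\bar g^{k,m}_l)$, and (c) tracking which higher-order-in-$1/\sqrt m$ remainders are genuinely negligible versus which combine to give the stated $\hat\epsilon^{k,j,m}$ term. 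Once the terms are correctly matched, the identity \eqref{mainprelimiteqnnoghat} is exact (no approximation), so the proof is a bookkeeping argument: substitute, expand, cancel the fluid equation, and rename groups of terms according to \eqref{othermartdef} and \eqref{epsilondef}.
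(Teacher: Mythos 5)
Your overall strategy matches the paper's: start from \eqref{prelimiteqn}, subtract the fluid equation \eqref{fluidlimiteqn}, and regroup, with the arrival side handled exactly as you describe. However, there is a genuine gap in your treatment of the service side: you never explain how the diffusion-scaled time change $\hat{g}^{k,m}_l(\cdot)=\sqrt{m}(\bar{g}^{k,m}_l(\cdot)-\bar{g}_l^{k}(\cdot))$ is eliminated from the equation. When you expand $\hat{\avg}^{\spr_l^k,j,m}_f$ following the toy example, the term \eqref{integralagainstrenewaldifftoy} necessarily produces $\int_0^t \phi^{\spr_l^k,j}_f(s,\boldsymbol{\fl}(s))\,d\bigl(\hat{\spr}^{k,m}_l(\bar{g}^{k,m}_l(s))+\sr_l\hat{g}^{k,m}_l(s)\bigr)$, and the statement \eqref{mainprelimiteqnnoghat} contains no $\hat{g}$. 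The paper removes it by (a) writing $g_j^{k,m}(t)$ explicitly as the sum of the service times of the class-$j$ jobs that have entered service at server $k$ minus the residual service time, performing the full martingale decomposition of that expression — this is precisely where the choosing-variable martingales $\hat{\mart}^{\spr_i^k,l,m}_1$ (the constant-function martingales appearing in \eqref{othermartdef}), the service-entry martingale $\hat{O}^{\tilde{V}}$, and hence $\hat{\epsilon}^{k,j,m}$ originate — and (b) exploiting non-idling, so that $\sum_{j}\bar{g}_j^{k,m}(t)=t$ and therefore $\sum_j\hat{g}^{k,m}_j(t)\equiv 0$, which lets one solve algebraically for $d\sum_l(\hat{\spr}^{k,m}_l(\bar{g}^{k,m}_l(s))+\sr_l\hat{g}^{k,m}_l(s))$ in terms of $\hat{O}^{V}$, $\hat{\epsilon}$, $\hat{\mart}_1$, and $\awmass(\hat{\boldsymbol{\tm}}^m)$; the identification \eqref{hash} of $\sr_l m^{-1/2}s^{k,m}(m\cdot)$ as the renewal remainder $\hat{R}^{\spr,k,l,m}$ then yields the clean $\hat{\spr}^{k,m}_l(\bar{g}^{k,m}_l)$ integrator. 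Without steps (a) and (b) your bookkeeping cannot reach the stated form: you correctly name the ingredients ($\hat{\epsilon}$, the $\hat{\mart}_1$ pieces, the $\awmass(\hat{\boldsymbol{\tm}}^m)/\awmass(\boldsymbol{\flm})$ factor) but not the mechanism that produces them, and that mechanism is the heart of the proof.

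A secondary point: you propose ``Taylor-expanding'' $1/\wmass(\bar{\boldsymbol{\tm}}^m)$ around $1/\wmass(\boldsymbol{\flm})$ and tracking negligible remainders, but the lemma is an exact identity (as you note at the end), so the expansion must be the exact algebraic one, $\sqrt{m}\bigl(\tfrac{a^m}{b^m}-\tfrac{a}{b}\bigr)=\tfrac{\hat{a}}{b^m}-\tfrac{a}{b}\,\tfrac{\hat{b}}{b^m}$ with $\hat{a}=\sqrt{m}(a^m-a)$; no remainder is discarded at this stage, and Lemma \ref{nonzerolemma} is invoked only to justify dropping indicator functions (Remark \ref{ignoreindicators}), not to control a Taylor remainder.
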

\begin{proof}
Subtracting \eqref{fluidlimiteqn} from \eqref{prelimiteqn} term by term and using \eqref{diffusiondefinitioneqn}, we see that for $f \in \mathscr{C},$
\begin{align*}
    \langle f, \hat{\ssp}_j^m(t)\rangle & = \langle f, \hat{\ssp}_j^m(0)\rangle -\int_0^t \langle f', \hat{\ssp}_j^m(s) \rangle ds + \hat{\avg}_{f}^{\ap_j,j,m}(t) - \sum_{i=1}^J\sum_{k=1}^K\hat{\avg}_{f}^{\spr^k_i,j,m}(t)\\
    & +\hat{Y}^{\ap_j,j,m}_{f}(t) - \sum_{i=1}^J\sum_{k=1}^K \hat{Y}^{\spr_i^k,j,m}_f(t)
    \numberthis \label{firstdiffusioneqn}
\end{align*}
where for $j \in [J], k \in [K],$ \begin{equation}
\hat{\avg}_f^{\ap_j,j,m}(t)=\sqrt{m}\left(\frac{1}{m}{\avg}_f^{\ap_j,j,m}(mt)-\ar_j\langle f, \pd_j \rangle t
\right),
\end{equation}
\begin{equation}
\hat{\avg}_f^{\spr^k_i,j,m}(t)=\sqrt{m}\left(\frac{1}{m}{\avg}_f^{\spr^k_i,j,m}(mt)-\int_0^t \frac{p_j\langle f, {\fl_j}(s) \rangle}{\wmass(\boldsymbol{\flm}(s))
}\frac{p_i \flm_i(s)}{\awmass(\boldsymbol{\flm}(s))}ds\right).
\label{firstavgvequationinreformulationproof}
\end{equation}
In the above equations, we have used the identity $\sum_{i=1}^J\frac{p_i \flm_i(s)}{\awmass(\boldsymbol{\flm}(s))}= \frac{\wmass(\boldsymbol{\flm}(s))}{\awmass(\boldsymbol{\flm}(s))}$ to break up the third term on the right hand side of \eqref{fluidlimiteqn} and the fact that overloaded fluid model solutions are positive to remove indicator functions in \eqref{fluidlimiteqn}.
Furthermore, $\hat{Y}^{\ap_j,j,m}_f(t),$ $\hat{Y}^{\spr_j^k,i,m}_f(t)$ for $j,i \in [J], k \in [K]$ are as in Definition \ref{diffscaledmartdef}.

Following the outline given in \ref{outlineofmethod}, we further decompose $\hat{\avg}_f^{\ap_j,j,m}(\cdot)$ and $\hat{\avg}_f^{\spr_j^k,i,m}(\cdot)$ as was done in \eqref{phihatmtoy},\eqref{errortermtoy},\eqref{integralagainstrenewaldifftoy}.
\begin{align*}
    \hat{\avg}_f^{\spr^k_j,i,m}(t) &=\int_0^t \hat{\phi}_f^{\spr^k_j,i,m}(s, \bar{\boldsymbol{\ssp}}^m(s-))d\bar{\spr}^{k,m}_j(\bar{g}^{k,m}_j(s))
    \\&+\int_0^t {\phi}_f^{\spr^k_j,i}(s, \boldsymbol{\fl}(s)) d\left( \hat{\spr}^{k,m}_j(\bar{g}^{k,m}_j(s))+\sr_j\hat{g}^{k,m}_j(s)\right),
    \numberthis \label{decomposedVhat}
\end{align*}
where $\hat{\phi}_f^{\spr_j^k,i,m}(s, \bar{X}(s-)) = \sqrt{m}\left( \frac{p_i \langle f, \bar{\ssp}_i^m(s-)\rangle}{\wmass(\bar{\boldsymbol{\tm}}^m(s))}-\frac{p_i\langle f,\fl_i(s)\rangle}{\wmass(\boldsymbol{\flm}(s))} \right)$ is as discussed below \eqref{integralagainstrenewaldifftoy}, and we have used the result of Lemma \ref{sjkfluidlimit}, $\frac{p_i\flm_i(s)}{\awmass(\boldsymbol{\flm}(s))}ds = d \bar{\spr}^k_i(\bar{g}_i^k(s))$.
Applying \eqref{vavgdef}, expanding $\hat{\phi}_f^{\spr_j^k,i,m},$ and using the fact that overloaded fluid model solutions are nonzero, we have
\begin{align*}
    \hat{\avg}_f^{\spr^k_j,i,m}(t) &=\int_0^t p_i\left( \frac{\langle f,\hat{\ssp}_i^m(s-)\rangle}{\wmass(\bar{\boldsymbol{\tm}}^m(s-))}-\frac{\langle f,\fl_i(s)\rangle}{\wmass(\boldsymbol{\flm}(s))}\frac{\wmass(\hat{\boldsymbol{\tm}}^m(s-))}{\wmass(\bar{\boldsymbol{\tm}}^m(s-))}\right)d\bar{\spr}^{k,m}_j(\bar{g}^{k,m}(s))
    \\&+\int_0^t  \frac{p_i \langle f, \fl_i(s)\rangle }{L({\boldsymbol{\flm}}(s))} d\left( \hat{\spr}^{k,m}_j(\bar{g}^{k,m}_j(s))+\sr_j\hat{g}^{k,m}_j(s)\right).\numberthis
    \label{hvdiffexpansion}
\end{align*}

Examining the unscaled ${g}_j^{k,m}(\cdot)$ from \eqref{gjkdef}, we see that
\begin{align*}
    g_j^{k,m}(t)&:= \int_0^t 1_{\{c_j^{k,m}(s)=1\}}ds\\
    & = \sum_{l=1}^J\sum_{\tau_i^{k,l,m}\in (0,t]}1_{\{\ur_i^{k,l}\in I_j(\boldsymbol{\tm}^m(\tau_i^{k,l,m}-)\}}\ser_{\spr_j^{k,m}({g}_j^{k,m}(\tau_i^{k,l,m})-)+2}^{k,j,m}- 1_{\{c_j^{k,m}(t)=1\}}s^{k,m}(t).
\end{align*}
The above equation holds because the time spent on class $j$ by server $k$ up until time $t$ is the total amount of service time of jobs of class $j$ that have entered service at server $k$ up until that point, minus the remaining time of the job currently in service if server $k$ is working on a job of class $j$ at that time.
We also use the convention $\spr_j^k(0-):=-1$ above to simplify notation. 
With this convention, the first service time counted in the sum will be $\ser_1^{k,j,m},$ as desired.
(Following Remark \ref{ignoreindicators}, we have omitted the term that adds in possible service entries from arrivals to an empty system, as this will be zero on any realization where $\bar{\boldsymbol{\tm}}^m(t) \geq \boldsymbol{0}$ for all $t \in [0,T].$)

Decomposing following the method outlined in \S \ref{outlineofmethod}, we have
\begin{align}
    g_j^{k,m}(t)= \mart^{g_j^k,m}(t)+\avg^{g_j^k,m}(t)- 1_{\{c_j^{k,m}(t)=1\}}s^{k,m}(t)
    \label{gdecomp}
\end{align}
for $t \geq 0$, where
\begin{align*}
    \mart^{g_j^k,m}(t) = \sum_{l=1}^J\sum_{\tau_i^{k,l,m}\in (0,t]}\left(1_{\{\ur_i^{k,l}\in I_j(\boldsymbol{\tm}^m(\tau_i^{k,l,m}-))\}}\ser_{\spr_j^{k,m}({g}_j^{k,m}(\tau_i^{k,l,m})-)+2}^{k,j,m}-\frac{\frac{p_j}{\sr_j}\tm_j^m(\tau_i^{k,l,m}-)}{\wmass(\boldsymbol{\tm}^m(\tau_i^{k,l,m}-))}\right)
\end{align*}
and
\begin{align*}
    H^{g_j^k,m}(t) = \int_0^t \frac{\frac{p_j}{\sr_j}\tm_j^m(s-)}{\wmass(\boldsymbol{\tm}^m(s-))}d \sum_{l=1}^J\spr^{k,m}_l(g^{k,m}_l(s)).
\end{align*}
Further decomposing $\mart^{g_j^k,m}(\cdot),$ we see that for $t \geq 0,$
\begin{align*}
    \mart^{g_j^k,m}(t) &= \sum_{l=1}^J\sum_{\tau_i^{k,l,m}\in (0,t]}1_{\{\ur_i^{k,l}\in I_j(\boldsymbol{\tm}^m(\tau_i^{k,l,m}-))\}}\left(\ser_{\spr_j^{k,m}({g}_j^{k,m}(\tau_i^{k,l,m})-)+2}^{k,j,m}-\frac{1}{\sr_j}\right)\\&+\sum_{l=1}^J\sum_{\tau_i^{k,l,m}\in (0,t]}\frac{1}{\sr_j}\left(1_{\{\ur_i^{k,l}\in I_j(\boldsymbol{\tm}^m(\tau_i^{k,l,m}-))\}}-\frac{p_j\tm_j^m(\tau_i^{k,l,m}-)}{\wmass(\boldsymbol{\tm}^m(\tau_i^{k,l,m}-))}\right)\\
    \end{align*}
We remark at this point, that the first term on the right-hand side above counts up all of the $\ser_{i}^{k,j,m}- \frac{1}{\sr_j}$ for jobs that have entered service at server $k$ from class $j$.
Thus $\mart^{g_j^k,m}(\cdot)= -\frac{1}{\sr_j}O^{\tilde{\spr},k,j,m}(g_j^{k,m}(\cdot))+\frac{1}{\sr_j}\sum_{l=1}^J\mart_1^{\spr_l^k,j,m}(\cdot),$ as defined in \eqref{martingalepartoftildespr} and \eqref{vmartdef}.
Because we primarily work with the service \textit{completion} processes, we will introduce an error term for the small difference between the service completion and service entry martingales, and say
    \begin{align*}
    \mart^{g_j^k,m}(\cdot)& = -\frac{1}{\sr_j}O^{V,j,k,m}(g_j^{k,m}(\cdot))- \epsilon^{k,j,m}(\cdot) +\frac{1}{\sr_j}\sum_{l=1}^J {\mart}^{\spr_l^k,j,m}_1(\cdot) 
\end{align*}
where for $t\geq 0$
\begin{align*}
\epsilon^{k,j,m}(t) &:=\frac{1}{\sr_j}{O}^{\tilde{V},j,k,m}(g_j^{k,m}(t))-\frac{1}{\sr_j}O^{V,j,k,m}(g_j^{k,m}(t))\\
&=\frac{1}{\sr_j}1_{\{c_j^{k}(t)=1\}}\left(1-\sr_j\ser_{\spr_j^{k}(t)+1}^{k,j}\right) .\numberthis \label{ederiv}
\end{align*}
Diffusion-scaling, we conclude that for $t \geq 0,$
\begin{equation}
    \hat{\mart}^{g_j^k,m}(t) = -\frac{1}{\sr_j} \hat{O}^{V,j,k,m}(\bar{g}_j^{k,m}(t)) - \hat{\epsilon}^{k,j,m}(t)+\frac{1}{\sr_j}\sum_{l=1}^J \hat{\mart}^{\spr_l^k,j,m}_1(t). 
    \label{yghatfinalform}
    \end{equation}
Diffusion-scaling $H^{g_j^k}(\cdot),$ applying \eqref{gdecomp}, and following the same steps as in \eqref{decomposedVhat}, we see that
\begin{align*}
    \hat{g}^{k,m}_j(t)& = \hat{\mart}^{g_j^k,m}(t) +\int_0^t\sqrt{m}\left(\frac{\frac{p_j}{\sr_j}\bar{\tm}_j^m(s-)}{\wmass(\bar{\boldsymbol{\tm}}^m(s-))}-\frac{\frac{p_j}{\sr_j}\flm_j(s)}{\wmass(\boldsymbol{\flm}(s))}\right)d\sum_{l=1}^J \bar{\spr}^{k,m}_l(\bar{g}^{k,m}_l(s))\\
    &+\int_0^t\frac{\frac{p_j}{\sr_j}\flm_j(s)}{\wmass(\boldsymbol{\flm}(s))}d\sum_{l=1}^J\left( \hat{\spr}^{k,m}_l(\bar{g}^{k,m}_l(s))+ \sr_l\hat{g}^{k,m}_l(s)\right)- \frac{1}{\sqrt{m}}1_{\{c_j^{k,m}(t)=1\}}s^{k,m}(mt).
\end{align*}
Noting that, because we have a non-idling assumption, and following Remark \ref{ignoreindicators}, we may once again assume the queues are all nonempty for $t \geq 0,$ the service time given by server $k$ before time $t$ is $t,$ and thus
\begin{equation}
    \sum_{j=1}^J \bar{g}_j^{k,m}(t) =\frac{1}{m}\sum_{j=1}^J {g}_j^{k,m}(mt) = \frac{1}{m} mt= t = \sum_{j=1}^J \bar{g}_j^{k}(t), \hspace{5mm} t \geq 0.
\end{equation}
Therefore,
\begin{align*}
    0=\sum_{j=1}^J\hat{g}^{k,m}_j(t) & = \sum_{j=1}^J\hat{\mart}^{g_j^k,m}(t) +\int_0^t\sqrt{m}\left(\frac{\awmass(\bar{\boldsymbol{\tm}}^m(s-))}{\wmass(\bar{\boldsymbol{\tm}}^m(s-))}-\frac{\awmass({\boldsymbol{\flm}}(s))}{\wmass(\boldsymbol{\flm}(s))}\right)d\sum_{l=1}^J \bar{\spr}^{k,m}_l(\bar{g}^{k,m}_l(s))\\
    &+\int_0^t\frac{\awmass(\boldsymbol{\flm}(s))}{\wmass(\boldsymbol{\flm}(s))}d\sum_{l=1}^J\left( \hat{\spr}^{k,m}_l(\bar{g}^{k,m}_l(s))+\sr_l\hat{g}^{k,m}_l(s)\right)- \frac{1}{\sqrt{m}}s^{k,m}(mt).
\end{align*}
Now, we expand the integrand $\sqrt{m}\left(\frac{\awmass(\bar{\boldsymbol{\tm}}^m(s-))}{\wmass(\bar{\boldsymbol{\tm}}^m(s-))}-\frac{\awmass({\boldsymbol{\flm}}(s))}{\wmass(\boldsymbol{\flm}(s))}\right)$ as was done for $\hat{\avg}_f^{\spr^k_j,i,m}(t)$ in \eqref{hvdiffexpansion}:
\begin{align*}
    \sqrt{m} \left(\frac{\awmass(\bar{\boldsymbol{\tm}}^m(s-))}{\wmass(\bar{\boldsymbol{\tm}}^m(s-))}-\frac{\awmass(\boldsymbol{\flm}(s))}{\wmass(\boldsymbol{\flm}(s))}\right)=\left( \frac{\awmass(\hat{\boldsymbol{\tm}}^m(s-))\wmass(\boldsymbol{\flm}(s))-\awmass(\boldsymbol{\flm}(s))\wmass(\hat{\boldsymbol{\tm}}^m(s-))}{\wmass(\bar{\boldsymbol{\tm}}^m(s-))\wmass(\boldsymbol{\flm}(s))}\right)
\end{align*}
Next, we note that
\begin{align*}
     &-\frac{\wmass(\boldsymbol{\flm}(s))}{\awmass(\boldsymbol{\flm}(s))}d\sum_{l=1}^J\hat{\mart}^{g_l^k,m}(s)+ \frac{\wmass(\boldsymbol{\flm}(s))}{\awmass(\boldsymbol{\flm}(s))}d\frac{1}{\sqrt{m}}s^{k,m}(ms)
    \\&= \frac{\wmass(\boldsymbol{\flm}(s))}{\awmass(\boldsymbol{\flm}(s))}d\left(\sum_{l=1}^J\frac{1}{\sr_l} \hat{O}^{V,l,k,m}(\bar{g}_l^{k,m}(s))+
    \frac{1}{\sqrt{m}}s^{k,m}(ms)\right)\\
    &+\frac{\wmass(\boldsymbol{\flm}(s))}{\awmass(\boldsymbol{\flm}(s))}d\sum_{l=1}^J\hat{\epsilon}^{k,l,m}(s)-\frac{\wmass(\boldsymbol{\flm}(s))}{\awmass(\boldsymbol{\flm}(s))}d\sum_{l=1}^J\frac{1}{\sr_l}\sum_{i=1}^J \hat{\mart}^{\spr_i^k,l,m}_1(s) \\
\end{align*}
Rearranging and combining the four displays above along with \eqref{yghatfinalform}, we find that 
\begin{align*}
    &d\sum_{l=1}^J \left(  \hat{\spr}^{k,m}_l(\bar{g}^{k,m}_l(s))+\sr_l\hat{g}^{k,m}_l(s)\right)\\
    &= \frac{\wmass(\boldsymbol{\flm}(s))}{\awmass(\boldsymbol{\flm}(s))}d\left(\sum_{l=1}^J\frac{1}{\sr_l} \hat{O}^{V,l,k,m}(\bar{g}_l^{k,m}(s))+
    \frac{1}{\sqrt{m}}s^{k,m}(ms)\right)\\
    &+\frac{\wmass(\boldsymbol{\flm}(s))}{\awmass(\boldsymbol{\flm}(s))}d\sum_{l=1}^J\hat{\epsilon}^{k,l,m}(s)-\frac{\wmass(\boldsymbol{\flm}(s))}{\awmass(\boldsymbol{\flm}(s))}d\sum_{l=1}^J\frac{1}{\sr_l}\sum_{i=1}^J \hat{\mart}^{\spr_i^k,l,m}_1(s) \\
    &-\frac{\wmass(\boldsymbol{\flm}(s))}{\awmass(\boldsymbol{\flm}(s))}\left( \frac{\awmass(\hat{\boldsymbol{\tm}}^m(s-))\wmass(\boldsymbol{\flm}(s))-\awmass(\boldsymbol{\flm}(s))\wmass(\hat{\boldsymbol{\tm}}^m(s-))}{\wmass(\bar{\boldsymbol{\tm}}^m(s-))\wmass(\boldsymbol{\flm}(s))}\right)d\sum_{l=1}^J \bar{\spr}^{k,m}_l(\bar{g}^{k,m}_l(s)).
    \numberthis
    \label{ghattransformation}
\end{align*}
Then for $\hat{\avg}^{A,j,m},$ following the same steps but with \eqref{phiAdef}, we obtain
\begin{equation}
    \hat{\avg}^{A,j,m}_t(f) = \langle f, \pd_j\rangle \hat{\ap}_j^m(t), \hspace{5mm} t\geq 0. 
    \numberthis
    \label{hadiffexpansion}
\end{equation}
Lastly, we note that
\begin{equation}
    \left(\sum_{l=1}^J\frac{1}{\sr_l} \hat{O}^{V,k,l,m}(\bar{g}_l^{k,m}(s))+
    \frac{1}{\sqrt{m}}s^{k,m}(ms)\right)= \sum_{l=1}^J\frac{1}{\sr_l} \hat{\spr}^{k,m}_l(\bar{g}
    _l^{k,m}(s)),\hspace{5mm} s\geq0, \label{hash}
\end{equation}
because $\sr_l\frac{1}{\sqrt{m}}s^{k,m}(mt)$ is the remainder term $\hat{R}^{\spr, k,l,m}(g_l^{k,m}(mt))$ for whichever process $\spr_l^{k,m}(g_l^{k,m}(mt))$ is running at time $mt$ (this can be directly checked using \eqref{rtdefspr} and the diffusion scaling \eqref{remainderpartofEhat}).
Then, combining \eqref{firstdiffusioneqn}, \eqref{hvdiffexpansion}, \eqref{hadiffexpansion}, \eqref{ghattransformation}, and \eqref{hash}, one obtains 
  \begin{align}
        \langle f, \hat{\ssp}^m_j(t)\rangle &= \langle f,  \hat{\ssp}^m_j(0) \rangle-\int_0^t \langle f', \hat{\ssp}^m_j(s)\rangle ds +\hat{\mart}^{\ap_j,j,m}_f(t) - \sum_{i=1}^J\sum_{k=1}^K \hat{\othermart}^{\spr_i^k,j,m}_f(t)\\
        &- \int_0^t p_j\left( \frac{\langle f,\hat{\ssp}_j^m(s-)\rangle}{\wmass(\bar{\boldsymbol{\tm}}^m(s-))}-\frac{\langle f,\fl_j(s)\rangle}{\wmass(\boldsymbol{\flm}(s))}\frac{\wmass(\hat{\boldsymbol{\tm}}^m(s-))}{\wmass(\bar{\boldsymbol{\tm}}^m(s-))}\right)d\sum_{k=1}^K\sum_{l=1}^J\bar{\spr}^{k,m}_l(\bar{g}_l^{k,m}(s))\numberthis \label{whaaaat}
    \\
    &-\int_0^t \frac{p_j \langle f, \fl_j(s)\rangle }{\awmass({\boldsymbol{\flm}}(s))} d\sum_{k=1}^K \sum_{l=1}^J\frac{1}{\sr_l} \hat{\spr}^{k,m}_l(\bar{g}_l^{k,m}(s))-\int_0^t \frac{p_j \langle f, \fl_j(s)\rangle }{\awmass({\boldsymbol{\flm}}(s))} d\sum_{k=1}^K \sum_{l=1}^J\hat{\epsilon}^{k,l,m}(s)\\
     &+\int_0^t \frac{p_j \langle f, \fl_j(s)\rangle }{\awmass({\boldsymbol{\flm}}(s))} \left( \frac{\awmass(\hat{\boldsymbol{\tm}}^m(s))\wmass(\boldsymbol{\flm}(s))-\awmass(\boldsymbol{\flm}(s))\wmass(\hat{\boldsymbol{\tm}}^m(s))}{\wmass(\bar{\boldsymbol{\tm}}^m(s))\wmass(\boldsymbol{\flm}(s))}\right)d\sum_{k=1}^K\sum_{l=1}^J\bar{\spr}^{k,m}_l(\bar{g}^{k,m}_l(s))\label{line62}\\
     &+\langle f, \pd_j\rangle \hat{\ap}_j^m(t).
    \numberthis \label{almosttomaineq}
    \end{align}
All that is left to do is combine like terms. In particular, we combine \eqref{whaaaat} and \eqref{line62} to achieve \eqref{mainprelimiteqnnoghat}.

\end{proof}
\subsection{Mass Transport Version of Diffusion-Scaled Difference Equation}
We will now introduce a mass transport equation that will be satisfied by a large class of test functions integrated against $\hat{\boldsymbol{\ssp}}^m(\cdot).$
We will denote translation by $x \geq 0$ of a function $f: \R_+\rightarrow \R$ as follows:
\begin{equation}
t_xf(y):=
\begin{cases}
f(y-x), & y>x,\\
0, & y\leq x,
\end{cases}
\label{translationnotation}
\end{equation}
Furthermore, for a function $f: \R_+\rightarrow\R,$ we define
\begin{equation}
    \flcdf_f^{j,c}(t,x): = \langle t_xf, \fl(t)\rangle,
\end{equation}
and
\begin{equation}
    \pdcdf_f^{j,c}(x) := \langle t_xf, \pd_j\rangle.
\end{equation}
In \cite{loeserwilliams}, an alternate fluid model equation, (24), which can be thought of as a mass transport equation, is given in Lemma 4.1.
We write this equation in the notation of our paper,
\begin{align*}
\flcdf_1^{j,c}(t,x)&= \flcdf_1^{j,c}(u,t+x-u) + \int_u^t \pdcdf_1^{j,c}(t+x-s)  d \bar{\ap}_j(s) \\&- \sum_{l=1 }^J\sum_{k=1}^K\int_u^t \frac{p_j \flcdf_1^{j,c}(s,t+x-s)  }{\wmass(\boldsymbol{\flm}(s))}d \bar{\spr}_l^k(\bar{g}_l^k(s)) \numberthis
    \label{masstransportindicator}
\end{align*}
for $t \geq u \geq 0$, where the last terms are obtained using the limit of the service processes obtained in Lemma \ref{sjkfluidlimit} and the fact that $\bar{\ap}_j(s)= \ar_js$ for $s\geq 0$.
In \cite{loeserwilliams}, equation (24) is obtained from the fluid model equation in Lemma 4.1 by first obtaining the following equation for $g \in \mathscr{C}$, $0 \leq u \leq t,$ taking $g(x)=0$ for $x \leq 0,$
\begin{align*}
    \langle g(\cdot), \fl_j(t) \rangle &= \langle g(\cdot-t+u), \fl_j(u) \rangle -\int_u^t \frac{Kp_j \langle g(\cdot -t+s) , \fl_j(s) \rangle}{\awmass(\boldsymbol{\flm}(s))}ds + \int_u^t \ar_j \langle g (\cdot-t+s), \pd_j \rangle ds.\numberthis
    \label{masstransportequationfromotherpaper}
\end{align*}
Then, the authors used an approximation argument, in which they approximated $1_{(0,\infty)}$ from below and applied the monotone convergence theorem, to obtain \eqref{masstransportindicator}, but for the test function $1_{(0,\infty)}$ rather than $1.$
When one observes that, because the measures above are all continuous, $M_{1_{(0,\infty}}^{j,c}(t,x)= M_1^{j,c}(t,x)$ and $N_{1_{(0,\infty}}^{c}(t,x)= N_1^{j}(t,x)$ for all $t,x \geq 0,$ we obtain \eqref{masstransportindicator}.
Substituting $t_xf$ for $g$ in \eqref{masstransportequationfromotherpaper} and the limits $\ap_j(s)=\ar_js$ and $\bar{\spr}_l^k(\bar{g}_l^k(s))= \int_0^s \frac{p_l \flm_l(s)}{\awmass(\flm(s))}ds$ for $s\geq 0$, we obtain
\begin{align*}
    \flcdf_f^{j,c}(t,x)&= \flcdf_f^{j,c}(u,t+x-u)
      -\sum_{j=1 }^J\sum_{k =1}^K\int_u^t \frac{p_j \flcdf_f^{j,c}(s,t+x-s)}{\wmass(\boldsymbol{\flm}(s))}d\bar{\spr}_l^k(\bar{g}_l^k(s)) \\&+  \int_u^t \pdcdf_f^{j,c}(t+x-s) d\bar{\ap}_j(s) ,\numberthis
    \label{masstransportequation}
\end{align*}
$t \geq u \geq 0.$
We note that the above equation is the same as equation \eqref{masstransportindicator}, but has now been extended from $f = 1$ to any $f$ in $\mathscr{C} \cup \{1\}.$ 
It is worthwhile to do the martingale decomposition for the mass transport representation of the sequence of diffusion-scaled models, centered around the fluid limit mass transport equation \eqref{masstransportequation} for $f \in \mathscr{C}\cup \{1\}$. We do so now.
\begin{lem}
    Let $f\equiv 1$ or $f \in \mathscr{C}.$ Define
    $$ \hat{\flcdf}^{j,c,m}_f(t,x):= \langle t_x f(\cdot), \hat{\ssp}_j^m(t) \rangle.$$
Then, almost surely, for $t, x \geq 0,$
\begin{align*}
\hat{{\flcdf}}^{j,c,m}_f(t,x)& =  \hat{{\flcdf}}^{j,c,m}_f(0,t+x)+\hat{\mart}^{\ap_j,j,m}_{t_{t+x-\cdot}f}(t) - \sum_{i=1}^J\sum_{k=1}^K \hat{\othermart}^{\spr_i^k,j,m}_{t_{t+x-\cdot}f}(t)\\
        & - \int_0^t \frac{p_j\hat{{\flcdf}}^{j,c,m}_f(s-,t+x-s)}{\wmass(\bar{\boldsymbol{\tm}}^m(s-))}d\sum_{k=1}^K\sum_{l=1}^J\bar{\spr}^{k,m}_l(\bar{g}^{k,m}_l(s))\\
    &+\int_0^t\frac{p_j {{\flcdf}}^{j,c}_f(s,t+x-s)}{\wmass(\bar{\boldsymbol{\tm}}^m(s-))}\left(\frac{\awmass(\hat{\boldsymbol{\tm}}^m(s-))}{\awmass(\boldsymbol{\flm}(s))}\right)d\sum_{k=1}^K\sum_{l=1}^J\bar{\spr}^{k,m}_l(\bar{g}^{k,m}_l(s))\\
        &-\int_0^t \frac{p_j {{\flcdf}}^{j,c}_f(s,t+x-s)}{\awmass({\boldsymbol{\flm}}(s))} d\sum_{k=1}^K \sum_{l=1}^J\frac{1}{\sr_l} \hat{\spr}^{k,m}_l(\bar{g}_l^{k,m}(s))\\&-\int_0^t \frac{p_j {{\flcdf}}^{j,c}_f(s,t+x-s)}{\awmass({\boldsymbol{\flm}}(s))} d\sum_{k=1}^K \sum_{i=1}^J \hat{\epsilon}^{k,i,m}(s)\\
     &+\int_0^t\pdcdf^{j,c}_f(t+x-s) d\hat{\ap}_j^m(s).
    \numberthis \label{mainMeq}
    \end{align*}
\end{lem}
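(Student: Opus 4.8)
The plan is to prove \eqref{mainMeq} by transcribing the proof of Lemma \ref{mainprelimiteqnlemma}, with the fixed test function $f$ replaced everywhere by the translation-shifted, time-dependent test function $g(s,y):=t_{t+x-s}f(y)=f\bigl((y-(t+x-s))^+\bigr)$ for $s\in[0,t]$, and with the fluid equation \eqref{fluidlimiteqn} replaced by the fluid mass transport equation \eqref{masstransportequation}. The first step is to record the prelimit mass transport identity, the time-dependent analogue of \eqref{finalprelimitequationfordecomp}. Starting from the state descriptor \eqref{statespacedescriptorequation}, the point-mass locations decrease at unit rate while the shift $t+x-s$ also decreases at unit rate, so on $\{y>t+x-s\}$ one has $\partial_s g(s,y)=\partial_y g(s,y)=f'\bigl(y-(t+x-s)\bigr)$; hence the transport drift $\int_0^t\langle \partial_s g(s,\cdot)-\partial_y g(s,\cdot),\ssp^m_j(s)\rangle\,ds$ that would otherwise appear vanishes identically. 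Moreover $g(s,0)=f\bigl((s-t-x)^+\bigr)=f(0)=0$ for every $s\le t$ (equal to $1_{\{0>t+x-s\}}=0$ when $f=1_{(0,\infty)}$), so a point mass reaching $0$ --- a reneging job --- contributes nothing to $\langle g(s,\cdot),\ssp^m_j(s)\rangle$ at or after that instant, and arrivals that go directly into service contribute nothing by the indicator in \eqref{statespacedescriptorequation}. Consequently $s\mapsto\langle g(s,\cdot),\ssp^m_j(s)\rangle$ is constant between the jump times of the arrival and service-completion renewal processes, and
\[
\langle t_xf,\ssp^m_j(t)\rangle=\langle t_{t+x}f,\ssp^m_j(0)\rangle+\Delta^{\ap_j,j}_{g}(t)+\sum_{l=1}^J\sum_{k=1}^K\Delta^{\spr_l^k,j}_{g}(t),
\]
with the $\Delta$ terms as defined in \S\ref{decomposingrenewaltermssect} for the time-dependent function $g$; note the absence of any $\langle f',\cdot\rangle$ term, exactly as in \eqref{masstransportequation}.

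The second step is to decompose each $\Delta^{\ap_j,j}_{g}$ and $\Delta^{\spr_l^k,j}_{g}$ into an averaged part and a martingale part using the construction of \S\ref{decomposingrenewaltermssect}. The hypotheses of Proposition \ref{martingaledecompositionprop}, verified through Lemma \ref{martingalesaremartingaleslem}, hold because $g(s,\cdot)$ is measurable and bounded uniformly in $s$ by $\|f\|_\infty$ (by $1$ when $f=1_{(0,\infty)}$). Subtracting the fluid mass transport equation \eqref{masstransportequation} (taken at $u=0$, so that its initial term is $\flcdf^{j,c}_f(0,t+x)=\langle t_{t+x}f,\fl_j(0)\rangle$) from the resulting prelimit equation, I would then run \emph{verbatim} the algebra in the proof of Lemma \ref{mainprelimiteqnlemma}: further split $\hat{\avg}^{\ap_j,j,m}_{g}$ and $\hat{\avg}^{\spr_l^k,j,m}_{g}$ as in \S\ref{toyexamplesect} into a $\hat{\phi}^m$-integral, an error integral against $\hat{\spr}^{k,m}_l(\bar{g}^{k,m}_l(\cdot))$, and an integral against $\hat{g}^{k,m}_l(\cdot)$; perform the time-change decomposition of $\hat{g}^{k,m}_j(\cdot)$ exactly as in \eqref{gdecomp}--\eqref{ghattransformation} (this part is untouched, as it does not involve the test function); collect the martingale pieces into $\hat{\othermart}^{\spr_i^k,j,m}_{g}$ and the service-entry discrepancy into $\hat{\epsilon}^{k,i,m}$ through \eqref{othermartdef}--\eqref{epsilondef}; and combine like terms. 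At each occurrence of $\langle f,\pd_j\rangle$ or $\langle f,\fl_j(s)\rangle$ in that proof, the shifted function instead produces $\langle t_{t+x-s}f,\pd_j\rangle=\pdcdf^{j,c}_f(t+x-s)$ (now inside the $d\hat{\ap}^m_j$ integral) and $\langle t_{t+x-s}f,\fl_j(s)\rangle=\flcdf^{j,c}_f(s,t+x-s)$, which is precisely the form appearing in \eqref{mainMeq}.

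For the case $f=1_{(0,\infty)}$ no change is needed, since the decomposition of \S\ref{decomposingrenewaltermssect} only requires bounded measurable test functions and the fluid equation \eqref{masstransportindicator} for $1_{(0,\infty)}$ is already available from \cite{loeserwilliams}; alternatively one approximates $1_{(0,\infty)}$ from within $\mathscr{C}$ and passes to the limit as in \cite{loeserwilliams}. The upgrade from ``for each fixed $t,x$, almost surely'' to ``almost surely, for all $t,x\ge0$'' follows by establishing the identity on a countable dense set of pairs and extending via the right-continuity in $t$ of $\hat{\ssp}^m_j$ together with the right-continuity in $x$ of $x\mapsto\langle t_xf,\mu\rangle$ for finite measures $\mu$. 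I expect the only genuine obstacle to be the bookkeeping in the prelimit mass transport identity of the first step --- verifying carefully that $s\mapsto\langle t_{t+x-s}f,\ssp^m_j(s)\rangle$ is constant between renewal jumps and that its jumps coincide with the $\Delta$ terms of \S\ref{decomposingrenewaltermssect} --- since once that identity is established, the remainder is a term-by-term transcription of the proof of Lemma \ref{mainprelimiteqnlemma}.
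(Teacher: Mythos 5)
Your proposal is correct and follows essentially the same route as the paper: the paper likewise reads the prelimit mass transport identity directly off the explicit state descriptor \eqref{statespacedescriptorequation} (expressing the arrival and service contributions via $t_{t+x}f$ applied to absolute event times), applies the decompositions \eqref{aavgdef}--\eqref{vmartdef} with the time-dependent test function $t_{t+x-\cdot}f$, subtracts \eqref{masstransportequation} at $u=0$, and then reruns the algebra of Lemma \ref{mainprelimiteqnlemma} verbatim. Your extra care about the vanishing transport drift and the a.s.-in-$(t,x)$ upgrade is fine but not something the paper dwells on.
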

\begin{proof}
We will be following the same method as was done in \S \ref{outlineofmethod} and \S \ref{decomposingrenewaltermssect}-\ref{diffusionscaledifferenceequationsect} to obtain \eqref{mainprelimiteqnnoghat}, so we keep the following proof brief.
Applying \eqref{statespacedescriptorequation} and assuming nonzero paths as was done in the proof of Lemma \ref{mainprelimiteqnlemma}, following Remark \ref{ignoreindicators}, we see that for $t \geq 0,$
\begin{align*}
\langle t_xf, {\ssp}_j(t) \rangle 
&= \langle t_{t+x}f , \ssp_j(0)\rangle + \sum_{i=1}^{\ap_j(t)} t_{t+x}f(\iinta_i^j+\pat_i^j)\hspace{20mm}\\& -\sum_{k \in [K]}\sum_{l \in [J]}\sum_{\tau_i^{\spr,k,l} \in (0,t]} t_{t+x} f(T_{i,j}^{k,l}+\tau_i^{\spr,k,l}),\hspace{20mm}
\end{align*}
and using the decompositions given in \eqref{aavgdef}, \eqref{vavgdef}, \eqref{amartdef}, and \eqref{vmartdef}, we rewrite this as
\begin{align*}
    &=\flcdf_f^{j,c}(0,t+x) + \mart^{\ap_j,j}_{t_{t+x-\cdot}f}(t) + \avg^{\ap_j,j}_{t_{t+x-\cdot}f}(t)
    \\&-\sum_{k \in [K]}\sum_{l \in [J]}\mart^{\spr^k_l,j}_{t_{t+x-\cdot}f}(t)-\sum_{k \in [K]}\sum_{l \in [J]}\avg^{\spr^k_l,j}_{t_{t+x-\cdot}f}(t),
\end{align*}
\begin{align*}
    &\hspace{35mm}=\flcdf_f^{j,c}(0,t+x) + \mart^{\ap_j,j}_{t_{t+x-\cdot}f}(t) + \int_0^t \pdcdf^{j,c}_f(t+x-s)d \ap_j(s) \\&\hspace{35mm}-\sum_{k \in [K]}\sum_{l \in [J]}\mart^{\spr^k_l,j}_{t_{t+x-\cdot}f}(t)-\sum_{k \in [K]}\sum_{l \in [J]}\int_0^t  \frac{p_j \flcdf_f^{j,c}(s-,t+x-s) }{\sum_{n=1}^J p_n \langle 1, \ssp_n(s-) \rangle}d\spr_l^k(g_l^k(s)).
\end{align*}
Subtracting off \eqref{masstransportequation} with $u=0$ and following the calculation in the proof of Lemma \ref{mainprelimiteqnlemma} with $t_{x+t-\cdot}f$ in place of $f$, \eqref{mainMeq} follows.

\end{proof}
\section{Proof of Tightness}
\label{proofoftightnesssect}

In this section, we prove Theorem \ref{tightnessresult}.
 We first use Lemma \ref{ornsteinuhlenbecktightnesscondition} to reduce C-tightness to C-tightness of a function of the martingale terms, fluid-scaled terms, and deterministic terms, which we we call $U_f^{j,m}(r,t)$.
We then prove C-tightness of $U_f^{j,m}(r,t)$.
\begin{lem}
  For $f \in \mathscr{C}\cup \{1\},$ $0 \leq r \leq t,$ define
\begin{align*}
    U_f^{j,m}(r,t)&:= \hat{M}_f^{j,c,m}(0,t)+ \hat{{\mart}}^{\ap_j,j,m}_{t_{t-\cdot }f}(r)- \sum_{i=1}^J\sum_{k=1}^K \hat{\othermart}^{\spr_i^k,j,m}_{t_{t-\cdot }f}(r)\\
    &-\int_0^r \frac{p_j {{\flcdf}}^{j,c}_f(s,t-s)}{\awmass({\boldsymbol{\flm}}(s))} d\sum_{k=1}^K \sum_{l=1}^J\frac{1}{\sr_l} \hat{\spr}^{k,m}_l(\bar{g}_l^{k,m}(s))-\int_0^r \frac{p_j {{\flcdf}}^{j,c}_f(s,t-s)}{\awmass({\boldsymbol{\flm}}(s))} d\sum_{k=1}^K\sum_{i=1}^J \hat{\epsilon}^{k,i,m}(s)\\
     &+\int_0^r\pdcdf^{j,c}_f(t-s) d\hat{\ap}_j^m(s).
     \numberthis
     \label{ujmdef}
\end{align*}
Then, if $U_f^{j,m}(r,t)$ is compactly contained, in other words for each $M\in\N,$ $\epsilon>0,$ there exists $m_0\in \N$ and $K_{\epsilon} \in \R_+$ such that
\begin{equation}
    m\geq m_0\implies P^m(\sup_{t\leq M}\sup_{r \leq t}|U^{j,m}_{f}(r,t)|\geq K_{\epsilon})\leq \epsilon \label{compactcontainmentcond}
\end{equation}
then if we define
\begin{equation}
    R_f^{j,m}(r,t): = \hat{M}_f^{j,c,m}(r,t-r), \hspace{5mm}t\geq0,0 \leq r \leq t, \label{rjdef}
\end{equation}
$R_f^{j,m}(\cdot,\cdot)$ satisfies the condition, and $\awmass(\hat{\boldsymbol{\tm}}^m(
\cdot))$ satisfies the condition with the $\sup_{r \leq t}$ removed.
If we assume further that $U_f^{j,m}(\cdot,\cdot)$ is C-tight as a multiparameter process, then $R_f^{j,m}(\cdot,\cdot)$ is also C-tight as a multiparameter process.
\label{Lcompactcontainmentlemma}
\end{lem}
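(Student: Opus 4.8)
The plan is to convert the mass-transport identity \eqref{mainMeq} into a self-referential integral equation for $R_f^{j,m}(r,t)=\hat{\flcdf}_f^{j,c,m}(r,t-r)$, apply Gr\"onwall, and then invoke Lemma~\ref{ornsteinuhlenbecktightnesscondition}. Substituting $t\mapsto r$, $x\mapsto t-r$ in \eqref{mainMeq} (so that $r+(t-r)=t$) and subtracting the definition \eqref{ujmdef} of $U_f^{j,m}(r,t)$ term by term, every martingale, every $\hat{\spr}$-integral, every $\hat{\epsilon}$-integral and the $\hat{\ap}$-integral cancel, leaving, for $f\in\mathscr C\cup\{1_{(0,\infty)}\}$ and $0\le r\le t$,
\[R_f^{j,m}(r,t)=U_f^{j,m}(r,t)-\int_0^r \frac{p_j R_f^{j,m}(s-,t)}{\wmass(\bar{\boldsymbol{\tm}}^m(s-))}\,d\Sigma^m(s)+\int_0^r \frac{p_j \flcdf_f^{j,c}(s,t-s)}{\wmass(\bar{\boldsymbol{\tm}}^m(s-))}\,\frac{\awmass(\hat{\boldsymbol{\tm}}^m(s-))}{\awmass(\boldsymbol{\flm}(s))}\,d\Sigma^m(s),\]
where $\Sigma^m(\cdot):=\sum_{k=1}^K\sum_{l=1}^J\bar{\spr}_l^{k,m}(\bar{g}_l^{k,m}(\cdot))$ is nondecreasing and $C$-tight by Lemma~\ref{sjkfluidlimit}. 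For fixed $t$ this is a closed equation for the process $r\mapsto R_f^{j,m}(r,t)$ except for the driver $\awmass(\hat{\boldsymbol{\tm}}^m(s-))=\sum_i\frac{p_i}{\sr_i}\hat{\tm}_i^m(s-)=\sum_i\frac{p_i}{\sr_i}R_{1_{(0,\infty)}}^{i,m}(s-,s)$; that is, it is closed once the diagonal slices of the $1_{(0,\infty)}$-family are under control.

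Accordingly, I would first show that $\Psi^m(t):=\sum_{j=1}^J\sup_{0\le s\le\tau\le t}|R_{1_{(0,\infty)}}^{j,m}(s,\tau)|$ is compactly contained; since $\Psi^m$ is nondecreasing this is the single-$\sup$ condition. Fix $\tau$, take absolute values in the displayed equation with $f=1_{(0,\infty)}$, bound $|\flcdf_{1_{(0,\infty)}}^{j,c}(s,\tau-s)|\le\flm_j(s)$ and $|\awmass(\hat{\boldsymbol{\tm}}^m(s-))|\le(\max_i\tfrac{p_i}{\sr_i})\Psi^m(s)$, and apply the integral Gr\"onwall inequality (\cite{corlay}, Lemma~3.1) in the variable $r$. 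This gives $\sup_{r\le\tau}|R_{1_{(0,\infty)}}^{j,m}(r,\tau)|\le G^m(\tau)\big(\sup_{r\le\tau}|U_{1_{(0,\infty)}}^{j,m}(r,\tau)|+c'\int_0^\tau b^m(u)\Psi^m(u)\,d\Sigma^m(u)\big)$ for a nondecreasing, compactly contained $G^m$ and a compactly contained random function $b^m$ assembled from $1/\wmass(\bar{\boldsymbol{\tm}}^m(\cdot-))$, $1/\awmass(\boldsymbol{\flm}(\cdot))$ and $\flm_j(\cdot)$ (all compactly contained: the fluid functional LLN gives $\bar{\boldsymbol{\tm}}^m\Rightarrow\boldsymbol{\flm}$, and in the overloaded regime $\boldsymbol{\flm}$ is continuous and bounded away from $\boldsymbol 0$ on compacts by Lemmas~\ref{fluidlimittheorem} and~\ref{nonzerolemma}). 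Summing over $j$ and taking $\sup_{\tau\le t}$ produces an inequality of exactly the form \eqref{tightnesslemmaineq}, namely $\Psi^m(t)\le\tilde U^m(t)+\int_0^t f^m(u,t)\Psi^m(u)\,d\Sigma^m(u)$ with $\tilde U^m$, $f^m$, $\Sigma^m$ compactly contained; here $\tilde U^m$ is controlled by $\sup_{t\le M}\sup_{r\le t}|U_{1_{(0,\infty)}}^{j,m}(r,t)|$, which is the compact-containment hypothesis \eqref{compactcontainmentcond} applied with $f=1_{(0,\infty)}$ and every $j$ (as holds in our application). Lemma~\ref{ornsteinuhlenbecktightnesscondition} then yields that $\Psi^m$ is compactly contained.

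The two asserted conclusions follow quickly. Pathwise, $|\awmass(\hat{\boldsymbol{\tm}}^m(t))|=\big|\sum_j\tfrac{p_j}{\sr_j}R_{1_{(0,\infty)}}^{j,m}(t,t)\big|\le(\max_j\tfrac{p_j}{\sr_j})\Psi^m(t)$, so $\sup_{t\le M}|\awmass(\hat{\boldsymbol{\tm}}^m(t))|\le(\max_j\tfrac{p_j}{\sr_j})\Psi^m(M)$ is compactly contained, which is the claim for $\awmass(\hat{\boldsymbol{\tm}}^m(\cdot))$ with $\sup_{r\le t}$ removed. For $R_f^{j,m}$ with the $f$ of the hypothesis: in the displayed equation bound the last integrand by $\frac{p_j\|f\|_\infty\flm_j(s)}{\wmass(\bar{\boldsymbol{\tm}}^m(s-))\awmass(\boldsymbol{\flm}(s))}(\max_i\tfrac{p_i}{\sr_i})\Psi^m(s)$, apply Gr\"onwall in $r$ for each fixed $t$ once more, and take $\sup_{t\le M}$; this bounds $\sup_{t\le M}\sup_{r\le t}|R_f^{j,m}(r,t)|$ by a continuous function of the compactly contained quantities $G^m(M)$, $\Sigma^m(M)$, $\Psi^m(M)$, $\sup_{u\le M}b^m(u)$ and $\sup_{t\le M}\sup_{r\le t}|U_f^{j,m}(r,t)|$, hence it too is compactly contained, i.e.\ $R_f^{j,m}$ satisfies \eqref{compactcontainmentcond}.

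The hard part is the self-referential coupling: the equation for a single $R_f^{j,m}(\cdot,t)$ is not closed because it feeds on $\awmass(\hat{\boldsymbol{\tm}}^m)$, which is a diagonal readout of the \emph{entire} $1_{(0,\infty)}$-family at \emph{all} second arguments, not just at $t$. The bookkeeping functional $\Psi^m$ is what decouples this, and the essential maneuver is to run the Gr\"onwall estimate for fixed $\tau$ \emph{first} and only afterwards sum over $j$ and take the $\sup$ over $\tau$, so that the resulting inequality for $\Psi^m$ is genuinely of Gr\"onwall type. A secondary nuisance is that none of the prefactors $1/\wmass(\bar{\boldsymbol{\tm}}^m(\cdot-))$, etc., are bounded pathwise; they are only compactly contained, which is exactly the level of generality Lemma~\ref{ornsteinuhlenbecktightnesscondition} was built to accommodate.
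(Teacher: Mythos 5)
Your proposal is correct and follows essentially the same route as the paper: subtract \eqref{ujmdef} from the mass-transport identity \eqref{mainMeq} to obtain the self-referential integral equation for $R_f^{j,m}(\cdot,t)$, run the Gr\"onwall estimate in $r$ for each fixed $t$, close the loop through $\awmass(\hat{\boldsymbol{\tm}}^m)$ via the $f=1_{(0,\infty)}$ family, and finish with Lemma \ref{ornsteinuhlenbecktightnesscondition}, bootstrapping to general $f$ afterwards. The only (harmless) difference is bookkeeping: the paper observes that $\awmass(\hat{\boldsymbol{\tm}}^m(s-))=\sum_j\tfrac{p_j}{\sr_j}R^{j,m}_{1_{(0,\infty)}}(s-,s)$ involves only the diagonal slice and therefore closes the Gr\"onwall loop directly on the scalar process $|\awmass(\hat{\boldsymbol{\tm}}^m(\cdot-))|$, whereas you close it on the larger two-parameter supremum functional $\Psi^m$; both require the same hypothesis and yield the same conclusions.
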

Because, for $t\geq 0,$
$ \awmass(\hat{\boldsymbol{\tm}}^m(t))= \sum_{j=1}^J \frac{p_j}{\sr_j}R_{1}^{j,m}(t,t)$, and for $f \in \mathscr{S},$ $f-f(0)1 \in \mathscr{C},$ and thus
\begin{equation}
    \langle f,\hat{\ssp}_j^m(t) \rangle=\langle f-f(0),\hat{\ssp}_j^m(t) \rangle+f(0)\langle 1,\hat{\ssp}_j^m(t) \rangle = R_{f-f(0)}^{j,m}(t,t)+f(0)R_{1}^{j,m}(t,t),
    \label{zerodecomeqn}
\end{equation}
Lemma \ref{Lcompactcontainmentlemma} and the Mitoma tightness criterion reduce Theorem \ref{tightnessresult} to tightness of $U_f^{j,m}(\cdot,\cdot)$.
\begin{proof}[Proof of Lemma \ref{Lcompactcontainmentlemma}]
Applying \eqref{rjdef} and \eqref{mainMeq}
\begin{align*}
{R}_f^{j,m}(r,t)& = {R}_f^{j,m}(0,t)+\hat{\mart}^{\ap_j,j,m}_{t_{t-\cdot}f}(r) - \sum_{i=1}^J\sum_{k=1}^K \hat{\othermart}^{\spr_i^k,j,m}_{t_{t-\cdot}f}(r)\\
        & - \int_0^r \frac{p_jR_f^{j,m}(s-,t)}{\wmass(\bar{\boldsymbol{\tm}}^m(s-))}d\sum_{k=1}^K\sum_{l=1}^J\bar{\spr}^{k,m}_l(\bar{g}^{k,m}_l(s))\\
    &+\int_0^r\frac{p_j {{\flcdf}}^{j,c}_f(s,t-s)}{\wmass(\bar{\boldsymbol{\tm}}^m(s-))}\left(\frac{\awmass(\hat{\boldsymbol{\tm}}^m(s-))}{\awmass(\boldsymbol{\flm}(s))}\right)d\sum_{k=1}^K\sum_{l=1}^J\bar{\spr}^{k,m}_l(\bar{g}^{k,m}_l(s))\\
    &-\int_0^r \frac{p_j {{\flcdf}}^{j,c}_f(s,t-s)}{\awmass({\boldsymbol{\flm}}(s))} d\sum_{k=1}^K \sum_{l=1}^J\frac{1}{\sr_l} \hat{\spr}^{k,m}_l(\bar{g}_l^k(s))-\int_0^r \frac{p_j {{\flcdf}}^{j,c}_f(s,t-s)}{\awmass({\boldsymbol{\flm}}(s))} d\sum_{k=1}^K \sum_{i=1}^J\hat{\epsilon}^{k,i,m}(s)\\
     &+\int_0^r\pdcdf_f^{j,c}(t-s) d\hat{\ap}_j^m(s).
    \end{align*}
For the second line we note that, almost surely, $R_f^{j,m}(s-,t)= \lim_{r\rightarrow s^-}\flcdf_f^{j,c,m}(r,t-r)=\flcdf_f^{j,c,m}(s-,t-s)$ for each $f \in \mathscr{C} \cup \{1\}$.
To see this, choose $a_n \downarrow 0$ and $\omega\in \Omega$.
Then if we take the random time $\sigma$ to be the last arrival or service departure time before time $s,$ then $\sigma(\omega) < s$ because interarrival and service times are positive. Thus, because masses in $\bar{\ssp}^m(\cdot)$ move to the left at rate $1,$ for $a_n < s- \sigma(\omega)$,
the masses that are past $t-s+a_n$ at time $s-a_n$ are the same as the masses that are past $t-s$ at time $s$.
The result then follows immediately when $f = 1$.
In the case that $f \in \mathscr{C},$ it follows from continuity of $f.$

Then we see that, applying \eqref{ujmdef},
\begin{align*}
    {R}_f^{j,m}(r,t)& = U_f^{j,m}(r,t)- \int_0^r \frac{p_jR_f^{j,m}(s-,t)}{\wmass(\bar{\boldsymbol{\tm}}^m(s-))}d\sum_{k=1}^K\sum_{l=1}^J\bar{\spr}^{k,m}_l(\bar{g}^{k,m}_l(s))\\
    &+\int_0^r\frac{p_j {{\flcdf}}^{j,c}_f(s,t-s)}{\wmass(\bar{\boldsymbol{\tm}}^m(s-))}\left(\frac{\awmass(\hat{\boldsymbol{\tm}}^m(s-))}{\awmass(\boldsymbol{\flm}(s))}\right)d\sum_{k=1}^K\sum_{l=1}^J\bar{\spr}^{k,m}_l(\bar{g}^{k,m}_l(s)).
    \numberthis \label{finalrjeq}
\end{align*}
It follows that
\begin{align*}
    |{R}_f^{j,m}(r-,t)|& \leq |U_f^{j,m}(r-,t)|+ \int_{0}^r \frac{p_j}{\wmass(\bar{\boldsymbol{\tm}}^m(s-))}|R_f^{j,m}(s-,t)|d\sum_{k=1}^K\sum_{l=1}^J\bar{\spr}^{k,m}_l(\bar{g}^{k,m}_l(s))\\
    &+\int_0^r\frac{p_j {{\flcdf}}^{j,c}_f(s,t-s)}{\wmass(\bar{\boldsymbol{\tm}}^m(s-))\awmass(\boldsymbol{\flm}(s))}\left|\awmass(\hat{\boldsymbol{\tm}}^m(s-))\right|d\sum_{k=1}^K\sum_{l=1}^J\bar{\spr}^{k,m}_l(\bar{g}^{k,m}_l(s)).
\end{align*}
Applying the same Gr{\"o}nwall Inequality argument as in the proof of Lemma \ref{ornsteinuhlenbecktightnesscondition}, we conclude that
\begin{align*}
    |R_f^{j,m}(r-,t)| &\leq x(r) \\&+ \int_0^r x(s) e^{\int_s^r\frac{p_j}{\wmass(\bar{\boldsymbol{\tm}}^m(y-))}d\sum_{k=1}^K\sum_{l=1}^J\bar{\spr}^{k,m}_l(\bar{g}^{k,m}_l(y))}\frac{p_j}{\wmass(\bar{\boldsymbol{\tm}}^m(s-))}d\sum_{k=1}^K\sum_{l=1}^J\bar{\spr}^{k,m}_l(\bar{g}^{k,m}_l(s))
\end{align*}
for $$ x(\cdot) = |U_f^{j,m}(\cdot-,t)|+\int_{0}^{\cdot}\frac{p_j {{\flcdf}}^{j,c}_f(s,t-s)}{\wmass(\bar{\boldsymbol{\tm}}^m(s-))\awmass(\boldsymbol{\flm}(s))}\left|\awmass(\hat{\boldsymbol{\tm}}^m(s-))\right|d\sum_{k=1}^K\sum_{l=1}^J\bar{\spr}^{k,m}_l(\bar{g}^{k,m}_l(s)).$$
Expanding and changing the order of integration, we obtain

\begin{align*}
    &|R_f^{j,m}(r-,t)|\\
    &\leq |U_f^{j,m}(r-,t)|+\int_{0}^r\frac{p_j {{\flcdf}}_f^{j,c}(s,t-s)}{\wmass(\bar{\boldsymbol{\tm}}^m(s-))\awmass(\boldsymbol{\flm}(s))}\left|\awmass(\hat{\boldsymbol{\tm}}^m(s-))\right|d\sum_{k=1}^K\sum_{l=1}^J\bar{\spr}^{k,m}_l(\bar{g}^{k,m}_l(s)) \\&+ \int_0^r |U_f^{j,m}(s-,t)| e^{\int_s^r\frac{p_j}{\wmass(\bar{\boldsymbol{\tm}}^m(x-))}d\sum_{k=1}^K\sum_{l=1}^J\bar{\spr}^{k,m}_l(\bar{g}^{k,m}_l(x))}\frac{p_j}{\wmass(\bar{\boldsymbol{\tm}}^m(s-))}d\sum_{k=1}^K\sum_{l=1}^J\bar{\spr}^{k,m}_l(\bar{g}^{k,m}_l(s))\\
    &+\int_0^r\left|\awmass(\hat{\boldsymbol{\tm}}^m(y-))\right|\frac{p_j {{\flcdf}}^{j,c}_f(y,t-y)}{\wmass(\bar{\boldsymbol{\tm}}^m(y-))\awmass(\boldsymbol{\flm}(y))}\int_{y}^re^{\int_s^r\frac{p_j}{\wmass(\bar{\boldsymbol{\tm}}^m(x-))}d\sum_{k=1}^K\sum_{l=1}^J\bar{\spr}^{k,m}_l(\bar{g}^{k,m}_l(x))} \\ & \cdot\frac{p_j}{\wmass(\bar{\boldsymbol{\tm}}^m(s-))}d\sum_{k=1}^K\sum_{l=1}^J\bar{\spr}^{k,m}_l(\bar{g}^{k,m}_l(s))d\sum_{k=1}^K\sum_{l=1}^J\bar{\spr}^{k,m}_l(\bar{g}^{k,m}_l(y))  
\end{align*}
and thus, defining

\begin{align*}
&\tilde{U}_f^{j,m}(r-,t):=|U_f^{j,m}(r-,t)|\\&+\int_0^r |U_f^{j,m}(s-,t)| e^{\int_s^r\frac{p_j}{\wmass(\bar{\boldsymbol{\tm}}^m(x-))}d\sum_{k=1}^K\sum_{l=1}^J\bar{\spr}^{k,m}_l(\bar{g}^{k,m}_l(x))}\frac{p_j}{\wmass(\bar{\boldsymbol{\tm}}^m(s-))}d\sum_{k=1}^K\sum_{l=1}^J\bar{\spr}^{k,m}_l(\bar{g}^{k,m}_l(s))\\
\end{align*}

and
\begin{align*}
    &h^{j,m}_{f,r-,t}(s):=\frac{p_j {{\flcdf}}^{j,c}_f(s,t-s)}{\wmass(\bar{\boldsymbol{\tm}}^m(s-))\awmass(\boldsymbol{\flm}(s))}\\&+\frac{p_j {{\flcdf}}^{j,c}_f(s,t-s)}{\wmass(\bar{\boldsymbol{\tm}}^m(s-))\awmass(\boldsymbol{\flm}(s))}\int_{s}^re^{\int_y^r\frac{p_j}{\wmass(\bar{\boldsymbol{\tm}}^m(x-))}d\sum_{k=1}^K\sum_{l=1}^J\bar{\spr}^{k,m}_l(\bar{g}^{k,m}_l(x))}\frac{p_j}{\wmass(\bar{\boldsymbol{\tm}}^m(y-))}d\sum_{k=1}^K\sum_{l=1}^J\bar{\spr}^{k,m}_l(\bar{g}^{k,m}_l(y))
\end{align*}
Then we may conclude that
\begin{align*}
    |R_f^{j,m}(r-,t)| \leq \tilde{U}_f^{j,m}(r-,t) + \int_0^rh_{f,r-,t}^{j,m}(s) |\awmass(\hat{\boldsymbol{\tm}}^m(s-)|d\sum_{k=1}^K\sum_{l=1}^J\bar{\spr}^{k,m}_l(\bar{g}^{k,m}_l(s)).
    \numberthis
\label{finalRjineq}
\end{align*}
Finally, we see that
\begin{align*}
|\awmass(\hat{\boldsymbol{\tm}}^m(t-))|& \leq\sum_{j=1}^J \frac{p_j}{\sr_j}|R_{1}^{j,m}(t-,t)|\\
& \leq\sum_{j=1}^J \frac{p_j}{\sr_j} \tilde{U}_{1}^{j,m}(t-,t) +\int_0^t\sum_{j=1}^J \frac{p_j}{\sr_j}h_{{1},t-,t}^{j,m}(s) |\awmass(\hat{\boldsymbol{\tm}}^m(s-)|d\sum_{k=1}^K\sum_{l=1}^J\bar{\spr}^{k,m}_l(\bar{g}^{k,m}_l(s)) 
    \end{align*}
Applying Lemma \ref{ornsteinuhlenbecktightnesscondition}, compact containment of $|\awmass(\hat{\boldsymbol{\tm}}^m(t-))|$ follows from the condition \eqref{compactcontainmentcond} holding for $\tilde{U}_{f}^{j,m}(t-,t)$, $h_{{f},t-,t}^{j,m}(s),$ and $\sum_{k=1}^K\sum_{l=1}^J\bar{\spr}^{k,m}_l(\bar{g}^{k,m}_l(s)),$ specifically when $f \equiv 1.$
This follows from tightness of the fluid model, which was proved in \cite{loeserwilliams}, Lemma \ref{nonzerolemma}, and compact containment for $U_f^{j,m}(\cdot,\cdot).$
After establishing compact containment (condition \eqref{compactcontainmentcond}) of $|\awmass(\hat{\boldsymbol{\tm}}^m(t-))|,$ $\tilde{U}_{f}^{j,m}(r-,t)$, $h_{f,r-,t}^{j,m}(\cdot),$ and $\sum_{k=1}^K\sum_{l=1}^J\bar{\spr}^{k,m}_l(\bar{g}^{k,m}_l(s)),$ compact containment of $R_f^{j,m}(\cdot,\cdot)$ then follows from \eqref{finalRjineq} and Lemma \ref{ornsteinuhlenbecktightnesscondition}.
Then, if $U_f^{j,m}(\cdot,\cdot)$ is C-tight, C-tightness of $R_f^{j,m}(\cdot,\cdot)$ follows from the same Lemma with \eqref{finalrjeq}.
\end{proof}
We must now prove compact containment for $\{U_f^{j,m}(\cdot,\cdot)\}_{m=1}^{\infty}.$
We begin by examining the convergence of the fifth term in $U_f^{j,m}(\cdot,\cdot)$.
\begin{lem}
    Let $\{H^m(\cdot)\}$ be a sequence of processes in $D(\R_+, \R)$ such that $H^m(\cdot) \Rightarrow H(\cdot)$ for some process $H(\cdot).$
    Then the process
    $\int_0^{\cdot}H^m(s-)d\hat{\epsilon}^{k,j,m}(s)\Rightarrow 0$ for $j \in [J], k \in [K].$
    \label{epsilonconvlem}
\end{lem}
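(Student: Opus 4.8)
Proof proposal. The plan is to first show that $\hat{\epsilon}^{k,j,m}(\cdot)$ itself collapses to the zero process uniformly on compacts, and then to integrate against it using the stochastic‑integral machinery of \cite{protterkurtz}, after observing that $\hat{\epsilon}^{k,j,m}$ is a difference of two well‑behaved martingale sequences. For the first point I would start from the explicit formula obtained in \eqref{ederiv}, together with \eqref{epsilondef}: this identifies $\hat{\epsilon}^{k,j,m}(t)$ as $\tfrac{1}{\sqrt m}$ times the quantity $\tfrac{1}{\sr_j}-\ser^{k,j,m}_{\spr^{k,m}_j(\,\cdot\,)+1}$ evaluated at the job currently in service, when server $k$ is serving class $j$, and $0$ otherwise. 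Hence
$\sup_{t\le T}|\hat{\epsilon}^{k,j,m}(t)|\le \tfrac{1}{\sqrt m}\bigl(\tfrac{1}{\sr_j}+\max\{\ser^{k,j,m}_{l}:\, l\le \spr^{k,m}_j(mT)+1\}\bigr)$,
and the right‑hand side tends to $0$ in probability by exactly the maximal‑inequality argument used around \eqref{thirdmomentultimatePbound} in the proof of Lemma \ref{dremainderconv} (Theorem 3 of \cite{downey} with $p=3$, Markov's inequality, the uniform third‑moment bound in Assumption \ref{fllnassumption2}, and compact containment of $\bar{\spr}^{k,m}_j$ coming from Assumption \ref{basicassumptions} and the functional law of large numbers). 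Thus $\hat{\epsilon}^{k,j,m}\Rightarrow 0$ in $D(\R_+,\R)$, uniformly on compacts; in particular $(H^m,\sr_j\hat{\epsilon}^{k,j,m})\Rightarrow(H,0)$ jointly, since convergence to a deterministic limit automatically upgrades to joint convergence.

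For the integral, I would use the decomposition $\sr_j\hat{\epsilon}^{k,j,m}(\cdot)=\hat{O}^{\tilde V,j,k,m}(\bar{g}^{k,m}_j(\cdot))-\hat{O}^{V,j,k,m}(\bar{g}^{k,m}_j(\cdot))$ from \eqref{epsilondef}. By Lemmas \ref{martingalesaremartingaleslem} and \ref{weirdmgismg}, and time‑change invariance of the martingale property, each of these two time‑changed diffusion‑scaled objects is a pure‑jump martingale (with respect to a common filtration generated by the underlying primitives, since the fresh service sample entering at each jump is independent of the entire past). Its quadratic variation is the renewal‑counting functional computed as in \eqref{quadraticvariationcalc}, so by Assumptions \ref{basicassumptions} and \ref{fllnassumption2} we have $\sup_m E\bigl[[\hat{O}^{\tilde V,j,k,m}(\bar{g}^{k,m}_j(\cdot))]_T\bigr]<\infty$, and likewise for the completion martingale; hence, via $E[(\int_0^t K^m_{s-}dY^m_s)^2]\le E[[Y^m]_t]$ for predictable $|K^m|\le1$, each of the two sequences satisfies the UT condition of \cite{protterkurtz} (Definition 7.4), exactly as $\bar E^m$ did at the end of the proof of Corollary \ref{martingaleconvcor}. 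Consequently the difference $\sr_j\hat{\epsilon}^{k,j,m}$ satisfies UT as well, because $\int_0^t K^m_{s-}d(\sr_j\hat{\epsilon}^{k,j,m})(s)$ is the difference of two stochastically bounded families. Then Theorem 7.10 (and Theorem 8.1) of \cite{protterkurtz}, applied with integrator $\sr_j\hat{\epsilon}^{k,j,m}$ and integrand $H^m$, gives $\int_0^\cdot H^m(s-)\,d(\sr_j\hat{\epsilon}^{k,j,m})(s)\Rightarrow \int_0^\cdot H(s-)\,d0=0$, and therefore $\int_0^\cdot H^m(s-)\,d\hat{\epsilon}^{k,j,m}(s)\Rightarrow 0$.

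The main obstacle is the bookkeeping around filtrations and the UT verification, and it is genuinely needed here: $\hat{\epsilon}^{k,j,m}$ has total variation of order $\sqrt m$ on $[0,T]$ (it changes value at each of the order‑$m$ service entries/completions of class $j$ at server $k$, by jumps of order one), so it is only through the cancellation between the service‑entry and service‑completion martingales — and not through any bound on the variation of $\hat{\epsilon}^{k,j,m}$ — that the relevant stochastic integrals stay bounded; writing $\hat{\epsilon}^{k,j,m}$ as that difference, and checking that the filtration with respect to which both pieces are martingales can be enlarged to render $H^m$ adapted without destroying this (which holds in every application, where $H^m$ is a continuous functional of the fluid model solution, hence $\mathscr F_0$‑measurable after a Skorokhod representation of the initial conditions, together with adapted fluid‑ and diffusion‑scaled primitives), is where the care lies. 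I note that when $H^m$ is of locally bounded variation — which covers all the uses of this lemma, since there $H^m$ is a fixed continuous functional of the (absolutely continuous) fluid limit — one can bypass \cite{protterkurtz} entirely by summation by parts along the jump times $\sigma^m_1<\sigma^m_2<\cdots$ of $\hat{\epsilon}^{k,j,m}$: one gets $\int_0^t H^m(s-)\,d\hat{\epsilon}^{k,j,m}(s)=H^m(t)\hat{\epsilon}^{k,j,m}(t)-H^m(0)\hat{\epsilon}^{k,j,m}(0)+\sum_i\bigl(H^m(\sigma^m_i-)-H^m(\sigma^m_{i+1}-)\bigr)\hat{\epsilon}^{k,j,m}(\sigma^m_i)$, and each term is bounded by $\sup_{s\le t}|\hat{\epsilon}^{k,j,m}(s)|$ times a quantity controlled by $\sup_{s\le t}|H^m(s)|$ or by the total variation of $H^m$ on $[0,t]$, both stochastically bounded, while $\sup_{s\le t}|\hat{\epsilon}^{k,j,m}(s)|\to0$ by the first paragraph.
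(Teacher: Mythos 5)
Your proposal is correct and follows essentially the same route as the paper: both identify $\sr_j\hat{\epsilon}^{k,j,m}$ as the difference of the time-changed martingales $\hat{O}^{\tilde{V},j,k,m}(\bar{g}^{k,m}_j(\cdot))$ and $\hat{O}^{V,j,k,m}(\bar{g}^{k,m}_j(\cdot))$, verify the Kurtz--Protter UT/UCV condition via the quadratic-variation bound, invoke their stochastic-integral convergence theorem, and make the limit vanish using the max-of-i.i.d.-service-times estimate from the proof of Lemma \ref{dremainderconv}. The only cosmetic differences are that the paper applies the convergence theorem to the two integrals jointly and then subtracts, whereas you apply it once to the difference after noting that UT is preserved under differences (and you add a summation-by-parts fallback for bounded-variation integrands); both are fine.
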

\begin{proof}
Recall from its definition \eqref{epsilondef} that for each $k \in [K], j \in [J], m \in \N,$ $\hat{\epsilon}^{k,j,m}(s)$ is a constant multiple of the difference of $\hat{O}^{\spr,k,j,m}(\bar{g}_j^{k,m}(\cdot))$ and $\hat{O}^{\tilde{\spr},k,j,m}(\bar{g}_j^{k,m}(\cdot)).$
    With respect to their individual filtrations, both $\hat{O}^{\spr,k,j,m}$ and $\hat{O}^{\tilde{\spr},k,j,m}$ have stochastically bounded quadratic variations, and thus satisfy the UCV condition given in \cite{protterkurtz}.
    In particular, using the predictable quadratic variation calculated for these terms in the proof of Lemma \ref{martingaleconv}, the fact that the expectation of the quadratic variation is the same as the expectation of the predictable quadratic variation and Assumption \ref{basicassumptions}, the condition (7.8) given in that paper is straightforward to check.
(For more details about the use of \cite{protterkurtz} in this paper, see the proof of Corollary \ref{martingaleconvcor}).
It follows that
\begin{align*}
&\left(\int_0^{\cdot} H^m(s-)d\hat{O}^{\spr,k,j,m}(\bar{g}_j^{k,m}(s)),\int_0^{\cdot} H^m(s-) d\hat{O}^{\tilde{\spr},k,j,m}(\bar{g}_j^{k,m}(s))\right)\\&\Rightarrow\left(\int_0^{\cdot} H(s-) d\hat{O}^{\spr,k,j}(\bar{g}_j^{k}(s)),\int_0^{\cdot} H(s-)  d\hat{O}^{\tilde{\spr},k,j}(\bar{g}_j^{k}(s))\right),
\end{align*}
where the limits $\hat{O}^{\spr,k,j}(\bar{g}_j^{k}(s))$ and $\hat{O}^{\tilde{\spr},k,j}(\bar{g}_j^{k}(s))$ are as established in Lemma \ref{martingaleconv}.
However, from \eqref{ederiv}, the convergence
$$\hat{O}^{\spr,k,j,m}(\bar{g}_j^{k,m}(\cdot))-\hat{O}^{\tilde{\spr},k,j,m}(\bar{g}_j^{k,m}(\cdot)) =\frac{1}{\sqrt{m}}  \frac{1}{\sr_j}1_{\{c_j^{k,m}(m\cdot)=1\}}\left(1-\sr_j\ser_{{\spr}_j^{k,m}(m\cdot)+1}^{k,j,m}\right) \Rightarrow 0$$
follows from the argument used in the proof of Lemma \ref{dremainderconv} to bound the analogous quantity \eqref{tvcalc}, with $\spr_j^k(\cdot)$ in place of $E(\cdot)$ and $\ser_l^{k,j,m}$ in place of $x_l^{i,m}$, noting that $\bar{g}^{k,m}_j(t) \leq t$ 
 $\forall t\geq 0.$
It follows from the last two displays that
$$\int_0^{\cdot} H^m(s-) d\hat{\epsilon}^{k,j,m}(s) \Rightarrow \int_0^{\cdot} H(s-)d\left(\hat{O}^{\spr,k,j}(s)-\hat{O}^{\tilde{\spr},k,j}(s)\right) =\int_0^{\cdot}H(s-) d0=0.$$
\end{proof}
Now, we must handle the multiparameter martingale terms.
This will require a very small extension to the Bickel-Wichura tightness criterion.
Ususally, one needs to check the tightness criterion for neighboring rectangles that abut on any arbitrary face.
This small extension clarifies that checking along a fixed face, for example, along the second face for blocks in $\R^3$, is actually enough.
In the following lemma, we use some notation from \cite{BickelWichura1971} that may not be entirely standard, so we include some of their definitions here.
\begin{defi}[$i$-neighbors, from \cite{BickelWichura1971}]
A block $B$ in $[0,T]^d$ is a subset of $[0,T]^d$ of the form $(s,t]=\Pi_{p}(s_p,t_p].$ Disjoint blocks $B$ and $C$ are $i$-neighbors if they abut and have the same $i$th face.
    
\end{defi}
Also, in the following extension to the Bickel-Wichura tightness condition, the modulus of continuity $M''$ is the extension of the $M''$ modulus discussed by Billingsley in his book Convergence of Probability Measures \cite{billingsley} to multiparameter processes, which is carefully defined in the beginning of section 2 of \cite{BickelWichura1971}.
\begin{lem}
    Let $X$ be a multiparameter process on $[0,T]^d$ and $\mu$ be a finite nonnegative measure on $[0,T]^d.$
    Let $\beta>1,\gamma >0.$
    Then we say that $(X,\mu) \in \mathscr{C}_i(\beta,\gamma)$ if, for all blocks $B,C$ in $[0,T]^d$ that are $i$-neighbors,
    \begin{equation}
        P(min\{|X(B)|,|X(C)|\}\geq \lambda) \leq \lambda^{-\gamma}\mu((B\cup C))^{\beta}\label{extendedBWcondition}
    \end{equation}
    If, for some $1\leq i\leq d,$ $(X,\mu) \in \mathscr{C}_i(\beta,\gamma)$ then there is a constant $L_d(\beta,\gamma)$ such that, for each $\lambda >0,$
    $$P\{M''(X) \geq \lambda\}\leq L_d(\beta, \gamma) \lambda^{-\gamma}.$$
    It follows that any sequence of processes $\{X_n\}_{n=1}^{\infty}$ such that there is an $i$ in $\{1,...,d\}$ and a finite nonnegative measure $\mu$ on $[0,T]^d$ such that $(X_n,\mu) \in \mathscr{C}_i(\beta,\gamma) $ for each $n,$ then $\{X_n\}_{n=1}^{\infty}$ is tight.
    \label{bickelwichuraextlemma}
\end{lem}

\begin{proof}
    The extension is immediate from the proof method, which uses induction on the dimension for $d\geq 2$.
    In particular, step 5 of the proof of Theorem 1 in Bickel-Wichura proves the extension Lemma above for $i=1$ using the original Bickel-Wichura theorem on dimension $d-1$
    Because the ordering of the indices is arbitrary, step 5 works as a proof of the extension for any $1 \leq i \leq d.$
\end{proof}
\begin{lem}
    For each $T>0, f \in \mathscr{C}\cup \{1\},$ $i,j \in [J],$ $k \in [K]$, the multi-index processes $\{\hat{\mart}_{t_{t-\cdot}f}^{\ap_j,j,m}(r): 0 \leq t \leq T, 0 \leq r \leq t\}$  and $\{\hat{\mart}_{t_{t-\cdot}f}^{\spr_j^k,i,m}(r): 0 \leq t \leq T, 0 \leq r \leq t\}$ are tight and converge to the process defined in Theorem \ref{Ydefthm}.
    \label{timechangemartingalesaretight}
\end{lem}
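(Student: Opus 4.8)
The plan is to verify the two-parameter C-tightness criterion (compact containment together with controlled oscillations, as in \cite{Skorokhodtopologychapter}, Proposition 3.26, the same criterion used in the proof of Lemma \ref{ornsteinuhlenbecktightnesscondition}) for the random fields $(t,r)\mapsto\hat{\mart}_{t_{t-\cdot}f}^{\ap_j,j,m}(r)$ and $(t,r)\mapsto\hat{\mart}_{t_{t-\cdot}f}^{\spr_j^k,i,m}(r)$ on the simplex $\{0\le r\le t\le T\}$. First I would record the structural facts. By Lemma \ref{martingalesaremartingaleslem} and Proposition \ref{martingaledecompositionprop}, with the time-varying test function $y\mapsto t_{t-s}f(y)$ substituted in the role of $f_j$ (resp. $f_i$), for each fixed $t$ the process $r\mapsto\hat{\mart}_{t_{t-\cdot}f}^{\ap_j,j,m}(r)$ is a pure-jump $\mathscr{F}^m_t$-martingale jumping only at the jump times of $\bar{\ap}^m_j(\cdot)$, and $r\mapsto\hat{\mart}_{t_{t-\cdot}f}^{\spr_j^k,i,m}(r)$ jumps only at the jump times of $\bar{\spr}^{k,m}_j(\bar{g}^{k,m}_j(\cdot))$. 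Since $\|t_yf\|_\infty\le\|f\|_\infty$, $|\langle t_yf,\pd_j\rangle|\le\|f\|_\infty$, and the compensator $\phi^{\spr_j^k,i}_{f_i}$ of \eqref{phiVdef} is bounded by $\|f_i\|_\infty$ (the weighted ratio in its definition is at most $1$), every jump of either diffusion-scaled martingale has absolute value at most $2\|f\|_\infty/\sqrt{m}$, uniformly in $t$, $r$, and $m$. Hence the predictable quadratic variations satisfy $\langle\hat{\mart}_{t_{t-\cdot}f}^{\ap_j,j,m}\rangle_r\le 4\|f\|_\infty^2\,\bar{\ap}^m_j(r)$ and $\langle\hat{\mart}_{t_{t-\cdot}f}^{\spr_j^k,i,m}\rangle_r\le 4\|f\|_\infty^2\,\bar{\spr}^{k,m}_j(\bar{g}^{k,m}_j(r))\le 4\|f\|_\infty^2\,\bar{\spr}^{k,m}_j(r)$, uniformly in $t$; the dominating increasing processes are C-tight by the functional law of large numbers for renewal processes (for the service term, combined with C-tightness of $\bar{g}^{k,m}_j(\cdot)$ as in Corollary \ref{cjconvcor}) and have uniformly bounded first moments by Assumption \ref{basicassumptions}.

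For a fixed $t$ these bounds already settle the $r$-direction: Doob's $L^2$ maximal inequality gives $\sup_m\sup_{t\le T}E[\sup_{r\le t}|\hat{\mart}_{t_{t-\cdot}f}^{\ap_j,j,m}(r)|^2]\le 16\|f\|_\infty^2\sup_mE[\bar{\ap}^m_j(T)]<\infty$, and the analogous inequality applied to martingale increments over subintervals, together with C-tightness of the dominating increasing process supplying the modulus of continuity, yields C-tightness of $r\mapsto\hat{\mart}_{t_{t-\cdot}f}^{\ap_j,j,m}(r)$ for each $t$; the $1/\sqrt m$ bound on the jumps forces every weak limit to be continuous, which is what upgrades D-tightness to C-tightness. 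The same argument applies verbatim to $\hat{\mart}_{t_{t-\cdot}f}^{\spr_j^k,i,m}$ with $\bar{\spr}^{k,m}_j(\bar{g}^{k,m}_j(\cdot))$ in place of $\bar{\ap}^m_j(\cdot)$ and $\phi^{\spr_j^k,i}_{t_{t-\cdot}f}$ in place of $\phi^{\ap_j,j}_{t_{t-\cdot}f}$ of \eqref{phiAdef}.

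The remaining point, and the genuinely new one, is uniformity in the target time $t$. For $t\le t'$ the difference $D^m(t,t'):=\hat{\mart}_{t_{t-\cdot}f}^{\ap_j,j,m}(\cdot)-\hat{\mart}_{(t')_{t'-\cdot}f}^{\ap_j,j,m}(\cdot)$ is again a pure-jump martingale in $r$; the conditional variance of its $n$-th jump is at most $\langle(t_{t-s}f-t_{t'-s}f)^2,\pd_j\rangle$ evaluated at $s=\tau^{\ap,j}_n/m$, so $\langle D^m(t,t')\rangle_r\le C\|f'\|_\infty^2|t-t'|^2\,\bar{\ap}^m_j(r)$ when $f\in\mathscr{C}\cap\mathscr{S}$ (using that such $f$ is Lipschitz and that $\langle t_{t-s}f-t_{t'-s}f,\pd_j\rangle$ obeys the same Lipschitz bound), while for $f=1_{(0,\infty)}$ one gets $\langle D^m(t,t')\rangle_r\le \bar{\ap}^m_j(r)\sup_a\big(\pdcdf^c_j(a)-\pdcdf^c_j(a+|t-t'|)\big)$, which is finite since $\pd_j$ has no atoms and is quantitatively small in $|t-t'|$ by the regularity hypothesis on $\pdcdf^c_j=\langle 1_{(\cdot,\infty)},\pd_j\rangle$ in Theorem \ref{tightnessresult}. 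Combining Doob's inequality with the moment bounds on $\bar{\ap}^m_j(\cdot)$ and a Kolmogorov--Chentsov / chaining argument over a finite grid of target times then produces $\lim_{\theta\downarrow 0}\limsup_m P^m\big(\sup_{|t-t'|\le\theta}\sup_{r\le t\wedge t'}|D^m_r(t,t')|>\eta\big)=0$ for every $\eta>0$; with the single-$t$ estimates of the previous paragraph this gives compact containment and joint controlled oscillations of $(t,r)\mapsto\hat{\mart}_{t_{t-\cdot}f}^{\ap_j,j,m}(r)$, hence C-tightness. For $\hat{\mart}_{t_{t-\cdot}f}^{\spr_j^k,i,m}$ the only change is in this last estimate: the conditional variance of a jump of the difference martingale now involves $\langle(t_{t-s}f-t_{t'-s}f)^2,\bar{\boldsymbol{\ssp}}^m_i(s-)\rangle$, the mass that the (discrete) prelimit measure places in a window of width $|t-t'|$, which is controlled by comparison with the continuous fluid measure $\fl_i(s)$ — via Theorem \ref{fluidlimittheorem} and the regularity of $\flcdf_{1_{(0,\infty)}}^{j,c}(s,\cdot)$ inherited from the mass transport equation \eqref{masstransportindicator} and the hypotheses of Theorem \ref{tightnessresult} — up to an $O(1/\sqrt m)$ error. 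The hard part will be exactly this uniform control in the target parameter $t$ for $f=1_{(0,\infty)}$, where $t\mapsto t_{t-\cdot}f$ is discontinuous and one must quantify how little patience (respectively support) mass can concentrate in shrinking windows; this is where the no-atom assumption on $\pd_j$, the H\"older regularity hypotheses of Theorem \ref{tightnessresult}, and (for the service martingale) the fluid limit of \cite{loeserwilliams} enter.
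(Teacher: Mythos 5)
Your overall strategy is the same as the paper's: for fixed $t$ the $r$-direction is handled by martingale estimates (the paper actually invokes Corollary \ref{martingaleconvcor} to get convergence in $r$ outright, then only needs oscillation control in the $t$-direction), and the $t$-direction is handled by bounding the predictable quadratic variation of the difference martingale $\hat{\mart}^{\cdot}_{t_{t-\cdot}f-t_{t'-\cdot}f}$, applying Doob's maximal inequality, and then a Kolmogorov continuity / chaining argument over target times. Your estimates for $f\in\mathscr{C}\cap\mathscr{S}$ (the $\|f'\|_\infty^2|t-t'|^2$ bound) and for the arrival martingale with $f=1_{(0,\infty)}$ (via the H\"older regularity of $\pdcdf^c_j$ assumed in Theorem \ref{tightnessresult}) match the paper's.

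The gap is in the case you yourself flag as the hard one: the service martingale with $f=1_{(0,\infty)}$. There the quadratic variation involves $\langle 1_{(s'-\tau,\,t'-\tau]},\bar{\ssp}^m_i(\tau-)\rangle$ and its square, i.e.\ the prelimit measure's mass in a window of width $|t-t'|$, and you propose to control this ``by comparison with the continuous fluid measure $\fl_i(s)$ \dots up to an $O(1/\sqrt m)$ error.'' That step is circular: the difference between $\bar{\ssp}^m_i$ and $\fl_i$ is exactly $m^{-1/2}\hat{\ssp}^m_i$, and uniform control of that quantity is precisely what this lemma is a step toward establishing (it feeds Lemma \ref{Lcompactcontainmentlemma} and hence compact containment of $\awmass(\hat{\boldsymbol{\tm}}^m(\cdot))$). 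Moreover, even granting the fluid limit theorem, convergence in distribution does not yield the uniform-in-$m$ moment bound $E[\cdot]\le C|t-t'|^{1+\epsilon}$ that a Kolmogorov--Chentsov argument needs. The paper avoids this by a purely pathwise domination of the prelimit measure: the mass in a window at time $\tau$ is at most $\langle 1_{(s,t]},\bar{\ssp}^m_i(0)\rangle$ plus $\frac{1}{m}\sum_{n\le m\bar{\ap}^m_i(T)}1_{\{s<\pat_n^i+\iinta_n^{i,m}/m\le t\}}$; expectations of this bound and of its square are then computed using independence of the patience times from the past and are controlled by the uniform $(1+\epsilon)$-H\"older hypotheses on $G_0^m$, $G_{2,0}^m$ and $\pdcdf^c_j$ imposed in Theorem \ref{tightnessresult} (this is why the second-moment function $G_{2,0}^m$ appears in the hypotheses at all — the compensator enters the quadratic variation squared). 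A further, smaller omission: since the number of service completions $\bar{\spr}^{k,m}_l(T)$ is random, the paper first localizes at $\tau_N^{\spr,k,l,m}$ and pays a $P(\bar{\spr}^{k,m}_l(T)\ge N)$ term before running the moment computation; your sketch skips this localization.
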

\begin{proof}
Applying the quadratic covariation calculations in Corollary \ref{martingaleconvcor} with the specific functions \eqref{phiAdef} and \eqref{truefunctioninsprmart} and simplifying using Lemma \ref{sjkfluidlimit}, we see that, fixing $t_1,...,t_a$, the processes
    $\{\hat{\mart}_{t_{t_b-\cdot}f_c}^{\ap_j,j,m}(\cdot)\}_{b\in[a],c\in[n],j\in [J]}$ and  $\{\hat{\mart}_{t_{t_b-\cdot}f_c}^{\spr_j^k,i,m}(\cdot)\}_{b\in[a],c\in[n],i,j\in [J], k \in [K]}$
   converge jointly in distribution to a process that has the finite dimensional distributions given in Theorem \ref{Ydefthm}.
    This gives convergence of finite dimensional distributions to the desired limit process.
    The meat of this proof will be in checking tightness.
We begin with the case of $f \in \mathscr{C}.$
Applying Corollary \ref{martingaleconvcor}, we see that, fixing $t,$ the processes
$\hat{\mart}_{t_{t-\cdot}f}^{\ap_j,j,m}(\cdot)$  and $\hat{\mart}_{t_{t-\cdot}f}^{\spr_j^k,i,m}(\cdot)$
 converge to continuous processes.
 Thus, it suffices to show that there is a bound on the modulus of continuity in the $t$ variable that is uniform in $r,m.$
We use the fact that $(\hat{\mart}^{\ap_j,j,m}_{t_{t-\cdot} f-t_{s -\cdot} f}(\cdot))^2-\langle\hat{{\mart}}^{\ap_j,j,m}_{t_{t-\cdot}f-t_{s-\cdot}f}\rangle_{\cdot}$ is a martingale to obtain for each $r \in [0,T]$
\begin{align*}
E[(\hat{\mart}^{\ap_j,j,m}_{t_{t-\cdot} f}(r)-\hat{\mart}^{\ap_j,j,m}_{t_{s -\cdot} f}(r))^2]&=
E[(\hat{\mart}^{\ap_j,j,m}_{t_{t-\cdot} f-t_{s -\cdot} f}(r))^2] = E[\langle\hat{{\mart}}^{\ap_j,j,m}_{t_{t-\cdot}f-t_{s-\cdot}f}\rangle_{r}].
\end{align*}
Then, using the form of the quadratic variation, an explicit calculation of which is included in the proof of Corollary \ref{martingaleconvcor} equation \eqref{quadraticvarcalc},
\begin{align*}
   E[\langle\hat{{\mart}}^{\ap_j,j,m}_{t_{t-\cdot}f-t_{s-\cdot}f}\rangle_{r}]& \leq E\left[\frac{1}{m} \sum_{i=1}^{m \bar{\ap}_j^m(r )}||f'||^2|t-s|^24\right]\\
   & \leq 4 ||f'||^2|t-s|^2\sup_mE[\bar{\ap}_j^m(T)].
\numberthis\label{bdgcalc}
\end{align*}
Applying Assumption \ref{basicassumptions}, particularly that $\sup_mE[\bar{\ap}_j^m(T)]< \infty,$ and the Kolmogorov continuity condition, we achieve a modulus of continuity in the $t$ variable that is uniform in $r,m$.
Following the same argument for $\hat{\mart}^{\spr_j^k,i,m}_{t-\cdot f}(r),$ we obtain for each $r \in [0,T],$
\begin{align*}
   E[(\hat{\mart}^{\spr_j^k,i,m}_{t_{t-\cdot} f}(r)-\hat{\mart}^{\spr_j^k,i,m}_{t_{s -\cdot} f}(r))^2]
   & \leq 4 ||f'||^2|t-s|^2\sup_mE[\bar{\spr}_j^{k,m}(T)].
\end{align*}
Therefore, the same proof suffices for $\hat{\mart}^{\spr_j^k,i,m}_{t_{t-\cdot} f}(r).$

For $f \equiv 1,$ the proof is trickier because the family of martingales does not vary continuously in the $t$ variable. In particular, there are jumps at points where $t= \pat_i^j +\iinta_i^j$ for some $i.$ Therefore, we must carefully prove tightness from first principles. We use Lemma \ref{bickelwichuraextlemma}, which will be more amenable to a process with jumps.
We note that, while we work on the domain $\{0 \leq r\leq t \leq T\}$ rather than a square domain such as $[0,T]^2,$ this will make no meaningful difference to the proof.
In particular, observe that one can extend the domain to the entire square by analyzing the stopped martingales $\hat{\mart}_{t_{t-\cdot}f}^{\ap_j,j,m}(\cdot\wedge t)$  and $\hat{\mart}_{t_{t-\cdot}f}^{\spr_j^k,i,m}(\cdot\wedge t)$ for each fixed $t$ which have the same global supremum and modulus of continuity as our processes.

For the remainder of this proof, we fix $j,k$ to minimize notation.
It suffices to show that
$$X^{A,m}(r,t):= \frac{1}{\sqrt{m}}\sum_{i=1}^{\lfloor mr \rfloor} \left(1_{\{\pat_i^j > t-\iinta_i^j/m\}} -\pd_j ((t-\iinta_i^j/m, \infty))\right)$$
and
$$X^{V,m}(r,t):=\frac{1}{\sqrt{m}}\sum_{i=1}^{\lfloor mr \rfloor}\left( 1_{\{T_i^{j,k} > t-\tau_i^{\spr,k,j}\}} -\frac{\langle 1_{(t-\tau_i^{\spr,k,j,m},\infty)}, \bar{\ssp}_j^m(\tau_i^{\spr,k,j,m}-)\rangle}{\wmass(\boldsymbol{\bar{\tm}^m}(\tau_i^{\spr,k,j,m}-))}\right)$$
is tight, as $\hat{\mart}_{t_{t-\cdot}f}^{\spr_j^k,j,m}(r)= X^{m,A}(\bar{\ap}_j^m(r),t),$ $\hat{\mart}_{t_{t-\cdot}f}^{\ap_j,j,m}(r)= X^{m,V}(\bar{\spr}_j^{k,m}(r),t)$ and $\bar{\ap}_j^m(\cdot),\bar{\spr}_j^{k,m}(\cdot)$ time-changes that converge to deterministic, continuous, increasing processes.
(It is straightforward to check that the composition processes will inherit the C-tightness of the processes before the time-change in this situation. However, for a nice proof of the extension of the random time change theorem to the two-parameter setting, see \cite{TalrejaWhitt2009}, Theorem 2.4).
Applying Lemma \ref{bickelwichuraextlemma}, we will check the condition for blocks that are 1-neighbors.
Now, if the increment in $r$ is less than or equal to $\frac{1}{2m}$ for both $B$ and $C,$ then the change in $r$ for one of the increments must be contained in some $(n/m,(n+1)/m],$ i.e. the increment must be zero.

If at least one of $B$ or $C$ has an increment in $r$ that is larger than $\frac{1}{2m},$ then we can use moments to bound the probability.
Because the rectangles are adjacent along the $t$ edge, they are of the form $B=(r_1,r_2]\times(t_1,t_2],C=(r_2,r_3]\times(t_1,t_2].$
We will be able to satisfy this condition using moment bounds.
In particular, we define $$\xi_i^A(t_1,t_2):= 1_{\{\pat_i^j \in (t_1-\iinta_i^j/m,t_2-\iinta_i^j/m]\}}-\pd_j((t_1-\iinta_i^j/m,t_2-\iinta_i^j/m])$$
and 
$$\xi_i^V(t_1,t_2):=1_{\{T_i^{j,k} \in (t_1-\tau_i^{\spr,k,j,m},t_1-\tau_i^{\spr,k,j,m}]\}} -\frac{\langle 1_{(t_1-\tau_i^{\spr,k,j,m},t_2-\tau_i^{\spr,k,j,m}]}, \bar{\ssp}_j^m(\tau_i^{\spr,k,j,m}-)\rangle}{\wmass(\boldsymbol{\bar{\tm}^m}(\tau_i^{\spr,k,j,m}-))}$$
for $i\in \N.$
Then, we have that for $\lambda >0,$ using Chebychev's inequality and the fact that $min(x,y)^4\leq x^2y^2$ for any $x,y\in \R,$ we obtain
$$P(\min(|X^A(B)|,|X^A(C)| \geq \lambda) \leq \lambda^{-4}E[(X^A(B))^2(X^A(C))^2].$$
Then, using the conditional independence of $\xi_i,\xi_k,$ we find that
\begin{align*}
\lambda^{-4}E[(X^A(B))^2(X^A(C))^2]&\leq \lambda^{-4}E[(X^A(B))^2(X^A(C))^2]\\
& \leq \frac{\lambda^{-4}}{m^2}(\sum_{\lfloor m r_1\rfloor+1}^{\lfloor m r_2 \rfloor}E[\xi_i^A(t_1,t_2)^2])(\sum_{\lfloor m r_2\rfloor+1}^{\lfloor m r_3 \rfloor}E[\xi_i^A(t_1,t_2)^2])\numberthis\label{linefrom}\\
& \leq \frac{\lambda^{-4}}{m^2}C^2(m r_2-m r_1 +1)(t_2-t_1)^{\alpha}(m r_3-mr_2 +1)(t_2-t_1)^{\alpha}\\
&  \leq \lambda^{-4} C^2( r_2- r_1 +\frac{1}{m})(t_2-t_1)^{\alpha}(r_3-r_2 +\frac{1}{m})(t_2-t_1)^{\alpha},
\end{align*}
where $C,\alpha$ are the H\"{older} constant and exponent for the cdf of $\pd_j.$
Now, because we are in the case where at least one of the increments in $r$ is larger than $\frac{1}{2m},$ we may assume without loss of generality that $r_2-r_1 \geq r_3-r_2$ and $r_2-r_1 \geq \frac{1}{2m}.$
We conclude that
\begin{align*}
    P(\min(|X^A(B)|,|X^A(C)| \geq \lambda)&\leq \lambda^{-4} 9C^2(r_2-r_1)^2(t_2-t_1)^{2\alpha}\\
    & \leq \lambda^{-4}9C^2 ((r_2-r_1)(t_2-t_1))^{ 2 \alpha}\\
    &\leq \lambda^{-4}9C^2 ((r_3-r_1)(t_2-t_1))^{ 2 \alpha}
\end{align*}
Taking $\gamma = 4,$ $\beta = 2\alpha,$ and $\mu$ to be the Lebesgue measure times $9C^2,$ we have \eqref{extendedBWcondition}.
The same proof works for this case for $X^V,$ it just takes a bit more work to calculate the moments of the $\xi_i^V(t_1,t_2)$.
In particular, once we show that $E[\xi_i^V(t_1,t_2)^2]\leq C(t_2-t_1)^{\alpha}$ for each $i,$ the proof from \eqref{linefrom} forward is identical.
So we bound these moments now.
First, observe that, using the tower property to condition on $\mathscr{F}_{\tau_i^{V,k,l,m}-}$ inside the expectation,
\begin{align*}
     &E[\xi_i^V(t_1,t_2)^2]\\& \leq 4E\left[\left(\langle1_{(t_1-\tau_i^{\spr,k,l,m},t_2-\tau_i^{\spr,k,l,m}]},\bar{{\ssp}}^m_j(\tau_i^{\spr,k,l,m}-)\rangle+\langle1_{(t_1-\tau_i^{\spr,k,l,m},t_2-\tau_i^{\spr,k,l,m}]},\bar{{\ssp}}^m_j(\tau_i^{\spr,k,l,m}-)\rangle^2\right)\right],
    \end{align*}
and therefore, using the fact that for $0 \leq s \leq T,$ \\
    $\langle 1_{(x-s,y-s]},\bar{\ssp}_j^m(s-) \rangle\leq \langle 1_{(x,y]},\bar{\ssp}_j^m(0) \rangle +\frac{1}{m}\sum_{i=1}^{m\bar{\ap}_j^m(T)}1_{\{x-\iinta_i^{j,m}/m < \pat_i^j \leq y-\iinta_i^{j,m}/m \}}$, 
\begin{align*}
&\leq E\left[\left(\langle 1_{[(t_1,t_2]},\bar{\ssp}_j^m(0) \rangle +\frac{1}{m}\sum_{i=1}^{m\bar{\ap}_j^m(T)}1_{\{t_1 < \pat_i^j+\iinta_i^{j,m}/m \leq t_2\}}\right)\right]
\\&+E\left[\left(\langle 1_{(t_1,t_2]},\bar{\ssp}_j^m(0) \rangle +\frac{1}{m}\sum_{i=1}^{m\bar{\ap}_j^m(T)}1_{\{t_1 < \pat_i^j+\iinta_i^{j,m}/m \leq t_2\}}\right)^2\right].
\end{align*}
Using the independence of $\{\pat_i^j\}_{i=1}^{\infty},\{\iinta_i^{j,m}\}_{i=1}^{\infty}$ from $\bar{\boldsymbol{\ssp}}^m(0),$ we bound the above quantity with
\begin{align*}
& \leq E[J]+E[K]+3E[J^2]+3E[K^2] 
\end{align*}
where $J=\langle 1_{[(t_1,t_2]},\bar{\ssp}_j^m(0) \rangle$ and $K=\frac{1}{m}\sum_{i=1}^{m\bar{\ap}_j^m(T)}1_{\{t_1 < \pat_i^j+\iinta_i^{j,m}/m \leq t_2\}}.$
Using the continuity assumptions on $\pd_j$ and $\{\bar{\ssp}_j^m(0)\}_{m=1}^{\infty},$
we obtain a bound of the form
\begin{equation}
    E[\xi_i^V(t_1,t_2)^2] \leq C(t_2-t_1)^{\alpha}
\end{equation}
for some $C,$ as required.

\end{proof}
\begin{proof}[Proof of Theorem \ref{Ydefthm}]
    We now check that these finite dimensional distributions specify a unique continuous process for each family of martingales and test function $f_c \in \mathscr{C}\cup \{1\}$.
    To do this, we apply the multiparameter version of the Kolmogorov continuity condition.
    Examining a multiparameter increment $(r_1,r_2]\times (t_1,t_2],$ we see that for $p \geq 2$, it follows from the BDG inequalities that for either family of martingales (we will just denote the family with a $\circ$ for now) and any $j$
    \begin{align*}
        &E[(\hat{\mart}_{t_{t_1-\cdot}f}^{\circ,j}(r_1)-\hat{\mart}_{t_{t_2-\cdot}f}^{\circ,j}(r_1)-(\hat{\mart}_{t_{t_1-\cdot}f}^{\circ,j}(r_2)-\hat{\mart}_{t_{t_2-\cdot}f}^{\circ,j}(r_2)))^p]\\
        &\leq C_p( E[\langle (\hat{\mart}_{t_{t_1-\cdot}f}^{\circ,j}-\hat{\mart}_{t_{t_2-\cdot}f}^{\circ,j})(\cdot-r_1)-(\hat{\mart}_{t_{t_1-\cdot}f}^{\circ,j}-\hat{\mart}^{\circ,j}_{t_{t_2-\cdot}f})(r_1)\rangle_{r_2-r_1}^{p/2}]
    \end{align*}
    The right-hand side above has different bounds depending on the choice of $f$ and the family of martingales. For $f \in \mathscr{C},$ it is straightforward to check that the RHS is bounded by $C_f(t_2-t_1)^p(r_2-r_1)^{p/2},$ where $C_f$ depends only on the function $f$ and the Lipschitz constants of the fluid limits for the arrival and service processes.
    For $f \equiv 1,$ we have $C (t_2-t_1)^{p\alpha/2}(r_2-r_1)^{p/2},$ where $\alpha,C$ are calculated using the H\"{o}lder coefficients and constants of the cdfs of $\pd_j$ and $\fl_j(0).$
    We note that, for the $\spr_j^k$ martingales, one needs to use the fact that $\fl_j(s) [u,v) \leq \fl_j(0) [s+u,s+v)+\int_0^t \ar_j \pd_j[r+u,r+v)dr.$
    Applying the criterion, we see that there exists a unique H\"{o}lder continuous modification by taking $p$ such that $p,p/2>1$ or $p\alpha/2,p/2>1,$ respectively.

\end{proof}
\begin{lem}
\label{Uconvlemma}
    For each $T>0, f_1,...,f_n \in \mathscr{C}\cup \{1\}$, the multi-index process $\{U_{f_a}^{j,m}(r,t): 0 \leq r \leq t \leq T, a\in [n], j \in \J \}$ is C-tight and converges to the process defined in Definition \ref{Udefdef}.
\end{lem}
\begin{proof}
Convergence for the first term on the right hand side of \eqref{ujmdef} follows from Assumption \ref{initialconditionsassumption}.
We now examine the fourth and sixth terms of \eqref{ujmdef}.
C-tightness will follow from the martingale convergence given in Lemmas \ref{dremainderconv} and \ref{martingaleconv} combined with the decomposition of diffusion-scaled renewal processes given in \eqref{martingalepartofEhat} and \eqref{remainderpartofEhat} with a small adjustment to the proofs to allow for the integrands to vary in $t.$
In particular, in the proof of Lemma \ref{dremainderconv}, the bound \eqref{tvcalc} becomes 
\begin{equation*}
    \frac{\max\{{\ser_l^{j,m}}:l \leq \spr_j^{k,m}(mT)\}}{\sqrt{m}}\sup_{t\leq T}TV(\flcdf_f^{j,c}(\cdot,t-\cdot))_{[0,t]}
\end{equation*}
for the integrals in the fourth term and
\begin{equation*}
    \frac{\max\{{\inta_l^{j,m}}:l \leq \ap_j^m(mT)\}}{\sqrt{m}}\sup_{t\leq T}TV(\pdcdf_f^{j,c}(t-\cdot))_{[0,t]}
\end{equation*}
for the sixth term.
We note that the integral equation derived for \eqref{masstransportindicator} in Lemma 6.1 of \cite{loeserwilliams} also holds for the more general \eqref{masstransportequation}, and using this equation it is straightforward to find a Lipschitz constant for $\flcdf_f^{j,c}(\cdot,t-\cdot)$ that is uniform over $t \in [0,M]$ and verify that $\sup_{t \leq M}TV(\flcdf_f^{j,c}(\cdot,t-\cdot))_{[0,t]}< \infty$.
Because $\pdcdf_f^{j,c}(\cdot)$ is decreasing and takes the value $1$ at zero, $TV(\pdcdf_f^{j,c}(t-\cdot))_{[0,t]}\leq 1$ for each $M>0.$
Examining the remaining martingale parts, the same argument as was used in the Proof of Theorem \ref{Ydefthm} will suffice using the finite dimensional distributions given by Lemma \ref{martingaleconv}.
\end{proof}

\section{Proof of Theorems 3.2, 3.3, and 3.4}
 \begin{proof}[Proof of Theorems \ref{uniquenessoftotalmassthm} and \ref{lhatconvergencethm} ]
Applying Lemma \ref{Lcompactcontainmentlemma} and Lemma \ref{Uconvlemma}, we see that $\{\boldsymbol{R}_f^{j,m}(r,t)\}_{m=1}^{\infty}$ is a C-tight sequence of multiparameter processes for each $j \in \J,$ $f \in \mathscr{C}\cup \{1\}$ and satisfies \eqref{finalrjeq} for each $m \in \N.$
It follows from convergence of the fluid model and Lemma \ref{sjkfluidlimit} that a subsequential limit of $\{\boldsymbol{R}_f^{j,m}(r,t)\}_{m=1}^{\infty},$ $\boldsymbol{R}_f^{j}$ satisfies \eqref{rjlimitdef} in each coordinate $j$.
Therefore, to complete the proof of Theorem \ref{lhatconvergencethm}, we must show that, if $\awmass(\hat{\boldsymbol{\tm}}(x)), U_f^j(\cdot,\cdot),$ and the fluid limit total mass vector are known, this uniquely specifies any solution $H_f^j(\cdot,\cdot)$ of \eqref{rjlimitdef}.
Fix $t \geq 0, j \in \J,$ $f \in \mathscr{C}\cup \{1\},$ $U_f^j(\cdot,t)$, and let $H_f^j(\cdot,\cdot)$ be a solution to \ref{rjlimitdef}.
Then, because $U_f^j$ is a semimartingale, it follows that $H_f^j(\cdot,t)$ is a semimartingale.
Therefore, applying integration by parts for stochastic integrals with the deterministic integrating factor
$$ b_j(r):=\exp\left(\int_0^r \frac{Kp_j}{\awmass(\boldsymbol{\flm}(s))}ds\right)$$
 we see that, for $0 \leq r \leq t,$
\begin{align}
    &b_j(r)H_f^j(r,t)\\&=H_f^j(0,t)+\int_0^rb_j(x)dH_f^j(x,t) \label{hsub}\\ 
    &+\int_0^rb_j(x)H_f^j(x,t)\frac{Kp_j}{\awmass(\boldsymbol{\flm}(x))}dx.
\end{align}
Substituting the RHS of \eqref{rjlimitdef} in for the integrator in \eqref{hsub}, we obtain
\begin{align*}
    H_f^j(r,t)&=H_f^j(0,t)\exp\left(-\int_0^r \frac{Kp_j}{\awmass(\boldsymbol{\flm}(s))}ds\right)+\int_0^r \exp\left(-\int_x^r \frac{Kp_j}{\awmass(\boldsymbol{\flm}(s))}ds\right)dU_f^j(x,t)\\
    &+\int_0^r\exp\left(-\int_x^r \frac{Kp_j}{\awmass(\boldsymbol{\flm}(s))}ds\right)\frac{Kp_j {{\flcdf}}^{j,c}_f(x,t-x)}{\awmass(\boldsymbol{\flm}(x))^2}\awmass(\hat{\boldsymbol{\tm}}(x)) dx.
    \numberthis \label{Hsolutionintegratingfactor}
\end{align*}
Therefore, for each $f \in \mathscr{C}\cup \{1\},$ $H_f(r,t)$ is an explicit function of $\awmass(\hat{\boldsymbol{\tm}}(x)), U_f(\cdot,\cdot),$ and the fluid limit total mass vector.
This completes the proof of theorem \ref{lhatconvergencethm}.

We continue to the proof of Theorem \ref{uniquenessoftotalmassthm}.
Applying \eqref{Hsolutionintegratingfactor} with $f=1,$ we see that it suffices to show that \eqref{Lhatlimit}, 
\begin{align*}
    \sum_{j=1}^J\frac{p_j}{\sr_j} H_1^j(t,t)&=\sum_{j=1}^J\frac{p_j}{\sr_j}H_1^j(0,t)b(t)^{-1}+\sum_{j=1}^J\frac{p_j}{\sr_j}\int_0^t \exp\left(-\int_x^t \frac{Kp_j}{\awmass(\boldsymbol{\flm}(s))}ds\right)dU_1^j(x,t)\\
    &+\int_0^t\sum_{j=1}^J\frac{p_j}{\sr_j}\exp\left(-\int_x^t \frac{Kp_j}{\awmass(\boldsymbol{\flm}(s))}ds\right)\frac{Kp_j {{\flcdf}}^{j,c}_f(x,t-x)}{\awmass(\boldsymbol{\flm}(x))^2}\sum_{i=1}^J\frac{p_i}{\sr_i} H_1^i(x,x)dx
\end{align*}
has a unique solution for the process $ \{\sum_{j=1}^J\frac{p_j}{\sr_j} H_1^j(t,t): t\geq 0\}.$
Uniqueness of solutions $\sum_{j=1}^J\frac{p_j}{\sr_j} H_1^j(t,t)$ then follows from Gr\"onwall's inequality because, fixing a realization of $\boldsymbol{U}_1$, the first two terms on the right hand side above will be the same.
 \end{proof}
\label{proofofdiffusiontheoremsection}
\begin{proof}[Proof of Theorem \ref{diffusionapproximationresult}]

    For $\boldsymbol{f} \in \mathscr{C}^J,$ we use the notation $\boldsymbol{X}_{\boldsymbol{f}}(\cdot):= \langle \boldsymbol{f}, \boldsymbol{X}(\cdot) \rangle.$
    Applying equation \eqref{mainprelimiteqnnoghat}, we see that our system is a ``good sequence of diffusion-scaled renewal driven systems" with
    \begin{itemize}
        \item $\hat{\boldsymbol{X}}^m(\cdot) = \hat{\boldsymbol{X}}^m_{\boldsymbol{f}}(\cdot)$,
        \item $A= J + JK$,
        \item $(E_1(\cdot),...,E_J(
        \cdot)
        ) = (\ap_1(\cdot),...,\ap_J(
        \cdot)),$ and $(E_{kJ+1}(\cdot),...,E_{kJ+J}(
        \cdot))=(\spr_1^k(\cdot),...,\spr_J^k(\cdot))$ for $k \in [K],$
        \item $(g_1^m(\cdot),...,g_J^m(
        \cdot)
        ) = (\cdot,...,        \cdot),$ and $(g_{kJ+1}^m(\cdot),...,g_{kJ+J}^m(
        \cdot))=(\bar{g}_1^{k,m}(\cdot),...,\bar{g}_J^{k,m}(\cdot))$ for $k \in [K],$
        \item $(c_1^m,...,c_J^m) = (1,...,1)$ and $(c_{kJ+1}^m,...,c_{kJ+J}^m) = \left(\frac{1}{\sr_1},...,\frac{1}{\sr_J}\right)$ for $k \in [K],$
        \item $b_{j}^i = 1_{\{i = j\}}\langle f_i,\pd_j\rangle$ for $i,j \in [J]$ and $b_{j}^{kJ+i}(\cdot) = \frac{p_j \langle f, \fl_j(\cdot)\rangle}{\awmass(\boldsymbol{\flm}(\cdot)) }$ for $k \in [K],i,j \in [J],$
        \item ${\mart}_{i,j}^m(\cdot)= 1_{\{i=j\}}{\mart}_{f_j}^{\ap_j,j,m}(\cdot) $ for $i, j \in [J]$ and ${\mart}_{kJ+i,j}^m(\cdot) = \othermart^{\spr_i^k,j,m}_{f_j}(\cdot)$ for $i,j \in [J], k \in [K],$
        \item $\boldsymbol{r}^{i} = \boldsymbol{0}$
        \item $\boldsymbol{h}^{i,m}(t) = \boldsymbol{0}$ for $i,j \in [J], t \geq 0$ and $\boldsymbol{h}^{i,m}(t) = \frac{\boldsymbol{p}}{\wmass(\bar{\boldsymbol{\tm}}^m(t-))}$ for $J \leq i \leq J+KJ, t \geq 0,$
        \item and 
        \begin{align*}
            \hat{\boldsymbol{J}}^m(\cdot) &= -\int_0^\cdot\langle \boldsymbol{f}', \hat{\boldsymbol{\ssp}}^m(s) \rangle ds+\int_0^{\cdot}\frac{\boldsymbol{p} \langle \boldsymbol{f}, \boldsymbol{\fl}(s)\rangle }{\wmass(\bar{\boldsymbol{\tm}}^m(s-))}\left(\frac{\awmass(\hat{\boldsymbol{\tm}}^m(s-))}{\awmass(\boldsymbol{\flm}(s))}\right)d\sum_{k=1}^K\sum_{l=1}^J\bar{\spr}^{k,m}_l(\bar{g}^{k,m}_l(s))\\
            &-\int_0^{\cdot} \frac{\boldsymbol{p} \langle \boldsymbol{f}, \boldsymbol{\fl}(s)\rangle }{\awmass({\boldsymbol{\flm}}(s))} d\sum_{k=1}^K \sum_{j=1}^J\hat{\epsilon}^{k,j,m}(s)
        \end{align*}
    \end{itemize}
    We note that it is easy to check, examining the form of the martingale decompositions, that the change in each martingale at each jump time of the associated renewal process is independent of the next interevent time for that renewal process.
    Applying Theorem \ref{diffusionclt}, and recalling Remark \ref{simultaneouseventsremark}, and Assumption \ref{assumptions}, Theorem \ref{diffusionapproximationresult} is proved if we show the following
    \begin{enumerate}[(i)]
    \item For $k \in [K], j \in [J]$ covariance matrix of $\hat{\boldsymbol{\othermart}}^{\spr_j^k,m}_{\boldsymbol{f}}(\cdot)$ converges to $\int_0^{\cdot} D_{k,j}^{\boldsymbol{f}}(s)ds$ as $m \rightarrow \infty.$ For $j \in [J],$ the covariance matrix of $\hat{\boldsymbol{Y}}^{\ap_j,m}_{\boldsymbol{f}}(\cdot)$ converges to the matrix with $\ar_j(\langle f_j^2, \pd_j\rangle- \langle f_j,\pd_j \rangle^2)(\cdot)$ in the $(j,j)$ spot for $j \in [J]$ and $0$ for $(i,l) \in [J]\times[J],$ $(i,l) \neq (j,j)$ for any $j \in [J]$. Furthermore, for $T>0,$ $\lim_{m \rightarrow \infty}E[\sup_{t \in [0,T]}|\hat{\boldsymbol{Y}}^{\spr_j^k,m}_{\boldsymbol{f}}(t)-\hat{\boldsymbol{Y}}^{\spr_j^k,m}_{\boldsymbol{f}}(t-)|^2]=0$ and $\lim_{m \rightarrow \infty}E[\sup_{t \in [0,T]}|\hat{\boldsymbol{Y}}^{\ap_j,m}_{\boldsymbol{f}}(t)-\hat{\boldsymbol{Y}}^{\ap_j,m}_{\boldsymbol{f}}(t-)|^2]=0.$
        \label{checkingassumptionsoftheorem4}
        \item $\bar{g}_j^{k,m}(\cdot) \Rightarrow \int_0^{\cdot} \frac{\frac{p_j}{\sr_j}\flm_j(s)}{\awmass(\boldsymbol{\flm}(s))}ds$ for $j \in [J], k \in [K],$
        \label{checkingassumptionsoftheorem1}
        \item $b_{j}^i = 1_{\{i = j\}}\langle f_i,\pd_j\rangle$ for $i,j \in [J]$ and $b_{j}^{kJ+i}(\cdot) = \frac{p_j \langle f_j, \fl_j(\cdot)\rangle}{\awmass(\boldsymbol{\flm}(s)) }$ for $k \in [K],i,j \in [J],$ are of locally finite variation,
        \label{checklocallyfinitevariationsitem}
        \item $\boldsymbol{h}^{i,m}(\cdot) \Rightarrow \frac{\boldsymbol{p}}{\wmass(\boldsymbol{\flm}(\cdot))}$ for $J \leq i \leq J+KJ, t \geq 0,$
        \label{checkingassumptionsoftheorem2}
        \item $$\hat{\boldsymbol{J}}^m(\cdot) \Rightarrow -\int_0^\cdot\langle \boldsymbol{f}', \hat{\boldsymbol{\ssp}}(s) \rangle ds+\int_0^{\cdot}\frac{\boldsymbol{p} \langle \boldsymbol{f}, \boldsymbol{\fl}(s)\rangle }{\wmass(\boldsymbol{\flm}(s))}\left(\frac{\awmass(\hat{\boldsymbol{\tm}}(s))}{\awmass(\boldsymbol{\flm}(s))}\right)d\sum_{k=1}^K\sum_{l=1}^J\bar{\spr}^{k}_l(\bar{g}^{k}_l(s)).$$
        \label{checkingassumptionsoftheorem3}
        
    \end{enumerate}
If \eqref{checkingassumptionsoftheorem4}-\eqref{checkingassumptionsoftheorem3} are true, then applying Theorem \ref{diffusionclt} to \eqref{mainprelimiteqnnoghat} along with the explicit forms of the fluid limit quantities given in Lemma \ref{sjkfluidlimit} and Corollary \ref{cjconvcor}, we find that subsequential limits will satisfy the equation system of equations given in \eqref{diffusionlimitequation}.

We begin with \ref{checkingassumptionsoftheorem1}.
This was proved in Corollary \ref{cjconvcor}.
For \ref{checklocallyfinitevariationsitem}, we see that in \cite{loeserwilliams} Lemma 8.1, it is shown that a function $\langle f, \fl_j(\cdot) \rangle$ is Lipschitz continuous for $f \in \mathscr{C}, j \in [J].$
In \cite{loeserwilliams} Lemma 6.1, we see that each $\flm_j(\cdot),$ $j \in [J]$ satisfies an integral equation (40).
It follows from the form of this equation that each $\flm_j(\cdot)$ has finite variation on $[0,T]$ if $\awmass(\cdot)$ is bounded away from zero on that interval.
Since $\awmass(\cdot)$ is continuous and nonzero, that will be the case.
Putting these facts together, we have shown \ref{checklocallyfinitevariationsitem}.
We continue to \ref{checkingassumptionsoftheorem2}.
This is immediate from Theorem \ref{fluidlimittheorem} and Lemma \ref{nonzerolemma} (which also implies that $\wmass(\bar{\boldsymbol{\tm}}^m(s))$ is eventually bounded away from $\boldsymbol{0}$).
For \ref{checkingassumptionsoftheorem3} we see that the limit of the first term follows from the same argument as was used in the proof of Theorem \ref{diffusionclt} to obtain convergence of the term $\int_0^t \boldsymbol{r}^{i}(\hat{\boldsymbol{X}}^m(s))ds .$
Convergence of the second term in $\hat{\boldsymbol{J}}^m$ follows from the same argument as was used for convergence of the term $\int_0^{\cdot}\boldsymbol{h}^{i,m}(s)\hat{\boldsymbol{X}}^m(s-)d\bar{{E}}^m_i(g^m_i(s))$ with $\boldsymbol{h}^{i,m}(s) = \frac{\boldsymbol{p}\langle \boldsymbol{f},\boldsymbol{\fl}(s)\rangle}{\awmass(\boldsymbol{\flm}(s))\wmass(\bar{\boldsymbol{\tm}}^m(s))}$, $\hat{\boldsymbol{X}}^m(s-) = \awmass(\hat{\boldsymbol{\tm}}^m(s-))$, and the time changed renewals appropriately substituted.
The convergence of the error terms with integrator $\hat{\epsilon}^{k,j,m}$ was proved in Lemma \ref{epsilonconvlem}.
Therefore, the heart of this proof is checking \ref{checkingassumptionsoftheorem4}.
First, we note that because the $\boldsymbol{f}$'s are bounded, the jumps of ${\boldsymbol{Y}}^{\ap_j,m}_{\boldsymbol{f}}(\cdot)$ and ${\boldsymbol{Y}}^{\spr_j^k,m}_{\boldsymbol{f}}(\cdot)$ are uniformly bounded, and thus the jumps of $\hat{\boldsymbol{Y}}^{\ap_j,m}_{\boldsymbol{f}}(\cdot)$ and $\hat{\boldsymbol{Y}}^{\spr_j^k,m}_{\boldsymbol{f}}(\cdot),$ which are $1/\sqrt{m}$ times the jumps of ${\boldsymbol{Y}}^{\ap_j,m}_{\boldsymbol{f}}(\cdot)$ and ${\boldsymbol{Y}}^{\spr_j^k,m}_{\boldsymbol{f}}(\cdot)$, satisfy the condition given for the jumps.
We turn our attention to the convergence of the predictable quadratic covariation matrices. 
We compute these now.
Applying Corollary \ref{martingaleconvcor}, \eqref{truefunctioninapmart}, the fact that $\bar{\ap}_j'(t) = \ar_j,$ and the fact that $\hat{\mart}^{\ap_j,i,m}_{f_i} = 0$ for $i \neq j,$ we have that $\langle \hat{\mart}^{\ap_j,i,m}_{f_i},\hat{\mart}^{\ap_j,l,m}_{f_l}\rangle = 0 $ if $i\neq j$ or $l \neq j,$ and when $i = j =l$ we have
$$\langle \hat{\mart}^{\ap_j,j,m}_{f_j},\hat{\mart}^{\ap_j,j,m}_{f_j}\rangle_{\cdot}\Rightarrow \ar_j (\cdot)\left(\langle f_j^2, \pd_j\rangle -\langle f_j, \pd_j\rangle^2 \right) .$$
Again applying Corollary \ref{martingaleconvcor} with the function \eqref{truefunctioninsprmart} and Lemma \ref{sjkfluidlimit}, we have
\begin{equation}
    \langle \hat{\mart}^{\spr_j^k,i,m}_{f_i},\hat{\mart}^{\spr_j^k,l,m}_{f_l}\rangle_{\cdot} \Rightarrow \int_0^{\cdot} \left(1_{\{i=l\}}\frac{p_i \langle f_i^2,\fl_i(s)\rangle}{\wmass(\boldsymbol{\flm}(s))}- \frac{p_i\langle f_i,\fl_i(s)\rangle p_l\langle f_l,\fl_l(s)\rangle }{\wmass(\boldsymbol{\flm}(s))^2}\right)\frac{p_j\flm_j(s)}{\awmass(\boldsymbol{\flm}(s))}ds .
    \label{ysprpredquad}
\end{equation}
Then, noting that
\begin{align*}
    &\langle \hat{\othermart}^{\spr_j^k,i,m}_{f_i},\hat{\othermart}^{\spr_j^k,l,m}_{f_l}\rangle_t \\&= \Bigg\langle \hat{\mart}^{\spr_j^k,i,m}_{f_i}-\int_0^{\cdot} \frac{p_i \langle f_i, \fl_i(s)\rangle }{\awmass({\boldsymbol{\flm}}(s))} d\sum_{n=1}^J\frac{1}{\sr_n} \hat{\mart}^{\spr_j^k,n,m}_1(s),\\&\hat{\mart}^{\spr_j^k,l,m}_{f_l}-\int_0^{\cdot} \frac{p_l \langle f_l, \fl_l(s)\rangle }{\awmass({\boldsymbol{\flm}}(s))} d\sum_{n=1}^J\frac{1}{\sr_n} \hat{\mart}^{\spr_j^k,n,m}_1(s)\Bigg\rangle_t\\
    & = \Bigg\langle\hat{\mart}^{\spr_j^k,i,m}_{f_i},\hat{\mart}^{\spr_j^k,l,m}_{f_l}\Bigg\rangle_t\\
    &-\sum_{n=1}^J \int_0^t \frac{p_i \langle f_i, \fl_i(s)\rangle }{\awmass({\boldsymbol{\flm}}(s))}\frac{1}{\sr_n} d\Bigg\langle \hat{\mart}^{\spr_j^k,n,m}_1,\hat{\mart}^{\spr_j^k,l,m}_{f_l}\Bigg\rangle_t\\
    &-\sum_{n=1}^J \int_0^t \frac{p_l \langle f_l, \fl_l(s)\rangle }{\awmass({\boldsymbol{\flm}}(s))}\frac{1}{\sr_n}d\Bigg\langle\hat{\mart}^{\spr_j^k,i,m}_{f_i}, \hat{\mart}^{\spr_j^k,n,m}_1\Bigg\rangle_t\\
    &+ \sum_{n=1}^J \sum_{x=1}^J\int_0^t\frac{p_i \langle f_i, \fl_i(s)\rangle }{\awmass({\boldsymbol{\flm}}(s))}\frac{1}{\sr_n}\frac{p_l \langle f_l, \fl_l(s)\rangle }{\awmass({\boldsymbol{\flm}}(s))}\frac{1}{\sr_x}d\Bigg\langle\hat{\mart}^{\spr_j^k,n,m}_1,\hat{\mart}^{\spr_j^k,x,m}_1\Bigg\rangle_t,
    \numberthis \label{othermartcovdoneout}
\end{align*}
the form of $D^{\boldsymbol{f}}_{k,j}$ follows from \eqref{ysprpredquad} and \eqref{othermartcovdoneout} and a standard real analysis argument in which one takes a Skorokhod Representation and notes that the Lebesgue-Stieltjes measure induced by the predictable quadratic covariations above converges in the weak topology to the measure induced by the limiting function.
\end{proof}
 \noindent {\bf Acknowledgements} The research reported in this paper was supported in part by NSF RTG grant DMS-2134107.
 The author would also like to acknowledge the feedback and discussion provided by Amarjit Budhiraja and Zachary Bezemek, and Ben Seeger, which greatly improved this paper.
 Furthermore, the feedback of some very diligent reviewers was invaluable.
 Lastly, the author used ChatGPT for grammar, copyediting, and phrasing suggestions. However, no mathematical results or proofs were generated by the tool, and the author reviewed and edited all AI-assisted text.
\newpage

\bibliographystyle{amsplain}
\bibliography{diffusionreferences.bib}

\end{document}